\numberwithin{equation}{section}
\newtheorem{lemma}{Lemma}[section]
\newtheorem{theorem}[lemma]{Theorem}
\newtheorem{proposition}[lemma]{Proposition}
\newtheorem{corollary}[lemma]{Corollary}
\newtheorem{definition}[lemma]{Definition}
\newtheorem{remark}[lemma]{Remark}
\newtheorem{problem}[lemma]{{Problem}}
\newtheorem{condition}[lemma]{Condition}
\newtheorem*{notation}{Notations}
\newtheorem*{proclaim}{Proclamation}
\theoremstyle{definition}
\def\beq#1\eeq{\begin{equation}#1\end{equation}}
\def\balign #1 #2 \ealign{\begin{aligned} #1 #2  \end{aligned} }
\def\Div{{\rm div}}
\def\sgn{{\rm sgn}}
\def\bu{\mathbf{u}}
\def\msE{\mathscr{E}}
\def \msB {\mathscr{B}}
\newcommand \alp{\alpha}
\newcommand \eps{\varepsilon}
\newcommand \vphi{\varphi}
\newcommand \Gam{\Gamma}
\newcommand \gam{\gamma}
\newcommand \om{\omega}
\newcommand \tx{\text}
\newcommand \R{\mathbb{R}}
\newcommand \til{\tilde}
\newcommand \wtil{\widetilde}
\newcommand \der{\partial}
\newcommand \mcl{\mathcal}
\newcommand \ol{\overline}
\newcommand \Om{\Omega}
\newcommand \Gamen{\Gamma_0}
\newcommand \Gamex{\Gamma_L}
\newcommand \Gamw{\Gamma_w}
\newcommand \Gamexg{\Gam_{R}}
\newcommand \rx{{\rm{x}}}
\newcommand \tpsi{\til{\psi}}
\newcommand \tPsi{\til{\Psi}}
\def \msB {\mathscr{B}}
\def\Div{{\rm div}}
\def\sgn{{\rm sgn}}
\newcommand \bx{\mathbf{x}}
\newcommand \us{u_s}
\newcommand{\mfrak}{\mathfrak}
\newcommand \gs{g_{\rm s}}
\newcommand \ls{l_{\rm s}}
\newcommand \barrhoi{\bar{\rho}_I}
\newcommand \barui{\bar u_{I}}
\newcommand \fsonic{\mathfrak{f}_{\rm sn}}
\newcommand \Tac{\mcl{T}_{\rm acc}}
\newcommand \umax{u_{\rm max}}
\newcommand \lmax{l_{\rm max}}
\newcommand \tphi{\tilde{\phi}}
\newcommand \tilT{\tilde{T}}
\newcommand \iterV{\mcl{V}}
\newcommand \iterT{\mcl{T}}
\newcommand \iterP{\mcl{P}}
\newcommand \iterseta{\iterP_{r_3}^m\times \iterP_{r_3}^m}
\newcommand \bJ{\bar{J}}
\newcommand \ubJ{\underbar{J}}
\newcommand \kz{ {\mathcal{K}}}
\begin{document}
\title[Accelerating transonic solutions to E-P system]
{Classical solutions to a mixed-type PDE with a Keldysh-type degeneracy and accelerating transonic solutions to the Euler-Poisson system}

\author{Myoungjean Bae}
\address{Department of Mathematical Sciences, KAIST, 291 Daehak-ro, Yuseong-gu, Daejeon, 43141, Korea}
\email{mjbae@kaist.ac.kr}

\author{Ben Duan}
\address{School of Mathematics, Jilin University, Changchun, Jilin Province, 130012, China}
\email{bduan@jlu.edu.cn}

\author{Chunjing Xie}
\address{School of Mathematical Sciences, Institute of Natural Sciences, Ministry of Education Key Laboratory of Scientific and Engineering Computing, CMA-Shanghai, Shanghai Jiao Tong University, Shanghai, 200240, China
}
\email{cjxie@sjtu.edu.cn}

\begin{abstract}
	In this paper, we first prove the existence of classical solutions to a class of Keldysh-type equations. Next, we apply this existence result to prove the structural stability of one-dimensional smooth transonic solutions to the steady Euler-Poisson system.  
    Most importantly, the solutions constructed in this paper are classical solutions to the Euler-Poisson system, thus their sonic interfaces are not weak discontinuities in the sense that all the flow variables, such as density, velocity and pressure, are at least $C^1$ across the interfaces.
	
\end{abstract}

\keywords{Mixed type equation, Keldysh type, Euler-Poisson system, smooth transonic solution, sonic interface}
\subjclass[2010]{
35M10, 35M30, 35M32, 76H05, 76N10}

\date{\today}

\maketitle

\tableofcontents

\section{Introduction}
The mixed type equations are important topics in modern mathematics. They appear in many open problems in  physics and geometry. See \cite{Bers, HanHong} and references therein. And much remains to be explored.

In the first part of this paper, we prove the well-posedness of a linear boundary value problem that contains a Keldysh-type equation in the form of
\[
a_{11}\der_{x_1x_1}u+2a_{12}\der_{x_1x_2}u
	+\der_{x_2x_2}u+a_1\der_{x_1}u=f.
    \]
    In particular, we prove that if the coefficients $a_{11}, a_{12}$ and $a_1$ belong to $H^{m-1}$ for $m\ge 4$, then we have $u\in H^m$ provided that the coefficients satisfy the structure condition \eqref{KZ-condition0} and some compatibility conditions. This result is a generalization of \cite[Theorem 1.7]{KZ}. In the second part of the paper, we apply this general result to study the structural stability of smooth transonic flows of the Euler-Poisson system.

Given a constant $R>0$, define
\begin{equation}\label{defOmega}
	\begin{aligned}
		&\Om=\{\rx=(x_1, x_2)\in \R^2: 0<x_1<R,\,\,|x_2|<1\}\\
	\end{aligned}
\end{equation}
with
\begin{equation}\label{defboundary}
	\Gamen:=\der\Om\cap \{x_1=0\},\quad  \Gamw:=\der\Om\cap\{|x_2|=1\},\quad
	\Gamexg:=\der\Om\cap\{x_1=R\}.
\end{equation}
For a function $u:\Om\rightarrow \R$, let $\mcl{L}$ be a differential operator of the form
\begin{equation}
	\label{definition:L-general}
	\mcl{L}u:=a_{11}\der_{11}u+2a_{12}\der_{12}u
	+\der_{22}u+a_1\der_{1}u
\end{equation}
with the following properties:
\begin{enumerate}[(i)]
\item $a_{11}, a_{12}, a_1\in H^{m-1}(\Om)$ for $m\ge 4$;
	
\item	there exists a constant $\mu>0$ satisfying
    \begin{equation*}
    \begin{pmatrix}a_{11}& a_{12}\\a_{12}&1\end{pmatrix}\ge \mu \mathbb{I}\,\,\tx{on $\Gamen$},\quad  a_{11}\le -\mu \,\,\tx{on $\Gamexg$}.
    \end{equation*}
\end{enumerate}
The operator $\mcl{L}$ is elliptic near $\Gamen$, and hyperbolic on $\Gamexg$, and it becomes degenerate on $\{{\rx}\in\Om: (a_{11}-a_{12}^2)(\rx)=0\}$. Note that the set $\{{\rx}\in\Om: (a_{11}-a_{12}^2)(\rx)=0\}$ is nonempty because $a_{11}-a_{12}^2$ is continuous in $\Om$. A Keldysh-type equation \[
\displaystyle{\left(\frac R2-x_1\right)\der_{11}u +\der_{22}u=0}
\]
is one of the examples. In this paper, $\der_{x_i}$ and $\der_{x_ix_j}$ are denoted as $\der_i$ and $\der_{ij}$($i,j=1,2$), respectively.

If the degenerate interface $\Gam_d:=\{{\rx}\in\Om: (a_{11}-a_{12}^2)(\rx)=0\}$ is a $C^1$-curve (codimension 1 in $\Om$), then one may try to solve $\mcl{L}u=f$ by the method of vanishing viscosity. Namely, one can introduce and solve a modified problem
\[
\mcl{L}_{\eps}u_{\eps}=f\quad \text{for}\,\, |\eps|>0
\,\,\text{small}
\]
such that $\mcl{L}_{\eps}$ is strictly elliptic or hyperbolic on either side of $\Gam_{d}$ and $\mcl{L}_{\eps}=\mcl{L}$ when $\eps=0$ then patch two solutions obtained from passing to the limit of $u_{\eps}$ as $|\eps|$ tends to 0 on each side of $\Gam_d$. See \cite{bae2009regularity, wang2019smooth} and references therein.

In \cite{Kuzmin}, however, it is proved by the method of a singular perturbation that a Keldysh-type equation of the form
\begin{equation*}
	k(x_1,x_2)\der_{11}u+\der_{2}(a(x_1,x_2)\der_{2}u)-\alpha(x_1,x_2) \der_{1}u +c(x_1,x_2)u=f
\end{equation*}
has a solution $u\in H^4$ provided that smooth coefficients $k,a,\alp$ and $c$ satisfy the following conditions:
\begin{itemize}
\item[(i)] $2\alp+(2j-1)\der_1k\ge \delta$ for $j=0,1,2,3$;
\item[(ii)] $\der_1a\le 0$, $c\le 0$, $\der_1c\ge 0$
\end{itemize}
for some constant $\delta>0$.
In \cite{Kuzmin}, this equation is derived from the potential flow equation by a hodograph transformation, when a transonic flow passes through a convergent-divergent nozzle (see  \cite[Chapter 2]{Kuzmin}).

In the first part of this paper, we prove the following theorem as a generalization of the result from \cite{Kuzmin}.

\begin{theorem} \label{theorem-1}
	Let $m\ge 4$ be fixed in $\mathbb{N}$. Assume that
	$a_{ij}, a_1\in W^{1,\infty}(\Om)\cap H^{m-1}(\Om)$ and $f\in H^{m-1}(\Omega)$.
	Suppose that there exists a constant $\lambda>0$ to satisfy
	\begin{equation}\label{KZ-condition0}
		a_1-\der_2 a_{12}+\frac{(2l-3)}{2}\der_1a_{11}\le -\lambda \quad\tx{in $\Om$} \,\,\text{for all } l=1,\cdots, m.
	\end{equation}
	In addition, if it holds that
	\begin{equation*}
		\min_{|x_2|\le 1} (a_{11}-a_{12}^2)(0,x_2)>0\,\,\tx{on $\Gamen$},\quad a_{11}<0\,\,\tx{on $\Gamexg$},
	\end{equation*}
	and
	\begin{equation*}
		a_{12}=\der_2^lf=0\quad\tx{on $\Gamw$ for $l=2j-1$ with $j\in \mathbb{N}$ and $l<m$},
	\end{equation*}
	then the boundary value problem
	\begin{equation}\label{bvp-g0}
		\begin{cases}
			\mcl{L} u=f\quad&\mbox{in $\Om$},\\
			u=0\quad&\mbox{on $\Gamen$},\quad
			\der_2u=0\quad\mbox{on $\Gamw$}
		\end{cases}
	\end{equation}
	has a unique solution $u\in H^m(\Om)$.
\end{theorem}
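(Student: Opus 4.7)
The plan is to establish the $H^m$ a priori estimate first---the heart of the matter---and then construct a solution via a Galerkin approximation in $x_2$ combined with an elliptic regularization, using the uniform estimates to pass to the limit. Uniqueness follows from the base ($l=1$) estimate applied to the difference of two solutions, so I focus on existence.

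For the \emph{basic energy estimate}, test $\mcl{L}u=f$ against the multiplier $-\der_1 u$ and integrate by parts. The boundary contributions $\tfrac12\int_{\Gamen}a_{11}u_1^2$, $-\tfrac12\int_{\Gamexg}a_{11}u_1^2$, and $\tfrac12\int_{\Gamexg}u_2^2$ are all nonnegative thanks to the signs of $a_{11}$ on $\Gamen$ and $\Gamexg$ and the fact that $u=0$ on $\Gamen$ forces $u_2=0$ there; the trace on $\Gamw$ vanishes because $a_{12}=0$ and $\der_2 u=0$ there. The interior bulk coefficient of $u_1^2$ equals $\tfrac12\der_1 a_{11}+\der_2 a_{12}-a_1$, which by~\eqref{KZ-condition0} with $l=1$ is bounded below by $\lambda$. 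Combined with a Poincar\'e inequality in $x_1$, this yields $\|u\|_{H^1}+\|(u_1,u_2)\|_{L^2(\Gamexg)}\le C\|f\|_{L^2}$.

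For the \emph{higher-order estimates} I induct on $l$ from $2$ to $m$. At step $l$, differentiate the equation $l-1$ times in $x_2$ to get $\mcl{L}(\der_2^{l-1}u)=\der_2^{l-1}f-[\der_2^{l-1},\mcl{L}]u$, and test against $-\der_1\der_2^{l-1}u$. The compatibility hypotheses $a_{12}=0$ and $\der_2^{2j-1}f=0$ on $\Gamw$ propagate inductively into the fact that odd-order $x_2$-derivatives of $u$ vanish on $\Gamw$, keeping every $\Gamw$-trace harmless. The dominant commutator $(l-1)(\der_2 a_{11})\der_{11}(\der_2^{l-2}u)$, after integrating by parts once in $x_2$ and once in $x_1$, contributes an extra $-\int(\der_1 a_{11})(\der_1\der_2^{l-1}u)^2$ relative to the previous step, so the effective interior coefficient becomes $-\tfrac{2l-3}{2}\der_1 a_{11}+\der_2 a_{12}-a_1$, which by \eqref{KZ-condition0} is again bounded below by $\lambda$. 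Lower-order commutator terms are absorbed by the inductive hypothesis; mixed derivatives $\der_1^i\der_2^j u$ with $i\ge 2$ are recovered algebraically from the equation by solving for $a_{11}\der_{11}u$ away from the degenerate set and iterating.

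Existence will be obtained by Galerkin-projecting onto the cosine basis $\{\cos(k\pi(x_2+1)/2)\}_{k\ge 0}$, which encodes $\der_2 u=0$ on $\Gamw$ automatically; truncation at dimension $N$ reduces the PDE to a finite, coupled, first-order system of ODEs in $x_1$. To handle the mixed-type nature of this system, regularize it by adding a small $-\eps\der_{11}$ term to make it uniformly elliptic and solve by standard theory. The energy estimates above are uniform in both $N$ and $\eps$, so passing to the limit $\eps\to 0$ then $N\to\infty$ yields the $H^m$ solution. The main obstacle will be the higher-order induction: confirming that the commutator contributions cumulate into exactly the coefficient $\tfrac{3-2l}{2}\der_1 a_{11}$ prescribed by \eqref{KZ-condition0}, and carefully propagating the parity of $x_2$-derivatives on $\Gamw$---which is precisely what the compatibility conditions on $a_{12}$ and $\der_2^l f$ are designed to support.
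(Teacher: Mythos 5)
Your overall architecture (energy estimate with the multiplier $\der_1 u$, then Galerkin in $x_2$ plus a regularization in $x_1$) matches the paper's, but three of your key steps would fail as described. First, the regularization: adding a small second-order term $\mp\eps\der_{11}$ cannot make the operator uniformly elliptic, since near $\Gamexg$ one has $a_{11}\le-\mu$ and you would need $\eps>\mu$; moreover a uniformly elliptic problem would demand a boundary condition on $\Gamexg$, which \eqref{bvp-g0} does not provide. The paper instead uses the \emph{third-order} singular perturbation $\eps\der_{111}v$ with the extra conditions $\der_1 v=0$ on $\Gamen$ and $\der_{11}v=0$ on $\Gamexg$, chosen precisely so that $\int e^{-\mu x_1}\der_1 v\,(\eps\der_{111}v)\,d\rx$ produces a term of the good sign $-\eps\int e^{-\mu x_1}(\der_{11}v)^2$ and vanishing boundary contributions. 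Relatedly, your basic estimate does not actually control $\|\der_2 u\|_{L^2(\Om)}$: testing $\der_{22}u$ against $-\der_1 u$ yields only boundary integrals, so the interior coercivity in $\der_2 u$ has to come from the weight $e^{-\mu x_1}$ (which contributes $-\tfrac{\mu}{2}e^{-\mu x_1}(\der_2 u)^2$), a device your sketch omits.

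Second, and more fundamentally, the higher-order induction is run in the wrong variable. The coefficient $\tfrac{2l-3}{2}\der_1 a_{11}$ in \eqref{KZ-condition0} is generated by repeated $x_1$-differentiation: applying $\der_1^{k-2}$ to $\mcl{L}u=f$ and testing against $\zeta^2 e^{-\mu x_1}\der_1^{k+1}u$, each commutator $\der_1 a_{11}\,\der_1^{k-1}(\der_{11}u)$ adds one unit of $\der_1 a_{11}$ to the coefficient of $(\der_1^k u)^2$. Your claimed mechanism---that the commutator $(l-1)(\der_2 a_{11})\der_{11}\der_2^{l-2}u$ arising from $x_2$-differentiation turns, after integrations by parts, into $-\int(\der_1 a_{11})(\der_1\der_2^{l-1}u)^2$---does not hold: $\der_2 a_{11}$ and $\der_1 a_{11}$ are unrelated, and no integration by parts converts one into the other. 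Finally, your plan to recover $\der_1^i\der_2^j u$ with $i\ge 2$ "by solving for $a_{11}\der_{11}u$ away from the degenerate set" breaks down exactly where the theorem is nontrivial, since $a_{11}$ (or $a_{11}-a_{12}^2$) vanishes there. The correct order is the reverse: obtain $\der_1^k u$ and $\der_1^{k-1}\der_2 u$ uniformly across the degeneracy from the $x_1$-differentiated energy identity (using a domain extension beyond $x_1=R$ so the cutoffs live in elliptic regions), and then solve the equation for $\der_{22}u$, whose coefficient is identically $1$.
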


A more precise statement is given as Theorem \ref{theorem-1-full}

In contrast to the linear Keldysh-type equation, the degenerate interface of a nonlinear Keldysh-type equation (e.g. \eqref{new system with H-decomp2}) is a priori unknown, and one may expect that the nonlinearity may cause additional singularities of solutions.
The lack of a universal method is a big obstacle in dealing with the elliptic phase and the hyperbolic phase together for mixed-type nonlinear PDEs with degeneracy. Over the last several decades, the PDE community has been largely partitioned according to the mathematical approaches taken to the analysis of different types of PDEs (elliptic/parabolic/hyperbolic). However, advances in the analysis of nonlinear PDEs over the past few decades have shown that many of the challenging questions confronting the community extend beyond this classification. More specifically, many nonlinear PDEs that arise in long-standing open problems in various fields are of mixed type, often exhibiting degeneracy in many cases. For example, there is a transonic flow problem, called a {\emph{de Laval nozzle flow problem}}, in gas dynamics.

Quasi-one-dimensional analysis of the compressible Euler system shows that when a subsonic flow of an inviscid compressible gas enters a convergent-divergent nozzle, with the minimal cross-section area at the throat, and if the pressure at the nozzle exit is sufficiently high, the flow transitions from subsonic to sonic at the throat and subsequently becomes supersonic in the expanding section of the nozzle. This transition is followed by the occurrence of a transonic shock, after which the flow reverts to a subsonic state. Numerous experiments and simulations have shown that this phenomenon occurs in practice; however, a rigorous mathematical study focusing on the stability and instability analysis of this phenomenon has yet to be established due to the limited results available on nonlinear mixed-type PDEs with degeneracy.
In the pioneering work \cite{Morawetz1, Morawetz2, Morawetz3, Morawetz64} by Morawetz, it is shown that two-dimensional smooth transonic potential flows around an airfoil are generally structurally unstable. Smooth transonic solutions for transonic small disturbance equations were analyzed in \cite{Kuzmin}. Recently, transonic solutions for the Euler system with sonic interfaces have been studied; see \cite{WangXin1, WangXin2, WangXin, WengXin}, etc.

In this paper, instead of geometric effect caused by de Laval nozzle, we study smooth transonic flows under the electric field.  More specifically, we prove the structural stability of a one-dimensional smooth transonic solution to the Euler-Poisson system, which exhibits a continuous transition from the subsonic state to the supersonic state across a sonic interface.  We conjecture that the sign change of the electric field in the Euler-Poisson system has the same effect as the sign change in the derivative of cross-sectional area of de Laval nozzle in the Euler system. The results obtained in this paper provide evidence that indicates the existence of a smooth transonic flow near the throat region in a de Laval nozzle and its structural stability.

The steady Euler-Poisson system
\begin{equation}\label{steadyEP0}
	\left\{
	\begin{aligned}
		& \Div_{\rx} (\rho \bu)=0 \\
		&\Div_{\rx} (\rho \bu \otimes \bu) +\nabla_{\bx} p=\rho \nabla_{\rx} \Phi \\
		& \Div_{\rx}(\rho\msE \bu +p\bu)=\rho \bu\cdot \nabla_{\bx}\Phi\\
		& \Delta_{\rx} \Phi=\rho-\barrhoi%
	\end{aligned}%
	\right.
\end{equation}
where $\rho$, ${\bf u}$, $p$, and $\msE$ represent the density, velocity, pressure, and the energy density of electrons, respectively, describes the motion of plasma or electrons in semiconductor devices (cf. \cite{MarkRSbook}). The Poisson equation $\tx{\eqref{steadyEP0}}_4$ describes how the two fluids of electrons and ions interact through the electric field $\nabla \Phi$. In particular, we shall consider ideal polytropic gas, where $p$ and $\msE$ are given by
\begin{equation}
	p=S\rho^{\gam}\quad\tx{and}\quad \msE=\frac{|\bu|^2}{2}+\frac{p}{(\gam-1)\rho},
\end{equation}
respectively, for a function $S>0$ and a constant $\gam>1$. The function $\ln S$ represents {\emph{the physical entropy}}, and the constant $\gam>1$ is called the {\emph{adiabatic exponent}}. The Mach number $M$ is  defined by
$$M:=\frac{|{\bf u}|}{\sqrt{\gam p/\rho}}.$$
If $M<1$, then the flow corresponding to $(\rho, {\bf u}, p, \Phi)$ is said to be subsonic. On the other hand, if $M>1$, then the flow is said to be supersonic. Finally, if $M=1$, then the flow is said to be sonic.

The continuous one-dimensional transonic solution to \eqref{steadyEP0} was first found by phase-plane analysis in \cite{LuoXin}. In this paper, we show that the solution is $C^{\infty}$-smooth at the points of $M=1$ (Lemma \ref{lemma-1d-full EP}). For other results on continuous transonic solutions to the one-dimensional Euler-Poisson system, one can refer to \cite{Gamba, GambaMorawetz, ChenMeiZhangZhang, LiMeiZhangZhang2,WeiMeiZhangZhang} and the references therein.

Now, one can raise the question whether this continuous transonic solution is stable under multidimensional perturbations of the boundary data.
Based on the results of \cite{Kuzmin}, we infer that the answer to this question is possibly yes, though much remains unknown. The goal of this work is to prove the existence of a two-dimensional solution to \eqref{steadyEP0} that produces a flow that accelerates from a subsonic state to a supersonic state in a finite-length flat nozzle, and to show that the transonic transition occurs continuously across a sonic interface, given as a curve. Furthermore, we show that the derivatives of $\rho, {\bf u}$ and $p$ are also continuous across the sonic interface.
The second main theorem of this paper can be summarized as follows.
\begin{theorem}\label{theorem-2-pre}
	The smooth one-dimensional accelerating transonic solution to \eqref{steadyEP0} is structurally stable in the sense that whenever boundary data are prescribed as small perturbations of the smooth transonic state, one can construct a corresponding classical two-dimensional solution to \eqref{steadyEP0} which is sufficiently close to the one-dimensional solution in an appropriately defined norm. Furthermore, the sonic interface $\Gamma_{\rm sn}=\{\rx=(x_1, x_2)\in \R^2:M(\rx)=1\}$ is given as a regular curve which is prescribed as a graph of a function $x_1=f_{\rm sn}(x_2)$.
\end{theorem}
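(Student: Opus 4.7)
The strategy is to reduce the Euler-Poisson system, near the one-dimensional accelerating transonic profile $(\barrho,\baru,\barp,\bPhi)$, to a single Keldysh-type equation (to which Theorem \ref{theorem-1} applies) coupled with transport equations for the flow invariants and a Poisson equation for the electric potential, and then run a fixed-point iteration. I would introduce the Bernoulli function $\mathscr{B}=\frac{|\mathbf{u}|^2}{2}+\frac{\gamma p}{(\gamma-1)\rho}-\Phi$ and the entropy $S=p/\rho^{\gamma}$, both of which are transported by $\mathbf{u}$, together with a stream function $\psi$ (so that $\rho\mathbf{u}=\nabla^{\perp}\psi$ automatically fulfills the mass equation) and a velocity potential $\varphi$ for the compressible part. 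Using Bernoulli's law and the equation of state to express $\rho$ in terms of $\nabla\varphi,\,S,\,\mathscr{B},\,\Phi$, the momentum equation collapses to a single scalar equation
\begin{equation*}
a_{11}(\nabla\varphi,S,\mathscr{B},\Phi)\,\der_{11}\varphi+2a_{12}\,\der_{12}\varphi+\der_{22}\varphi+a_1\,\der_1\varphi=F(S,\mathscr{B},\Phi,\nabla^{\perp}\psi),
\end{equation*}
in which $a_{11}-a_{12}^2$ has the sign of $1-M^2$ and therefore degenerates exactly on the sonic curve.

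The verification of the hypotheses of Theorem \ref{theorem-1} at the background is precisely where the accelerating condition is used. A direct computation along $(\barrho,\baru,\barp,\bPhi)$ yields $\der_1\bar a_{11}=-c\,\der_1(\bar M^2)<0$ and $\bar a_1$ of the appropriate sign, so that
\begin{equation*}
\bar a_1-\der_2\bar a_{12}+\frac{2l-3}{2}\der_1\bar a_{11}\le -\lambda\qquad\text{for all }l=1,\ldots,m,
\end{equation*}
which is exactly \eqref{KZ-condition0}. The subsonic entry $\bar M<1$ and supersonic exit $\bar M>1$ translate into ellipticity of $\mcl{L}$ on $\Gamen$ and hyperbolicity on $\Gamexg$; the slip condition $\mathbf{u}\cdot\nu=0$ on the walls, combined with the natural reflection symmetry in $x_2$, forces $a_{12}|_{\Gamw}=0$ and the required oddness of $\der_2^l F$ on $\Gamw$. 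Each of these properties persists under small $H^{m-1}$ perturbations of the iterate, so Theorem \ref{theorem-1} is available at every step.

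The iteration operator $\mcl{T}$ is built as follows. Given $(S_*,\mathscr{B}_*,\psi_*,\Phi_*)$ in a small $H^m$-ball around the background, freeze the coefficients and the right-hand side, solve the resulting linear Keldysh problem \eqref{bvp-g0} for $\varphi\in H^m(\Om)$ via Theorem \ref{theorem-1}, recover $(\rho,\mathbf{u},p)$, update $S$ and $\mathscr{B}$ by integrating transport along the streamlines of the new $\mathbf{u}$ (whose horizontal component stays bounded below by a positive constant, so trajectories pass from $\Gamen$ to $\Gamexg$ transversally and through the sonic line cleanly), update $\psi$ from the vorticity-type transport equation, and finally update $\Phi$ from $\Delta\Phi=\rho-\barrhoi$ with homogeneous Neumann data on $\Gamw$. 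A contraction in a weaker norm, together with uniform $H^m$-bounds coming from Theorem \ref{theorem-1}, produces a unique fixed point. Once $\varphi\in H^m(\Om)\hookrightarrow C^{m-2}(\ol{\Om})$ is in hand, the sonic set $\{M=1\}$ is identified by the implicit function theorem as a regular graph $x_1=\fsonic(x_2)$, because $\der_1 M>0$ is preserved under the perturbation; the classical regularity of $\varphi$ then yields $C^1$-continuity of $\rho,\,\mathbf{u},\,p$ across $\Gamma_{\rm sn}$.

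The main obstacle I anticipate is showing that $\mcl{T}$ sends a small $H^m$-ball into itself, i.e.\ propagating top-order regularity through the degenerate Keldysh equation together with the coupled transport/Poisson system. This is exactly where the full strength of \eqref{KZ-condition0} for every $l\le m$ is required; it also demands a careful description of the admissible class of boundary perturbations so that the oddness conditions on $\der_2^l f$ on $\Gamw$ and the corner compatibility conditions at $\Gamen\cap\Gamw$ and $\Gamexg\cap\Gamw$ are preserved under every iterate. A secondary difficulty is the transport step across the sonic interface: since the characteristic speed of the first-order equations degenerates there, one must verify that $S$ and $\mathscr{B}$ retain $H^m$-regularity up to and across $\Gamma_{\rm sn}$, which I expect to handle by exploiting that $\bar u>0$ keeps streamlines uniformly transversal to $\Gamma_{\rm sn}$.
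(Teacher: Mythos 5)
There is a genuine gap at the heart of your iteration scheme. You propose to reduce the problem to a single scalar Keldysh equation for $\varphi$ with $S,\mathscr{B},\Phi,\psi$ frozen, solve it by Theorem \ref{theorem-1}, and only afterwards update $\Phi$ from $\Delta\Phi=\rho-\barrhoi$. But after linearization about the accelerating profile, the equation for the velocity potential perturbation $\psi$ and the Poisson equation for the electric potential perturbation $\Psi$ are coupled through first-order terms $b_1\der_1\Psi$ and $c_1\der_1\psi$ whose coefficients are of order one (they are built from $\bar u_1$, $\bar\rho$, not from the small perturbation). A decoupled freeze-and-update iteration therefore has no smallness to contract on, and Theorem \ref{theorem-1} cannot be invoked as a black box: this is exactly the obstruction pointed out in Remark \ref{remark about pt main theorem-3}. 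The paper instead solves the linearized Keldysh--Poisson system \emph{simultaneously} and proves a joint a priori $H^1$ estimate (Lemma \ref{proposition-H1-apriori-estimate}) by testing the Keldysh equation with $G\der_1 v$ for a carefully chosen weight $G=\bar\rho^{\eta}$ and the Poisson equation with $w$, and then absorbing the mixed terms $T_{\rm mix}$. This absorption is only possible under additional structural hypotheses that your argument neither produces nor acknowledges: the background momentum density $J$ must be either sufficiently small or sufficiently large (with $\eta=\tfrac{3\gamma}{4}$ or $\eta=\tfrac{\gamma}{4}$ accordingly), and the background Mach number must stay in a $d$-neighborhood of $1$ over the whole nozzle (condition \eqref{almost sonic condition1 full EP}). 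The accelerating condition alone, which is all you use to verify \eqref{KZ-condition0}, does not suffice to close the energy estimate for the coupled system.

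Two secondary points. First, your ansatz $\rho\mathbf{u}=\nabla^{\perp}\psi$ together with a velocity potential for the ``compressible part'' is not the decomposition the paper uses and is not internally consistent as stated; the paper takes the Helmholtz decomposition of the \emph{velocity}, $\mathbf{u}=\nabla\vphi+\nabla^{\perp}\phi$ with $-\Delta\phi=\nabla\times\mathbf{u}$, and the divergence form of the mass equation then becomes the quasilinear Keldysh equation \eqref{new system with H-decomp2} for $\vphi$. Second, you carry $\mathscr{B}$ as an independent transported invariant, whereas the paper imposes the pseudo-Bernoulli law $\msB-\Phi=0$ throughout; keeping $\mathscr{B}$ free is a genuine extension that the paper does not claim and that would further entangle the coupling you have already underestimated. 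The remaining elements of your outline (transport of $S$ along streamlines transversal to the sonic curve, identification of $\Gamma_{\rm sn}$ as a graph via the sign of $a_{11}-a_{12}^2$ and the implicit function theorem, Schauder/contraction for the fixed point) do match the paper's structure.
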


The primary challenge in establishing this theorem is to solve a boundary value problem of a weakly coupled quasilinear system that consists of a second-order elliptic equation and a Keldysh-type equation. Our strategy is to establish the well-posedness of linearized boundary value problems, then solve the nonlinear problem by the method of iterations. Once an a priori $H^1$ estimate of solutions to linearized boundary value problems is achieved, we can apply Theorem \ref{theorem-1} and a bootstrap argument to prove the unique existence of classical solutions. Hence, the most important part in solving linearized boundary value problems is to establish an a priori $H^1$ estimate. To achieve this, it is essential to carefully analyze the structure of the Euler-Poisson system.

    We should point out that the sonic interface presented in this paper exhibits significant differences from the ones explored in previous works (see \cite{bae2013prandtl, CF2, CFbook, elling2008supersonic}).
    In \cite{BCF}, it is shown that the sonic interfaces appearing in self-similar potential flow  are weak discontinuities in the sense that the flow velocity is $C^{0,1}$ but not $C^1$. 
    For studies on self-similar flow of other models, one can refer to \cite{JegdicKeyfitz, KeyfitzTesdall, Zhengregular} and references therein.

	Before this work, the authors of this paper studied multidimensional subsonic solutions and supersonic solutions for the Euler-Poisson system; see \cite{BCF2, BDX3, BDX}. The ultimate goal of this work and the work in \cite{BCF2, BDX3, BDX} is to establish the existence of a multidimensional solution to the steady Euler-Poisson system with a continuous transonic-transonic shock configuration in a flat nozzle. Once this goal is achieved, one may try to construct a transonic solution to the steady Euler system in a de Laval nozzle.

The rest of the paper is organized as follows.
In Section \ref{Section:2}, we state a more precise version of Theorem \ref{theorem-1}, and give its proof. In Section \ref{Section:3}, we state a precise version of Theorem \ref{theorem-2-pre}. Finally, we give its proof in Section \ref{subsection:framework}.

\section{Well-posedness for a boundary value problem of  linear Keldysh equation}\label{Section:2}

\subsection{Main theorem}

For the differential operator $\mcl{L}$ given by \eqref{definition:L-general}, we consider the boundary value problem
\begin{equation}\label{bvp-g}
  \begin{cases}
  \mcl{L}u=f\quad&\mbox{in $\Om$},\\
  u=0\quad&\mbox{on $\Gamen$},\\
  \der_2u=0\quad&\mbox{on $\Gamw$},
  \end{cases}
\end{equation}
for $\Om,\, \Gamen,\, \Gamw,\, \Gamexg$ given by \eqref{defOmega} and \eqref{defboundary}.
\begin{definition}
\label{definition:Kz coefficient}

 Given a constant $l\in \mathbb{N}$, we define
\begin{equation*}
\kz_l(\rx):=a_1+\frac{(2l-3)}{2}\der_1a_{11}-\der_2 a_{12}.
\end{equation*}
\end{definition}
This definition is inspired by \cite{Kuzmin}.

\begin{condition}
\label{condition:1.2}
Suppose that the coefficients $(a_{11}, a_{12}, a_1)$ of the operator $\mcl{L}$ satisfy the following conditions.
\begin{itemize}
\item[(i)] There exists a constant $\lambda>0$ to satisfy
\begin{equation}\label{KZ-condition}
 \max_{\ol{\Om}}\kz_l\le -\lambda \quad \text{for all } l=1,2,\cdots, m.
\end{equation}

\item[(ii)] 
    $\displaystyle{
 \min_{\Gamen} (a_{11}-a_{12}^2)>0}$ and  $\displaystyle{\max_{\Gamexg}a_{11}<}0$

\item[(iii)] $\displaystyle{ a_{12}=0}$ on $\Gamw$
\end{itemize}
\begin{itemize}
\item[(iv)] If $m=4$, then there exists a function $\bar{a}_{11}\in C^2(\ol{\Om})$, and a small constant $\bar{\delta}>0$ such that
    \begin{equation}
\label{G: perturbation condition for m=4}
  \|(a_{11}, a_{12})-(\bar a_{11}, 0)\|_{H^{m-1}(\Om)}\le \bar{\delta},
\end{equation}
or if $m>4$, then there exists a small constant $\bar{\delta}>0$ such that
    \begin{equation}
\label{G: perturbation condition for m>4}
  \|a_{12}\|_{H^{m-1}(\Om)}\le \bar{\delta}.
\end{equation}
\end{itemize}
\begin{itemize}
\item[(v)] For all $j\in \mathbb{N}$ satisfying $2j\le m-1$,
\begin{equation}
\label{comp for even ext}
\der_2^{2j-1}a_{11}=\der_2^{2j-1}a_{1}=\der_2^{2j-1}f=0\quad\tx{on $\Gamw$}.
\end{equation}
And, for all $k\in \mathbb{N}$ satisfying $2k+1\le m-1$,
\begin{equation}
\label{comp for odd ext}
  \der_2^{2k}a_{12}=0\quad\tx{on $\Gamw$}.
\end{equation}
\end{itemize}
\end{condition}

\begin{definition}
We shall call $\lambda>0$ from \eqref{KZ-condition} a $\kz$-constant of order $m$ for the differential operator $\mcl{L}$.
\end{definition}

\begin{theorem} \label{theorem-1-full}
	Fix $m\in \mathbb{N}$ with $m\ge 4$. Assume that
	$a_{11},a_{12}, a_1\in H^{m-1}(\Om)(\subset W^{1,\infty}(\Om))$ and $f\in H^{m-1}(\Omega)$. Under Condition \ref{condition:1.2},
	the boundary value problem \eqref{bvp-g}
	has a unique solution $u\in H^m(\Om)$. Furthermore, there exists a constant $C>0$ depending only on $\displaystyle{\lambda,  \|(a_{ij}, a_1)\|_{H^{m-1}(\Om)}, \bar{\delta}}$, $\|D^2\bar{a}_{11}\|_{L^{\infty}(\Om)}$ (if $m=4$), $\Om$ and $ m$ to satisfy the estimate
	\begin{equation*}
		\|u\|_{H^m(\Om)}\le C\|f\|_{H^{m-1}(\Om)}.
	\end{equation*}
\end{theorem}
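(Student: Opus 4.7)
The plan follows the broad strategy of Kuzmin: combine a multiplier-based energy identity with a singular-perturbation approximation. For each small $\eps>0$, I would introduce a strictly elliptic approximating operator $\mcl{L}_\eps$ (for instance by adding a small uniformly elliptic higher-order perturbation, together with an artificial outflow condition on $\Gamexg$), so that the regularized boundary value problem is well-posed by standard elliptic theory. The core task is then to prove $\eps$-independent $H^m$ bounds on $u_\eps$; weak compactness produces a limit $u\in H^m(\Om)$ solving the original problem, and the artificial condition on $\Gamexg$ disappears in the limit because $\mcl{L}$ is hyperbolic there. Uniqueness follows from the same energy identity applied to the difference of two solutions.

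The central estimate is the $L^2$ bound on $\partial_1 u$. Multiplying $\mcl{L}u=f$ by $\partial_1 u$ and integrating by parts over $\Om$, the bulk integrand is $\kz_1 (\partial_1 u)^2$; using $u=0$ on $\Gamen$ and $\partial_2 u=a_{12}=0$ on $\Gamw$, the boundary integrals collapse to
\[
\tfrac12\int_{\Gamen} a_{11}(\partial_1 u)^2 \;-\; \tfrac12\int_{\Gamexg} a_{11}(\partial_1 u)^2 \;+\; \tfrac12\int_{\Gamexg} (\partial_2 u)^2,
\]
each of which is nonnegative by Condition~\ref{condition:1.2}(ii). Combined with Condition~\ref{condition:1.2}(i) at $l=1$ (giving $\kz_1\le -\lambda$) and Cauchy--Schwarz, this yields $\|\partial_1 u\|_{L^2}\le C\|f\|_{L^2}$; the one-dimensional Poincar\'e inequality using $u|_{\Gamen}=0$ then controls $\|u\|_{L^2}$ as well.

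Higher-order bounds, and in particular the tangential $\partial_2$-regularity needed to reach $H^m$, are obtained by iterating the same identity on $w_l := \partial_1^{l-1} u$. A direct computation shows that $w_l$ satisfies $\mcl{L}w_l + (l-1)\,\partial_1 a_{11}\,\partial_1 w_l + R_l = \partial_1^{l-1}f$, where $R_l$ collects commutator terms of strictly lower differential order. Multiplying by $\partial_1 w_l$ and integrating by parts produces precisely the coefficient $\kz_l = a_1 + \tfrac{2l-3}{2}\partial_1 a_{11} - \partial_2 a_{12}$ in front of $(\partial_1 w_l)^2$, which is why Condition~\ref{condition:1.2}(i) is invoked at every level $l=1,\dots,m$. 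Tangential derivatives are handled by reflecting $u$ evenly across $\Gamw$: the parity hypotheses in Condition~\ref{condition:1.2}(v) are designed so that the evenly reflected $a_{11}, a_1, f$ and the oddly reflected $a_{12}$ remain in $H^{m-1}$ on the extended strip, reducing $x_2$-smoothness to interior regularity. The smallness in Condition~\ref{condition:1.2}(iv) is used to absorb those pieces of $R_l$ that pair high derivatives of $a_{12}$ (or, when $m=4$, of $a_{11}-\bar a_{11}$) with intermediate derivatives of $u$, via Sobolev embedding.

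I expect the main obstacle to be designing the regularization $\mcl{L}_\eps$ so that it is uniformly elliptic in $\eps$, preserves the $\kz_l$-structure with $\eps$-independent constants, and makes the artificial condition on $\Gamexg$ either trivially satisfied by the weak limit or vanish as $\eps\to 0$. Equally delicate is the bookkeeping of the commutator terms $R_l$ through each level of iteration: although they are of genuinely lower differential order in $u$, some of their terms pair up to $l$ derivatives of $u$ with coefficients only in $H^{m-1-l}$, and controlling these in $L^2$ is exactly what forces both $m\ge 4$ and the smallness assumption in Condition~\ref{condition:1.2}(iv).
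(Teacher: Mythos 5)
Your overall architecture (singular perturbation, multiplier $\partial_1 u$, induction on $w_l=\partial_1^{l-1}u$ producing $\kz_l$, parity/reflection for the $x_2$-direction, smallness of $a_{12}$ to absorb commutators) matches the paper's, but the central estimate as you state it has a genuine gap. Multiplying $\mcl{L}u=f$ by $\partial_1 u$ alone, the bulk quadratic form is $\kz_1(\partial_1 u)^2$ with \emph{no} $(\partial_2 u)^2$ term: integration by parts moves all of $\partial_{22}u\,\partial_1 u$ to the boundary, so you control $\|\partial_1 u\|_{L^2(\Om)}$ and $\|\partial_2 u\|_{L^2(\Gamexg)}$ but not $\|\partial_2 u\|_{L^2(\Om)}$, and hence not $\|u\|_{H^1(\Om)}$. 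The paper's multiplier is $e^{-\mu x_1}\partial_1 u$ with $\mu\sim\lambda/\|a_{11}\|_{L^\infty}$; the weight generates the interior term $-\tfrac{\mu}{2}e^{-\mu x_1}(\partial_2 u)^2$ while perturbing the coefficient of $(\partial_1 u)^2$ only to $\kz_1+\tfrac{\mu}{2}a_{11}\le-\tfrac{3\lambda}{4}$. Without this weight (or an equivalent decreasing factor $G(x_1)$) the basic $H^1$ estimate, and with it the entire bootstrap, does not close. The same weight reappears at every level $l$ of the induction.

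Two further points where your plan would fail as written. First, no small \emph{elliptic} perturbation of $\mcl{L}$ exists: $a_{11}\le-\mu$ on $\Gamexg$, so a second-order $O(\eps)$ correction cannot restore ellipticity. The paper instead adds the non-elliptic third-order term $\eps\partial_{111}$ with the boundary conditions $\partial_1 v^\eps=0$ on $\Gamen$ and $\partial_{11}v^\eps=0$ on $\Gamexg$, chosen precisely so that $\int e^{-\mu x_1}\partial_1 v\,\eps\partial_{111}v\,d\rx$ has a favorable sign; solvability of the regularized problem is then not ``standard elliptic theory'' but a Galerkin reduction in $x_2$ to an ODE system plus the Fredholm alternative. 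Second, your induction gives only estimates localized away from $x_1=R$ (the multiplier $\partial_1^{l}u$ at level $l\ge3$ produces boundary terms on $\Gamexg$ involving $\partial_1^{l-1}\partial_2 u$ that you cannot control there). The paper resolves this by extending the coefficients to $0<x_1<R_*>R$ so that the operator becomes uniformly elliptic near the new right end while preserving the $\kz_l$-condition with constant $\lambda/2$; the original boundary $\Gamexg$ then becomes interior, cut-offs supported in $(0,R_*)$ suffice, and consistency of the extended solution with the original one is checked by a separate energy argument. You need either this extension or a substitute mechanism to obtain $H^m$ regularity up to the hyperbolic boundary.
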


In \S \ref{subsec:2.1}, we introduce a boundary value problem with a singular perturbation of \eqref{bvp-g}, and use it to establish an a priori global $H^1$-estimate and local $H^k$-estimate($k=2,\cdots, m$) of solutions to \eqref{bvp-g}. In \S \ref{subsec:2.2}, we introduce an extension of the boundary value problem given in \S \ref{subsec:2.1} to establish a global $H^k$-estimate($k=2,\cdots, m$) of solutions to \eqref{bvp-g}. Finally, a proof of Theorem \ref{theorem-1-full} is given in \S \ref{subsec:2.3}.


\subsection{Boundary value problem with a singular perturbation } \label{subsec:2.1}
For a constant $\eps>0$, consider a boundary value problem for $v^{\eps}$:
\begin{equation}\label{bvp-aux}
  \begin{cases}
  \eps\der_{111}v^{\eps}+\mcl{L}v^{\eps}=f\quad&\mbox{in $\Om$}\\
  v^{\eps}=0,\quad \der_1 v^{\eps}=0 \quad&\mbox{on $\Gamen$}\\
  \der_2v^{\eps}=0\quad&\mbox{on $\Gamw$}\\
  \der_{11}v^{\eps}=0\quad&\mbox{on $\Gamexg$}.
  \end{cases}
\end{equation}

\subsubsection{A priori $H^1$-estimate of $v^{\eps}$}
\begin{lemma}
\label{lemma G:wp of singular pert prob-main}
There exists a constant $\bar{\eps}>0$ depending only on $\lambda$ so that whenever the inequality $0<\eps<\bar{\eps}$ holds, if $v^{\eps}$ is a smooth solution to \eqref{bvp-aux}, then it satisfies the estimate
\begin{equation}
\label{G:a priori estimate1 of vm and wm}
\begin{split}
  &\sqrt{\eps}\|\der_{11}v^{\eps}\|_{L^2(\Om)}+\|\der_2 v^{\eps}\|_{L^2(\Gamexg)}
  +\|v^{\eps}\|_{H^1(\Om)}\\
  &\le C(\lambda, \|a_{11}\|_{L^{\infty(\Om)}}, R) \|f\|_{L^2(\Om)}.
  \end{split}
\end{equation}

\end{lemma}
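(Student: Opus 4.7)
I intend to derive \eqref{G:a priori estimate1 of vm and wm} by two successive energy identities, with multipliers $\der_1 v^\eps$ and $v^\eps$, both chosen so that the boundary conditions in \eqref{bvp-aux} cooperate with the integration by parts.

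First, multiplying the equation by $\der_1 v^\eps$ and integrating by parts on $\Om$, the boundary conditions should ensure that every boundary contribution vanishes or becomes coercive: on $\Gamen$ one has $\der_1 v^\eps = 0$ (whence also $\der_2 v^\eps = 0$); on $\Gamw$, $\der_2 v^\eps = 0$, $a_{12}=0$, and $\nu_1 = 0$; on $\Gamexg$, $\der_{11}v^\eps = 0$ and $\nu_2 = 0$. Accordingly $\eps\int \der_{111}v^\eps \cdot \der_1 v^\eps = -\eps\|\der_{11}v^\eps\|_{L^2(\Om)}^2$; the term $\int a_{11}\der_{11}v^\eps\cdot \der_1 v^\eps$ contributes $-\tfrac12\int (\der_1 a_{11})(\der_1 v^\eps)^2$ plus a boundary piece yielding, after relocation to the left side, a coercive term bounded below by $\tfrac{\mu}{2}\|\der_1 v^\eps\|_{L^2(\Gamexg)}^2$; the mixed term reduces to $-\int (\der_2 a_{12})(\der_1 v^\eps)^2$; the term $\int \der_{22}v^\eps\cdot \der_1 v^\eps$ becomes $-\tfrac12\int_{\Gamexg}(\der_2 v^\eps)^2$, coercive after relocation; and $\int a_1(\der_1 v^\eps)^2$ contributes as is. The interior quadratic terms group into $\int \kz_1(\der_1 v^\eps)^2$, which, after being moved to the left, carries coefficient $-\kz_1 \ge \lambda$ by Condition \ref{condition:1.2}(i). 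Young's inequality then produces
\[
\eps\|\der_{11}v^\eps\|_{L^2(\Om)}^2 + \|\der_1 v^\eps\|_{L^2(\Om)}^2 + \|\der_1 v^\eps\|_{L^2(\Gamexg)}^2 + \|\der_2 v^\eps\|_{L^2(\Gamexg)}^2 \le C\|f\|_{L^2(\Om)}^2,
\]
and Poincar\'e (from $v^\eps|_{\Gamen}=0$) supplies $\|v^\eps\|_{L^2(\Om)} \le R\|\der_1 v^\eps\|_{L^2(\Om)}$.

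To control the missing $\|\der_2 v^\eps\|_{L^2(\Om)}$, I multiply the equation by $v^\eps$: the identity $\int \der_{22}v^\eps\cdot v^\eps = -\|\der_2 v^\eps\|_{L^2(\Om)}^2$ supplies the coercive quantity, the boundary terms being eliminated by $v^\eps|_{\Gamen} = 0$, $\der_2 v^\eps|_{\Gamw} = 0$, and $\nu_2|_{\Gamexg} = 0$. The remaining contributions split into: interior integrals quadratic in $v^\eps$ and $\der_1 v^\eps$, bounded by $C\|f\|^2$ through the first-step estimate; a cross term $-2\int a_{12}\der_1 v^\eps\, \der_2 v^\eps$, absorbed via Young as $\delta\|\der_2 v^\eps\|_{L^2(\Om)}^2 + C_\delta\|\der_1 v^\eps\|_{L^2(\Om)}^2$; boundary integrals on $\Gamexg$ of the form $\int_{\Gamexg}(\cdot)v^\eps\,\der_1 v^\eps$, controlled by $\|v^\eps\|_{L^2(\Gamexg)} \le \sqrt{R}\|\der_1 v^\eps\|_{L^2(\Om)}$ (a one-dimensional consequence of $v^\eps|_{\Gamen}=0$) together with the trace bound $\|\der_1 v^\eps\|_{L^2(\Gamexg)} \le C\|f\|$ from the first-step estimate; and finally the perturbation piece $\eps\int\der_{111}v^\eps\cdot v^\eps = -\tfrac{\eps}{2}\int_{\Gamexg}(\der_1 v^\eps)^2$. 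Choosing $\delta$ small then concludes \eqref{G:a priori estimate1 of vm and wm}.

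The main obstacle is uniformity in $\eps$. In the first step, the singular contribution is the nonnegative term $\eps\|\der_{11}v^\eps\|_{L^2(\Om)}^2$, so no difficulty arises. In the second step, the interaction $\eps\int\der_{111}v^\eps\cdot v^\eps$ would naively demand control on $\|\der_{111}v^\eps\|$, unavailable uniformly in $\eps$; the essential observation is that repeated integration by parts, using $v^\eps|_{\Gamen} = \der_1 v^\eps|_{\Gamen} = 0$ and $\der_{11}v^\eps|_{\Gamexg} = 0$, reduces this interaction purely to a trace quantity on $\Gamexg$ already controlled by the first-step bound. The smallness $\eps < \bar\eps$ is used only to absorb constants uniformly in the final combination.
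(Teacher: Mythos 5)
Your argument is correct in substance, but it takes a genuinely different route from the paper. The paper uses a \emph{single} weighted multiplier $e^{-\mu x_1}\der_1 v^{\eps}$ with $\mu=\min\{\lambda/(4\|a_{11}\|_{L^\infty}),1\}$: differentiating the weight against the $\der_{22}v\,\der_1 v$ term produces the interior coercive quantity $\tfrac{\mu}{2}e^{-\mu x_1}(\der_2 v)^2$ in one pass, at the cost of a $\tfrac{\mu}{2}a_{11}(\der_1v)^2$ contribution (absorbed by $\lambda$ via \eqref{KZ-condition}) and an $\eps\mu\,\der_1v\,\der_{11}v$ cross term (which is why $\bar\eps$ must be small there). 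You instead drop the weight — so your first identity yields only the interior $(\der_1v)^2$ coercivity via $\kz_1\le-\lambda$ plus trace control of $Dv^\eps$ on $\Gamexg$ — and recover $\|\der_2 v^\eps\|_{L^2(\Om)}$ from a second identity with multiplier $v^\eps$; your reduction of $\eps\int\der_{111}v^\eps\,v^\eps$ to $-\tfrac{\eps}{2}\|\der_1v^\eps\|^2_{L^2(\Gamexg)}$ is the right observation and all boundary terms check out. What your route buys is that the first step needs no smallness of $\eps$ at all; what it costs is that the second step must integrate by parts on $a_{11}\der_{11}v^\eps\cdot v^\eps$ and $2a_{12}\der_{12}v^\eps\cdot v^\eps$, so $\der_1a_{11}$, $\der_2a_{12}$ appear outside the package $\kz_1$, and your constant depends on $\|(a_{11},a_{12})\|_{W^{1,\infty}(\Om)}$ rather than only on $\lambda$, $\|a_{11}\|_{L^\infty(\Om)}$ and $R$ as the lemma states. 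This is harmless under the standing hypotheses ($a_{ij}\in W^{1,\infty}$), but it is a real deviation from the stated dependence that you should flag. (Also a trivial slip: $\der_2 v^\eps=0$ on $\Gamen$ follows from $v^\eps=0$ there, not from $\der_1 v^\eps=0$.)
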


\begin{proof} Define
\begin{equation*}
  \mcl{L}_{\eps}v:=\eps\der_{111}v+\mcl{L}v.
\end{equation*}
For simplicity, put $v:=v^{\eps}$.
\medskip

For a constant $\mu>0$ to be determined, we integrate by parts to get
\begin{equation*}
  \begin{split}
  &\int_{\Om}e^{-\mu x_1}\der_1v \mcl{L}_{\eps}v\,d\rx\\
  &=\int_{\Gamexg}e^{-\mu R}\left(\frac{a_{11}}{2}(\der_1v)^2-(\der_2v)^2\right)\,dx_2\\
  &\phantom{=}-\int_{\Om}\eps e^{-\mu x_1} \left((\der_{11}v)^2-\mu \der_1v\der_{11}v\right)\,d\rx\\
  &\phantom{=}+\int_{\Om}e^{-\mu x_1}\left(\kz_1+\frac{\mu}{2} a_{11}\right)(\der_1 v)^2-\frac{\mu}{2}e^{-\mu x_1}(\der_2 v)^2\,d\rx.
  \end{split}
\end{equation*}

For the $\kz$-constant $\lambda>0$ given from \eqref{KZ-condition}, fix $\mu$ as
\begin{equation*}
  \mu:=\min\left\{\frac{\lambda}{4\|a_{11}\|_{L^{\infty}(\Om)}},1\right\}.
\end{equation*}
By Condition \ref{condition:1.2}(ii) and Cauchy-Schwarz inequality,
\begin{equation*}
  \begin{split}
  &\int_{\Om} e^{-\mu x_1}\der_1v \mcl{L}_{\eps}v\,d\rx\\
  &\le -\int_{\Gamexg}(\der_2v)^2\,dx_2-\frac{\eps}{2}\int_{\Om} e^{-\mu x_1}(\der_{11}v)^2\,d\rx\\
  &\phantom{\le} -\int_{\Om} e^{-\mu x_1}\left(\left(\frac{3\lambda}{4} -\frac{\eps}{2}\right)(\der_1 v)^2+\frac{\lambda}{8\|a_{11}\|_{L^{\infty}(\Om)}}(\der_2 v)^2\right)\,d\rx.
  \end{split}
\end{equation*}
So we can easily see from
$\displaystyle{
  \int_{\Om} e^{-\mu x_1}\der_1 v\mcl{L}_{\eps}v\,d\rx=\int_{\Om} e^{-\mu x_1}f \der_1 v\,d\rx}$
that there exists a small constant $\bar{\eps}>0$ depending only on $\lambda$ such that whenever $0<\eps\le \bar{\eps}$, we have
 \begin{equation*}
   \sqrt{\eps}\|\der_{11}v\|_{L^2(\Om)}+\|\der_2 v\|_{L^2(\Gamexg)}+\|Dv\|_{L^2(\Om)}\le C(\lambda, \|a_{11}\|_{L^{\infty(\Om)}})\|f\|_{L^2(\Om)}.
 \end{equation*}
Then we apply Poincar\'{e} inequality to conclude that
\begin{equation*}
  \sqrt{\eps}\|\der_{11}v\|_{L^2(\Om)}+\|\der_2 v\|_{L^2(\Gamexg)}+\|v\|_{H^1(\Om)}\le C(\lambda, \|a_{11}\|_{L^{\infty(\Om)}}, R)\|f\|_{L^2(\Om)}.
\end{equation*}

\end{proof}

\subsubsection{Local $H^2$-estimate of $v^{\eps}$}
Set
\begin{equation*}
  \kappa_0:=\frac 12 \min_{|x_2|\le 1}(a_{11}-a_{12}^2)(0, x_2)>0.
\end{equation*}
Since $a_{11}$ and $ a_{12}$ are continuous in $\ol{\Om}$, one can fix two constants $\kappa_1>0$ and $R_1\in(0, R)$ such that
\begin{equation}
\label{G:ellipticity}
\begin{bmatrix}
a_{11}&a_{12}\\
a_{12}&1
\end{bmatrix}
\ge \kappa_1\mathbb{I}_{2\times 2} \quad\tx{in $\ol{\Om\cap\{x_1<R_1\}}$}.
\end{equation}

\begin{lemma}
\label{G:lemma for pre H2 estimate of vm, part 1}
Let $\bar{\eps}$ be from Lemma \ref{lemma G:wp of singular pert prob-main}. Suppose that $v^{\eps}$ is a smooth solution to \eqref{bvp-aux} for  $\eps\in(0,\bar{\eps}]$. Then, for any $r\in(0, \frac{R_1}{8}]$, we have
\begin{equation}
\label{G:viscous-estimate-intermediate1}
  \|D^2 v^{\eps}\|_{L^2(\Om\cap \{2r<x_1<R_1-2r\})}
  \le C\|f\|_{L^2(\Om)}
\end{equation}
for $C=C(\kappa_0,\lambda, \|(a_{11}, a_{12}, a_1)\|_{L^{\infty}(\Om)}, R, r)$. Note that the estimate constant $C$ is independent of $\eps\in(0, \bar{\eps}]$.

\begin{proof}
Given a constant $r\in(0, \frac{R_1}{8}]$, define a cut-off function $\chi_r\in C^{\infty}(\R)$ satisfying the following properties:
\begin{equation}
\label{G:property of cut-off function in singular perturb.}
\begin{split}
 \chi_r(x_1)=\begin{cases}
  1\quad&\mbox{for $2r\le x_1\le R_1-2r$}\\
  0\quad&\mbox{for $x_1\le r$ or $x_1\ge R_1-r$}
  \end{cases}\quad\tx{and}\quad
0\le \chi_r\le 1\,\,\tx{on $\R$}.
  \end{split}
\end{equation}
One can fix $\chi_r$ such that, for each $k\in \mathbb{N}$,
\begin{equation*}
\|\chi_r\|_{C^k(\R)}\le C_k\left(1+\frac{1}{r^k}\right)
\end{equation*}
for some constant $C_k>0$ depending only on $k$.
\medskip

Consider
\begin{equation}
\label{G:integral for vm with viscosity1}
  \int_{\Om} \chi_r^2\der_{11}v \mcl{L}_{\eps}v\,d\rx=\int_{\Om} f\chi_r^2\der_{11}v\,d\rx.
\end{equation}
Integrating by parts,
\begin{equation*}
\begin{split}
\tx{LHS of \eqref{G:integral for vm with viscosity1}}
&=\int_{\Om} \chi_r\chi_r'(2\der_2v\der_{12}v-\eps(\der_{11}v)^2)\,d\rx\\
&\phantom{=}+\int_{\Om}\left(a_{11}(\der_{11}v)^2+2a_{12}\der_{11}v\der_{12}v+(\der_{12}v)^2+a_1\der_1v\der_{11}v\right)\chi_r^2\,d\rx.
\end{split}
\end{equation*}
By \eqref{G:ellipticity} and the Cauchy-Schwarz inequality,
\begin{equation*}
\begin{split}
&\tx{LHS of \eqref{G:integral for vm with viscosity1}}\\
&\ge \kappa_1\|\chi_r D\der_1 v\|_{L^2(\Om)}^2
-C(\kappa_0, \|a_1\|_{L^{\infty(\Om)}}, r)\left(\|Dv\|_{L^2(\Om)}^2+\|\sqrt{\eps}\der_{11}v\|_{L^2(\Om)}^2\right).
\end{split}
\end{equation*}
Then it follows from \eqref{G:integral for vm with viscosity1} and Lemma \ref{lemma G:wp of singular pert prob-main} that
\begin{equation*}
 \|\chi_r D\der_1 v\|_{L^2(\Om)}\le
 C(\kappa_0, \|a_1\|_{L^{\infty(\Om)}}, \lambda, \|a_{11}\|_{L^{\infty(\Om)}}, R, r)\|f\|_{L^2(\Om)}.
\end{equation*}
By using this estimate and solving the equation $\mcl{L}_{\eps}v=f$ for $\der_{22}v$, we obtain the estimate \eqref{G:viscous-estimate-intermediate1}.

\end{proof}

\end{lemma}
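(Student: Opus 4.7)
The target subdomain $\Omega\cap\{2r<x_1<R_1-2r\}$ sits strictly inside the uniformly elliptic region $\Omega\cap\{x_1<R_1\}$, where the principal part of $\mathcal{L}$ satisfies \eqref{G:ellipticity}. I would run a Caccioppoli-type interior energy estimate for $\partial_1 v^\eps$ using a one-variable cutoff tailored to $x_1$, then recover $\partial_{22}v^\eps$ from the equation. The cutoff must annihilate boundary contributions at $x_1=0$ and $x_1=R$ while living inside the elliptic region, and must commute with $\partial_2$ to avoid polluting the $\Gamma_w$ boundary integrals.

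\textbf{Key steps.} First, construct $\chi_r(x_1)\in C_c^\infty(\mathbb{R})$ with $\chi_r\equiv 1$ on $[2r,R_1-2r]$, $\mathrm{supp}\,\chi_r\subset[r,R_1-r]$, and $|\chi_r^{(k)}|\le C_k r^{-k}$. Multiply $\mathcal{L}_\eps v^\eps=f$ by $\chi_r^2\partial_{11}v^\eps$ and integrate over $\Omega$. Two integrations by parts drive the estimate:
\begin{itemize}
\item The $\eps\partial_{111}v^\eps$ contribution, after one IBP in $x_1$, reduces to $-\int \chi_r\chi_r'\eps(\partial_{11}v^\eps)^2\,d\rx$ (boundary terms at $x_1=0,R$ vanish via the cutoff), which Lemma~\ref{lemma G:wp of singular pert prob-main} bounds by $Cr^{-1}\|f\|_{L^2}^2$.
\item The $\partial_{22}v^\eps\cdot\chi_r^2\partial_{11}v^\eps$ contribution, after IBP first in $x_2$ (using $\partial_2 v^\eps=0$ on $\Gamma_w$) and then in $x_1$ (using that $\chi_r$ vanishes near $\Gamma_{en}$ and $\Gamma_{exg}$), produces the positive term $\int\chi_r^2(\partial_{12}v^\eps)^2\,d\rx$ plus a cutoff error $2\int \chi_r\chi_r'\partial_2 v^\eps\partial_{12}v^\eps\,d\rx$.
\end{itemize}
Collecting the principal pieces yields the quadratic form $\int\chi_r^2[a_{11}(\partial_{11}v^\eps)^2+2a_{12}\partial_{11}v^\eps\partial_{12}v^\eps+(\partial_{12}v^\eps)^2]\,d\rx$, which \eqref{G:ellipticity} bounds below by $\kappa_1\|\chi_r D\partial_1 v^\eps\|_{L^2}^2$ since $\mathrm{supp}\,\chi_r\subset\{x_1<R_1\}$. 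The remaining cross terms, the transport piece $\chi_r^2a_1\partial_1 v^\eps\partial_{11}v^\eps$, and the cutoff errors are controlled by Cauchy–Schwarz with a small absorption parameter, reducing them to the global $H^1$ bound and the $\sqrt{\eps}\|\partial_{11}v^\eps\|_{L^2}$ bound of Lemma~\ref{lemma G:wp of singular pert prob-main}. This delivers
$$\|\chi_r D\partial_1 v^\eps\|_{L^2(\Omega)}\le C\|f\|_{L^2(\Omega)},$$
with $C$ independent of $\eps\in(0,\bar\eps]$.

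\textbf{Completing the $H^2$ bound.} Since $\chi_r\equiv 1$ on the target region, the preceding step yields uniform $L^2$ bounds on $\partial_{11}v^\eps$ and $\partial_{12}v^\eps$ there. Solving $\mathcal{L}_\eps v^\eps=f$ pointwise for $\partial_{22}v^\eps$,
$$\partial_{22}v^\eps=f-\eps\partial_{111}v^\eps-a_{11}\partial_{11}v^\eps-2a_{12}\partial_{12}v^\eps-a_1\partial_1 v^\eps,$$
supplies the missing second derivative once the singular term is controlled. I would use nested cutoffs: apply the previous step on the slightly larger region $[r,R_1-r]$ to place $\partial_{11}v^\eps$ uniformly in $L^2$ there, then exploit interior elliptic regularity for the equation $\mathcal{L}v^\eps=f-\eps\partial_{111}v^\eps$ on the further-interior region $[2r,R_1-2r]$ (or, equivalently, a differentiated-in-$x_1$ energy identity) to extract a uniform $L^2$ bound on $\eps\partial_{111}v^\eps$.

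\textbf{Main obstacle.} The delicate point is not the energy identity itself but choosing a test function that produces coercivity in the elliptic principal part \emph{and} keeps the singular perturbation $\eps\partial_{111}v^\eps$ harmless. The choice $\chi_r^2\partial_{11}v^\eps$ is engineered precisely so that the perturbation integrates by parts into a cutoff error absorbable by the global $\sqrt{\eps}\|\partial_{11}v^\eps\|_{L^2}$ estimate, while the $\partial_{22}v^\eps\cdot\partial_{11}v^\eps$ cross term produces the positive $(\partial_{12}v^\eps)^2$ contribution one needs to invoke \eqref{G:ellipticity}; separating $\partial_{22}v^\eps$ from $\eps\partial_{111}v^\eps$ in the final step is the remaining subtlety that nested cutoffs resolve.
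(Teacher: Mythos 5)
Your argument reproduces the paper's proof essentially verbatim: the same $x_1$-only cutoff $\chi_r$, the same test function $\chi_r^2\partial_{11}v^\eps$, the same integration by parts yielding the quadratic form bounded below by $\kappa_1\|\chi_r D\partial_1 v^\eps\|^2_{L^2}$ via \eqref{G:ellipticity}, the same use of Lemma \ref{lemma G:wp of singular pert prob-main} to control the cutoff errors, and the same final step of reading $\partial_{22}v^\eps$ off the equation. You are right to flag that this last step hides a small subtlety (the $\eps\partial_{111}v^\eps$ term must be tamed before $\partial_{22}v^\eps$ can be isolated, and the paper glosses over this); the cleanest fix, however, is not interior elliptic regularity with $\eps\partial_{111}v^\eps$ on the right—since that presupposes the very $L^2$ control you are after—but a second energy identity against $\chi_{r'}^2\partial_{22}v^\eps$ on a slightly smaller slab, integrating $\eps\,\chi_{r'}^2\partial_{111}v^\eps\,\partial_{22}v^\eps$ by parts first in $x_1$ and then in $x_2$ so that everything is absorbed by the wider-cutoff bound on $\|\chi_r D\partial_1 v^\eps\|_{L^2}$ together with the $\sqrt{\eps}\|\partial_{11}v^\eps\|_{L^2}$ estimate.
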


\begin{lemma}
\label{G:lemma for pre H2 estimate of vm, part 2}
Under the same assumptions as Lemma \ref{G:lemma for pre H2 estimate of vm, part 1}, for any $\eps\in(0,\bar{\eps}]$ and $r\in(0, \frac{R}{8}]$,
we have
\begin{equation}
\label{G:viscous-estimate-intermediate2}
\begin{split}
  &\sqrt{\eps} \|\der_{111} v^{\eps}\|_{L^2(\Om\cap\{x_1>2r\})}
  +\|D^2v^{\eps}\|_{L^2(\Om_L\cap\{x_1>2r\})}
  \le C\|f\|_{H^1(\Om)}
\end{split}
\end{equation}
for $C=C(\kappa_0,\lambda, \|(a_{11}, a_{12}, a_1)\|_{L^{\infty(\Om)}}, R, r)$ provided that
a constant $\bar{\delta}>0$ from (iv) of Condition \ref{condition:1.2} is fixed sufficiently small depending only on $\lambda$ and $\|a_{11}\|_{L^{\infty}(\Om)}$. (Here, we choose $\bar{\delta}$ to make the term $\|a_{12}\|_{H^{m-1}(\Om)}$ small.)

\end{lemma}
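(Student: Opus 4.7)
The plan is to differentiate the equation once in $x_1$ and run the same multiplier argument as in Lemma~\ref{lemma G:wp of singular pert prob-main}, with a one-sided cutoff $\chi=\chi(x_1)$ supported away from $\Gamen$ but equal to $1$ up to and on $\Gamexg$. Setting $w:=\der_1 v^{\eps}$, differentiation of $\mcl{L}_{\eps} v^{\eps}=f$ in $x_1$ yields
\begin{equation*}
\eps\der_{111}w+a_{11}\der_{11}w+2a_{12}\der_{12}w+\der_{22}w+(a_1+\der_1 a_{11})\der_1 w \;=\; \der_1 f - 2(\der_1 a_{12})\der_2 w-(\der_1 a_1)\der_1 v^{\eps},
\end{equation*}
and $w$ satisfies $\der_2 w=0$ on $\Gamw$ (from $\der_2 v^{\eps}=0$) together with $\der_1 w=\der_{11}v^{\eps}=0$ on $\Gamexg$. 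I would fix $\chi\in C^{\infty}(\mathbb{R})$ with $\chi\equiv 0$ on $(-\infty,r]$ and $\chi\equiv 1$ on $[2r,R]$, test the $w$-equation against $\chi^2 e^{-\mu x_1}\der_1 w$, and integrate by parts exactly as in Lemma~\ref{lemma G:wp of singular pert prob-main}.

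Because the drift has been shifted from $a_1$ to $a_1+\der_1 a_{11}$, a direct bookkeeping shows that the bulk coefficient in front of $(\der_1 w)^2$ becomes exactly $\kz_2+\tfrac{\mu}{2}a_{11}$ in place of $\kz_1+\tfrac{\mu}{2}a_{11}$, so hypothesis \eqref{KZ-condition} with $l=2$ supplies the required coercivity. The boundary terms on $\Gamw$ vanish by Condition~\ref{condition:1.2}(iii) (which gives $a_{12}=0$) together with $\der_2 w=0$; those on $\Gamen$ vanish because $\chi\equiv 0$ there; on $\Gamexg$, the contributions coming from $\eps\der_{111}w$ and $a_{11}\der_{11}w$ vanish using $\der_1 w=0$, while integration by parts of $\der_{22}w$ in $x_2$ produces the usual good negative boundary contribution $-\tfrac{1}{2}e^{-\mu R}\int_{\Gamexg}(\der_2 w)^2\,dx_2$. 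The left-hand side therefore controls
\begin{equation*}
\sqrt{\eps}\,\|\chi\der_{11}w\|_{L^2(\Om)}+\|\chi\der_2 w\|_{L^2(\Gamexg)}+\|\chi Dw\|_{L^2(\Om)}.
\end{equation*}

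On the right-hand side, the $\der_1 f$ contribution is handled by Cauchy--Schwarz; the $(\der_1 a_1)\der_1 v^{\eps}$ contribution is absorbed using the $H^1$-estimate of Lemma~\ref{lemma G:wp of singular pert prob-main}; and the cutoff-derivative residual is supported in $\{r\le x_1\le 2r\}$, where Lemma~\ref{G:lemma for pre H2 estimate of vm, part 1} applied at scale $r/2$ already controls $\|D^2 v^{\eps}\|_{L^2}$ by $\|f\|_{L^2}$. The delicate term is the cross term
\begin{equation*}
-2\int_{\Om}\chi^2 e^{-\mu x_1}(\der_1 a_{12})(\der_2 w)(\der_1 w)\,d\rx
\end{equation*}
arising from the forcing $-2(\der_1 a_{12})\der_2 w$. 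By Condition~\ref{condition:1.2}(iv) combined with the Sobolev embedding $H^{m-1}(\Om)\hookrightarrow W^{1,\infty}(\Om)$ (valid for $m\ge 4$ in two dimensions), the quantity $\|\der_1 a_{12}\|_{L^{\infty}(\Om)}$ can be made as small as desired by shrinking $\bar\delta$; this allows absorbing the cross term into the coercive $\|\chi\der_1 w\|_{L^2}^2+\|\chi\der_2 w\|_{L^2}^2$ on the left and pins down the admissible size of $\bar\delta$ in terms of $\lambda$ and $\|a_{11}\|_{L^{\infty}(\Om)}$.

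On $\{x_1>2r\}$ where $\chi\equiv 1$, one finally recovers $\der_{22}v^{\eps}$ by solving the original equation $\mcl{L}_{\eps}v^{\eps}=f$ algebraically for $\der_{22}v^{\eps}$ in terms of $f$, $\der_1 v^{\eps}$, $\der_{11}v^{\eps}=\der_1 w$, $\der_{12}v^{\eps}=\der_2 w$, and $\eps\der_{111}v^{\eps}=\eps\der_{11}w$, each of which is now controlled by $\|f\|_{H^1(\Om)}$; this produces \eqref{G:viscous-estimate-intermediate2}. I expect the principal obstacle to be precisely the cross-term absorption described above: it is the only step that genuinely requires the perturbative smallness of $a_{12}$ and is exactly what forces the quantitative restriction on $\bar\delta$ stated in the lemma. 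The remaining computations are a careful but essentially routine repetition of the multiplier identity from Lemma~\ref{lemma G:wp of singular pert prob-main}.
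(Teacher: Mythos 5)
Your proposal is correct and follows essentially the same route as the paper: the paper tests the undifferentiated equation against the multiplier $\zeta_r^2 e^{-\mu x_1}\der_{111}v^{\eps}$ rather than differentiating the equation and testing against $\chi^2 e^{-\mu x_1}\der_{11}v^{\eps}$, but these differ only by an integration by parts in $x_1$, and both yield the same coercive coefficient $-\kz_2+\frac{\mu}{2}a_{11}$, the same absorption of the $\der_1 a_{12}$ cross term via the smallness of $\|a_{12}\|_{W^{1,\infty}(\Om)}$ guaranteed by $\bar\delta$, and the same recovery of $\der_{22}v^{\eps}$ from the equation. You also correctly identify the cross-term absorption as the step that fixes the admissible size of $\bar\delta$ in terms of $\lambda$ and $\|a_{11}\|_{L^{\infty}(\Om)}$, exactly as in the paper.
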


\begin{proof}

For a constant $r\in (0, \frac R8]$, define a cut-off function $\zeta_r\in C^{\infty}(\R)$ such that
\begin{equation*}
 \zeta_r(x_1)=\begin{cases}
  0\quad&\mbox{for}\quad x_1\le r\\
  1\quad&\mbox{for}\quad x_1\ge 2r
  \end{cases},\quad
  0\le \zeta_r\le 1,\quad
  \tx{and} \quad \zeta_r'\ge 0\quad\tx{on $\R$}.
\end{equation*}
One can fix $\zeta_r$ such that, for each $k\in \mathbb{N}$,
\begin{equation*}
\|\zeta_r\|_{C^k(\R)}\le C_k\left(1+\frac{1}{r^k}\right),
\end{equation*}
for a constant $C_k>0$ fixed depending only on $k$.

For a constant $\mu>0$ to be determined later (differently from the proof of Lemma \ref{lemma G:wp of singular pert prob-main}), consider
\begin{equation}
\label{G:Kz H-2 estimate}
  \int_{\Om} \zeta_r^2e^{-\mu x_1}\der_{111}v \mcl{L}_{\eps}v\,d\rx=\int_{\Om} f\zeta_r^2 e^{-\mu x_1}\der_{111}v\,d\rx.
\end{equation}

We integrate by parts, and use Lemmas \ref{lemma G:wp of singular pert prob-main} and \ref{G:lemma for pre H2 estimate of vm, part 1} to get
\begin{equation*}
  \begin{split}
  \left|\int_{\Om}\zeta_r^2 e^{-\mu x_1}f\der_{111}v\,d\rx\right|\le C\|f\|_{H^1(\Om)} \|\zeta_r\der_{11}v\|_{L^2(\Om)}
  \end{split}
\end{equation*}
for $C=C(\kappa_0,\lambda, \|(a_{11}, a_{12}, a_1)\|_{L^{\infty(\Om)}}, R, r)$.

By a lengthy but straightforward computation on the left-hand side of \eqref{G:Kz H-2 estimate}, we have
\begin{equation*}
  \begin{split}
  &\int_{\Om} \zeta^2 e^{-\mu x_1} \der_{111}v\mcl{L}_{\eps}v\,d\rx\\
  &=\int_{\Om} \eps \zeta^2 e^{-\mu x_1}(\der_{111}v)^2\,d\rx\\
  &\phantom{=}+\int_{\Om} e^{-\mu x_1}\zeta_r^2\left(\left(-\kz_2+\frac{\mu}{2} a_{11}\right)(\der_{11}v)^2
  +\mu (\der_{12}v)^2-\left(2{\der_1 a_{12}}-2\mu a_{12}\right)\der_{11}v\der_{12}v\right)\,d\rx\\
  &\phantom{=}+\int_{\Gamexg} \zeta_r^2 e^{-\mu x_1}\frac{(\der_{12}v)^2}{2}\,dx_2
  -\int_{\Gamexg}\mu\zeta_r^2 e^{-\mu x_1}\der_{12}v\der_2 v\,dx_2\\
  &\phantom{=}+\int_{\Om} \der_1\left((-2\zeta_r\zeta_r'+\mu \zeta_r^2)e^{-\mu x_1}\right)\der_2 v\der_{12}v-2\zeta_r\zeta_r'e^{-\mu x_1}\left(\frac{(\der_{11}v)^2}{2}+(\der_{12}v)^2+2a_{12}\der_{12}v\der_{11}v\right)\,d\rx\\
&\phantom{=}+  \int_{\Om} (-2\zeta_r\zeta_r'+\mu\zeta_r^2)e^{-\mu x_1}a_1\der_1 v\der_{11}v\,d\rx
  \end{split}
\end{equation*}
for $\kz_2$ given by Definition \ref{definition:Kz coefficient}.
Then, we use Condition \ref{condition:1.2} (i), and apply Lemmas \ref{lemma G:wp of singular pert prob-ext} and \ref{G:a priori estimate1 of vm and wm} to get
\begin{equation*}
  \begin{split}
 &\int_{\Om}\zeta_r^2 e^{-\mu x_1}\der_{111}v\mcl{L}_{\eps}v\,d\rx\\
 &\ge \int_{\Om} \zeta_r^2 e^{-\mu x_1}\left(\eps(\der_{111}v)^2+
 \left(\frac{\lambda}{2}-\mu\|a_{11}\|_{L^{\infty}(\Om)}\right)(\der_{11}v)^2\right)\,d\rx\\
 &\phantom{=}+\int_{\Om}\zeta_r^2 e^{-\mu x_1}\left(\frac{\mu}{2}-\frac{4\|a_{12}\|_{W^{1,\infty}(\Om)}^2(1+\mu)^2}{\lambda}\right)(\der_{12}v)^2\,d\rx\\
&\phantom{=}+\int_{\Gamexg} \zeta_r^2 e^{-\mu x_1} \left(\frac 14 (\der_{12}v)^2-\mu^2(\der_2 v)^2\right)\,dx_2-\int_{\Om}\zeta_r^2 e^{-\mu x_1}\left(\frac{\mu^3}{2}(\der_2 v)^2+\frac{\mu^2}{\lambda}a_1^2(\der_1 v)^2\right)\,d\rx\\
&\phantom{=}-C(\kappa_0,\lambda, \|(a_{11}, a_{12}, a_1)\|_{L^{\infty(\Om)}}, R, r)\|f\|_{L^2(\Om)}^2.
  \end{split}
\end{equation*}

Fix $\mu$ as
\begin{equation*}
  \mu=\frac{\lambda}{4\|a_{11}\|_{L^{\infty}(\Om)}}
\end{equation*}
from which it follows that
\begin{equation*}
\frac{\lambda}{2}-\mu\|a_{11}\|_{L^{\infty}(\Om)} =\frac{\lambda}{4}.
\end{equation*}
If
\begin{equation*}
\begin{split}
&\frac{\mu}{2}-\frac{4\|a_{12}\|_{W^{1,\infty}(\Om)}^2(1+\mu)^2}{\lambda}\ge \frac{\mu}{4}\\
&\left(\Leftrightarrow \|a_{12}\|_{W^{1,\infty}(\Om)}\le \frac{\lambda}{8\sqrt{\|a_{11}\|_{L^{\infty}(\Om)}}(1+\frac{\lambda}{4\|a_{11}\|_{L^{\infty}(\Om)}})}\right),
\end{split}
\end{equation*}
then
\begin{equation*}
  \begin{split}
  &\int_{\Om} \zeta_r^2 e^{-\mu x_1}\left(\eps (\der_{111}v)^2+\frac{\lambda}{4}(\der_{11}v)^2+\frac{\mu}{4}(\der_{12}v)^2\right)
  +\frac 14\int_{\Gamexg}  e^{-\mu x_1} (\der_{12}v)^2\,dx_2\\
  &\le C(\kappa_0,\lambda, \|(a_{11}, a_{12}, a_1)\|_{L^{\infty(\Om)}}, R, r)\|f\|_{H^1(\Om)}^2,
  \end{split}
\end{equation*}
from which we obtain
\begin{equation*}
\begin{split}
  &\sqrt{\eps} \|\der_{111} v^{\eps}\|_{L^2(\Om\cap\{x_1>2r\})}
  +\|D\der_1v^{\eps}\|_{L^2(\Om_L\cap\{x_1>2r\})}\\
& \le C(\kappa_0,\lambda, \|(a_{11}, a_{12}, a_1)\|_{L^{\infty(\Om)}}, R, r)\|f\|_{H^1(\Om)}.
\end{split}
\end{equation*}

By solving $\mcl{L}_{\eps}v=f$ for $\der_{22}v$, we get
\begin{equation*}
\begin{split}
  &\|\der_{22}v\|_{L^2(\Om\cap\{x_1>2r\})}\\
  &\le
  \|\eps\der_{111}v\|_{L^2(\Om\cap\{x_1>2r\})}+\|\der_1 v\|_{H^1(\Om\cap\{x_1>2r\})}+\|f\|_{L^2(\Om)}.
  \end{split}
\end{equation*}
Finally, \eqref{G:viscous-estimate-intermediate2} is obtained from combining these two estimates.

\end{proof}

\subsection{Extension of boundary value problem \eqref{bvp-aux}} \label{subsec:2.2}
By a standard method, one can define extensions $(\til a_{11}, \til a_{12}, \til a_1)$  of $(a_{11}, a_{12}, a_1)$ onto
\begin{equation*}
  \Om_{2R}:=\{\rx=(x_1, x_2): 0<x_1<2R,\,\,|x_2|<1\}
\end{equation*}
such that
\begin{itemize}
\item[-]
$\displaystyle{  \til a_{11}=a_{11},\quad \til a_{12}=a_{12},\quad \til a_1=a_1\quad\tx{in $\ol{\Om}$}
}$;
\item[-] $\til a_{12}=0$ on $\der\Om_{2R}\cap\{|x_2|=1\}$;
\item[-] $\displaystyle{\til a_{11}, \til a_{12}, \til a_1\in H^{m-1}(\Om_{2R})}$ with
    \begin{equation*}
    \begin{split}
      &\|\til a_{11}\|_{H^{m-1}(\Om_{2R})}\le C(R)\|a_{11}\|_{H^{m-1}(\Om)},\\
      &\|\til a_{12}\|_{H^{m-1}(\Om_{2R})}\le C(R)\|a_{12}\|_{H^{m-1}(\Om)},\\
      &\|\til a_{1}\|_{H^{m-1}(\Om_{2R})}\le C(R)\|a_{1}\|_{H^{m-1}(\Om)};
      \end{split}
    \end{equation*}
\item[-]if $m=4$, there exists $\til{\bar a}_{11}\in C^2(\ol{\Om_{2R}})$ such that
\begin{equation}
\label{G:pert-condition-extension}
  \|(\til a_{11}, \til a_{12})-(\til {\bar{a}}_{11},0)\|_{H^{m-1}(\Om_{2R})}\le C(R)\bar{\delta}
\end{equation}
for $\bar{\delta}>0$ from \eqref{G: perturbation condition for m=4}, where $\til{\bar a}_{11}$ is a standard $C^2$-extension of $\bar{a}_{11}$ onto $\ol{\Om_{2R}}$, or if $m>4$,
\begin{equation}
\label{G:pert-condition2-extension}
  \|\til a_{12}\|_{H^{m-1}(\Om)}\le C(R)\bar{\delta}
\end{equation}
for $\bar{\delta}>0$ from \eqref{G: perturbation condition for m>4}.
\end{itemize}

Since the coefficients $\til a_{11}$, $\til a_{12}$ and $\til a_1$ are $C^1$ in $\ol{\Om}$, one can fix a constant $R_*\in(R, 2R)$ such that
  \begin{equation}\label{KZ-condition-ext}
  \til a_1-\der_2 \til a_{12}+\frac{(2k-3)}{2}\der_1\til a_{11}\le -\frac{\lambda}{2} \quad\tx{in $\Om_{R_*}$}
\end{equation}
for all $k=1,2,\cdots, m$. Put
\begin{equation*}
  \delta_*:=\frac{R_*-R}{4}.
\end{equation*}
Fix a smooth function $\chi^{(1)}$ on $\R$ such that
\begin{equation*}
  \chi^{(1)}(x_1)=
  \begin{cases}
  1\quad&\mbox{for $x_1\le R+2\delta_*$}\\
  0\quad&\mbox{for $x_1\ge R+3\delta_*$}
  \end{cases},\quad (\chi^{(1)})'\le 0\,\,\tx{on $\R$}.
\end{equation*}
Fix a smooth function $\chi^{(2)}$ on $\R$ such that
\begin{equation*}
  \chi^{(2)}(x_1)=
  \begin{cases}
  1\quad&\mbox{for $x_1\le R+\delta_*$}\\
  0\quad&\mbox{for $x_1\ge R+2\delta_*$}
  \end{cases},\quad (\chi^{(2)})'\le 0\,\,\tx{on $\R$}.
\end{equation*}
For two constants $\om$ and $\mu$ to be determined, define 
\begin{equation}
\label{definition:KZ ext coefficients}
\begin{split}
  &a^*_{11}(x_1, x_2):=\til a_{11}(x_1, x_2)\chi^{(1)}(x_1)+\om(1-\chi^{(1)}(x_1)),\\
&a^*_1(x_1, x_2):=\til a_1(x_1, x_2)\chi^{(1)}(x_1)+\mu(1-\chi^{(2)}(x_1)).
  \end{split}
\end{equation}
First, we fix $\om>0$ sufficiently large depending on $\|(a_{11}, a_{12})\|_{H^{m-1}(\Om)}$ such that
\begin{equation}
\label{G:ellipticity at R end}
  \begin{bmatrix}
  a_{11}^*&\til a_{12}\\
  \til a_{12}&1
  \end{bmatrix}\ge \frac{\om}{2}\mathbb{I}_{2\times 2}\quad\tx{in $\Om_{R_*}\cap\{x_1\ge R_*-2\delta_*\}$}.
\end{equation}
Note that
\begin{equation*}
\{x_1:(\chi^{(1)})'(x_1)\neq 0\}\subset \{\chi^{(2)}(x_1)=0\}.
\end{equation*}

By \eqref{KZ-condition-ext}, it holds that
\begin{equation*}
\begin{split}
&a_1^*-\der_2 \til a_{12}+\frac{2k-3}{2}\der_1a_{11}^*\\
&=\left(\til a_1-\der_2\til a_{12}+\frac{2k-3}{2}\der_1\til a_{11}\right)\chi^{(1)}+\frac{2k-3}{2}(\til a_{11}-\om)(\chi^{(1)})'+\mu(1-\chi^{(2)})\\
&\le -\frac{\lambda}{2}\chi^{(1)}+\left(\frac{2k-3}{2}(\til a_{11}-\om)(\chi^{(1)})'+\mu\right)(1-\chi^{(2)})
\end{split}
\end{equation*}
One can choose $\mu<0$ with $|\mu|$ sufficiently large depending on $(m, \|a_{11}\|_{H^{m-1}(\Om)}, R, \delta_*, \om)$ to satisfy
\begin{equation*}
\frac{2k-3}{2}(\til a_{11}-\om)(\chi^{(1)})'+\mu \le -\frac{\lambda}{2}\quad\tx{for $R+\delta_*\le x_1\le R_*$}
\end{equation*}
for $k=1,2,\cdots, m.$

For the extended coefficients, define 
\begin{equation*}
  \kz^*_l=a_1^*+\frac{(2l-3)}{2}\der_1 a_{11}^*-\der_2 \til a_{12}\quad\tx{in $\Om_{R_*}$}
\end{equation*}
for $l=1,2,\cdots, m$ (cf. Definition \ref{definition:Kz coefficient}).
Then, we have
\begin{equation}
\label{G:KZ-condition-ext}
a_1^*-\der_2 \til a_{12}+\frac{2l-3}{2}\der_1a_{11}^*  \kz^*_l-\frac{\lambda}{2}\quad\tx{in $\Om_{R_*}$}
\end{equation}
for all $l=1,2,\cdots, m$

Note that the new coefficients $a_{11}^*$ and $a_1^*$ given by \eqref{definition:KZ ext coefficients} satisfy
\begin{equation*}
  (a_{11}^*, a_1^*)=(a_{11}, a_1)\quad\tx{in $\ol{\Om}$},
\end{equation*}
and
\begin{equation*}
\begin{split}
\|a_1^*\|_{H^{m-1}(\Om_{R_*})}&\le C(m,R,\lambda,\|(a_{11}, a_{12})\|_{H^{m-1}(\Om)})(\|a_1\|_{H^{m-1}}+1),\\
\|a_{11}^*\|_{H^{m-1}(\Om_{R_*})}&\le C(R,\lambda,\|(a_{11}, a_{12})\|_{H^{m-1}(\Om)})(\|a_{11}\|_{H^{m-1}(\Om)}+1).
\end{split}
\end{equation*}
In addition, if $m=4$, then there exists a function $\bar{a}_{11}^*\in C^2(\ol{\Om_{R_*}})$ such that
\begin{equation}
\label{new perturbation condition for m=4}
  \|a_{11}^*-\bar{a}_{11}^*\|_{H^{m-1}(\Om_{R_*})}\le C(
  R)\bar{\delta}
\end{equation}
for the constant $\bar{\delta}>0$ from \eqref{G: perturbation condition for m=4}.
As an extension of the linear differential operator $\mcl{L}$ given in \eqref{definition:L-general}, we define
\begin{equation*}
  \mcl{L}^*V:=a_{11}^*\der_{11}V+2\til a_{12}\der_{12}V
  +\der_{22}V+a_1^*\der_1V\quad\tx{in $\Om_{R_*}$}.
\end{equation*}
Given $\til f:\ol{\Om_{R_*}}\rightarrow \R$, consider a boundary value problem extended from \eqref{bvp-g}:
\begin{equation}\label{bvp-aux-ext}
  \begin{cases}
 \mcl{L}^*V=\til f\quad&\mbox{in $\Om_{R_*}$}\\
 V=0 \quad&\mbox{on $\Gamen$}\\
  \der_2V=0\quad&\mbox{on $\Gamw^*:=\{\rx=(x_1, x_2):0<x_1<R_*,\,\,|x_2|=1\}$}.
  \end{cases}
\end{equation}

\begin{lemma}\label{lemma:extension-consistence}
Given a function $\til f\in H^1(\Om_{R_*})$, suppose that
\begin{equation*}
  \til f=f\quad\tx{in $\Om$}.
\end{equation*}
If $V\in H^2(\Om_{R_*})$ is a strong solution to \eqref{bvp-aux-ext}, and if $v\in H^2_{\rm loc}(\Om)$ is a strong solution to \eqref{bvp-g} , then we have
\begin{equation*}
  V=v\quad\tx{in $\Om$}.
\end{equation*}

\begin{proof}
Fix a small constant $t\in(0,R)$, and set $z:=v-V$ in $\Om\cap\{x_1<R-t\}$. Then, $z\in H^2(\Om\cap\{x_1<R-t\})$ satisfies
\begin{equation*}
  \begin{split}
  \mcl{L}^* z=0\quad&\tx{in $\Om\cap\{x_1<R-t\}$},\\
  z=0\quad&\tx{on $\Gamen$},\\
  \der_2z=0\quad&\tx{on $
  \Gamw\cap \{x_1<R-t\}$}.
  \end{split}
\end{equation*}
So we can integrate by parts to get
\begin{equation}
\label{estimate of xi}
\begin{split}
0&=
\int_{\Om\cap\{x_1<R-t\}}
\mcl{L}^* z\der_1 z\,d\rx=T_1+T_2
\end{split}
\end{equation}
for
\begin{equation*}
  \begin{split}
T_1:=&\int_{\Om\cap\{x_1<R-t\}}\left(-\der_1a_{11}^*+2a_1^*-2\der_2\til a_{12}\right)
\frac{(\der_1 z)^2}{2}\,d\rx-\int_{\{(R-t, x_2):|x_2|<1\}}\frac{(\der_2 z)^2}{2}\,dx_2,\\
T_2:=&\left(\int_{\{(R-t, x_2):|x_2|<1\}}-\int_{\Gamen}\right)a_{11}^*\frac{(\der_1 z)^2}{2}\,dx_2.
\end{split}
\end{equation*}
Since $T_1\le 0$ and $T_2\le 0$, we obtain that
\begin{equation*}
\der_1 z=0\quad\tx{in $\Om$}.
\end{equation*}
Since $z=0$ on $\Gamen$, it follows that $z\equiv 0$ in $\Om$.
\end{proof}
\end{lemma}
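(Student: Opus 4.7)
My plan is to argue uniqueness by a one-sided energy estimate for the difference $z := v - V$. On $\Om$ the extended coefficients coincide with the original ones ($a_{11}^* = a_{11}$, $a_1^* = a_1$, $\til a_{12} = a_{12}$), so $z$ satisfies $\mcl{L}^* z = \mcl{L} z = 0$ on $\Om$, $z = 0$ on $\Gamen$, and $\der_2 z = 0$ on $\Gamw$, with no prescribed data on $\Gamexg$ (the operator is hyperbolic there and no boundary condition is imposed in \eqref{bvp-g}). To avoid the exit face I would truncate to $\Om_t := \Om \cap \{x_1 < R - t\}$ for small $t > 0$, establish $z \equiv 0$ there uniformly, and then let $t \downarrow 0$. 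Both $v \in H^2_{\rm loc}(\Om)$ and $V \in H^2(\Om_{R_*})$ lie in $H^2(\Om_t)$, so the integration by parts below is justified.

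\textbf{Multiplier.} On $\Om_t$ I would test the identity $\mcl{L}^* z = 0$ against $\der_1 z$, in the spirit of the $\kz$-multiplier used in Lemma \ref{lemma G:wp of singular pert prob-main} but without the exponential weight. After integrating by parts, the interior terms collapse to $\int_{\Om_t} \kz_1 (\der_1 z)^2 \, d\rx$, where $\kz_1 = a_1 - \tfrac{1}{2}\der_1 a_{11} - \der_2 a_{12}$ is the $l = 1$ instance of Definition \ref{definition:Kz coefficient}. The boundary contribution along $\Gamw$ vanishes automatically, since $a_{12} = 0$ there (Condition \ref{condition:1.2}(iii)) and $\der_2 z = 0$ there. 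The contribution along $\Gamen$ collapses to $-\int_{\Gamen} \tfrac{a_{11}}{2}(\der_1 z)^2 \, dx_2$, after using that $z = 0$ on $\Gamen$ forces $\der_2 z = 0$ along that edge. The contribution along $\{x_1 = R - t\}$ is the standard energy flux $\int \bigl(\tfrac{a_{11}}{2}(\der_1 z)^2 - \tfrac{1}{2}(\der_2 z)^2\bigr) dx_2$.

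\textbf{Sign analysis and closure.} Under Condition \ref{condition:1.2} each of the three surviving pieces of the identity $0 = \int_{\Om_t} \mcl{L}^* z \cdot \der_1 z \, d\rx$ is non-positive: the interior integral because $\kz_1 \le -\lambda$ by Condition \ref{condition:1.2}(i); the $\Gamen$ piece because $a_{11} > 0$ on $\Gamen$ by Condition \ref{condition:1.2}(ii); and the $\{x_1 = R - t\}$ piece because $\max_{\Gamexg} a_{11} < 0$, which by continuity of $a_{11}$ yields $a_{11} < 0$ on $\{x_1 = R - t\}$ for all sufficiently small $t$. Since the three non-positive quantities sum to zero, each must vanish. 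In particular $\der_1 z = 0$ a.e.\ on $\Om_t$, and together with the Dirichlet condition $z = 0$ on $\Gamen$ this forces $z \equiv 0$ on $\Om_t$; taking $t \downarrow 0$ concludes $v = V$ on $\Om$. I do not expect a serious obstacle: the whole argument hinges on choosing the multiplier $\der_1 z$, after which the structural hypothesis $\kz_1 \le -\lambda$ — the weakest case ($l = 1$) of \eqref{KZ-condition} — immediately supplies the needed coercivity, while the sign conditions on $a_{11}$ at the two ends handle the only non-trivial boundary terms.
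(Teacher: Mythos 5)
Your proposal is correct and follows essentially the same route as the paper's own proof: truncate to $\Om\cap\{x_1<R-t\}$, test $\mcl{L}^*z=0$ against the multiplier $\der_1 z$, observe that the interior term is $\int \kz_1(\der_1 z)^2\le 0$ while the boundary fluxes on $\Gamen$ and $\{x_1=R-t\}$ are non-positive by the sign conditions on $a_{11}$, and conclude $\der_1 z=0$ and hence $z\equiv 0$ from the Dirichlet condition on $\Gamen$. The only cosmetic difference is that you make explicit the limit $t\downarrow 0$ and the vanishing of $\der_2 z$ on $\Gamen$, which the paper leaves implicit.
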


By using all the results from \S\ref{subsec:2.1}, we shall prove in \S\ref{subsec:2.3} that \eqref{bvp-g} has a weak solution $v\in H^1(\Om)$ to \eqref{bvp-g}, and that $v$ is in $H^2(\Om\cap\{r<x_1<R-r\})$ for $0<r<R$. In order to establish a global $H^m$-estimate of $v$ in $\Om$, we use Lemma \ref{lemma:extension-consistence} and all the other estimate results following hereafter in \S\ref{subsec:2.2}.

\medskip

A singular perturbation boundary value problem of \eqref{bvp-aux-ext} is as follows:
\begin{equation}\label{bvp-aux-ext-singp}
  \begin{cases}
  \eps\der_{111}V^{\eps}+\mcl{L}^*V^{\eps}=\til f\quad&\mbox{in $\Om_{R_*}$}\\
  V^{\eps}=0,\quad \der_1V^{\eps}=0 \quad&\mbox{on $\Gamen$}\\
  \der_2V^{\eps}=0\quad&\mbox{on $\Gamw^*$}\\
  \der_{11}V^{\eps}=0\quad&\mbox{on $\Gam_{R_*}$}.
  \end{cases}
\end{equation}

\begin{lemma}
\label{lemma G:wp of singular pert prob-ext}
There exists a constant $\bar{\eps}>0$ depending only on $\lambda$ so that, for any $\eps\in(0, \bar{\eps}]$, if $V^{\eps}$ is a smooth solution to \eqref{bvp-aux-ext-singp}, then it satisfies
\begin{equation}
\label{G:estiamte 1-sing pert-ext}
\begin{split}
  &\sqrt{\eps}\|\der_{11}V^{\eps}\|_{L^2(\Om_{R_*})}
  +\|V^{\eps}\|_{H^1(\Om_{R_*})}\le C(\lambda, R) \|\til f\|_{L^2(\Om_{R_*})}.
  \end{split}
\end{equation}

\end{lemma}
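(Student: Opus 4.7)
The plan is to mirror the energy argument used in Lemma~\ref{lemma G:wp of singular pert prob-main}, adapted to the extended domain $\Om_{R_*}$. I would test the equation $\eps\der_{111}V^\eps+\mcl{L}^*V^\eps=\til f$ against the multiplier $e^{-\mu x_1}\der_1 V^\eps$, with $\mu>0$ to be chosen small. The boundary conditions in \eqref{bvp-aux-ext-singp} are tailored so that all unwanted boundary contributions vanish under integration by parts: $V^\eps=\der_1 V^\eps=0$ on $\Gamen$ kills the $x_1=0$ traces; $\der_{11}V^\eps=0$ on $\Gam_{R_*}$ together with $\der_1 V^\eps|_{\Gamen}=0$ eliminates the bracket $[e^{-\mu x_1}\der_1 V^\eps\,\der_{11}V^\eps]_0^{R_*}$ arising from the $\eps\der_{111}V^\eps$ integration; and $\der_2 V^\eps=0$ on $\Gamw^*$ combined with the built-in extension property $\til a_{12}|_{\Gamw^*}=0$ kills all $|x_2|=1$ contributions.

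A lengthy but routine computation then yields the identity
\begin{equation*}
\begin{split}
&\int_{\Om_{R_*}} e^{-\mu x_1}\Big[\big(\kz^*_1+\tfrac{\mu a_{11}^*}{2}+\tfrac{\eps\mu^2}{2}\big)(\der_1 V^\eps)^2-\tfrac{\mu}{2}(\der_2 V^\eps)^2-\eps(\der_{11}V^\eps)^2\Big]\,d\rx\\
&+\int_{\Gam_{R_*}} e^{-\mu R_*}\Big[\tfrac{a_{11}^*+\eps\mu}{2}(\der_1 V^\eps)^2-\tfrac{1}{2}(\der_2 V^\eps)^2\Big]\,dx_2=\int_{\Om_{R_*}}e^{-\mu x_1}\til f\,\der_1 V^\eps\,d\rx.
\end{split}
\end{equation*}
Fixing $\mu:=\min\{\lambda/(4\|a_{11}^*\|_{L^\infty(\Om_{R_*})}),\,1\}$ and using the extended condition \eqref{G:KZ-condition-ext}, the volume coefficient of $(\der_1 V^\eps)^2$ is bounded above by $-\lambda/4$ provided $\eps$ is small, providing positive volume control of $\|\der_1 V^\eps\|_{L^2}^2$, $\|\der_2 V^\eps\|_{L^2}^2$, and $\eps\|\der_{11}V^\eps\|_{L^2}^2$.

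The new feature compared with Lemma~\ref{lemma G:wp of singular pert prob-main} is the boundary contribution $\tfrac{a_{11}^*+\eps\mu}{2}(\der_1 V^\eps)^2$ on $\Gam_{R_*}$: since $a_{11}^*|_{\Gam_{R_*}}=\omega>0$ by the ellipticity-at-the-end construction \eqref{G:ellipticity at R end}, this term carries the \emph{unfavorable} sign, in contrast to the favorable sign available in Lemma~\ref{lemma G:wp of singular pert prob-main} from the hyperbolic condition $a_{11}\le-\mu$ on $\Gam_R$. My plan to control it is to use $\der_1 V^\eps|_{\Gamen}=0$ and the fundamental theorem of calculus,
\[
(\der_1 V^\eps)(R_*,x_2)^2=2\int_0^{R_*}\der_1 V^\eps\,\der_{11}V^\eps\,dx_1,
\]
which upon integration over $x_2$ yields $\|\der_1 V^\eps\|_{L^2(\Gam_{R_*})}^2\le 2\|\der_1 V^\eps\|_{L^2(\Om_{R_*})}\|\der_{11}V^\eps\|_{L^2(\Om_{R_*})}$. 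A Cauchy--Schwarz split and absorption of the $\|\der_{11}V^\eps\|$ piece into the volume-controlled $\eps\|\der_{11}V^\eps\|^2$ then allows the estimate to close, after which Poincar\'e's inequality (via $V^\eps|_{\Gamen}=0$) upgrades the gradient control to $\|V^\eps\|_{H^1(\Om_{R_*})}$, giving \eqref{G:estiamte 1-sing pert-ext}.

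The hard part is making this final absorption uniform in $\eps$: a naive Cauchy--Schwarz splitting produces a constant of order $\omega^2/\eps$ that degenerates as $\eps\downarrow 0$. The way I expect to overcome this is to exploit the equation at $\Gam_{R_*}$ — where $a_{11}^*=\omega$ (so the operator is elliptic there) and $\der_{11}V^\eps=0$ is imposed — reducing the equation to $\eps\der_{111}V^\eps+2\til a_{12}\der_{12}V^\eps+\der_{22}V^\eps+a_1^*\der_1 V^\eps=\til f$ on $\Gam_{R_*}$, and using this together with a trace-interpolation inequality to produce a bound of the form $\|\der_1 V^\eps\|_{L^2(\Gam_{R_*})}^2\le C(\|\til f\|_{L^2(\Om_{R_*})}^2+\|V^\eps\|_{H^1(\Om_{R_*})}^2)$ with $C$ independent of $\eps$. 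Once such a uniform-in-$\eps$ boundary bound is in hand, the estimate closes by a standard Gronwall-type absorption.
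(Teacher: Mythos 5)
Your energy identity is correct, and you have correctly isolated the one point at which this lemma is \emph{not} ``almost the same'' as Lemma \ref{lemma G:wp of singular pert prob-main}: by the construction \eqref{definition:KZ ext coefficients} and \eqref{G:ellipticity at R end}, $a_{11}^*=\om>0$ on $\Gam_{R_*}$, so the boundary contribution $\tfrac{\om}{2}e^{-\mu R_*}\int_{\Gam_{R_*}}(\der_1V^{\eps})^2\,dx_2$ now has the unfavorable sign, whereas in Lemma \ref{lemma G:wp of singular pert prob-main} the hypothesis $a_{11}\le-\mu$ on $\Gamexg$ made the analogous term harmless. The paper skips the proof, so it offers no guidance here; but your proposal does not close this term either. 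Your first device gives, as you yourself note, a constant of order $\om^2/\eps$. Your second device fails as well: restricting the equation to $\Gam_{R_*}$ expresses $a_1^*\der_1V^{\eps}$ there in terms of the traces of $\til f$, $\eps\der_{111}V^{\eps}$, $\til a_{12}\der_{12}V^{\eps}$ and $\der_{22}V^{\eps}$, none of which is controlled by the $H^1$ energy, and there is no ``trace-interpolation'' inequality of the form $\|\der_1V^{\eps}\|_{L^2(\Gam_{R_*})}^2\le C(\|\til f\|_{L^2(\Om_{R_*})}^2+\|V^{\eps}\|_{H^1(\Om_{R_*})}^2)$ with $C$ independent of $\eps$ --- the $L^2$ boundary trace of a first derivative is simply not an $H^1$ quantity. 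So the final absorption step, which you correctly flag as ``the hard part,'' is a genuine gap.

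A repair that does work is to observe that $\der_{11}V^{\eps}=0$ is not the natural condition at $\Gam_{R_*}$ for the extended problem: near $\Gam_{R_*}$ the operator $\mcl{L}^*$ has been made uniformly elliptic, so one may instead impose $\der_1V^{\eps}=0$ on $\Gam_{R_*}$ in \eqref{bvp-aux-ext-singp}. With that condition the offending term $\tfrac{\om}{2}e^{-\mu R_*}\int_{\Gam_{R_*}}(\der_1V^{\eps})^2\,dx_2$ and the $\eps$-boundary term $\tfrac{\eps\mu}{2}e^{-\mu R_*}\int_{\Gam_{R_*}}(\der_1V^{\eps})^2\,dx_2$ vanish identically, the bracket $[e^{-\mu x_1}\der_1V^{\eps}\der_{11}V^{\eps}]$ at $x_1=R_*$ is killed by $\der_1V^{\eps}=0$ rather than by $\der_{11}V^{\eps}=0$, the surviving boundary term $-\tfrac12e^{-\mu R_*}\int_{\Gam_{R_*}}(\der_2V^{\eps})^2\,dx_2$ has the good sign, and the argument of Lemma \ref{lemma G:wp of singular pert prob-main} then transfers verbatim; this change is harmless for the rest of \S\ref{subsec:2.2}, since by Lemma \ref{lemma:extension-consistence} only the restriction of $V$ to $\Om$ is ultimately compared with the solution of \eqref{bvp-g}. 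As written, however, your proof does not establish \eqref{G:estiamte 1-sing pert-ext}.
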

The proof of this lemma was skipped as it is almost the same as the one of Lemma \ref{lemma G:wp of singular pert prob-main}.
\medskip

Put
\begin{equation*}
  r_*:=\frac 18\min\{R_1, \delta_*\}
\end{equation*}
For a constant $r$ with $0<r \le r_*$, define
\begin{equation*}
  \Om_{R_{*},r}:=\Om_{R_*}\cap \left(\{2r<x_1<R_1-2r\}\cup \{R+3\delta_*-2r<x_1<R_*-2r\}\right).
\end{equation*}
Since the linear operator $\mcl{L^*}$ is uniformly elliptic in $ \Om_{R_{*},r}$, a simple modification of the proof of Lemma \ref{G:lemma for pre H2 estimate of vm, part 1} yields the following lemma.
\begin{lemma}
\label{lemma:2.5} Let $\bar{\eps}$ be from Lemma \ref{lemma G:wp of singular pert prob-ext}.
For any $\eps\in(0, \bar{\eps}]$ and $r\in(0, r_*]$, if $V^{\eps}$ is a smooth solution to \eqref{bvp-aux-ext-singp}, then we have
\begin{equation}
\label{G:viscous-estimate-intermediate1-ext}
  \|D^2 V^{\eps}\|_{L^2( \Om_{R_{*},r})}
  \le C\|\til f\|_{L^2(\Om_{R_*})}
\end{equation}
for $C=C(\kappa_1, \om, \lambda, \|(a_{11}^*, \til a_{12}, a_1^*)\|_{L^{\infty}(\Om_{R_*})},R_*, r)$.

\end{lemma}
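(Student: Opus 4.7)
The plan is to adapt the argument used in Lemma \ref{G:lemma for pre H2 estimate of vm, part 1} to the extended setting. The crucial observation is that the domain $\Om_{R_*,r}$ decomposes into two strips, each of which lies in a region where $\mcl{L}^*$ is uniformly elliptic: the left strip $\Om_{R_*}\cap\{2r<x_1<R_1-2r\}$ is contained in the set where the original pair $(a_{11},a_{12})$ satisfies \eqref{G:ellipticity}, and the right strip $\Om_{R_*}\cap\{R+3\delta_*-2r<x_1<R_*-2r\}$ sits inside the region where the modified coefficient $a_{11}^*\equiv\om$ yields the ellipticity bound \eqref{G:ellipticity at R end}. The inequality $R_*-2\delta_*<R+3\delta_*$ shown while constructing $\chi^{(1)}$ guarantees these two strips and their slightly enlarged neighborhoods lie inside a common uniformly elliptic region with constant $\kappa_*:=\min\{\kappa_1,\om/2\}$, provided $r\le r_*$.

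First, I would fix a cut-off function $\chi_r\in C_c^\infty(\R)$ such that $\chi_r\equiv 1$ on $[2r,R_1-2r]\cup[R+3\delta_*-2r,R_*-2r]$ and $\chi_r\equiv 0$ outside $[r,R_1-r]\cup[R+3\delta_*-r,R_*-r]$, with the standard bound $\|\chi_r\|_{C^k(\R)}\le C_k(1+r^{-k})$. By construction, $\mathrm{supp}(\chi_r)$ is contained in the union of the two uniformly elliptic strips and is disjoint from $\Gamen$ and $\Gam_{R_*}$.

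Next, I would test the equation \eqref{bvp-aux-ext-singp} against $\chi_r^2\,\der_{11}V^{\eps}$ and integrate over $\Om_{R_*}$, mimicking the computation carried out in the proof of Lemma \ref{G:lemma for pre H2 estimate of vm, part 1}. Integration by parts in $x_2$ produces no boundary contribution because $\der_2 V^{\eps}=0$ on $\Gamw^*$, and integration by parts in $x_1$ is likewise boundary-free because $\chi_r$ vanishes on $\Gamen\cup\Gam_{R_*}$. The quadratic-form identity then yields
\[
\kappa_*\|\chi_r D\der_1 V^{\eps}\|_{L^2(\Om_{R_*})}^2 \le C_1\|\til f\|_{L^2(\Om_{R_*})}^2 + C_2(r)\bigl(\|DV^{\eps}\|_{L^2(\Om_{R_*})}^2+\eps\|\der_{11}V^{\eps}\|_{L^2(\Om_{R_*})}^2\bigr),
\]
where the lower-order terms collect everything involving $\chi_r'$ or the first-order coefficient, and the term $\eps \chi_r \chi_r' (\der_{11}V^{\eps})^2$ is controlled using the bound \eqref{G:estiamte 1-sing pert-ext} on $\sqrt{\eps}\,\der_{11}V^{\eps}$. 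Invoking Lemma \ref{lemma G:wp of singular pert prob-ext} to absorb the last two terms into $\|\til f\|_{L^2(\Om_{R_*})}^2$ gives an $\eps$-independent bound on $\|\chi_r D\der_1V^{\eps}\|_{L^2(\Om_{R_*})}$. Finally, I recover $\der_{22}V^{\eps}$ on $\Om_{R_*,r}$ by solving the equation \eqref{bvp-aux-ext-singp} pointwise for $\der_{22}V^{\eps}$ and using the just-established bounds on $\der_{11}V^{\eps}$, $\der_{12}V^{\eps}$, $\der_1V^{\eps}$, the $L^\infty$ bounds on the coefficients, and the $H^1$ control of $V^{\eps}$. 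This yields \eqref{G:viscous-estimate-intermediate1-ext}.

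I do not expect any substantial obstacle here: the single genuinely new ingredient, relative to Lemma \ref{G:lemma for pre H2 estimate of vm, part 1}, is the geometric check that the cut-off $\chi_r$ can be chosen to be supported inside both uniformly elliptic strips while vanishing on the potentially degenerate middle region $\{R_1-r<x_1<R+3\delta_*-r\}$; this is precisely what the choice $r\le r_*$ and the construction of $\chi^{(1)}$ secure. All other steps are essentially identical to the local $H^2$ argument already carried out in $\S\ref{subsec:2.1}$.
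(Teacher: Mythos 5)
Your proposal is correct and is exactly what the paper does: the paper's proof of this lemma is the single remark that $\mcl{L}^*$ is uniformly elliptic on $\Om_{R_{*},r}$ so that the argument of Lemma \ref{G:lemma for pre H2 estimate of vm, part 1} applies after a simple modification, and your double-bump cut-off supported in the two elliptic strips---with the ellipticity constants $\kappa_1$ and $\om/2$ supplied by \eqref{G:ellipticity} and \eqref{G:ellipticity at R end}, and the $H^1$ bound of Lemma \ref{lemma G:wp of singular pert prob-ext} absorbing the lower-order and viscous terms---is precisely that modification, including the correct geometric check that $r\le r_*$ keeps the support of the cut-off inside the elliptic regions. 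The one step you leave implicit, namely the control of the $\eps\der_{111}V^{\eps}$ term when the equation is solved for $\der_{22}V^{\eps}$, is glossed over in the paper's template proof of Lemma \ref{G:lemma for pre H2 estimate of vm, part 1} in exactly the same way, so it is not a defect of your argument relative to the paper's.
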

Next, a minor modification of the proof of Lemma \ref{G:lemma for pre H2 estimate of vm, part 2} with applying Lemma \ref{lemma:2.5} yields the following result.
\begin{lemma}
\label{G:lemma for pre H2 estimate-ext} Under the same assumptions as Lemma \ref{lemma:2.5}, $V^{\eps}$ satisfies
\begin{equation}
\label{G:viscous-estimate-intermediate2-2}
\begin{split}
  &\sqrt{\eps} \|\der_{111} V^{\eps}\|_{L^2(\Om_{R_*}\cap\{2r<x_1<R_*-2r\})}
  +\|D^2V^{\eps}\|_{L^2(\Om_{R_*}\cap\{2r<x_1<R_*-2r\})}
  \le C\|\til f\|_{H^1(\Om_{R_*})}
\end{split}
\end{equation}
for $C=C(\kappa_0, \lambda, \|(a_{11}^*, \til a_{12}, a_1^*)\|_{L^{\infty}(\Om_{R_*})}, R_*, r)$ provided that
\begin{equation*}
  \|\til a_{12}\|_{W^{1,\infty}(\Om_{R_*})}\le \til{\delta}
\end{equation*}
for a constant $\til{\delta}>0$ fixed sufficiently small depending only on $(\lambda, \|a_{11}^*\|_{L^{\infty}(\Om_{R_*})})$.

\end{lemma}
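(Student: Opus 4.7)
The plan is to mimic the proof of Lemma \ref{G:lemma for pre H2 estimate of vm, part 2} with two adjustments: the extension domain $\Om_{R_*}$ replaces $\Om$, and the cut-off must be two-sided because the target estimate excludes collar neighborhoods of both $\Gamen$ and $\Gam_{R_*}$. Fix a cut-off $\zeta_r\in C^\infty(\R)$ with $\zeta_r\equiv 1$ on $[2r,R_*-2r]$, $\zeta_r\equiv 0$ outside $(r,R_*-r)$, $0\le \zeta_r\le 1$, and $\|\zeta_r\|_{C^k(\R)}\le C_k(1+r^{-k})$. The boundary conditions in \eqref{bvp-aux-ext-singp} will be absorbed by this choice: $V^\eps$ and $\der_1V^\eps$ vanish on $\Gamen$ is only used near $\Gamen$, but since $\zeta_r$ vanishes there the boundary terms at $x_1=0$ disappear; similarly the boundary terms at $x_1=R_*$ disappear because $\zeta_r$ vanishes at $x_1=R_*-r$.

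The main step is the testing identity
\begin{equation*}
\int_{\Om_{R_*}}\zeta_r^2 e^{-\mu x_1}\der_{111}V^\eps\bigl(\eps\der_{111}V^\eps+\mcl{L}^*V^\eps\bigr)\,d\rx
=\int_{\Om_{R_*}}\zeta_r^2 e^{-\mu x_1}\til f\,\der_{111}V^\eps\,d\rx.
\end{equation*}
Integrating by parts exactly as in Lemma \ref{G:lemma for pre H2 estimate of vm, part 2}, the principal contribution from the second-order part of $\mcl{L}^*$ produces the quadratic form
\begin{equation*}
\int_{\Om_{R_*}}\zeta_r^2 e^{-\mu x_1}\Bigl[\bigl(-\kz_2^*+\tfrac{\mu}{2}a_{11}^*\bigr)(\der_{11}V^\eps)^2+\mu(\der_{12}V^\eps)^2
-2(\der_1\til a_{12}-\mu\til a_{12})\der_{11}V^\eps\,\der_{12}V^\eps\Bigr]\,d\rx,
\end{equation*}
where $\kz_2^*:=a_1^*+\tfrac{1}{2}\der_1 a_{11}^*-\der_2\til a_{12}$ is the coefficient from Definition \ref{definition:Kz coefficient} built from the extended data. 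By the extended Keldysh bound \eqref{G:KZ-condition-ext} we have $\kz_2^*\le -\lambda/2$ throughout $\Om_{R_*}$, and the boundary condition $\der_2V^\eps=0$ on $\Gamw^*$ together with $\til a_{12}=0$ on $\Gamw^*$ ensures the boundary traces from the $x_2$-integrations vanish. Exactly as before, choose $\mu=\lambda/(4\|a_{11}^*\|_{L^\infty})$ so the coefficient in front of $(\der_{11}V^\eps)^2$ is bounded below by $\lambda/4$; the cross term $\der_{11}V^\eps\,\der_{12}V^\eps$ is then absorbed by a Cauchy–Schwarz with weight $\lambda/4$ provided $\|\til a_{12}\|_{W^{1,\infty}(\Om_{R_*})}\le \til\delta$ is small enough in terms of $(\lambda,\|a_{11}^*\|_{L^\infty})$, which is the smallness hypothesis in the statement.

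The commutator terms arising from derivatives of $\zeta_r$ are supported in $\{r<x_1<2r\}\cup\{R_*-2r<x_1<R_*-r\}$ and involve at most two derivatives of $V^\eps$; by Lemma \ref{lemma:2.5} applied at the parameter $r/2$ (so that the two strips lie inside $\Om_{R_*,r/2}$), these commutator terms are controlled by $C\|\til f\|_{L^2(\Om_{R_*})}$. The right-hand side is handled as in Lemma \ref{G:lemma for pre H2 estimate of vm, part 2}: integrating by parts once moves one $\der_1$ off $\der_{111}V^\eps$ and onto $\zeta_r^2 e^{-\mu x_1}\til f$, producing at worst $C\|\til f\|_{H^1(\Om_{R_*})}\|\zeta_r\der_{11}V^\eps\|_{L^2}$, which is absorbed by Cauchy–Schwarz and the just-established lower bound on the quadratic form. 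Together with Lemma \ref{lemma G:wp of singular pert prob-ext}, this yields the bound on $\sqrt{\eps}\,\der_{111}V^\eps$ and on $D\der_1 V^\eps$ in the interior strip. Finally, solving $\der_{22}V^\eps=\til f-\eps\der_{111}V^\eps-a_{11}^*\der_{11}V^\eps-2\til a_{12}\der_{12}V^\eps-a_1^*\der_1V^\eps$ closes the $D^2$ estimate.

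The main obstacle, as in the original unextended lemma, is the indefinite mixed-derivative cross term generated by $a_{12}$: it cannot be handled by the Keldysh condition alone, only by smallness, which is why the hypothesis $\|\til a_{12}\|_{W^{1,\infty}(\Om_{R_*})}\le\til\delta$ appears. The only genuinely new bookkeeping compared to Lemma \ref{G:lemma for pre H2 estimate of vm, part 2} is the two-sided cut-off and the consequent need to control commutator errors near both ends using the locally uniform ellipticity of $\mcl L^*$ near $x_1=R_*$ guaranteed by \eqref{G:ellipticity at R end}, via Lemma \ref{lemma:2.5}.
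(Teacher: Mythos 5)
Your proposal is correct and follows essentially the route the paper intends: the paper omits the proof, stating only that it is a minor modification of the proof of Lemma \ref{G:lemma for pre H2 estimate of vm, part 2} combined with Lemma \ref{lemma:2.5}, which is precisely what you carry out (same multiplier $\zeta^2e^{-\mu x_1}\der_{111}V^\eps$, same choice of $\mu$, same absorption of the $a_{12}$ cross term by smallness, and Lemma \ref{lemma:2.5} at a smaller parameter to control the cut-off commutators in the two elliptic collars). Your use of a two-sided cut-off vanishing near $\Gam_{R_*}$, rather than retaining the boundary terms there via $\der_{11}V^\eps=0$ as in the one-sided original, is a harmless variant consistent with the interior strip appearing in \eqref{G:viscous-estimate-intermediate2-2}.
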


\begin{definition}
\label{definition:cut-offs}
Fix $m\ge 4$. And, fix $r>0$sufficiently small to satisfy
\begin{equation}
\label{G:condition for r}
 0< r\le \frac{\min\{R_1, \delta_*\}}{3(m-1)+2}.
\end{equation}

\textbf{1. Smooth double-bumped cut-off function:}
For $k=1,\cdots,m-1$, define a smooth cut-off function $\eta_r^{(k)}(x_1)$ such that
\begin{itemize}
\item[(i)]
\begin{equation*}
  \eta_r^{(k)}(x_1):=\begin{cases}
  1\quad&\mbox{if $x_1\in[\frac{3k-1}{2}r, \frac{3k+1}{2}r]\cup [R_*-\frac{3k+1}{2}r, R_*-\frac{3k-1}{2}r]$}\\
  0\quad&\mbox{if $x_1\in [0,R_*]\setminus \bigl((\frac{3k-2}{2}r, \frac{3k+2}{2}r)\cup (R_*-\frac{3k+2}{2}r, R_*-\frac{3k-2}{2}r)\bigr)$}
  \end{cases}
\end{equation*}
\item[(ii)] $0\le \eta_r^{(k)} \le 1 $ on $\R$
\item[(iii)]
\begin{equation*}
 \left |\left(\frac{d}{dx_1}\right)^j\eta_r^{(k)}\right|\le C_j\left(\frac{1}{r}\right)^j\quad\tx{on $\R$}.
\end{equation*}
\end{itemize}

\textbf{2. Smooth single-bumped cut-off function subordinate to $\eta_r^{(k)}$:}
For each $r>0$ and $k=1,\cdots, m-1$, we define a smooth cut-off function  $\zeta_r^{(k)}(x_1)$ such that
\begin{itemize}
\item[(i)] \begin{equation*}
  \zeta_r^{(k)}(x_1):=\begin{cases}
  1\quad&\mbox{if $x_1\in [\frac{3k+1}{2}r, R_*-\frac{3k+1}{2}r]$}\\
  0\quad&\mbox{if $x_1\in [0, R_*]\setminus (\frac{3k}{2}r, R_*-\frac{3k}{2}r)$}
  \end{cases}
\end{equation*}

\item[(ii)] $0\le \zeta_r^{(k)} \le 1 $ on $\R$
\item[(iii)]
\begin{equation*}
  \left|\left(\frac{d}{dx_1}\right)^j\zeta_r^{(k)}\right|\le C_j\left(\frac{1}{r}\right)^j\quad\tx{on $\R$}.
\end{equation*}
\end{itemize}
We say that $\zeta_r^{(k)}$ is subordinate to $\eta_r^{(k)}$ in the sense that
\begin{equation*}
  {\rm spt}\left\{\left(\frac{d}{dx_1}\right)^j\zeta_r^{(k)}\right\}\subset \{\eta_{r}^{(k)}=1\}\quad\forall j\in \mathbb{N}.
\end{equation*}
Note that, for each $k=3,\cdots, m-1$
\begin{align}
\label{G-cutoff prop 1}
  &{\rm spt}\,\eta_r^{(k)}\subset \{\zeta_r^{(k-1)}=1\},\\
  \label{G-cutoff prop 2}
  &{\rm spt} \frac{d}{dx_1}\zeta_r^{(k)} \subset \{\eta_r^{(k)}=1\}.
\end{align}

\end{definition}

\begin{proposition}
\label{proposition:2.10}
Under the same assumptions as Lemma \ref{lemma:2.5}, for any  $\eps\in(0,\bar{\eps}]$ and $r\in(0, r_*]$, we have
\begin{equation}
\label{G:final H-k estimate}
\begin{split}
  &\sqrt{\eps} \|\zeta_r^{(k)}\der_1^{k+1}V\|_{L^2(\Om_{R_*})}+\|\zeta_r^{(k)} \der_1^{k-2}D^2V\|_{L^2(\Om_{R_*})}\\
&  \le C^*
\|\til f\|_{H^{k-1}(\Om_{R_*})}
  \end{split}
\end{equation}
for all $k=2,\cdots, m-2$ with
\begin{equation*}
  C^*=\begin{cases}
  C^*(r,R, m,  \lambda, \kappa_1, \om, \|(a_{11}^*,\til a_{12}, a_1^*)\|_{H^{m-1}(\Om_{R_*})},  \|\der_{11}\bar{a}_{11}^*\|_{L^{\infty}(\Om_{R_*})})&\mbox{if $m=4$}\\
  C^*(r, R, m, \lambda, \kappa_1, \om,  \|(a_{11}^*,\til a_{12}, a_1^*)\|_{H^{m-1}(\Om_{R_*})})&\mbox{if $m>4$}
  \end{cases}
\end{equation*}
provided that the constant $\bar{\delta}>0$ from \eqref{G: perturbation condition for m=4} if $m=4$, or from \eqref{G: perturbation condition for m>4} if $m>4$ is fixed sufficiently small. In addition, such a constant $\bar{\delta}$ can be fixed with the same dependence as the estimate constant $C^*$.

\begin{proof}
We prove this proposition by induction.

{\textbf{Step 1.}}
By Lemma \ref{G:lemma for pre H2 estimate-ext}, \eqref{G:final H-k estimate} holds for $k=2$.
Next, fix $k\ge 3$, and suppose that \eqref{G:final H-k estimate} holds for all $2\le j\le k-1$.
\medskip

{\textbf{Step 2.}} Fix a constant $r\in(0, r_*]$. To simplify notations,  put
\begin{equation*}
V:=V^{\eps},\quad  \zeta:=\zeta_r^{(k)},\quad \eta:=\eta_r^{(k)}.
\end{equation*}
To obtain \eqref{G:final H-k estimate}, we take
\begin{equation}
\label{Hk-5}
  \int_{\Om_{R_*}} \der_1^{k-2}(\eps\der_{111}V+\mcl{L}^*V)\zeta^2e^{-\mu x_1}\der_1^k(\der_1 V)\,d\rx=\int_{\Om_{R_*}}\der_1^{k-2}\til f\zeta^2 e^{-\mu x_1}\der_1^k(\der_1 V)\,d\rx
\end{equation}
for a constant $\mu>0$ to be determined later.

First, we estimate the right-hand side of \eqref{Hk-5}. By an integration by parts, we get
\begin{equation*}
 \begin{split}
 &\int_{\Om_{R_*}}\der_1^{k-2}\til f\zeta^2e^{-\mu x_1}\der_1^k(\der_1 V)\,d\rx
 =-\int_{\Om_{R_*}} \der_1(e^{-\mu x_1}\der_1^{k-2}\til f) \zeta^2 \der_1^kV+(e^{-\mu x_1}\der_1^{k-2}\til f)(\der_1 \zeta^2)\der_1^kV\,d\rx.
  \end{split}
\end{equation*}
In order to estimate $\int_{\Om_{R_*}}(e^{-\mu x_1}\der_1^{k-2}\til f)(\der_1 \zeta^2)\der_1^kV\,d\rx$, we take
\begin{equation}
\label{Hk-1}
  \int_{\Om_{R_*}} \der_1^{k-2}(\eps\der_{111}V+\mcl{L}^*V)\eta^2\der_1^k V\,d\rx=\int_{\Om_{R_*}}\der_1^{k-2}\til f\eta^2\der_1^kV\,d\rx.
\end{equation}
By an integration by parts,
\begin{equation*}
\begin{split}
 \int_{\Om_{R_*}} \der_1^{k-2}(\eps\der_{111}V)\eta^2\der_1^k V\,d\rx=
  -\int_{\Om_{R_*}}\eps \eta\eta'(\der_1^kV)^2\,d\rx.
  \end{split}
\end{equation*}
The inductive assumption of \eqref{G:final H-k estimate} holding for $2\le j\le k-1$ combined with \eqref{G-cutoff prop 1} yields
\begin{equation*}
|\int_{\Om_{R_*}}\eps \eta\eta'(\der_1^kV)^2\,d\rx|\le C(r,k)\|\til f\|^2_{H^{k-2}(\Om_{R_*})}.
\end{equation*}

Next, we consider the term $\displaystyle{\int_{\Om_{R_*}} \der_1^{k-2}(\mcl{L}^*V)\eta^2\der_1^k V\,d\rx}$.
By direct computations, we have
\begin{equation*}
  \begin{split}
  &\int_{\Om_{R_*}}\der_1^{k-2}(\mcl{L}^*V)\eta^2 \der_1^kV\,d\rx\\
  &=\int_{\Om_{R_*}}(a_{11}^*(\der_1^kV)^2+2\til a_{12}\der_1^kV\der_1^{k-1}\der_2 V+(\der_1^{k-1}\der_2V)^2)\eta^2\,d\rx\\
  &\phantom{=}+\int_{\Om_{R_*}} a_1^*\eta^2\der_1^{k-1}V\der_1^kV+\der_1\eta^2(\der_1^{k-2}\der_2V)(\der_1^{k-1}\der_2V)\,d\rx\\
  &\phantom{=}+\int_{\Om_{R_*}} \eta^2\der_1^kV\sum_{l=1}^{k-2}\left(\der_1^l a_{11}^* \der_1^{k-l}V+2\der_1^l\til a_{12}\der_1^{k-1-l}\der_2V+\der_1^la_1^*\der_1^{k-1-l}V\right)\,d\rx\\
  &=:T_1+T_2+T_3
  \end{split}
\end{equation*}
By \eqref{G:ellipticity} and \eqref{G:ellipticity at R end}, we have
\begin{equation}
\label{Hk-2}
T_1
\ge \min\{\kappa_1, \frac{\om}{2}\}\|\eta D\der_1^{k-1}V\|^2_{L^2(\Om_{R_*})}.
\end{equation}

By applying Morrey's inequality, \eqref{G-cutoff prop 1} and the inductive assumption of \eqref{G:final H-k estimate} holding for $2\le j\le k-1$, we can directly check
\begin{equation}
\label{Hk-3}
  \begin{split}
  |T_2|
  &\le (\|a_1^*\|_{L^{\infty}(\Om_{R_*})}+1)\|\eta D\der_1^{k-1}V\|_{L^2(\Om_{R_*})}(\|\eta\der_1^{k-1}V\|_{L^2(\Om_{R_*})}+2\|\der_1\eta \der_1^{k-2}\der_2V\|_{L^2(\Om_{R_*})})\\
  &\le \til C \|\eta D\der_1^{k-1}V\|_{L^2(\Om_{R_*})} \|\til f\|_{H^{k-2}(\Om_{R_*})}
  \end{split}
\end{equation}
for a constant $ \til{C}$ having the same dependence as $C^*$ from \eqref{G:final H-k estimate}.

In the following, any estimate constant presented as $\til C$ may vary but is assumed to be fixed with the same dependence as $C^*$ from \eqref{G:final H-k estimate} unless otherwise specified.

Next, we estimate the term $T_3$.
Since $a_{11}^*\in H^{m-1}(\Om_{R_*})$ for $m\ge 4$, we apply Morrey's inequality to estimate $\|\der_1a_{11}^*\|_{L^{\infty}(\Om_{R_*})}$ by $\|a_{11}^*\|_{H^{m-1}(\Om_{R_*})}$. Then, we use \eqref{G-cutoff prop 1} and the inductive assumption to get
\begin{equation}
\label{2:36}
 \|\der_1a_{11}^* \der_1^{k-1}V\eta\|_{L^2(\Om_{R_*})}\le \til{C}\|\til f\|_{H^{k-2}(\Om_{R_*})}
\end{equation}
By applying Sobolev inequality, Poincar\'{e} inequality and \eqref{2:36}, we get
\begin{equation*}
\begin{split}
  &\int_{\Om_{R_*}}|\eta^2\sum_{l=1}^{k-2}\der_1^la_{11}^*\der_1^{k-l}V\der_1^kV|\,d\rx\\
  &\le C(R) \|\eta\der_1^kV\|_{L^2(\Om_{R_*})}\left(\|\til f\|_{H^{k-2}(\Om_{R_*})}+\sum_{l=2}^{k-2}\|a_{11}^*\|_{H^{l+1}(\Om_{R_*})} \|D(\eta\der_1^{k-l}V)\|_{L^2(\Om_{R_*})}\right)\\
  &\le C(R,m)\|a_{11}^*\|_{H^{m-1}(\Om_{R_*})} \|\eta\der_1^kV\|_{L^2(\Om_{R_*})} \left(\|\til f\|_{H^{k-2}(\Om_{R_*})}+
  \sum_{l=2}^{k-2} \|D(\eta\der_1^{k-l}V)\|_{L^2(\Om_{R_*})}\right).
  \end{split}
\end{equation*}
Next, we use \eqref{G-cutoff prop 1} and the inductive assumption to further estimate as
\begin{equation}
 \int_{\Om_{R_*}}|\eta^2\sum_{l=1}^{k-2}\der_1^la_{11}^*\der_1^{k-l}V\der_1^kV|\,d\rx
 \le \til C \|\eta\der_1^kV\|_{L^2(\Om_{R_*})}\|\til f\|_{H^{k-2}(\Om_{R_*})}
\end{equation}
By a similar argument, one can estimate the rest of the term $T_3$ so we get
\begin{equation}
\label{Hk-4}
  \begin{split}
 |T_3|\le \til C \|\eta\der_1^kV\|_{L^2(\Om_{R_*})}\|\til f\|_{H^{k-2}(\Om_{R_*})}.
   \end{split}
\end{equation}

We combine all of \eqref{Hk-1}--\eqref{Hk-3} and \eqref{Hk-4} to obtain
\begin{equation*}
 \begin{split}
 &\int_{\Om_{R_*}} \der_1^{k-2}(\eps\der_{111}V+\mcl{L}^*V)\eta^2\der_1^k V\,d\rx\\
 &\ge \min\{\kappa_0, \frac{\om}{2}\}\|\eta D\der_1^{k-1}V\|^2_{L^2(\Om_{R_*})}-\til C \|\eta\der_1^kV\|_{L^2(\Om_{R_*})}\|\til f\|_{H^{k-2}(\Om_{R_*})}.
 \end{split}
\end{equation*}
So we conclude from \eqref{Hk-1} that
\begin{equation}
\label{Hk-6}
  \|\eta\der_1^kV\|_{L^2(\Om_{R_*})}\le \til C \|\til f\|_{H^{k-2}(\Om_{R_*})}.
\end{equation}
Then it follows from \eqref{G-cutoff prop 2} that
\begin{equation*}
\begin{split}
  \int_{\Om_{R_*}}|(e^{-\mu x_1}\der_1^{k-2}\til f)(\der_1 \zeta^2)\der_1^kV|\,d\rx
  &\le 2\|\til f\|_{H^{k-2}(\Om_{R_*})} \|\der_1\zeta\der_1^kV\|_{L^2(\Om_{R_*})}\\
  &\le \til C\|\til f\|_{H^{k-2}(\Om_{R_*})}^2.
  \end{split}
\end{equation*}
This estimate directly yields
\begin{equation}
\label{RHS}
|{\tx{RHS of \eqref{Hk-5}}}|\le (1+\mu)\|\til f\|_{H^{k-1}(\Om_{R_*})} \|\zeta\der_1^kV\|_{L^2(\Om_{R_*})}+\til C\|\til f\|_{H^{k-2}(\Om_{R_*})}^2.
\end{equation}


Now, it remains to estimate the left-hand side of \eqref{Hk-5}.
\medskip

{\textbf{Step 3.}} By lengthy but straightforward computations, we have
\begin{equation*}
  \begin{split}
  {\tx{LHS of \eqref{Hk-5}}}&=P_k(\mu)+\sum_{j=1}^4\mcl{R}_k^{(j)}(\mu)
  \end{split}
\end{equation*}
where $P_k(\mu)$ and $\mcl{R}_k^{(j)}$ are given as follows:
\begin{equation*}
  \begin{split}
  P_k(\mu)&:=\int_{\Om_{R_*}} \eps(\der_1^{k+1}V)^2\zeta^2 e^{-\mu x_1}+ \left(-\kz^*(k)+\mu a_{11}^*\right)e^{-\mu x_1}(\der_1^kV)^2\,d\rx\\
  &\phantom{=}+\int_{\Om_{R_*}}((2\mu \til a_{12}-2(k-1)\der_1\til a_{12})\der_1^{k-1}\der_2V\der_1^kV+\frac{3\mu}{2}(\der_1^{k-1}\der_2V)^2)\zeta^2 e^{-\mu x_1}\,d\rx;
  \end{split}
\end{equation*}

\begin{equation*}
  \begin{split}
  \mcl{R}_k^{(1)}(\mu):=-&\int_{\Om_{R_*}} (\der_1\zeta^2)(a_{11}^*{{e^{-\mu x_1}}})(\der_1^kV)^2 +(k-2)\der_1\left(\der_1a_{11}^*\zeta^2{{e^{-\mu x_1}}}\right)\der_1^{k-1}V\der_1^kV\,d\rx\\
  &+\int_{\Om_{R_*}} T_k
\zeta^2{{e^{-\mu x_1}}}\der_1^k(\der_1V)\,d\rx
  \end{split}
\end{equation*}
with
\begin{equation*}
 T_k :=\begin{cases}
 0\quad&\mbox{for $k=3$}\\
 \sum_{l=2}^{k-2}\binom{k-2}{l}\der_1^la_{11}^*\der_1^{k-l}V\quad&\mbox{for $3<k\le m$}
 \end{cases};
\end{equation*}

\begin{equation*}
  \begin{split}
 \mcl{R}_k^{(2)}(\mu):=& 2\int_{\Om_{R_*}} S_k\zeta^2{{e^{-\mu x_1}}}\der_1(\der_1^kV)-(k-2)\der_{11}\til a_{12}{{e^{-\mu x_1}}}\zeta^2 \der_1^{k-2}\der_2V\der_1^kV \,d\rx\\
   &-2\int_{\Om_{R_*}}\left(e^{-\mu x_1}\til a_{12}(\der_1\zeta^2 )\der_1^{k-1}\der_2V+(k-2)\der_1\til a_{12}\der_1(\zeta^2 {{e^{-\mu x_1}}})\der_1^{k-2}\der_2V\right)\der_1^kV\,d\rx
  \end{split}
\end{equation*}
with
\begin{equation*}
  S_k:=\begin{cases}
  0\quad&\mbox{for $k=3$}\\
 \sum_{l=2}^{k-2}\binom{k-2}{l}\der_1^l\til a_{12}(\der_1^{k-1-l}\der_2V)\quad&\mbox{for $3<k\le m$}
  \end{cases};
\end{equation*}

\begin{equation*}
  \begin{split}
 \mcl{R}_k^{(3)}(\mu):=&-\int_{\Om_{R_*}}\der_1^2(\zeta^2 e^{-\mu x_1})\der_1^{k-2}\der_2V\der_1^{k-1}\der_2V+e^{-\mu x_1}\der_1\zeta^2(\der_1^{k-1}\der_2V)^2\,d\rx;
  \end{split}
\end{equation*}

\begin{equation*}
  \begin{split}
  \mcl{R}_k^{(4)}(\mu):=-\int_{\Om_{R_*}}\der_1 (e^{-\mu x_1}\zeta^2)\der_1^{k-2}(a_1^*\der_1V)\der_1^kV +
  \zeta^2e^{-\mu x_1}\der_1^kV\sum_{l=1}^{k-1}\binom{k-1}{l}\der_1^la_1^*\der_1^{k-l}V\,d\rx.
  \end{split}
\end{equation*}

\medskip

{\textbf{Step 4.}} Estimate of $P_k(\mu)$:
By \eqref{G:KZ-condition-ext},
\begin{equation*}
  -\kz^*(k)+\mu a_{11}^*\ge \frac{\lambda}{2}-\mu\max_{\ol{\Om_{R_*}}}|a_{11}^*|\quad\tx{in $\Om_{R_*}$}.
\end{equation*}
So if the constant $\mu>0$ satisfies
\begin{equation}
\label{mu-condition 1}
  \mu\le \frac{\lambda}{4\max_{\ol{\Om_{R_*}}}|a_{11}^*|},
\end{equation}
then
$\displaystyle{-\kz^*(k)+\mu a_{11}^*\ge \frac{\lambda}{4}}$ holds in $\Om_{R_*}$.
In addition, if $\mu$ satisfies
\begin{equation}
\label{mu-condition 2}
  \mu\le \frac{\lambda}{16^2\max_{\ol{\Om_{R_*}}}|\til a_{12}|^2},
\end{equation}
then we get
$|2\mu \til a_{12}\der_1^{k-1}\der_2V\der_1^kV|\le
  \frac{\mu}{16}(\der_1^{k-1}\der_2 V)^2+\frac{\lambda}{16}(\der_1^kV)^2$ by Cauchy-Schwarz inequality, and this yields
\begin{equation}
\label{G:long-d}
\begin{split}
  P_k(\mu)\ge & \int_{\Om_{R_*}} \eps(\der_1^{k+1}V)^2\zeta^2 e^{-\mu x_1}\,d\rx\\
  &+\left(\min\left\{\frac{3\lambda}{16}, \frac{7\mu}{16}\right\}-(k-1)\|\der_1\til a_{12}\|_{L^{\infty}(\Om_{R_*})}\right)
  \int_{\Om_{R_*}}|D\der_1^{k-1}V|^2
  \zeta^2 e^{-\mu x_1}\,d\rx.
  \end{split}
\end{equation}
Therefore, one can fix a constant $\bar{\delta}>0$ sufficiently small depending on $(\lambda, m, R, \|a_{11}^*\|_{L^{\infty}(\Om_{R_*})})$, and fix $\mu>0$ to satisfy both \eqref{mu-condition 1} and \eqref{mu-condition 2} so that if
\begin{equation*}
  \|\til a_{12}\|_{H^{m-1}(\Om_{R_*})}\le \bar{\delta},
\end{equation*}
then it follows from \eqref{G:long-d} that
\begin{equation}
\label{coerceivity-ext}
\begin{split}
  P_k(\mu)\ge & \int_{\Om_{R_*}} \eps(\der_1^{k+1}V)^2\zeta^2 e^{-\mu x_1}\,d\rx\\
  &+\frac 12\min\left\{\frac{3\lambda}{16}, \frac{7\mu}{16}\right\}
  \int_{\Om_{R_*}}|D\der_1^{k-1}V|^2
  \zeta^2 e^{-\mu x_1}\,d\rx.
  \end{split}
\end{equation}
\medskip

{\textbf{Step 5.}} Estimate of $\mcl{R}_k^{(1)}(\mu)$: {\textbf{(5-1)}} First, it follows from \eqref{G-cutoff prop 2} and \eqref{Hk-6} that
\begin{equation*}
  \int_{\Om_{R_*}} |(\der_1\zeta^2)(a_{11}^*{{e^{-\mu x_1}}})(\der_1^kV)^2|\,d\rx\le \til C\|\til f\|_{H^{k-2}(\Om_{R_*})} \|\zeta\der_1^k V\|_{L^2(\Om_{R_*})}.
\end{equation*}
Note that
 \begin{equation*}
  \begin{split}
  &\der_1\left(\der_1a_{11}^*\zeta^2{{e^{-\mu x_1}}}\right)\der_1^{k-1}V\der_1^kV
 =\der_{11}a_{11}^*(\der_1^{k-1}V\zeta)\cdot (\der_1^kV\zeta)e^{-\mu x_1}+\der_1a_{11}^*\der_1(\zeta^2e^{-\mu x_1})\der_1^{k-1}V\der_1^kV.
  \end{split}
\end{equation*}
By applying \eqref{G-cutoff prop 2}, \eqref{Hk-6} and the inductive assumptions, we get
\begin{equation}
\label{Hk-7}
  \begin{split}
  &\int_{\Om_{R_*}} |(k-2)\der_1\left(\der_1a_{11}^*\zeta^2{{e^{-\mu x_1}}}\right)\der_1^{k-1}V\der_1^kV|\,d\rx\\
  &\le \til C \|\zeta\der_1^kV\| \left(\|\til f\|_{H^{k-2}(\Om_{R_*})}
  +\|\der_{11}a_{11}^*\zeta\der_1^{k-1}V\|_{L^2(\Om_{R_*})}\right).
  \end{split}
\end{equation}

{\textbf{(5-2)}} Note that $\der_{11}a_{11}^*\in H^{m-3}(\Om_{R_*})$. To estimate $\|\der_{11}a_{11}^*\zeta\der_1^{k-1}V\|_{L^2(\Om_{R_*})}$, we consider two cases separately.

(Case 1) If $m=4$, we apply the condition \eqref{new perturbation condition for m=4}, Poincar\'{e} inequality, \eqref{G-cutoff prop 1} and the inductive assumption to get
\begin{equation*}
  \begin{split}
  \|\der_{11}a_{11}^*\der_1^{k-1}V\zeta\|_{L^2(\Om_{R_*})}
  &\le \|\bar{a}_{11}^*\|_{C^2(\ol{\Om})}\|\zeta\der_1^{k-1}V\|_{L^2(\Om_{R_*})}
  +C(R)\bar{\delta}\|D(\der_1^{k-1}V\zeta)\|_{L^2(\Om_{R_*})}\\
  &\le \til C \|\til f\|_{H^{k-2}(\Om_{R_*})}+C(R)\bar{\delta}\|\zeta D\der_1^{k-1}V\|_{L^2(\Om_{R_*})}.
  \end{split}
\end{equation*}

{{(Case 2)}} If $m\ge 5$, then $\der_{11}a_{11}^*\in L^{\infty}(\Om_{R_*})$ by Morrey's embedding theorem, \eqref{G-cutoff prop 1} and the inductive assumption so we have
\begin{equation*}
\begin{split}
  \|\der_{11}a_{11}^*\zeta\der_1^{k-1}V\|_{L^2(\Om_{R_*})}
  &\le \|\der_{11}a_{11}^*\|_{L^{\infty}(\Om_{R_*})}\|\zeta\der_1^{k-1}V\|_{L^2(\Om_{R_*})}\\
  &\le \til C\|\til f\|_{H^{k-2}(\Om_{R_*})}.
  \end{split}
\end{equation*}
Back to \eqref{Hk-7}, we now have
\begin{equation*}
  \begin{split}
  &\int_{\Om_{R_*}}|(k-2)\der_1\left(\der_1a_{11}^*\zeta^2 {e^{-\mu x_1}}\right)\der_1^{k-1}V\der_1^kV|\,d\rx\\
&\le \til C\|\zeta\der_1^kV\|_{L^2(\Om_{R_*})}\left(\|\til f\|_{H^{k-2}(\Om_{R_*})}+\bar{\delta}\|\zeta D\der_1^{k-1}V\|_{L^2(\Om_{R_*})}\right).
  \end{split}
\end{equation*}

{\textbf{(5-3)}} Fix $k\ge 4$. Note that $T_k=0$ for $k<4$. By integration by parts,
\begin{equation*}
  \begin{split}
  \int_{\Om_{R_*}} T_k\zeta^2 e^{-\mu x_1}\der_1(\der_1^k V)\,d\rx
  &=-\int_{\Om_{R_*}} \der_1(T_k\zeta^2 e^{-\mu x_1})\der_1^k V\,d\rx\\
  &=A_1+A_2+A_3
  \end{split}
\end{equation*}
for
\begin{equation*}
  \begin{split}
  A_1&:=-\int_{\Om_{R_*}}\sum_{l=2}^{k-2}\binom{k-2}{l}(\der_1^{l+1}a_{11}^*)(\der_1^{k-l}V\zeta)e^{-\mu x_1} \zeta\der_1^kV\,d\rx\\
  A_2&:=-\int_{\Om_{R_*}}\sum_{l=2}^{k-2}\binom{k-2}{l}(\der_1^{l}a_{11}^*)(\der_1^{k-l+1}V\zeta)e^{-\mu x_1} \zeta\der_1^kV\,d\rx\\
  A_3&:=-\int_{\Om_{R_*}} \sum_{l=2}^{k-2}\binom{k-2}{l}(\der_1^{l}a_{11}^*)(\der_1^{k-l}V)\der_1(e^{-\mu x_1}\zeta^2) \der_1^kV\,d\rx.
  \end{split}
\end{equation*}

{\textbf{(5-4)}}
{\emph{Estimate of $A_1$:}}
If $l<k-2$, then $l+1\le m-2$ holds because $k\le m$. Therefore, we apply H\"{o}lder inequaltiy, Sobolev inequality and Poincar\'{e} inequality to get
\begin{equation*}
  \|(\der_1^{l+1}a_{11}^*)\zeta\der_1^{k-l}V\|_{L^2(\Om_{R_*})}
  \le C(R)\|a_{11}^*\|_{H^{m-1}(\Om_{R_*})}
  \|\der_1(\zeta\der_1^{k-l}V)\|_{L^2(\Om_{R_*})}.
\end{equation*}
Since $k-l+1\le k-1$ for all $l<k-2$, we apply the inductive assumption to get
\begin{equation}
\label{Hk-12}
  \sum_{l=2}^{k-3}\binom{k-2}{l}\|(\der_1^{l+1}a_{11}^*)\zeta\der_1^{k-l}V\|_{L^2(\Om_{R_*})}\le \til C\|\til f\|_{H^{k-2}(\Om_{R_*})}.
\end{equation}
For $l=k-2$,
\begin{equation}
\label{Hk-8}
  \|(\der_1^{l+1}a_{11}^*)\zeta\der_1^{k-l}V\|_{L^2(\Om_{R_*})}= \|\der_1^{k-1}a_{11}^*\zeta\der_1^{2}V\|_{L^2(\Om_{R_*})}
\end{equation}
If $k<m$, it follows from Sobolev inequality that $\|\der_1^{k-1}a_{11}^*\|_{L^4(\Om_{R_*})}\le C\|a_{11}^*\|_{H^{m-1}(\Om_{R_*})}$. Since $k\ge 4$ is assumed, by \eqref{G-cutoff prop 1} and the inductive assumption, we have
\begin{equation}
\label{Hk-9}
\begin{split}
\|(\der_1^{k-1}a_{11}^*)\zeta\der_1^{2}V\|_{L^2(\Om_{R_*})}
&\le
C(R)  \|a_{11}^*\|_{H^{m-1}(\Om_{R_*})}\|D(\zeta\der_1^{2}V)\|_{L^2(\Om_{R_*})}\\
&\le \til C\|\til f\|_{H^2(\Om_{R_*})}.
\end{split}
\end{equation}
If $k=m$,
we use \eqref{new perturbation condition for m=4} to get
\begin{equation}
\label{G:long-4}
\begin{split}
  &\|\der_1^{m-1}a_{11}^*\der_1^{2}V\zeta\|_{L^2(\Om_{R_*})}\\
  &\le \begin{cases}
   \|\der_{11}\bar{a}_{11}^*\|_{L^{\infty}({\Om_{R_*}})}\|\zeta\der_1^{2}V\|_{L^2(\Om_{R_*})}+
 C(R)\bar{\delta}\|\zeta\der_1^2V\|_{L^{\infty}(\Om_{R_*})}&\mbox{for $m=4$}\\
\|{a}_{11}^*\|_{H^{m-1}(\Om_{R_*})}\|\zeta\der_1^2V\|_{L^{\infty}(\Om_{R_*})}&\mbox{for $m>4$}
  \end{cases}.
  \end{split}
\end{equation}
Since $\zeta(0)=0$,
\begin{equation*}
  \begin{split}
  |\zeta\der_1^2V(x_1, x_2)|
  &\le \int_0^{x_1}|\der_1(\zeta\der_1^2V)(s, x_2)|\,ds\\
  &\le \sqrt{x_1}\left(\int_0^{x_1}|\der_1(\zeta\der_1^2V)(s, x_2)|^2\,ds\right)^{1/2}.
  \end{split}
\end{equation*}

Since $\der_2 V(x_1,-1)\equiv 0$, we can apply the trace inequality to get
\begin{equation*}
  \begin{split}
  \int_0^{x_1}|\der_1(\zeta\der_1^2V)(s, x_2)|^2\,ds
  &=\int_0^{x_1}|\der_1(\der_1^2V\zeta)(s,-1)+\int_{-1}^{x_2}\der_2\der_1(\zeta\der_1^2V)(s,t)\,dt|^2\,ds\\
  &\le 2\int_0^{x_1}|\der_1(\zeta\der_1^2V)(s,-1)|^2+|\int_{-1}^{x_2}\der_2\der_1(\zeta\der_1^2V)(s,t)\,dt|^2\,ds\\
  &\le C(R_*)\left(\|\der_1(\zeta\der_1^2V)\|_{L^2(\Om_{R_*})}+\|D(\der_1(\zeta\der_1^2V))\|_{L^2(\Om_{R_*})}\right)^2.
  \end{split}
\end{equation*}
Therefore, we obtain that
\begin{equation*}
  \|\der_1^2V\zeta\|_{L^{\infty}(\Om_{R_*})}\le C(R)\|\der_1(\zeta\der_1^2V)\|_{H^1(\Om_{R_*})}.
\end{equation*}

(Case 1) If $l=k-2$ and $k=m=4$, then we get
\begin{equation}
\label{Hk-10}
\begin{split}
  \|\der_1^{l+1}a_{11}^*\der_1^{k-l}V\zeta\|_{L^2(\Om_{R_*})} \le \til C\|\til f\|_{H^1(\Om_{R_*})}+C(R)\bar{\delta}\|\zeta D\der_1^3V\|_{L^2(\Om_{R_*})}.
  \end{split}
\end{equation}
\medskip

(Case 2) If $l=k-2$ and $k=m>4$, then we get
\begin{equation}
\label{Hk-11}
\begin{split}
  \|\der_1^{l+1}a_{11}^*\der_1^{k-l}V\zeta\|_{L^2(\Om_{R_*})} \le \til C\|\til f\|_{H^3(\Om_{R_*})}.
  \end{split}
\end{equation}

By combining all the estimates \eqref{Hk-8}--\eqref{Hk-11} with \eqref{Hk-12}, we obtain
\begin{equation*}
\begin{split}
  |A_1|\le \|\zeta \der_1^kV\|_{L^2(\Om_{R_*})}(\til C\|\til f\|_{H^{k-1}(\Om_{R_*})}+C(m,R)\bar{\delta}\|\zeta D\der_1^{k-1}V\|_{L^2(\Om_{R_*})}).
  \end{split}
\end{equation*}

One can similarly check that
\begin{equation*}
  \begin{split}
  |A_2|&\le \|\zeta \der_1^kV\|_{L^2(\Om_{R_*})}(\til C\|\til f\|_{H^{k-2}(\Om_{R_*})}+C(m,R)\bar{\delta}\|\zeta \der_1^{k}V\|_{L^2(\Om_{R_*})}),\\
  |A_3|&\le \til C \|\zeta \der_1^kV\|_{L^2(\Om_{R_*})} \|\til f\|_{H^{k-3}(\Om_{R_*})}.
  \end{split}
\end{equation*}
\medskip

{\textbf{(5-5)}} Finally, we conclude that
\begin{equation}
\label{R-1}
\begin{split}
  &|\mcl{R}_k^{(1)}(\mu)|\\
  &\le \|\zeta \der_1^kV\|_{L^2(\Om_{R_*})}\left(\til C\|\til f\|_{H^{k-2}(\Om_{R_*})}+C(m,R)\bar{\delta}\|\zeta D\der_1^{k-1}V\|_{L^2(\Om_{R_*})}\right).
  \end{split}
\end{equation}

{\textbf{Step 6.}} Estimate of $\mcl{R}_k^{(j)}$ for $j=2,3,4$:
By adjusting the arguments in step 5, and using the assumption
$$\|\til a_{12}\|_{H^{m-1}(\Om_{R_*})}\le C(R)\bar{\delta}$$
stated in \eqref{G:pert-condition-extension} and \eqref{G:pert-condition2-extension}, we have
\begin{equation*}
  \begin{split}
  &|\mcl{R}_k^{(2)}(\mu)|\\
  &\le C(m,R)\bar{\delta}\|\zeta\der_1^kV\|_{L^2(\Om_{R_*})}
  \left(\til C\|\til f\|_{H^{k-2}(\Om_{R_*})}+\|\zeta \der_1^{k-2}D^2 V\|_{L^2(\Om_{R_*})}\right).
  \end{split}
\end{equation*}

Using the equation $\eps\der_{111}V+\mcl{L}_*V=\til f$, we can express $\der_{22}V$ as
\begin{equation}
\label{Hk-13}
  \der_{22}V=\til f-\eps\der_{111}V-a_{11}^*\der_{11}V-2\til a_{12}\der_{12}V-a_1^*\der_1V.
\end{equation}
By straightforward computation with using the representation \eqref{Hk-13}, one can directly check that
\begin{equation*}
\begin{split}
  \|\zeta\der_1^{k-2}D^2V\|_{L^2(\Om_{R_*})}\le & \|\til f\|_{H^{k-2}(\Om_{R_*})}+\eps \|\zeta\der_1^{k+1}V\|_{L^2(\Om_{R_*})}\\
  &+C(\|(a_{11}^*, \til a_{12}, a_1^*)\|_{H^{m-1}(\Om_{R_*})})\|\zeta D \der_1^{k-1}V\|_{L^2(\Om_{R_*})}.
  \end{split}
\end{equation*}
So we can further estimate the term $|\mcl{R}_k^{(2)}(\mu)|$ as
\begin{equation}
\label{R-2}
\begin{split}
  &|\mcl{R}_k^{(2)}(\mu)|\\
  \le &\til C\bar{\delta}\|\zeta\der_1^kV\|_{L^2(\Om_{R_*})}
\left(\|\til f\|_{H^{k-2}(\Om_{R_*})}+\eps\|\zeta\der_1^{k+1}V\|_{L^2(\Om_{R_*})}+\|\zeta D\der_1^{k-1} V\|_{L^2(\Om_{R_*})}\right).
  \end{split}
\end{equation}
Finally, one can also easily check that
\begin{equation}
\label{R-3}
  \begin{split}
  |\mcl{R}^{(3)}_k|&\le \til C\|\zeta D\der_1^{k-1}V\|_{L^2(\Om_{R_*})}\|\til f\|_{H^{k-2}(\Om_{R_*})},\\
 |\mcl{R}^{(4)}_k|&\le \til C\|\zeta \der_1^{k}V\|_{L^2(\Om_{R_*})}\|\til f\|_{H^{k-2}(\Om_{R_*})}.
  \end{split}
\end{equation}
\medskip

{\textbf{Step 7.}} We use \eqref{RHS}, \eqref{coerceivity-ext} and \eqref{R-1}--\eqref{R-3}, and apply Cauchy-Schwarz inequality to derive from \eqref{Hk-5} that
\begin{equation}
\label{Hk-final1}
\begin{split}
&\int_{\Om_{R_*}} \eps(\der_1^{k+1}V)^2\zeta^2 e^{-\mu x_1}\,d\rx+\frac 12\min\left\{\frac{3\lambda}{16}, \frac{7\mu}{16}\right\}
  \int_{\Om_{R_*}}|D\der_1^{k-1}V|^2
  \zeta^2 e^{-\mu x_1}\,d\rx\\
&\le \til C\left((1+\mu)^2\|\til f\|^2_{H^{k-1}(\Om_{R_*})}+\bar{\delta}^2(\|\zeta D\der_1^{k-1}V\|_{L^2(\Om_{R_*})}+\eps\|\zeta\der_1^{k+1}V\|_{L^2(\Om_{R_*})})^2\right)
\end{split}
\end{equation}
provided that two positive constants $\bar{\delta}$ and $\mu$ are fixed to satisfy \eqref{mu-condition 1}, \eqref{mu-condition 2} and \eqref{coerceivity-ext}. And, we reduce the constant $\bar{\delta}>0$ further to conclude from \eqref{Hk-final1} that
\begin{equation}
\label{Hk-final2}
\begin{split}
&\int_{\Om_{R_*}} \eps(\der_1^{k+1}V)^2\zeta^2 e^{-\mu x_1}\,d\rx+\frac 14\min\left\{\frac{3\lambda}{16}, \frac{7\mu}{16}\right\}
  \int_{\Om_{R_*}}|D\der_1^{k-1}V|^2
  \zeta^2 e^{-\mu x_1}\,d\rx\\
&\le \til C(1+\mu)^2\|\til f\|^2_{H^{k-1}(\Om_{R_*})}.
\end{split}
\end{equation}
Note that the choice of $\mu$ is independent of $\bar{\delta}$.

Finally, we combine \eqref{Hk-13} with \eqref{Hk-final2} and the inductive assumption to get
\begin{equation*}
  \|\zeta \der_1^{k-2}\der_{22}V\|_{L^2(\Om_{R_*})}\le \til C\|\til f\|_{H^{k-1}(\Om_{R_*})}.
\end{equation*}
This completes the proof.

\end{proof}
\end{proposition}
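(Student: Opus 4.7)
My plan is to prove Proposition \ref{proposition:2.10} by induction on $k$, with the base case $k=2$ already supplied by Lemma \ref{G:lemma for pre H2 estimate-ext}. Fix $k\ge 3$ and assume \eqref{G:final H-k estimate} holds for all $j=2,\ldots,k-1$. I would proceed in two multiplier steps. First, I would test the differentiated equation $\der_1^{k-2}(\eps\der_{111}V+\mcl{L}^*V)=\der_1^{k-2}\til f$ against $(\eta_r^{(k)})^2\der_1^kV$ to obtain the auxiliary bound $\|\eta_r^{(k)}\der_1^kV\|_{L^2(\Om_{R_*})}\le \til C\|\til f\|_{H^{k-2}(\Om_{R_*})}$. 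Coercivity here comes from the uniform ellipticity of the leading symbol on $\mathrm{spt}\,\eta_r^{(k)}$, which is built into the extension via \eqref{G:ellipticity} and \eqref{G:ellipticity at R end}; every remainder term produced by Leibniz expansion is controlled by the inductive hypothesis together with the subordination $\mathrm{spt}\,\eta_r^{(k)}\subset\{\zeta_r^{(k-1)}=1\}$ from \eqref{G-cutoff prop 1}.

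The main step is then to test against $(\zeta_r^{(k)})^2 e^{-\mu x_1}\der_1^{k+1}V$ for a weight $\mu>0$ to be tuned. Integration by parts produces a principal coercive form
\[
P_k(\mu)=\int_{\Om_{R_*}}(\zeta_r^{(k)})^2 e^{-\mu x_1}\Bigl(\eps(\der_1^{k+1}V)^2+(-\kz^*_k+\mu a_{11}^*)(\der_1^kV)^2+\tfrac{3\mu}{2}(\der_1^{k-1}\der_2V)^2\Bigr)d\rx
\]
plus cross terms in $\til a_{12}$, together with four remainder groups $\mcl{R}_k^{(1)},\ldots,\mcl{R}_k^{(4)}$ from Leibniz expansions acting on $a_{11}^*$, $\til a_{12}$, the lower-order $\der_{22}$-pieces, and $a_1^*$ respectively. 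Choosing $\mu$ small enough so that $-\kz^*_k+\mu a_{11}^*\ge \lambda/4$ (possible by \eqref{G:KZ-condition-ext}) and using the smallness of $\|\til a_{12}\|_{H^{m-1}(\Om_{R_*})}\le C(R)\bar\delta$ from \eqref{G:pert-condition-extension} and \eqref{G:pert-condition2-extension} to absorb the mixed terms yields coercivity of $P_k(\mu)$ in $\sqrt{\eps}\der_1^{k+1}V$ and $\zeta_r^{(k)}D\der_1^{k-1}V$.

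The main obstacle lies in bounding $\mcl{R}_k^{(1)}$ and $\mcl{R}_k^{(2)}$, where the dangerous terms are $\der_1^{\ell}a_{11}^*\,\der_1^{k-\ell}V$ with $\ell$ close to $m-1$. For intermediate $\ell$ these are controlled by H\"older plus Sobolev and the inductive hypothesis on the larger set $\{\zeta_r^{(k-1)}=1\}$. The critical sub-case is $\ell=k-2$, and especially the extreme term $\der_1^{m-1}a_{11}^*\cdot\zeta_r^{(m)}\der_1^2V$ when $k=m$: since $\der_1^{m-1}a_{11}^*$ is only in $L^2$, no plain embedding suffices. I would resolve this by two devices: when $m>4$, invoke Morrey's embedding so $\der_{11}a_{11}^*\in L^\infty$, whereas when $m=4$ I would use the decomposition $a_{11}^*=\bar a_{11}^*+(\text{small in }H^3)$ from \eqref{new perturbation condition for m=4}, putting $\der_{11}\bar a_{11}^*$ in $L^\infty$ and multiplying the small remainder against $\|\zeta_r^{(k)}\der_1^2V\|_{L^\infty}$. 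The latter $L^\infty$ bound is upgraded via the fundamental theorem of calculus from $\Gamen$ (where $\zeta_r^{(k)}=0$) in the $x_1$-direction, combined with integration from $\Gamw$ (where $\der_2V=0$) in the $x_2$-direction and the trace inequality, producing $\|\zeta_r^{(k)}\der_1^2V\|_{L^\infty}\le C\|\der_1(\zeta_r^{(k)}\der_1^2V)\|_{H^1}$. The resulting top-order term then carries a factor of $\bar\delta$ and is absorbed into the LHS.

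Finally, after fixing $\mu$ and then shrinking $\bar\delta$ (independently of $\mu$) to absorb all such top-order remainders into $P_k(\mu)$, the main Cauchy--Schwarz application on $\int \der_1^{k-2}\til f\,(\zeta_r^{(k)})^2 e^{-\mu x_1}\der_1^{k+1}V\,d\rx$ closes the estimate for the $\sqrt{\eps}\der_1^{k+1}V$ and $\zeta_r^{(k)}\der_1^{k-1}DV$ norms. The remaining $\zeta_r^{(k)}\der_1^{k-2}\der_{22}V$ component of \eqref{G:final H-k estimate} I would recover algebraically by solving \eqref{Hk-13} for $\der_{22}V$, applying $\der_1^{k-2}$ via Leibniz, and estimating the right-hand side by the already-proved bounds and the inductive hypothesis. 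The bookkeeping obstacle throughout is ensuring that $\mu$ is chosen only in terms of $\lambda$ and $\|a_{11}^*\|_{L^\infty}$, while all remainder absorptions that require smallness of $\bar\delta$ can be done with $\bar\delta$ depending exactly on the quantities listed in $C^*$.
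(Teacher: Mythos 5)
Your proposal is correct and follows essentially the same route as the paper's proof: induction on $k$, the auxiliary multiplier $\eta_r^{(k)2}\der_1^kV$ to control $\|\eta_r^{(k)}\der_1^kV\|_{L^2}$, the main weighted multiplier $\zeta_r^{(k)2}e^{-\mu x_1}\der_1^{k+1}V$ with $\mu$ tuned via \eqref{G:KZ-condition-ext}, absorption of the $\til a_{12}$ and top-order $a_{11}^*$ remainders using $\bar\delta$, the $L^\infty$ bound on $\zeta\der_1^2V$ via the fundamental theorem of calculus from $\Gamen$ plus the trace argument from $\Gamw$, and recovery of the $\der_{22}$ component from the equation. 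No substantive differences from the paper's argument.
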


\subsection{Proof of Theorem \ref{theorem-1-full}}
\label{subsec:2.3}
Now, we prove the main theorem of Section \ref{Section:2}.


{\textbf{Step 1.}}  {\emph{Weak solution to \eqref{bvp-g}}}: Put
\begin{equation*}
  a_{21}:=a_{12},\quad a_{22}:=1\quad\tx{in $\Om$}.
\end{equation*}
Define $\mcl{B}_{\mcl{L}}:H^1(\Om)\times H^1(\Om)\rightarrow \R$ by
\begin{equation*}
  \mcl{B}_{\mcl{L}}[v,\phi]:=-\int_{\Om} \sum_{i,j=1}^2 a_{ij}\der_i v\der_j\phi+((\der_1 a_{11}+\der_2a_{12})\der_1 v+\der_1 a_{21}\der_2 v)\phi-a_1\phi\der_1 v\,d\rx.
\end{equation*}
If $v\in H^2(\Om)$ is a strong solution to \eqref{bvp-g}, then it holds that
\begin{equation}
\label{def:weak-sol_0}
  \mcl{B}_{\mcl{L}}[v,\phi]=\int_{\Om} f\phi\,d\rx
\end{equation}
for all $\phi\in C^{\infty}(\ol{\Om})$ with $\phi=0$ on $\Gamen\cup\Gamexg$.

\begin{definition}
\label{G:definition of weak sol}
(i) Given a function $f\in L^2(\Om)$, we call $v\in H^1(\Om)$ {\emph{a weak solution to \eqref{bvp-g}}} if \eqref{def:weak-sol_0} holds for all $\phi\in C^{\infty}(\ol{\Om})$ with $\phi=0$ on $\Gamen\cup\Gamexg$.

(ii) Given a function $f\in L^2(\Om)$, we call
$$v^{\eps}\in\{w\in H^1(\Om): \der_{11}w\in L^2(\Om)\}$$
{\emph{a weak solution to \eqref{bvp-aux}}} if
\begin{equation}
\label{def:weak-sol}
-\int_{\Om} \eps \der_{11}v^{\eps}\der_1\phi \,d\rx+  \mcl{B}_{\mcl{L}}[v^{\eps},\phi]=\int_{\Om} f\phi\,d\rx
\end{equation} holds for all $\phi\in C^{\infty}(\ol{\Om})$ with $\phi=0$ on $\Gamen\cup\Gamexg$.
\end{definition}

\quad\\
{\textbf{Step 2.}} {\emph{Partially smooth approximations of $(a_{11}^*, \til a_{12}, a_1^*)$ in $\Om_{R_*}$}}:
Put
\begin{equation*}
  \Om_{R_*}^{\rm ext}:=(-1, R_*+1)\times (-1,1).
\end{equation*}

For $(a_{11}^*, a_1^*, \til a_{12})$ given in \S \ref{subsec:2.2} as an extension of $(a_{11}, a_1, a_{12})$ onto $\Om_{R_*}$, we now define its smooth approximation in $\Om_{R_*}$.

For each $m\ge 4$, let $\mcl{E}_m: H^{m-1}(\Om_{R_*})\rightarrow H^{m-1}(\Om_{R_*}^{\rm ext})$ be an extension operator satisfying the following properties:
\begin{itemize}
\item[(i)] $\mcl{E}_m$ is linear and bounded with its norm $\|\mcl{E}_m\|$ depending only on $(m, R_*)$;
\item[(ii)] $\mcl{E}_m \vphi=0$ holds on $\der \Om_{R_*}^{\rm ext}\cap\{|x_2|=1\}$ whenever $\vphi=0$ on $\der\Om_{R_*}\cap\{|x_2|=1\}$.
\end{itemize}
This type of operator can be easily constructed by applying a method of higher-order reflection about $x_1=0$ and $x_1=R_*$, respectively. Let $\phi:\R\rightarrow \R$ be a smooth mollifier that satisfies the following conditions:
\begin{itemize}
\item[(i)] $\phi(x_1)\ge 0$ for all $x_1\in \R$;
\item[(ii)] ${\rm spt}\phi\subset [-1,1]$;
\item[(iii)] $\int_{\R}\phi(x_1)\,dx_1=1$.
\end{itemize}
For a constant $\tau>0$, define $\phi^{\tau}:\R\rightarrow \R$ by
\begin{equation*}
  \phi^{\tau}(x_1):=\frac{1}{\tau}\phi\left(\frac{x_1}{\tau}\right).
\end{equation*}

For $\tau\in(0,1)$, define
\begin{equation}
\label{smooth approx-coeff}
\begin{split}
a_{11}^{\tau}&:=(\mcl{E}_m a_{11}^*) * \phi^{\tau}\\
a_{12}^{\tau}&:=(\mcl{E}_m\til a_{12})*\phi^{\tau}\\
a_1^{\tau}&:=(\mcl{E}_m a_1^*)*\phi^{\tau}\\
f^{\tau}&:=(\mcl{E}_m f)*\phi^{\tau}.
  \end{split}
\end{equation}
Then we have
\begin{equation}
\label{convergence in sobolev sp}
\begin{split}
  \lim_{\tau\to 0+}\|(a_{11}^{\tau}, a_{12}^{\tau}, a_1^{\tau})-(a_{11}^*, \til a_{12}, a_1^*)\|_{H^{m-1}(\Om_{R_*})}&=0,\\
\tx{and}\,\,  \lim_{\tau\to 0+}\|(a_{11}^{\tau}, a_{12}^{\tau}, a_1^{\tau})-(a_{11}, a_{12}, a_1)\|_{H^{m-1}(\Om_{R})}&=0.
  \end{split}
\end{equation}
Since $m\ge 4$, it follows from Morrey's inequality and \eqref{convergence in sobolev sp} that $(a_{11}^{\tau}, a_{12}^{\tau}, a_1^{\tau})$ converges to $(a_{11}^*, \til a_{12}, a_1^*)$ in $C^1(\ol{\Om_{R_*}})$. This also implies that $(a_{11}^{\tau}, a_{12}^{\tau}, a_1^{\tau})$ converges to $(a_{11}, a_{12}, a_1)$ in $C^1(\ol{\Om_{R}})$. Therefore, it follows from \eqref{KZ-condition} and \eqref{G:KZ-condition-ext} that there exists a constant $\bar{\tau}\in(0,1)$ such that for every $\tau\in(0, \bar{\tau}]$, we have
\begin{equation}
\label{Kz for smooth coeff}
  \begin{split}
  a_1^{\tau}+\frac{2k-3}{2}\der_1 a_{11}\tau-\der_2a_{12}^{\tau}&\le -\frac{4\lambda}{5}\quad\tx{in $\Om$}\\
   a_1^{\tau}+\frac{2k-3}{2}\der_1 a_{11}\tau-\der_2a_{12}^{\tau}&\le -\frac{2\lambda}{5}\quad\tx{in $\Om_{R_*}$}.
  \end{split}
\end{equation}

{\textbf{Step 3.}} {\emph{Galerkin's approximations}}: For each $\tau\in (0, \bar{\tau}]$, define
\begin{equation*}
\mcl{L}^{\tau}w:=a_{11}^{\tau}\der_{11}w+2a_{12}^{\tau}\der_{12}w+\der_{22}w+a_1^{\tau}\der_1w\quad\tx{in $\Om_{R_*}$}.
\end{equation*}

As an approximate boundary value problem of \eqref{bvp-aux}, consider
\begin{equation}\label{bvp-approx1}
  \begin{cases}
  \eps\der_{111}v+\mcl{L}^{\tau}v=f^{\tau}\quad&\mbox{in $\Om$}\\
  v=0,\quad \der_1 v=0 \quad&\mbox{on $\Gamen$}\\
  \der_2v=0\quad&\mbox{on $\Gamw$}\\
  \der_{11}v=0\quad&\mbox{on $\Gamexg$}.
  \end{cases}
\end{equation}

For
\begin{equation*}
  \Gam:=\{x_2\in \R: |x_2|<1\},
\end{equation*}
let $\{\eta_j\}_{j=1}^{\infty}$ be the set of all eigenfunctions of an eigenvalue problem:
\begin{equation}
\label{evp-for galerkin}
  -\eta''=\lambda\eta\,\,\tx{on $\Gam$}, \quad
  \eta'=0\,\,\tx{on $\der\Gam=\{\pm 1\}$}.
\end{equation}
One can take the set $\{\eta_k\}_{k=0}^{\infty}$ such that it forms
\begin{itemize}
\item[(i)] an orthonormal basis of $L^2(\Gam)$,
\item[(ii)] and an orthogonal basis of $H^k(\Gam)$ for $k=1,2$.
\end{itemize}
Note that every $\eta_k$ is smooth on $[-1,1]$.
Define $\langle \cdot, \cdot\rangle$ to be the standard scalar product in $L^2(\Gam)$, that is,
\begin{equation*}
  \langle \xi, \eta \rangle:=\int_{\Gam} \xi(x_2)\eta(x_2)\,dx_2.
\end{equation*}
Fix $n\in \mathbb{N}$. As an $n$-dimensional approximation of a solution to \eqref{bvp-approx1}, set
\begin{equation}
\label{galerkin-n}
\begin{split}
 v_n(x_1, x_2):=\sum_{j=1}^n \vartheta_j(x_1) \eta_j(x_2)
  \end{split}
\end{equation}
for $\rx=(x_1, x_2)\in \Om$.
We shall determine $\vartheta_j$ for $j=1,\cdots, n$ to satisfy
\begin{equation}
\label{G:galerkin-eqns}
  \langle \eps \der_{111}v_n+\mfrak{L}^{\tau}v_n, \eta_k\rangle=\langle f^{\tau}, \eta_k \rangle\quad\tx{for $0<x_1<R$}
\end{equation}
for all $k=1,...,n$, and
\begin{equation}
\label{G:galerkin-bcs}
  \begin{cases}
  v_n=0, \quad \der_1 v_n=0\quad&\tx{on $\Gamen$}\\
\der_{11}v_n=0\quad&\tx{on $\Gam_R$}.
  \end{cases}
\end{equation}
From \eqref{G:galerkin-eqns}--\eqref{G:galerkin-bcs}, we derive a boundary value problem for ${\bm \Theta}_n(x_1):=(\vartheta_1,\cdots, \vartheta_n)(x_1)$ as follows: $\forall k=1,\cdots,n$,
\begin{equation}
\label{G:Galerkin-ODE}
  \begin{cases}
  \eps\vartheta'''_k-\lambda_k\vartheta_k+\sum_{j=0}^n a_{11}^{\tau,jk}\vartheta''_j+(2a_{12}^{\tau, jk}+a_1^{\tau,jk})\vartheta'_j=f^{\tau,k}\quad \tx{for $0<x_1<R$}\\
  \vartheta_k(0)=\vartheta_k'(0)=\vartheta''_k(R)=0
  \end{cases}
\end{equation}
for
\begin{equation*}
  \begin{split}
  a_{11}^{\tau,jk}(x_1)&:=\langle a_{11}(x_1, \cdot)\eta_j, \eta_k\rangle\\
  a_{12}^{\tau,jk}(x_1)&:=\langle a_{12}(x_1, \cdot)\eta_j', \eta_k\rangle\\
  a_{1}^{\tau,jk}(x_1)&:=\langle a_{1}(x_1, \cdot)\eta_j, \eta_k\rangle\\
  f^{\tau,k}(x_1)&:=\langle f^{\tau}(x_1, \cdot)\eta_j, \eta_k\rangle .
  \end{split}
\end{equation*}
Note that $a_{11}^{\tau,jk}$, $a_{12}^{\tau,jk}$, $ a_{1}^{\tau,jk}$ and $f^{\tau,k}$ are smooth for $x_1\in[0,R_*]$.
\medskip

Define ${\bf X}_l^n:[0,R]\rightarrow \R^{n\times 1}$ for $l=1,2,3$ by
\begin{equation*}
  {\bf X}_1^n:=(\vartheta_1, \cdots, \vartheta_n)^T,\quad
  {\bf X}_2^n:={\bf X}'_1,\quad
  {\bf X}_3^n:={\bf X}''_1.
\end{equation*}
Set ${\bf X}^n: [0, R]\rightarrow \R^{3n\times 1}$ as
${\bf X}^n:=\begin{bmatrix}{\bf X}_1^n\\  {\bf X}_2^n\\ {\bf X}_3^n\end{bmatrix}$.
From the first line of \eqref{G:Galerkin-ODE},
\begin{equation*}
  \frac{d}{dx_1}{\bf X}_3^n=\frac{1}{\eps}\left( \mathbb{A}_1^{\tau, n}{\bf X}_1^n+\mathbb{A}_2^{\tau, n}{\bf X}_2^n+ \mathbb{A}_3^{\tau, n}{\bf X}_3^n+[f^{\tau,i}]_{i=1}^n\right)
\end{equation*}
for
\begin{equation*}
  \mathbb{A}_1^{\tau, n}:=[\lambda_i\delta_{ij}]_{i,j=1}^n,\quad \mathbb{A}_2^{\tau, n}:=-[2a_{12}^{\tau,ji}+a_1^{\tau,ji}]_{i,j=1}^n,\quad \mathbb{A}_3^{\tau, n}:=-[a_{11}^{\tau,ji}]_{i,j=1}^n.
\end{equation*}
So we can rewrite the first line of \eqref{G:Galerkin-ODE} as a first order linear system for ${\bf X}$ in the form of
\begin{equation*}
  \frac{d}{dx_1}{\bf X}^n=\mathbb{A}_{\eps}^{\tau, n}{\bf X}^n+{\bf F}^{\tau,n}_{\eps}
\end{equation*}
for smooth matrices $\mathbb{A}_{\eps}^{\tau, n}:[0, R]\rightarrow \R^{3n\times 3n}$ and ${\bf F}^{\tau,n}_{\eps}:[0, R]\rightarrow \R^{3n\times 1}$.
\medskip

Next, define a projection mapping $\Pi:\R^{3n\times 1}\rightarrow \R^{3n\times 1}$ by
\begin{equation*}
  \Pi{\bf X}^n:=\begin{bmatrix}{\bf X}_1^n\\  {\bf X}_2^n\\ {\bf 0}\end{bmatrix}.
\end{equation*}
Clearly, ${\bf X}^n$ yields a solution $v_n$ to \eqref{G:galerkin-eqns}--\eqref{G:galerkin-bcs} if and only if
\begin{equation}
\label{G:X-eqn}
  {\bf X}^n(x_1)=\Pi\int_0^{x_1}(\mathbb{A}_{\eps}^{\tau, n}{\bf X}^n+{\bf F}^{\tau,n}_{\eps})(t)\,dt
  +({\rm Id}-\Pi)\int_R^{x_1}(\mathbb{A}_{\eps}^{\tau, n}{\bf X}^n+{\bf F}^{\tau,n}_{\eps})(t)\,dt.
\end{equation}
Define a linear operator $\mfrak{K}: C^1([0, R];\R^{3n\times 1})\rightarrow C^1([0, R];\R^{3n\times 1})$ by
\begin{equation*}
  \mfrak{K}{\bf X}^n(x_1):=\Pi\int_0^{x_1}\mathbb{A}_{\eps}^{\tau, n}{\bf X}^n(t)\,dt
  +({\rm Id}-\Pi)\int_R^{x_1}\mathbb{A}_{\eps}^{\tau, n}{\bf X}^n(t)\,dt
\end{equation*}
to rewrite \eqref{G:X-eqn} as
\begin{equation*}
  ({\rm Id}-\mfrak K){\bf X}^n=\Pi\int_0^{x_1}{{\bf F}^{\tau,n}_{\eps}}(t)\,dt
  +({\rm Id}-\Pi)\int_L^{x_1}{{\bf F}^{\tau,n}_{\eps}}(t)\,dt.
\end{equation*}
Due to the smoothness of $\mathbb{A}_{\eps}^{\tau, n}$ and ${\bf F}^{\tau,n}_{\eps}$, the operator $\mfrak{K}$ is compact. Furthermore, if ${\bf X}^n\in C^1$ solves \eqref{G:X-eqn}, then
\begin{itemize}
\item[(i)] ${\bf X}^n\in C^{\infty}([0,R])$,
\item[(ii)] and the corresponding $v_n\in C^{\infty}(\ol{\Om})$ solves \eqref{G:galerkin-eqns}--\eqref{G:galerkin-bcs}.
\end{itemize}
So we can adjust the proof of Lemma \ref{lemma G:wp of singular pert prob-main} to show that
\begin{equation}
\label{G:Gal-est-H1}
  \begin{split}
  &\sqrt{\eps}\|\der_{11}v_n\|_{L^2(\Om)}+\|\der_2 v_n\|_{L^2(\Gamexg)}
  +\|v_n\|_{H^1(\Om)}\\
  &\le C(\lambda, \|a_{11}\|_{L^{\infty}(\Om)}, R) \|f\|_{L^2(\Om)}
  \end{split}
\end{equation}
for all $\eps\in(0,\bar{\eps}]$, $n\in \mathbb{N}$ and $\tau\in(0, \bar{\tau}]$.
Then, it follows from the Fredholm alternative theorem that, for each $\eps\in(0,\bar{\eps}]$, $\tau\in(0, \bar{\tau}]$ and $n\in \mathbb{N}$, \eqref{G:Galerkin-ODE} has a unique smooth solution.
In addition, we adjust the proof of Lemma \ref{G:lemma for pre H2 estimate of vm, part 2} to show that, for each $\eps\in(0,\bar{\eps}]$, $\tau\in(0, \bar{\tau}]$ and $n\in \mathbb{N}$,
\begin{equation}
\label{G:Gal-est-H2}
\begin{split}
  &\sqrt{\eps} \|\der_{111} v_n\|_{L^2(\Om\cap\{x_1>2r\})}
  +\|D^2v_n\|_{L^2(\Om\cap\{x_1>2r\})}\\
 & \le C(\kappa_0,\lambda, \|(a_{11}, a_{12}, a_1)\|_{L^{\infty(\Om)}}, R, r)\|f\|_{H^1(\Om)}
\end{split}
\end{equation}
for any $r\in(0, \frac{R}{8}]$.
\medskip

{\textbf{Step 4.}}
For any given $\eps\in(0,\bar{\eps}]$ and $\tau\in(0, \bar{\tau}]$, we now have a sequence $\{v_n^{(\eps,\tau)}\}\subset C^{\infty}(\ol{\Om})$ such that
\begin{itemize}
\item[-] $v_n^{(\eps,\tau)}$ solves \eqref{G:galerkin-eqns}--\eqref{G:galerkin-bcs}.
\item[-] $\{v_n^{(\eps,\tau)}\}$ is bounded in $H^1(\Om)$, and $\{\der_{11}v_n^{(\eps,\tau)}\}$ is bounded in $L^2(\Om)$ (see \eqref{G:Gal-est-H1}).
\item[-] $\forall r\in(0, \frac R8]$, $\{v_n^{(\eps,\tau)}\}$ is bounded in $H^2(\Om\cap \{x>2r\})$, and $\{\der_{111}v_n^{(\eps,\tau)}\}$ is bounded in $L^2(\Om\cap\{x_1>2r\})$ (see \eqref{G:Gal-est-H2}).
\end{itemize}
Therefore, by a diagonal process, one can take a subsequence $\{v_{n_j}^{(\eps, \tau)}\}$ of $\{v_n^{(\eps, \tau)}\}$, and find a function $v^{(\eps, \tau)}\in H^1(\Om)\cap H^2_{{\rm loc}}(\Om)$ such that
\begin{itemize}
\item[(i)] $v_{n_j}^{(\eps, \tau)} \rightharpoonup v^{(\eps, \tau)}$ in $L^2(\Gam_0)\cap H^1(\Om)$, and $\der_{11} v_{n_j}^{(\eps, \tau)}\rightharpoonup \der_{11} v^{(\eps, \tau)}$ in $L^2(\Om)$;
\item[(ii)] $v_{n_j}^{(\eps, \tau)} \rightharpoonup v^{(\eps, \tau)}$ in $H^2(\Om\cap \{x_1>2r\})$, and  $\der_{111} v_{n_j}^{(\eps, \tau)}\rightharpoonup \der_{111} v^{(\eps, \tau)}$ in $L^2(\Om\cap\{x_1>2r\})$ for all $r\in(0, \frac R8]$.
\end{itemize}
It follows from (i) that
\begin{equation*}
  -\int_{\Om} \eps\der_{11}v^{(\eps, \tau)}\der_1\phi\,d\rx+\mcl{B}_{\mcl{L^{\tau}}}[v^{(\eps, \tau)},\phi]=\int_{\Om} f^{\tau}\phi\,d\rx
\end{equation*}
for all $\phi\in C^{\infty}(\ol{\Om})$ with $\phi=0$ on $\Gamen\cup\Gamexg$, and that
\begin{equation*}
  v^{(\eps, \tau)}=0\quad\tx{on $\Gam_0$ in the trace sense}.
\end{equation*}
And, it follows from (ii) that
\begin{equation*}
\begin{split}
  \eps\der_{111}v^{(\eps, \tau)}+\mcl{L}^{\tau}v^{(\eps, \tau)}=f^{\tau}\quad &\tx{a.e. in $\Om$},\\
  \der_2 v^{(\eps, \tau)}=0\quad &\tx{on $\Gamw$ in the trace sense}.
  \end{split}
\end{equation*}
Note that the estimates \eqref{G:Gal-est-H1} and \eqref{G:Gal-est-H2} hold for $v^{(\eps, \tau)}$. Therefore, one can take a sequence $\{(\eps_n ,\tau_n)\}\subset (0, \bar{\eps}]\times (0, \bar{\tau}]$, and find a function $u\in H^1(\Om)\cap H^2_{\rm loc}(\Om)$ such that
\begin{itemize}
\item[(i)] $(\eps_n ,\tau_n)\rightarrow (0,0)$ as $n\to \infty$;
\item[(ii)] $v^{(\eps_n ,\tau_n)} \rightharpoonup u$ in $L^2(\Gam_0)\cap H^1(\Om)$;
\item[(iii)] $\eps_n\der_{11}v^{(\eps_n ,\tau_n)} \rightarrow 0$ in $L^2(\Om)$;
\item[(iv)] $v^{(\eps_n ,\tau_n)} \rightharpoonup u$ in $H^2(\Om\cap \{x_1>2r\})$;
\item[(v)] $\eps_n\der_{111} v^{(\eps_n ,\tau_n)}\rightarrow 0$ in $L^2(\Om\cap\{x_1>2r\})$ for all $r\in(0, \frac R8]$.
\end{itemize}
Then it follows from (ii) and (iii) that
\begin{equation}
\label{G:v_weak sol}
  \mcl{B}_{\mcl{L}}[u,\phi]=\int_{\Om} f\phi\,d\rx
\end{equation}
for all $\phi\in C^{\infty}(\ol{\Om})$ with $\phi=0$ on $\Gamen\cup\Gamexg$, and that
\begin{equation*}
  u=0\quad\tx{on $\Gam_0$ in the trace sense}.
\end{equation*}
And, it follows from (iv) and (v) that
\begin{equation*}
\begin{split}
\mcl{L}u=f\quad &\tx{a.e. in $\Om$},\\
  \der_2 u=0\quad &\tx{on $\Gamw$ in the trace sense}.
  \end{split}
\end{equation*}

\medskip

{\textbf{Step 5.}} It follows from \eqref{G:ellipticity} and \eqref{G:v_weak sol} that $v$ is a weak solution to a uniformly elliptic equation in $\Om\cap\{x_1<R_1\}$. By applying a standard elliptic theory along with the compatibility conditions \eqref{comp for even ext} and \eqref{comp for odd ext}, one can show that
\begin{equation}
\label{G:final estimate 1}
  \|u\|_{H^m(\Om\cap\{x_1<\frac{R_1}{2}\})}\le C(m,R_1, \kappa_0, \|(a_{11},a_{12},a_1)\|_{H^{m-1}(\Om)})\|f\|_{H^{m-2}(\Om)}.
\end{equation}
\medskip

{\textbf{Step 6.}} By applying Lemmas \ref{lemma G:wp of singular pert prob-ext} and \ref{G:lemma for pre H2 estimate-ext}, we can also show that the boundary value problem \eqref{bvp-aux-ext}
has a weak solution $V\in H^1(\Om_{R_*})\cap H^2_{{\rm loc}}(\Om_{R_*})$ that satisfies
\begin{equation*}
  \mcl{B}_{\mcl{L}^*}[V,\phi]=\int_{\Om_{R_*}} \til f \phi\,d\rx
\end{equation*}
for all $\phi\in C^{\infty}(\ol{\Om_{R_*}})$ with $\phi=0$ on $\Gamen\cup\Gam_{R_*}$, and  $V$ is a strong solution to \eqref{bvp-aux-ext}. Then it follows from Lemma \ref{lemma:extension-consistence} that
\begin{equation}
\label{v is V}
  u=V\quad\tx{in $\Om$}.
\end{equation}

In addition, by a limiting process with applying Proposition \ref{proposition:2.10}, we get
\begin{equation}
\label{G:Hm estimate1 of v}
\|V\|_{H^1(\Om_{R_*})}+\|D^2\der_1^{k-2}V\|_{L^2(\Om_{R_*}\cap\{r<x_1<R_*-r\})}\le \til{C}\|\til f\|_{H^{k-1}(\Om_{R_*})}
\end{equation}
for all $k=2,\cdots, m$, and any $r>0$ sufficiently small.

Fix a constant $\beta_0>0$ sufficiently large to satisfy
\begin{equation*}
\begin{pmatrix}
  a^*_{11}+\beta_0&\til a_{12}\\
  \til a_{12}&1
  \end{pmatrix}\ge \frac{\min\{\beta_0, 1\}}{2}\mathbb{I}_2\quad \tx{in $\ol{\Om_{R_*}}$}.
\end{equation*}
We can regard $V$ as a strong solution to the following uniformly elliptic equation:
\begin{equation*}
  (a_{11}^*+\beta_0)\der_{11}V+2\til a_{12}\der_{12}V+\der_{22}V+a_1\der_1V=\til F\quad\tx{in $\Om_{R_*}$}.
\end{equation*}
for
\begin{equation*}
  \til F:=\til f+\beta_0\der_{11}V.
\end{equation*}
Then, by applying standard elliptic theory together with \eqref{G:Hm estimate1 of v} and bootstrap arguments, we obtain
\begin{equation*}
  \|V\|_{H^m(\Om_{R_*}\cap\{r<x_1<R_*-r\})}\le \til{C}\|\til f\|_{H^{m-1}(\Om_{R_*})}.
\end{equation*}
Then it follows from \eqref{v is V} that
\begin{equation*}
  \|u\|_{H^m(\Om\cap\{x_1>\frac{R_1}{4}\})}\le \til{C}\|f\|_{H^{m-1}(\Om)}.
\end{equation*}
By combining \eqref{G:final estimate 1} with this estimate, we complete the proof. \qed

\section{Accelerating transonic solution to the steady Euler-Poisson system}\label{Section:3}
In this section, we present a precise version of Theorem \ref{theorem-2-pre}, and give its proof.

\subsection{Preliminary}

Suppose that $(\rho, {\bf u}, S, \Phi)$ is a classical solution to \eqref{steadyEP0}, and satisfies $\rho>0$ and $u_1>0$, where we set the velocity vector field ${\bf u}$ as ${\bf u}(\rx):=u_1(\rx){\bf e}_1+u_2(\rx){\bf e}_2$ for ${\rx}=(x_1, x_2)\in \R^2$. Here, ${\bf e}_j(j=1,2)$ represents the unit vector in the positive $x_j$-direction. we define the vorticity function $\om$ by
\begin{equation*}
\om(\rx):=\der_{x_1}u_2-\der_{x_2}u_1.
\end{equation*}
Then $(\rho, {\bf u}, S, \Phi)$ solves
\begin{equation}\label{steadyEP}
  \left\{
\begin{aligned}
& \nabla\cdot (\rho \bu)=0, \\
&\om=\frac{1}{u_1}\left(\frac{\rho^{\gam-1}\der_{x_2} S}{(\gam-1)}-\der_{x_2}(\msB-\Phi)\right), \\
& {\bf m}\cdot \nabla S=0,\\
& {\bf m}\cdot \nabla(\msB-\Phi)=0,\\
& \Delta\Phi=\rho-\barrhoi,
\end{aligned}%
\right.
\end{equation}
where ${\bf m}$ and $\msB$ are given by
$$
{\bf m}:=\rho{\bf u},\quad\tx{and}\quad
\msB:=\frac{|{\bf u}|^2}{2}+\frac{\gam S\rho^{\gam-1}}{\gam-1}.
$$
Note that \eqref{steadyEP0} is equivalent to \eqref{steadyEP} as long as $\rho>0$ and $u_1>0$ hold.

If $(\bar{\rho}, {\bf u}, \bar{S}, \bar{\Phi})(x_1)$ with ${\bf u}=u_1{\bf e}_1$ is a one-dimensional solution to \eqref{steadyEP}, then it can be obtained by solving the ODE system
\begin{equation*}
  \left\{
\begin{aligned}
& \bar{\rho}\bar u_1=J \\
&\bar S=S_0\\
&\frac{\bar u_1^2}{2}+\frac{\gam \bar S\bar{\rho}^{\gam-1}}{\gam-1}-\bar{\Phi}=\kappa_0\\
& \bar{\Phi}''=\bar{\rho}-\barrhoi
\end{aligned}%
\right.
\end{equation*}
for some constants $J>0$, $S_0>0$ and $\kappa_0\in \R$.
By setting $\bar E:=\bar{\Phi}'$, one can rewrite this system as
\begin{equation*}
\bar{\rho}=\frac{J}{\bar u_1},\quad
   \bar S=S_0,\quad
  \begin{cases}\bar u_1'=\frac{\bar E \bar u_1^{\gam}}{\bar u_1^{\gam+1}-\gam S_0J^{\gam-1}}\\
  \bar E'=\frac{J}{\bar u_1}-\barrhoi
  \end{cases}.
\end{equation*}
In the following, the constants $\gam >1$, $J>0$, and $S_0>0$ are fixed.

Define two constants $\barui$ and $\us$ by
\begin{equation*}
  \barui:=\frac{J}{\barrhoi},\quad\tx{and}\quad \us:=\left(\gam S_0J^{\gam-1}\right)^{\frac{1}{\gam+1}}.
\end{equation*}
For later use, we also define a parameter $\zeta_0$ by
\begin{equation}\label{definition of zeta0}
  \zeta_0:=\frac{\barui}{\us}.
\end{equation}
In this paper, we assume that
\begin{equation}
\label{condition for zeta0}
\zeta_0>1.
\end{equation}

Suppose that $(\bar u_1, \bar E)$ is a $C^1$-solution to
\begin{equation}
\label{EP-1d-reduced}
   \begin{cases}
  \bar u_1'=\frac{\bar E \bar u_1^{\gam}}{\bar u_1^{\gam+1}-\us^{\gam+1}},\\
  \bar E'=\frac{J}{\bar u_1}-\barrhoi.
  \end{cases}
\end{equation}
In addition, suppose that
\begin{equation*}
(\bar u_1, \bar{E})(\ls)=(\us, 0)
\end{equation*}
for some constant $\ls>0$. Next, we define a function $H:(0,\infty)\rightarrow \R$ by
\begin{equation}\label{definition of H}
  H(u)=\int_{\us}^{u} \frac{J}{\barui t^{\gam+1}}(t^{\gam+1}-\us^{\gam+1})
\left(\barui-t\right)dt.
\end{equation}
Also, define a function $\mfrak{H}:(0,\infty)\times \R\rightarrow \R$ by
\begin{equation*}
\mfrak{H}(u, E):=\frac{1}{2}E^2-H(u).
\end{equation*}
Then it can be directly checked by straightforward computations that $(\bar u_1, \bar E)$ satisfies
\begin{equation}
\label{1d-Hamiltonian}
  \mfrak{H}(\bar u_1, \bar E)\equiv 0
\end{equation}
as long as it exists.
\smallskip

\begin{psfrags}
\begin{figure}[htp]
\centering
\psfrag{A}[cc][][0.8][0]{$\phantom{aaaa}(u_0, E_0)$}
\psfrag{B}[cc][][0.8][0]{$u_s$}
\psfrag{C}[cc][][0.8][0]{$\barui$}
\psfrag{u}[cc][][0.8][0]{$u$}
\psfrag{E}[cc][][0.8][0]{$E$}
\includegraphics[scale=0.5]{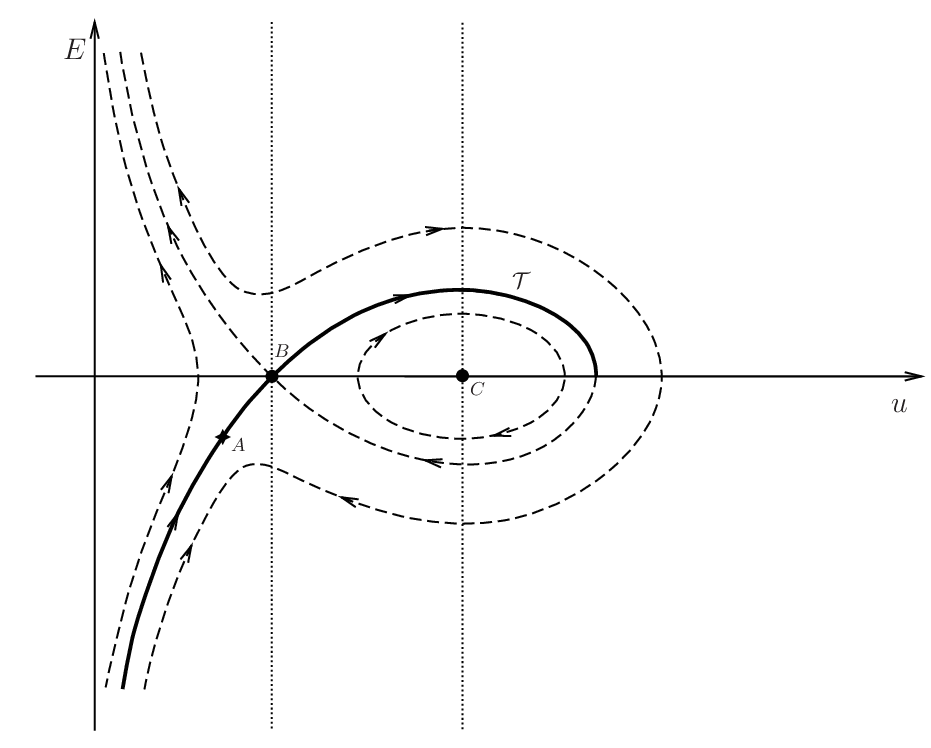}
\caption{The critical trajectory}\label{figure1}
\end{figure}
\end{psfrags}
Define
\begin{equation*}
\mcl{T}:=\{(u, E):\mfrak{H}(u, E)=0\}.
\end{equation*}
We refer to the set $\mcl{T}$ as \emph{critical trajectory} of the system \eqref{EP-1d-reduced}(see Figure \ref{figure1}).
We define a subset $\mcl{T}_{\rm{acc}}$ by
\begin{equation}
\label{definition of T and Tpm}
  \mcl{T}_{\rm{acc}}=\{(u, E)\in\mcl{T}:  (u-u_s) E \ge 0 \}.
\end{equation}

\begin{lemma}
\label{lemma-1d-full EP}
Given constants $\gam> 1$, $J>0$ and $S_0>0$, assume that the condition \eqref{condition for zeta0} holds.
Then, the set $\mcl{T}_{\rm{acc}}$ represents the trajectory of an accelerating smooth transonic solution to \eqref{EP-1d-reduced} in the following sense: for any fixed $(u_0, E_0)\in \mcl{T}_{\rm{acc}}$ with $u_0<\us$, there exists a finite constant $l_{\rm max}>0$ depending only on $(\gam, J, S_0, \barui, u_0)$ so that the initial value problem
\begin{equation}
\label{1d-ivp}
\begin{split}
   &\begin{cases}
  \bar u_1'=\frac{\bar E \bar u_1^{\gam}}{\bar u_1^{\gam+1}-\us^{\gam+1}}\\
  \bar E'=\frac{J}{\bar u_1}-\barrhoi
  \end{cases}
\tx{for $x_1>0$},\\
&(\bar u_1, \bar E)(0)=(u_0, E_0)
\end{split}
\end{equation}
has a unique smooth solution for $x_1\in[0, l_{\rm max})$ with satisfying the following properties:
\begin{itemize}
\item[(i)] $\bar{u}_1'(x_1)>0$ on $[0, l_{\rm max})$.
\item[(ii)] $\displaystyle{\lim_{x_1\to l_{\rm max}-}\bar u_1'(x_1)=0}$.
\item[(iii)] $\displaystyle{\mcl{T}_{\rm acc}\cap\{(\bar u_1, \bar E)(x_1):0\le x_1\le l_{\rm max}\}=\mcl{T}_{\rm acc}\cap \{(u, E):u\ge u_0\}}$.
\item[(iv)] There exists a unique constant $\ls\in(0, l_{\rm max})$ depending only on $(\gam, J, S_0, \barui, u_0)$ such that $\bar u_1$ satisfies
    \begin{equation}
    \label{definition:ls}
      \bar u_1(x_1)\begin{cases}
      <\us\quad &\mbox{for $x_1<\ls$ {\emph{(subsonic)}}},\\
      =\us\quad &\mbox{at $x_1=\ls$ {\emph{(sonic)}}},\\
      >\us\quad &\mbox{for $x_1>\ls$ {\emph{(supersonic)}}}.
      \end{cases}
    \end{equation}
\end{itemize}

\begin{proof}
Define a function $F(t)$ by
\begin{equation}\label{definition of F in ode}
  F(t):=\frac{t^{\gam}\sqrt{2H(t)}}{|t^{\gam+1}-\us^{\gam+1}|}\quad\tx{for $t\neq \us$}
\end{equation}
for $H(u)$ given by \eqref{definition of H}.
Clearly, $F(t)$ is smooth for $t\in(0,\infty)\setminus \{\us\}$ as long as $H(t)\ge 0$.

Note that $H(\us)=H'(\us)=0$. It follows from the assumption \eqref{condition for zeta0} that
$$H''(\us)=(\gam+1)J\left(\frac{1}{\us}-\frac{1}{\barui}\right)>0.$$
By applying L'H\^{o}pital's rule, it is obtained that
\begin{equation*}
\lim_{t\to \us}\frac{2H(t)}{(t^{\gam+1}-\us^{\gam+1})^2}=
\lim_{t\to \us} \frac{H''(t)}{(\gam+1)^2t^{2\gam}}=\frac{J}{(\gam+1)\us^{2\gam}}
\left(\frac{1}{\us}-\frac{1}{\barui}\right)>0,
\end{equation*}
so one can extend the definition of $F(t)$ up to $t=\us$ as
\begin{equation*}
  F(\us):=
  \sqrt{\frac{J}{\gam+1}\left(\frac{1}{\us}-\frac{1}{\barui}\right)}.
\end{equation*}
Therefore, $F(t)$ is continuous for all $t>0$ as long as $H(t)\ge 0$ holds.

By Taylor's theorem, there exist two smooth functions $P(t)$ and $Q(t)$ satisfying
\begin{equation*}
\begin{split}
  &H(t)=\frac 12 H''(\us)(t-\us)^2+(t-\us)^3 P(t),\\
  &t^{\gam+1}-\us^{\gam+1}=(\gam+1)\us^{\gam} (t-\us)+(t-\us)^2Q(t),
\end{split}
\end{equation*}
so that the function $F(t)$ can be written as
\begin{equation*}
  F(t)=\frac{t^{\gam}\sqrt{H''(\us)+2(t-\us)P(t)}}{(\gam+1)\us^{\gam}+(t-\us)Q(t)}.
\end{equation*}
This shows that $F(t)$ is smooth and positive for all $t>0$ as long as the inequality $H(t)\ge 0$ holds.

By the definition \eqref{definition of T and Tpm}, one has
\begin{equation*}
  \Tac=\{E=\sgn (u-\us)\sqrt{H(u)}: H(u)\ge 0\}.
\end{equation*}
If $(\bar u_1, \bar E)(x_1)$ is a $C^1$-solution to \eqref{1d-ivp}, then it also solves the initial value problem:
\begin{equation}
\label{ivp-accelerating}
  \begin{cases}
  \bar u_1'=F(\bar u_1)\\
  \bar u_1(0)=u_0
  \end{cases}\quad\tx{and}\quad \bar E=\sgn(\bar u_1-\us)\sqrt{2H(\bar u_1)}.
\end{equation}
One can directly check from \eqref{definition of H} that there exists a constant $u_{\rm max}\in(\barui, \infty)$ satisfying that
\begin{equation*}
  H(t)\begin{cases}
  >0\quad&\mbox{for $0<t<u_{\rm max}$},\\
  =0\quad&\mbox{at $t=u_{\rm max}$},\\
  <0\quad&\mbox{for $t>u_{\rm max}$}.
  \end{cases}
\end{equation*}
Hence it follows from the unique existence theorem of ODEs and the method of continuation that there exists a finite constant $l_{\rm max}>0$ so that the initial value problem \eqref{ivp-accelerating} has a unique smooth solution for $0\le x_1\le l_{\rm max}$ with $(\bar u, \bar E)(l_{\rm max})=(u_{\rm max},0)$. Note that the inequality
\begin{equation*}
 0<u_0<\us<\barui<u_{\rm max}
\end{equation*}
holds. Since $F(t)>0$ for $0<t\le u_{\rm max}$, the velocity function $\bar u_1$ strictly increases with respect to $x_1\in[0, l_{\rm max})$. Therefore, there exists a unique constant $\ls\in(0, l_{\rm max})$ that satisfies \eqref{definition:ls}.

\end{proof}
\end{lemma}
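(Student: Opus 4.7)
The plan is to reduce the planar system \eqref{1d-ivp} to a single scalar ODE by exploiting the Hamiltonian structure \eqref{1d-Hamiltonian}, and then to remove the apparent singularity at the sonic point $\bar u_1=\us$ by a Taylor expansion argument. First I would observe that along the critical trajectory $\mcl{T}$, the identity $\mfrak{H}(\bar u_1,\bar E)\equiv 0$ forces $\bar E=\pm\sqrt{2H(\bar u_1)}$. Since $(u_0,E_0)\in \mcl{T}_{\rm acc}$ and the solution must stay in $\mcl{T}_{\rm acc}$ (by continuity together with the sign convention $(u-\us)E\ge 0$), the sign is determined by $\sgn(\bar u_1-\us)$. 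Substituting this into the equation for $\bar u_1'$ yields the scalar ODE $\bar u_1'=F(\bar u_1)$ with $F$ defined by \eqref{definition of F in ode}.

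The hard part is the singularity of $F$ at $t=\us$: both numerator and denominator vanish there. To handle it, I would first note that $H(\us)=H'(\us)=0$ follows directly from \eqref{definition of H}, while the assumption $\zeta_0>1$ (equivalently $\us<\barui$) gives $H''(\us)=(\gamma+1)J(\tfrac{1}{\us}-\tfrac{1}{\barui})>0$. Taylor-expanding both $H(t)$ around $t=\us$ and $t^{\gamma+1}-\us^{\gamma+1}$ around $t=\us$, the $(t-\us)$ factors cancel, and one sees that $F$ extends to a smooth and strictly positive function on the set $\{t>0:H(t)\ge 0\}$. A straightforward analysis of $H$ via \eqref{definition of H} shows that $H$ is strictly positive on $(0,u_{\rm max})$ for some $u_{\rm max}\in(\barui,\infty)$, with $H(u_{\rm max})=0$ and $H<0$ beyond.

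Given smoothness and positivity of $F$ on $(0,u_{\rm max}]$, I would apply the classical existence/uniqueness theorem for ODEs to $\bar u_1'=F(\bar u_1)$ with $\bar u_1(0)=u_0$, and extend the solution by continuation until $\bar u_1$ reaches $u_{\rm max}$. The maximal existence time $l_{\rm max}:=\int_{u_0}^{u_{\rm max}}\frac{dt}{F(t)}$ is finite because the integrand is continuous on $[u_0,u_{\rm max}]$. Since $F>0$ throughout, $\bar u_1$ is strictly increasing on $[0,l_{\rm max})$, which gives (i), and $\bar u_1'(x_1)=F(\bar u_1(x_1))\to F(u_{\rm max})=0$ as $x_1\to l_{\rm max}-$, which gives (ii). Recovering $\bar E$ from \eqref{ivp-accelerating} and the monotonicity of $\bar u_1$ yields (iii), namely that the trajectory sweeps out exactly the portion $\{u\ge u_0\}$ of $\mcl{T}_{\rm acc}$.

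Finally, for (iv), by strict monotonicity of $\bar u_1$ and the intermediate value theorem, there is a unique $\ls\in(0,l_{\rm max})$ with $\bar u_1(\ls)=\us$, and \eqref{definition:ls} is immediate from monotonicity. The dependence of $l_{\rm max}$ and $\ls$ on only $(\gamma,J,S_0,\barui,u_0)$ follows from the explicit form of $F$ and the integral representation. The one conceptual subtlety I would emphasize is that a priori the original planar system \eqref{1d-ivp} has a genuine singularity at $\bar u_1=\us$, but the Hamiltonian identity $\mfrak{H}\equiv 0$ together with the sign choice encoded in $\mcl{T}_{\rm acc}$ is precisely what allows $F$ to extend smoothly through the sonic point, so the solution passes through $(\us,0)$ without any loss of regularity.
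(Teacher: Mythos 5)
Your proposal is correct and follows essentially the same route as the paper's proof: reduction to the scalar ODE $\bar u_1'=F(\bar u_1)$ via the conserved quantity $\mfrak{H}\equiv 0$, removal of the sonic singularity through the Taylor expansions of $H$ and $t^{\gam+1}-\us^{\gam+1}$ using $H(\us)=H'(\us)=0$ and $H''(\us)>0$, and then continuation up to $u_{\rm max}$ with monotonicity giving (i)--(iv). The explicit formula $l_{\rm max}=\int_{u_0}^{u_{\rm max}}dt/F(t)$ is only implicit in the paper, but this is a presentational difference, not a different argument.
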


\subsection{Restatement of Theorem \ref{theorem-2-pre}}
\label{subsection-the main result}
In this paper, we assume that
\begin{equation*}
  \msB-\Phi= 0\quad\tx{({\emph{the pseudo-Bernoulli law}}  )}
\end{equation*}
for technical simplicity.
Then, we can rewrite \eqref{steadyEP} as
\begin{equation}\label{full EP rewritten}
  \left\{
\begin{aligned}
& \nabla\cdot {\rho{\bf u}}=0, \\
&\nabla\times {\bf u}=\frac{\rho^{\gam-1}\der_{x_2} S}{(\gam-1)u_1},\\
&  {\rho{\bf u}}\cdot \nabla S=0,\\
&\frac 12|{\bf u}|^2+\frac{\gam S \rho^{\gam-1}}{\gam-1}=\Phi,\\
& \Delta \Phi=\rho-\barrhoi.
\end{aligned}%
\right.
\end{equation}
For a constant $L>0$, let us define
\begin{equation}
\label{definition of Omeage L}
  \Om_L:=\{\rx=(x_1, x_2)\in \R^2: 0<x_1<L, |x_2|<1\}.
\end{equation}
The boundary $\der \Om_L$ consists of three parts
\begin{equation}
\label{definition of der Omeage L}
  \Gamen:=\{0\}\times [-1,1],\quad \Gamw:=(0,L)\times\{-1,1\},\quad \Gamex:=\{L\}\times [-1,1],
\end{equation}
which represent the entrance, the wall and the exit of $\Om_L$,
respectively.

\begin{definition}[Background solutions]
\label{definition of background solution-full EP}
Given constants $\gam > 1$, $S_0>0$, $J>0$ and $\zeta_0>1$(see \eqref{definition of zeta0} for the definition), fix a point $(u_0, E_0)\in \Tac$ with $E_0 < 0$. Then Lemma \ref{lemma-1d-full EP} implies that the initial value problem \eqref{EP-1d-reduced} with
$$(\bar u_1, \bar E)(0)=(u_0,E_0)$$
has a unique smooth solution on the interval $[0, l_{\rm max})$. Furthermore, the flow governed by the solution is accelerating. In other words, it holds that
$$
\frac{d\bar u_1}{dx_1}>0\quad\tx{on $[0, l_{\rm max})$.}
$$
For ${\rx}=(x_1, x_2)\in \R^2$ with $x_1\in[0, l_{\rm max})$, let us define $(\bar{\rho}, {\bar{\bf u}}, \bar{\Phi})$ by
\begin{equation*}
\begin{split}
& \bar{\rho}(\rx):=\frac{J}{\bar u_1(x_1)},\quad
\bar{\bf u}(\rx):=\bar u_1(x_1){\bf e}_{1},\quad
\bar{\Phi}(\rx):=\int_0^{x_1}\bar E(t)\,dt+\frac{u_0^2}{2}+\frac{\gam S_0}{\gam-1}\left(\frac{J}{u_0}\right)^{\gam-1}.
  \end{split}
\end{equation*}
Then, $(\rho, {\bf u},S, \Phi)=(\bar{\rho}, \bar{\bf u}, S_0, \bar{\Phi})$ solves the system \eqref{full EP rewritten} in $\Om_{l_{\rm max}}$. We call $(\bar{\rho}, \bar{\bf u}, S_0, \bar{\Phi})$ the background smooth transonic solution to \eqref{full EP rewritten} associated with $(\gam, \zeta_0, J, S_0, E_0)$. Note that the initial value $u_0$ of $\bar u_1$ is uniquely determined depending on $E_0$ as the point $(u_0, E_0)$ lies on the curve $\Tac$, defined by \eqref{definition of T and Tpm}, and the function $\sgn (u-\us)\sqrt{H(u)}$ is monotone for $u\in (0, u_{\rm max}]$. So, it follows from the inequality $E_0<0$ that $0<u<\us$.
\end{definition}

\begin{problem}
\label{problem-full EP} Fix $m\in \mathbb{N}$ with $m\ge 4$. Given constants $\gam>1$, $\zeta_0>1$, $J>0$ and $S_0>0$, fix a point $(u_0, E_0)\in \Tac$ with $E_0<0$. Given $(S_{\rm en}, E_{\rm en}, \om_{\rm en}):[-1,1]\rightarrow \R^3$, define
\begin{equation}
\label{definition-perturbation of bd}
      \mfrak P(S_{\rm en}, E_{\rm en}, w_{\rm en}):=\|S_{\rm en}-S_0\|_{C^m([-1,1])}+\|E_{\rm en}-E_0\|_{C^m([-1,1])}
    +\|w_{\rm en}\|_{C^{m+1}([-1,1])}.
    \end{equation}
Assuming that $\mfrak P(S_{\rm en}, E_{\rm en}, w_{\rm en})$ is sufficiently small, find a solution $(\rho, {\bf u}, S, \Phi)$ to \eqref{full EP rewritten} in $\Om_L$ with
the boundary conditions:
  \begin{equation}\label{BC-full EP}
  \begin{split}
  {\bf u}\cdot {\bf e}_{2}=w_{\rm en},\quad S=S_{\rm en},\quad \tx{and}
  \quad \der_{x_1}\Phi=E_{\rm en}\quad&\tx{on $\Gamen$},\\
  {\bf u}\cdot {\bf n}=0\quad\tx{and}\quad \der_{\bf n}\Phi=0\quad&\mbox{on $\Gamw$},\\
  \Phi=\bar{\Phi}\quad&\mbox{on $\Gamex$},
  \end{split}
\end{equation}
where ${\bf n}$ is the inward unit normal vector on $\Gamw$.
\end{problem}

By fixing the constants $\gam$, $\zeta_0$, $J$, and $S_0$, the term $\barrhoi$ from the equation
$\eqref{full EP rewritten}_5$ can be expressed as
\begin{equation}
    \label{barrhoi-expression-n}
      \barrhoi=\frac{J}{\zeta_0(\gam S_0J^{\gam-1})^{\frac{1}{\gam+1}}}.
    \end{equation}
    We call the system \eqref{full EP rewritten} with the expression of \eqref{barrhoi-expression-n} {\emph{the simplified steady Euler-Poisson system}} associated with $(\gam, \zeta_0, J, S_0)$. This expression indicates our choice of a background solution (in the sense of Definition \ref{definition of background solution-full EP}) from which we shall build a two-dimensional solution to \eqref{full EP rewritten} with a \underline{smooth} transonic transition across an interface of codimension 1.

Before stating our main theorem, we list the compatibility conditions required for $(S_{\rm en}, E_{\rm en}, w_{\rm en})$.
\begin{condition} For the fixed constant $m\ge 4$ from Problem \ref{problem-full EP}, we assume that the boundary data $(S_{\rm en}, E_{\rm en}, w_{\rm en})$ satisfy the following compatibility conditions:
\label{conditon:1}
\begin{itemize}
\item[(i)] For all $k=2i-1$ with $i\in \mathbb{N}$ and $k<m$ ,
\begin{equation*}
\frac{d^k S_{\rm en}}{dx_2^k}(x_2)=
                  \frac{d^k E_{\rm en}}{dx_2^k}(x_2)=0\quad\tx{at $|x_2|=1$}.
\end{equation*}

    \item[(ii)] For all $l=2j$ with $j\in \{0\}\cup \mathbb{N}$ and $l\le m$,
    \begin{equation*}
     \frac{d^l w_{\rm en}}{dx_2^l}(x_2)=0 \quad\tx{at $|x_2|=1$}.
    \end{equation*}
\end{itemize}
\end{condition}

\begin{theorem}
\label{theorem-smooth transonic-full EP}
Fix constants $(\gam, \zeta_0, J, S_0, E_0)$ satisfying
\begin{equation*}
\gam>1,\quad \zeta_0>1,\quad  S_0>0.
\end{equation*}
Suppose that $(u_0, E_0)\in \Tac$ with $E_0<0$, which is equivalent to $0<u_0<\us$.
And, let $(\bar{\rho}, {\bar{\bf u}}, \bar{\Phi})$ be the background smooth transonic solution to \eqref{full EP rewritten} associated with $(\gam, \zeta_0, J, S_0, E_0)$. In addition, suppose that the boundary data $(S_{\rm en}, E_{\rm en}, w_{\rm en})$ satisfy Condition \ref{conditon:1}. Then one can fix two constants $\bJ$ and $\ubJ$ depending only on $(\gam, \zeta_0, S_0)$, which satisfy
\begin{equation*}
  0<\bJ<1<\ubJ<\infty
\end{equation*}
so that whenever the background momentum density $J(=\bar{\rho}\bar u_1)$ satisfies
\begin{center}
either $0<J\le \bJ$ or $\ubJ\le J<\infty$,
\end{center}
there exists a constant $d\in(0,1)$ depending on $(\gam, \zeta_0, S_0,J)$ so that if the two constants $u_0$ and $L$ satisfy the condition
\begin{equation}
\label{almost sonic condition1 full EP}
  1-d\le \frac{u_0}{\us}<1 <\frac{\bar u_1(L)}{\us}\le 1+d,
\end{equation}
then $(\bar{\rho}, \bar{\bf u},\bar{\Phi})$ is structurally stable in the following sense: one can fix a constant $\bar{\sigma}>0$ sufficiently small depending only on $(\gam, \zeta_0, S_0, E_0, J, L)$ so that if the inequality
\begin{equation}
\label{smallness of bd}
  \mfrak P(S_{\rm en}, E_{\rm en}, w_{\rm en})\le \bar{\sigma}
\end{equation}
holds, then Problem \ref{problem-full EP} has a unique solution $(\rho, {\bf u}, S, \Phi)\in H^{m-1}(\Om_L)\times H^{m-1}(\Om_L;\R^2)\times H^{m}(\Om_L)\times H^{m}(\Om_L)$ that satisfies the estimate
    \begin{equation}
    \label{solution estimate full EP}
    \begin{split}
    &\|\rho-\bar{\rho}\|_{H^{m-1}(\Om_L)}+\|{\bf u}-\bar{\bf u}\|_{H^{m-1}(\Om_L)}+\|S-S_0\|_{H^{m}(\Om_L)}+\|\Phi-\bar{\Phi}\|_{H^{m}(\Om_L)}\\
    &\le
    C\mfrak P(S_{\rm en}, E_{\rm en}, w_{\rm en})
    \end{split}
    \end{equation}
    for some constant $C>0$ depending only on $(\gam, \zeta_0, S_0, E_0, J, L, m)$.
Moreover, there exists a function $\fsonic:[-1,1]\rightarrow (0, L)$ satisfying that
    \begin{equation}
    \label{sonic boundary is a graph pt}
  M\begin{cases}
  <1\quad&\mbox{for $x_1<\fsonic(x_2)$},\\
  =1\quad&\mbox{for $x_1=\fsonic(x_2)$},\\
  >1\quad&\mbox{for $x_1>\fsonic(x_2)$},
  \end{cases}
    \end{equation}
where the {\emph{Mach number}} $M$ of the system \eqref{full EP rewritten} is defined by
    \begin{equation*}
      M:=\frac{|{\bf u}|}{\sqrt{\gam S\rho^{\gam-1}}}.
    \end{equation*}
  And, the function $\fsonic$ satisfies the estimate
    \begin{equation}
    \label{estimate of sonic boundary pt}
      \|\fsonic-\ls\|_{H^{m-2}((-1,1))}\le C\mfrak P(S_{\rm en}, E_{\rm en}, w_{\rm en})
    \end{equation}
    for some constant $C>0$ depending on $(\gam, \zeta_0, S_0, E_0, J, L)$.

\end{theorem}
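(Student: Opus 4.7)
The plan is to reformulate \eqref{full EP rewritten} into a form that fits Theorem \ref{theorem-1-full}. Under the pseudo-Bernoulli assumption $\msB-\Phi=0$, the fourth equation of \eqref{full EP rewritten} becomes an algebraic relation expressing $\rho$ in terms of $|{\bf u}|^2$ and $\Phi$; the entropy $S$ is transported along streamlines from the prescribed $S_{\rm en}$; and the second equation of \eqref{full EP rewritten} determines the vorticity from $\der_{x_2}S$. Using a Helmholtz-type decomposition ${\bf u}=\nabla\phi+{\bf V}$, with ${\bf V}$ carrying the prescribed vorticity and the normal-flux boundary data $w_{\rm en}$, the essential unknowns collapse to the velocity potential $\phi$ and the electric potential $\Phi$. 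The continuity equation then takes the quasilinear divergence form $\nabla\cdot(\rho(\nabla\phi+{\bf V}))=0$, whose linearization at the background is an operator of the form \eqref{definition:L-general} with $a_{11}$ proportional to $1-\bar M^2$; this coefficient changes sign across $\{x_1=\ls\}$, so the linearized equation is of Keldysh type.

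I would then set up a fixed-point iteration in a closed ball $\mathcal{B}_{\sigma}\subset H^m(\Om_L)^2$ centered at $(\bar\phi,\bar\Phi)$: given $(\tilde\phi,\tilde\Phi)$, successively solve (i) the transport equation for $S$ along streamlines of $\nabla\tilde\phi+{\bf V}$; (ii) the Poisson equation for $\Phi$ with mixed Neumann--Dirichlet data from \eqref{BC-full EP}; and (iii) the linearized Keldysh-type equation for $\phi$. The verification of Condition \ref{condition:1.2} is the key algebraic step: a direct computation using the background ODE \eqref{EP-1d-reduced} shows that $\kz_l$ of Definition \ref{definition:Kz coefficient} is, to leading order, a computable combination of $\bar E$ and derivatives of $\bar u_1$, dominated by $\bar E$ near the sonic point. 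Since $\bar E<0$ on the subsonic portion and the almost-sonic condition \eqref{almost sonic condition1 full EP} holds, one obtains $\kz_l\le -\lambda<0$ uniformly in $l=1,\ldots,m$, provided $J\in(0,\bJ]\cup[\ubJ,\infty)$. Ellipticity on $\Gamen$ and hyperbolicity on $\Gamex$ are inherited from the background under smallness, and Condition \ref{conditon:1} on $(S_{\rm en},E_{\rm en},w_{\rm en})$ is engineered precisely to force the parity/compatibility conditions \eqref{comp for even ext}--\eqref{comp for odd ext} on the coefficients and source of the linearized equation.

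The hardest step will be the uniform a priori $H^1$ estimate for the coupled linear system in $(\phi,\Phi)$, because the Keldysh equation and the Poisson equation are intertwined through both the algebraic reconstruction of $\rho$ and the source $\rho-\barrhoi$. The Keldysh multiplier $e^{-\mu x_1}\der_1\phi$ from the proof of Lemma \ref{lemma G:wp of singular pert prob-main} produces a cross term involving $\der_1\Phi$ which cannot be absorbed by the Keldysh energy alone; my plan is to simultaneously test the Poisson equation against a carefully tuned combination of $\phi$ and $\der_1\Phi$ so that a cancellation exploiting the sign of the background electric field closes the estimate. This structural cancellation, available in the Euler--Poisson setting but absent for the pure Euler system, is what makes smooth transonic stability possible; it plays the role of the variable cross-section in de Laval nozzle analyses.

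Once the $H^1$ estimate is in hand, Theorem \ref{theorem-1-full} gives $\phi\in H^m$ and standard elliptic theory gives $\Phi\in H^m$, closing the linear iterate. A contraction argument in a weaker norm, combined with these linear bounds, then produces the unique nonlinear solution and the estimate \eqref{solution estimate full EP}, with $\bar\sigma$ chosen small enough to absorb the quasilinear remainders. For the sonic interface: since $\bar u_1'(\ls)>0$ and $H^{m-1}\hookrightarrow C^1$ for $m\ge 4$, the perturbed $\der_{x_1}u_1$ stays strictly positive in a neighborhood of the sonic set by \eqref{solution estimate full EP} and the smallness of $\bar\sigma$; applying the implicit function theorem to the level-set equation $|{\bf u}|^2-\gam S\rho^{\gam-1}=0$ then yields the graph $x_1=\fsonic(x_2)$, and the regularity $\fsonic\in H^{m-2}((-1,1))$ satisfying \eqref{sonic boundary is a graph pt}--\eqref{estimate of sonic boundary pt} follows from the trace theorem applied along the level curve.
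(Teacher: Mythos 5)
Your overall architecture coincides with the paper's: reduce \eqref{full EP rewritten} by a Helmholtz-type decomposition to a problem whose linearization couples a Keldysh-type operator with a second-order elliptic operator (this is exactly Theorem \ref{theorem-HD} and Proposition \ref{theorem-wp of lbvp for system with sm coeff}), iterate, and recover the sonic interface from the sign of $1-M^2$ via the implicit function theorem; that last step is essentially what the paper does by applying the IFT to $\det\mathbb{A}^P$, a positive multiple of $1-M^2$ whose $x_1$-derivative is uniformly negative.

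The genuine gap sits precisely at the step you flag as hardest, and it is compounded by a misattribution of the hypotheses. You claim the restriction $J\in(0,\bJ]\cup[\ubJ,\infty)$ enters when verifying the structure condition \eqref{KZ-condition0}. It does not: for the accelerating background $\bar a_{11}=1-\bar u_1^{\gamma+1}/\us^{\gamma+1}$ has $\der_1\bar a_{11}<0$ and $\kz_l\le-\lambda$ holds in the almost-sonic regime with no constraint on $J$; indeed the paper never closes the argument by invoking Theorem \ref{theorem-1-full} directly, because (Remark \ref{remark about pt main theorem-3}) the coupling coefficients between the Keldysh and Poisson equations are $O(1)$. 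The restrictions on $J$ and the condition \eqref{almost sonic condition1 full EP} are needed exclusively to close the coupled $H^1$ estimate of Lemma \ref{proposition-H1-apriori-estimate}, which you leave as a placeholder ("a carefully tuned combination \dots so that a cancellation \dots closes the estimate"). No exact cancellation is available; the mechanism is absorption: test the Keldysh equation with $G\der_1 v$ for the weight $G=\bar\rho^{\eta}$ of \eqref{definition of G} and the Poisson equation with $w$ itself, bound the mixed terms by Cauchy--Schwarz, and show that the resulting coefficient $\alp=-\alp_1-\alp_2$ of $(\der_1v)^2$ in \eqref{definition of beta*} is positive. Positivity of the limit \eqref{alp limit} forces $J^{(2\eta-\gamma)/(\gamma+1)}$ to be small, whence $\eta=\tfrac34\gamma$ with $J\le\bJ$ or $\eta=\tfrac14\gamma$ with $J\ge\ubJ$, and the contribution $L^2\beta^2$ inside $\alp_2$ is tamed only because $\lambda(\kappa_0,\kappa_L)\to0$ under \eqref{almost sonic condition1 full EP}. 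Without producing this weight and this balance the estimate does not close; for the same reason your sequentially decoupled iteration (Poisson first, then Keldysh) cannot contract, and the two equations must be solved as a coupled linear system at each iterate. A secondary omission: "transport $S$ along streamlines" does not by itself give $S-S_0\in H^m$; one must construct the Lagrangian map $\mathscr{L}$ and prove $D^m\mathscr{L}\in L^2$ as in Lemma \ref{lemma:wp for ivp, entropy}.
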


\begin{definition}[Sonic interface]
For the function $\fsonic$ given in Theorem \ref{theorem-smooth transonic-full EP}, we refer to the curve
\begin{equation*}
\{x_1=\fsonic(x_1):-1\le x_2\le 1\}
\end{equation*}
as {\emph{the sonic interface}} of the flow governed by $(\rho, {\bf u}, S, \Phi)$.
\end{definition}




There is another example of a sonic interface with a significantly different feature. According to \cite[Theorem 4.1]{BCF}, the function $D{\bf u}$ for the velocity field ${\bf u}$, is discontinuous on the sonic boundaries occurring in the self-similar regular shock reflection or in the self-similar Prandtl-Meyer reflection of potential flows(see \cite{bae2013prandtl, BCF2, CF2, CF3}). The sonic interface across which $D{\bf u}$ is discontinuous is called a {\emph{weak discontinuity}}. From Theorem \ref{theorem-smooth transonic-full EP} in this paper, and \cite[Theorem 4.1]{BCF}, we are given with two types of sonic interfaces: (i) a regular interface which is given as a level set of a function defined in terms of a classical solution, and (ii) a weak discontinuity which is determined by only one side of a flow. This leads to the following question:
    \begin{center}
    \emph{What is a general criterion to determine the type of a sonic interface?
    }\end{center}
    A general classification of sonic interfaces should be a fascinating subject to be investigated in the future.

\begin{remark}
\label{remark about pt main theorem-3}
In Theorem \ref{theorem-smooth transonic-full EP}, we require that the background momentum density $J(=\bar{\rho}\bar u_1)$ be smaller or larger, and that the background Mach number $\bar{M}$ be close to 1 (see \eqref{almost sonic condition1 full EP}) to establish the well-posedness of problem \ref{problem-full EP}. As we shall see later, this requirement is given so that we can establish a priori $H^1$-estimate for a solution to a \emph{linear system} consisting of \emph{a Keldysh equation} and a second order elliptic equation, which are weakly coupled together via lower order derivative terms. We must emphasize that we can not directly apply Lemma \ref{lemma G:wp of singular pert prob-main} in this case because the Keldysh equation in the system is coupled with another equation, and coefficients of the coupling terms are not negligible. This is the greatest challenge in proving Theorem \ref{theorem-smooth transonic-full EP}. To the best of our knowledge, this is the first paper to deal with a boundary value problem of such a PDE system.
\end{remark}

Suppose that $(\rho, {\bf u}, S, \Phi)\in H^{m-1}(\Om_L)\times H^{m-1}(\Om_L;\R^2)\times H^{m}(\Om_L)\times H^{m}(\Om_L)$ is a solution to Problem \ref{problem-full EP}. Since ${\bf u}$ is in $H^{m-1}(\Om_L)$, there exists a unique $H^{m+1}$-function $\phi:\ol{\Om_L}\rightarrow \R$ that solves the linear boundary value problem:
\begin{equation}
\label{bvp for phi}
\begin{cases}
  -\Delta\phi=\nabla\times{\bf u}\quad&\mbox{in $\Om_L$},\\
  \der_{x_1}\phi=0\quad&\mbox{on $\Gam_0$},\\
  \phi=0\quad&\mbox{on $\Gam_w\cup\Gam_L$}.
  \end{cases}
\end{equation}
Set
\begin{equation*}
\nabla^{\perp}:={\bf e}_{1}\der_{x_2}-{\bf e}_{2}\der_{x_1},
\end{equation*}
one has $\nabla\times (\nabla^{\perp}\phi)=-\Delta \phi$, and this yields that $\nabla\times({\bf u}-\nabla^{\perp}\phi)=0$ in $\Om_L$. So there exists a function $\vphi:\ol{\Om_L}\rightarrow \R$ such that
\begin{equation}
\label{H-decomposition}
  {\bf u}=\nabla\vphi+\nabla^{\perp}\phi\quad\tx{in $\ol{\Om_L}$}.
\end{equation}
This yields a {\emph{Helmholtz decomposition}} of a two-dimensional vector field ${\bf u}$. The representation \eqref{H-decomposition} indicates that $\vphi$ and $\phi$ are concerned with the compressibility and the vorticity of a flow, respectively, in the sense that $\nabla\cdot {\bf u}=\Delta \vphi$ and $\nabla\times {\bf u}=-\Delta \phi$. In \cite{BDXX}--\cite{BDX}, it is shown that if ${\bf u}\cdot{\bf e}_1=\vphi_{x_1}+\phi_{x_2}\neq 0$, then \eqref{full EP rewritten} can be rewritten in
terms of $(\vphi, \Phi, \phi, S)$ as follows:
\begin{align}
\label{new system with H-decomp2}
&
{\rm div}\left(\varrho(S, \Phi, \nabla\vphi, \nabla^{\perp}\phi)(\nabla\vphi+\nabla^{\perp}\phi)\right)=0,\\
\label{new system with H-decomp3}
&\Delta\Phi=\varrho(S, \Phi,\nabla\vphi, \nabla^{\perp}\phi)-\barrhoi,\\
\label{new system with H-decomp4}
&\Delta\phi=-\frac{ \varrho^{\gam-1}(S, \Phi,\nabla\vphi, \nabla^{\perp}\phi)S_{x_2}}{(\gam-1)(\vphi_{x_1}+\phi_{x_2})},\\
\label{new system with H-decomp5}
&\varrho(S, \Phi,\nabla\vphi, \nabla^{\perp}\phi) (\nabla\vphi+\nabla^{\perp}\phi)\cdot \nabla S=0
\end{align}
for
\begin{equation}
  \label{new system with H-decomp1}
\varrho(S, \Phi,\nabla\vphi, \nabla^{\perp}\phi):=
\left(\frac{\gam-1}{\gam S}\left(\Phi-\frac 12|\nabla\vphi+\nabla^{\perp}\phi|^2\right)\right)^{\frac{1}{\gam-1}}.
\end{equation}

So we can restate Problem \ref{problem-full EP} in terms of $(\vphi, \Phi, \phi, S)$ as follows:
\begin{problem}
\label{problem-HD}
 Fix $m\in \mathbb{N}$ with $m\ge 4$.
Given constants $\gam>1$, $\zeta_0>1$, $J>0$, $S_0>0$, and $(u_0, E_0)\in \Tac$ with $E_0<0$, fix a constant $L\in(0, l_{\rm max})$. Suppose that functions $S_{\rm en}:[-1,1]\rightarrow \R$, $E_{\rm en}:[-1,1]\rightarrow \R$ and $w_{\rm en}:[-1,1]\rightarrow \R$ are fixed so that $(S_{\rm en}, E_{\rm en}, w_{\rm en})$ is sufficiently close to $(S_0, E_0, 0)$ in the sense that $\mfrak P(S_{\rm en}, E_{\rm en}, w_{\rm en})$, given by \eqref{definition-perturbation of bd}, is small. Find a solution $(\vphi, \phi, \Phi, S)$ to the nonlinear system \eqref{new system with H-decomp2}--\eqref{new system with H-decomp5} in $\Om_L$ with satisfying the properties
\begin{equation*}
 (\vphi_{x_1}+\phi_{x_2})>0\quad\tx{and}\quad  \varrho(S, \Phi, \nabla\vphi, \nabla^{\perp}\phi)>0 \quad\tx{in $\ol{\Om_L}$},
\end{equation*}
and the boundary conditions:
  \begin{equation}
\label{BC for new system with H-decomp}
  \begin{split}
  \der_{x_2}\vphi=w_{\rm en},\quad  \der_{x_1}\phi=0,\quad S=S_{\rm en},\quad
  \der_{x_1}\Phi=E_{\rm en}\quad&\tx{on $\Gamen$},\\
  \der_{x_2}\vphi=0,\quad \phi=0,\quad \der_{x_2}\Phi=0\quad&\mbox{on $\Gamw$},\\
  \phi=0,\quad \Phi=\bar{\Phi}\quad&\mbox{on $\Gamex$}.
  \end{split}
\end{equation}

\end{problem}

\begin{definition}\label{definition of background solution-HD}
For the background solution $(\bar{\rho}, \bar{\bf u}, \bar{\Phi})$ associated with $(\gam, \zeta_0, J, S_0, E_0)$ in the sense of Definition \ref{definition of background solution-full EP}, define a function $\bar{\vphi}:[0, l_{\rm max}]\times [-1,1]\rightarrow \R$ by
\begin{equation*}
  \bar{\vphi}(x_1, x_2):=\int_{0}^{x_1}\bar u_1(t)\,dt.
\end{equation*}
Then, $(\vphi, \phi, \Phi, S)=(\bar{\vphi}, 0, \bar{\Phi}, S_0)$ solves Problem \ref{problem-HD} for $(S_{\rm en}, E_{\rm en}, w_{\rm en})=(S_0, E_0, 0)$. Later on, we call $(\bar{\vphi}, 0, \bar{\Phi}, S_0)$ the {\emph{background solution}} to the system \eqref{new system with H-decomp2}--\eqref{new system with H-decomp5} associated with $(\gam, \zeta_0, J, S_0, E_0)$.

\end{definition}

\begin{notation} Let $\|\cdot\|_X$ be a norm in a linear space $X$. Given a vector field ${\bf V}=(V_1,\cdots, V_n)$ with $V_j\in X$ for all $j=1,\cdots, n$, we define $\|{\bf V}\|_X$ by
\begin{equation*}
  \|{\bf V}\|_X:=\sum_{j=1}^n \|V_j\|_{X}.
\end{equation*}
\end{notation}

\begin{theorem}
\label{theorem-HD}  Fix $m\in \mathbb{N}$ with $m\ge 4$.
Given constants $(\gam, \zeta_0, J, S_0, E_0)$ satisfying
\begin{equation*}
\gam>1,\quad \zeta_0>1,\quad S_0>0,\quad E_0<0,
\end{equation*}
suppose that $(u_0, E_0)\in \Tac$, which is equivalent to $0<u_0<\us$.
Let $(\bar{\vphi}, 0, \bar{\Phi}, S_0)$ be the background solution to the system \eqref{new system with H-decomp2}--\eqref{new system with H-decomp5} associated with $(\gam, \zeta_0, J, S_0, E_0)$. In addition, suppose that the boundary data $(S_{\rm en}, E_{\rm en}, w_{\rm en})$ satisfy {\emph{Condition \ref{conditon:1}}}.
Then one can fix two constants $\bJ$ and $\ubJ$ depending only on $(\gam, \zeta_0, S_0)$ with satisfying that
\begin{equation*}
  0<\bJ<1<\ubJ<\infty
\end{equation*}
so that whenever the background momentum density $J(=\bar{\rho}\bar u_1)$ satisfies
\begin{center}
either $0<J\le \bJ$ or $\ubJ\le J<\infty$,
\end{center}
there exists a constant $d\in(0,1)$ depending on $(\gam, \zeta_0, S_0,J)$ so that if
$(E_0, L)$ are fixed to satisfy the condition \eqref{almost sonic condition1 full EP}, then $(\bar{\vphi}, 0, \bar{\Phi}, S_0)$ is structurally stable in the following sense: one can fix a constant $\bar{\sigma}>0$ sufficiently small depending only on $(\gam, \zeta_0, S_0, E_0, J, L, m)$ so that if the inequality \eqref{smallness of bd} holds, then Problem \ref{problem-HD} has a unique solution $(\vphi, \phi, \Phi, S )$ that satisfies the estimate
    \begin{equation}
    \label{solution estimate HD}
    \begin{split}
    \|(\vphi, \Phi, S)-(\bar{\vphi},\bar{\Phi}, S_0)\|_{H^m(\Om_L)}+\|\phi\|_{H^{m+1}(\Om_L)}
    \le
    C\mfrak P(S_{\rm en}, E_{\rm en}, w_{\rm en})
    \end{split}
    \end{equation}
    for some constant $C>0$ depending on $(\gam, \zeta_0, S_0, E_0, J, L, m)$.
Furthermore, there exists a function $\fsonic:[-1,1]\rightarrow (0, L)$ satisfying the property \eqref{sonic boundary is a graph pt} with ${\bf u}=\nabla\vphi+\nabla^{\perp}\phi$ and $\rho=\varrho(S, \Phi, \nabla\vphi, \nabla\phi)$. Furthermore, the function $\fsonic$ satisfies the estimate \eqref{estimate of sonic boundary pt}
    for some constant $C>0$ depending on $(\gam, \zeta_0, S_0, E_0, J, L, m)$.

\end{theorem}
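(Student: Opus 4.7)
The plan is to construct the solution to Problem \ref{problem-HD} by writing
$(\vphi,\phi,\Phi,S)=(\bvphi,0,\bPhi,S_0)+(\psi,\phi,\Psi,\sigma)$
and solving the system for the perturbation via a fixed-point iteration that exploits the weak coupling of \eqref{new system with H-decomp2}--\eqref{new system with H-decomp5}. Given an iterate $(\psi^*,\phi^*,\Psi^*,\sigma^*)$ in a small ball of $H^m\times H^{m+1}\times H^m\times H^m$, I would update sequentially: first solve the transport equation \eqref{new system with H-decomp5} for $\sigma$ along the streamlines of $\nabla(\bvphi+\psi^*)+\nabla^{\perp}\phi^*$, which are well-defined since $\der_1(\bvphi+\psi^*)>0$; next solve the linear Poisson equation \eqref{new system with H-decomp4} for $\phi$ with Dirichlet data on $\Gamw\cup\Gamex$ and Neumann data on $\Gamen$, whose source is of order $\|\sigma\|$; and finally solve the coupled linear system obtained by linearizing the continuity equation \eqref{new system with H-decomp2} for $\psi$ and the Poisson equation \eqref{new system with H-decomp3} for $\Psi$, using the updated $(\sigma,\phi)$. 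The fixed point then yields the nonlinear solution.

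The heart of the construction is this last coupled linear problem. Linearization of \eqref{new system with H-decomp2} about the background produces an equation
\begin{equation*}
a_{11}\der_{11}\psi+2a_{12}\der_{12}\psi+\der_{22}\psi+a_1\der_1\psi
=F_1(\Psi,\nabla\Psi,\sigma,\phi),
\end{equation*}
in which $a_{11}=1-\bar M^2$ to leading order and vanishes along $x_1=\ls$, so the operator is of Keldysh type in the sense of Section \ref{Section:2}. A direct calculation using the ODEs $\bar u_1'=\bar E\bar u_1^{\gam}/(\bar u_1^{\gam+1}-\us^{\gam+1})$ and $\bar E'=J/\bar u_1-\barrhoi$ expresses the quantity $\kz_l=a_1+\frac{2l-3}{2}\der_1a_{11}-\der_2a_{12}$ evaluated on the background as an explicit function of $(\bar u_1,\bar E;J,S_0,\gam,\barrhoi)$; the thresholds $\bJ,\ubJ$ on the background momentum and the closeness-to-sonic condition \eqref{almost sonic condition1 full EP} are precisely what is required to make this expression bounded above by $-\lambda<0$ on $\ol{\Om_L}$ for every $l=1,\ldots,m$. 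Together with the structural positivity of $a_{11}-a_{12}^2$ near $\Gamen$ and the negativity of $a_{11}$ near $\Gamex$, this verifies the hypotheses of Theorem \ref{theorem-1-full} for the principal part, while the odd-order compatibility \eqref{comp for even ext}--\eqref{comp for odd ext} is inherited from Condition \ref{conditon:1} by the evenness/oddness of $\bar u_1$ under $x_2\mapsto -x_2$.

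The main obstacle is to establish an a priori $H^1$ estimate for $(\psi,\Psi)$ in the coupled Keldysh--elliptic system, as already highlighted in Remark \ref{remark about pt main theorem-3}. The difficulty is that the right-hand side $F_1$ contains $\Psi$ and $\der_1\Psi$ with coefficients that do not vanish at the sonic curve, while the linearized Poisson equation for $\Psi$ has a right-hand side proportional to $\der_1\psi$ through the density formula \eqref{new system with H-decomp1}. My plan is to test the Keldysh equation with the multiplier $e^{-\mu x_1}\der_1\psi$ exactly as in the proof of Lemma \ref{lemma G:wp of singular pert prob-main}, which produces a coercive quadratic form in $D\psi$ plus a boundary term $\|\der_2\psi\|^2_{L^2(\Gamex)}$; simultaneously test the Poisson equation for $\Psi$ with $\Psi$ and with the weight $e^{-\mu x_1}$, to produce an $H^1$ bound for $\Psi$ in terms of $\|\der_1\psi\|_{L^2(\Om_L)}$ and the data. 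The cross terms between the two estimates carry prefactors controlled by $J$ (or $1/J$) through the background density and by the magnitude of $1-\bar M^2$; the assumption that either $0<J\le\bJ$ or $\ubJ\le J<\infty$ combined with \eqref{almost sonic condition1 full EP} is used to make these prefactors small enough that the cross terms can be absorbed into the principal coercive quantities, closing the coupled $H^1$ estimate.

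Once the $H^1$ estimate is available, higher-order regularity for $\psi$ follows from Theorem \ref{theorem-1-full} applied to the linearized Keldysh equation, and higher-order regularity for $\Psi$, $\phi$, and $\sigma$ follows from standard elliptic theory and from transport estimates along the regular background streamlines. A bootstrap argument then shows that the iteration map sends a ball of radius $\mcl{K}\mfrak P(S_{\rm en},E_{\rm en},w_{\rm en})$ in $H^m\times H^{m+1}\times H^m\times H^m$ into itself, and a contraction estimate for differences of iterates in the weaker norm $H^{m-1}\times H^m\times H^{m-1}\times H^{m-1}$, proved by the same energy method applied to the equation for the difference, yields a unique fixed point. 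The bound \eqref{solution estimate HD} is the closed-ball estimate. Finally, to produce the function $\fsonic$ with \eqref{sonic boundary is a graph pt}--\eqref{estimate of sonic boundary pt}, I would apply the implicit function theorem to $F(\rx):=1-M^2(\rx)$, using that $\der_1(1-\bar M^2)<0$ in a neighborhood of $x_1=\ls$ because the background flow accelerates strictly across the sonic point, and that $F$ is a small $H^{m-1}$-perturbation of $1-\bar M^2$; the quantitative statement of the implicit function theorem then gives \eqref{estimate of sonic boundary pt}.
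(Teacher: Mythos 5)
Your overall architecture (background-plus-perturbation decomposition, nested iteration, the Keldysh theory of Section \ref{Section:2} for higher regularity, implicit function theorem applied to $1-M^2$ for the sonic curve) matches the paper's. But the step you yourself identify as the heart of the matter --- the a priori $H^1$ estimate for the coupled Keldysh--elliptic system --- is where your argument has a genuine gap. You propose to test the Keldysh equation with $e^{-\mu x_1}\der_1\psi$ ``exactly as in the proof of Lemma \ref{lemma G:wp of singular pert prob-main}'' and to absorb the cross terms on the grounds that their prefactors are ``controlled by $J$ (or $1/J$).'' This does not close. With the weight $e^{-\mu x_1}$, the coercive coefficient of $(\der_1\psi)^2$ produced by the multiplier is of order $\frac{\bar\rho'}{\bar\rho}\sim J^{(2-\gam)/(\gam+1)}$ (see \eqref{ode for rho in kappa}), while the Cauchy--Schwarz remainder of the cross term $-G\frac{\bar u_1}{\gam S_0\bar\rho^{\gam-1}}\der_1w\,\der_1\psi$ contributes $\bigl(G\frac{\bar u_1}{\gam S_0\bar\rho^{\gam-1}}\bigr)^2\sim J^{-2(\gam-1)/(\gam+1)}$ against $(\der_1\psi)^2$; the ratio of the bad term to the coercive term is $J^{-\gam/(\gam+1)}$, which blows up as $J\to 0$. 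So the exponential weight fails outright in the regime $0<J\le\bJ$, and the closeness-to-sonic condition \eqref{almost sonic condition1 full EP} cannot rescue it, since that condition only makes $\lambda(\kappa_0,\kappa_L)$ small and hence controls the other piece of $\alp_2$, not this one. This is precisely the point of Remark \ref{remark about pt main theorem-3}: Lemma \ref{lemma G:wp of singular pert prob-main} cannot be applied directly because the coupling coefficients are not negligible.

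The missing idea is the choice of multiplier $G(x_1)=\bar\rho^{\eta}(x_1)$ in \eqref{definition of G}, with the exponent chosen differently in the two regimes ($\eta=\tfrac{3\gam}{4}$ for $J\le\bJ$, $\eta=\tfrac{\gam}{4}$ for $J\ge\ubJ$), so that after rewriting $\alp=-\alp_1-\alp_2$ in the variables $(\kappa,J)$ the dangerous ratio becomes $J^{(2\eta-\gam)/(\gam+1)}$ and is small in the corresponding regime, while the remaining piece of $\alp_2$ carries the factor $\lambda(\kappa_0,\kappa_L)\to 0$ as $\kappa_0,\kappa_L\to 1$. Without this weighted multiplier (or an equivalent device) the estimate \eqref{A priori H1 estimate} is not established and the rest of your scheme cannot start. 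Two smaller inaccuracies: the thresholds $\bJ,\ubJ$ and the condition \eqref{almost sonic condition1 full EP} are consumed by this $H^1$ estimate, not, as you state, by verifying the $\kz_l$ structure condition of Theorem \ref{theorem-1-full}; and solving the entropy transport along streamlines of an arbitrary iterate risks losing a derivative --- the paper solves it only for the divergence-free approximated momentum field of a fixed point $(\phi,\psi,\Psi)$, so that the Lagrangian map can be defined implicitly through the stream function $\vartheta$ and the $H^m$ bound \eqref{estimate of T} can be proved.
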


Similarly to the works in \cite{BDXX, BDX3}, Theorem \ref{theorem-smooth transonic-full EP} can easily follow from Theorem \ref{theorem-HD}.
\begin{proof}
[Proof of Theorem \ref{theorem-smooth transonic-full EP}]
If $(\vphi, \phi, \Phi, S)$ solves Problem \ref{problem-HD} and satisfies the estimate \eqref{solution estimate HD}, then $(\rho, {\bf u}, S, \Phi)$ with $\rho=\varrho(S, \Phi, \nabla\vphi, \nabla^{\perp}\phi)$ and ${\bf u}=\nabla\vphi+\nabla^{\perp}\phi$ solves Problem \ref{problem-full EP} and satisfies the estimate \eqref{solution estimate full EP}.
\medskip

Suppose that $(\rho, {\bf u}, \Phi)$ solves Problem \ref{problem-full EP}, and satisfies the estimate \eqref{solution estimate full EP}. First of all, we obtain $\phi$ as a solution to the boundary value problem:
\begin{equation*}
  \begin{cases}
  -\Delta \phi=\nabla\times {\bf u}\quad&\mbox{in $\Om_L$},\\
  \der_{x_1}\phi=0\quad&\mbox{on $\Gamen$},\\
  \phi=0\quad&\mbox{on $\Gam_w\cup\Gamex$}.
  \end{cases}
\end{equation*}
Next, we find $\vphi$ to satisfy
\begin{equation}
\label{recovery of vphi}
\begin{cases}
\nabla\vphi={\bf u}-\nabla^{\perp}\phi\quad&\mbox{in $\Om_L$},\\
\der_{x_2}\vphi=w_{\rm en}\quad&\mbox{on $\Gamen$}.
\end{cases}
\end{equation}
Define
\begin{equation*}
  \vphi(x_1, x_2):=\int_{0}^{x_1}({\bf u}\cdot {\bf e}_1-\der_{x_2}\phi)(t,x_2)\,dt+g(x_2)\quad\tx{in $\Om_L$}
\end{equation*}
for a function $g:[-1,1]\rightarrow \R$ to be determined. By a straightforward computation using the equation $-\Delta\phi=\nabla \times {\bf u}$, we get
\begin{equation*}
  \der_{x_2}\vphi=(\der_{x_1}\phi+u_2)(x_1, x_2)-u_2(0, x_2)+g'(x_2).
\end{equation*}
The function $\vphi$ satisfies \eqref{recovery of vphi} if we choose the function $g$ to be
\begin{equation*}
  g(x_2)=\int_0^{x_2} w_{\rm en}(s)\,ds.
\end{equation*}
Clearly, ${\bf V}:=(\vphi, \phi, \Phi, S)$ solves Problem \ref{problem-HD}. Furthermore, it follows from the estimate \eqref{solution estimate full EP} given in Theorem \ref{theorem-smooth transonic-full EP} that ${\bf V}$ satisfies
\begin{equation}
\label{estimate from full to HD}
  \|(\vphi, \Phi, S)-(\bar{\vphi}, \bar{\Phi}, S_0)\|_{H^m(\Om_L)}+ \|\phi\|_{H^{m+1}(\Om)}\le C_*\mfrak P(S_{\rm en}, E_{\rm en}, \om_{\rm en})
\end{equation}
for some constant $C_*>0$ depending on $(\gam, \zeta_0, S_0, E_0, J, L, m)$.

 Suppose that Problem \ref{problem-full EP} has two solutions ${\bf U}^{(i)}=(\rho^{(i)}, {\bf u}^{(i)}, S^{(i)}, \Phi^{(i)})\,(i=1,2)$ that satisfy the estimate \eqref{solution estimate full EP}. For each $j=1,2$, let ${\bf{V}}^{(j)}=(\vphi^{(j)}, \phi^{(j)}, \Phi^{(j)}, S^{(j)})$ be given by the procedure described in the above. Then it follows from Theorem \ref{theorem-HD} that ${\bf V}^{(1)}={\bf V}^{(2)}$ in $\Om_L$, which yields that ${\bf U}^{(1)}={\bf U}^{(2)}$ in $\Om_L$.
\end{proof}

The rest of the paper is devoted to prove Theorem \ref{theorem-HD}.


\section{Proof of Theorem \ref{theorem-HD}}
\label{subsection:framework}

\subsection{Linearized boundary value problem for iteration }

Fix constants $\gam>1$, $\zeta_0>1$, $S_0>0$ and $E_0<0$. Let $u_0\in(0, \us)$ be fixed  such that $(u_0, E_0)\in \Tac$, and fix a constant $L\in(0, l_{\rm max})$. Let $(\bar{\vphi}, 0, {\bar{\Phi}}, S_0)$ be the background solution to the system \eqref{new system with H-decomp2}--\eqref{new system with H-decomp5} associated with $(\gam, \zeta_0, J, S_0, E_0)$ in the sense of Definition \ref{definition of background solution-HD}.

\begin{definition}
\label{definition:coefficints-iter}
(1) For $z\in \R$ and ${\bf p}, {\bf q}\in \R^2$, denote
\begin{equation}
\label{definition of v}
  {\bf v}(\rx, {\bf p}, {\bf q} ):=\nabla\bar{\vphi}(\rx)+{\bf p}+{\bf q}.\end{equation}
For ${\bf v}={\bf v}(\rx, {\bf p}, {\bf q})$, define
\begin{equation}
\label{definition of pre coeff}
  A_{ij}(\rx, z, {\bf p}, {\bf q} ):=(\gam-1)
      \left(\bar{\Phi}(\rx)+z-\frac 12|{\bf v}|^2\right)\delta_{ij}-
       {\bf v}\cdot {\bf e}_i {\bf v}\cdot {\bf e}_j.
\end{equation}
For $s\in\R$ satisfying $|s|\le \frac{S_0}{2}$, define
\begin{equation}
        \label{definition of rho tilde}
          \til{\rho}(\rx, s, z, {\bf p}, {\bf q}):=\left(\frac{\gam-1}{\gam(S_0+s)}\left(\bar{\Phi}(\rx)+z-\frac 12|{\bf v(\rx, {\bf p}, {\bf q})}|^2\right)\right)^{\frac{1}{\gam-1}}.
        \end{equation}
\quad\\
(2) For $\rx=(x_1, x_2)\in \ol{\Om_L}$, ${\bf p}=(p_1,p_2),\, {\bf q}=(q_1,q_2)$, regard ${\bf v}={\bf v}(\rx, {\bf p}, {\bf q})$ as a 2 by 1 matrix. For ${\bf r}\in \R^2$ and $\mathbb{M}=(m_{ij})_{i,j=1}^2\in \R^{2\times 2}$, define
\begin{equation}
\begin{split}
      Q_1(\rx, {\bf p}, {\bf q},  {\bf r})&:=\frac{\gam+1}{2}\bar u_1'(x_1)\left(p_1+q_1\right)^2
-{\bf r}\cdot({\bf p}+{\bf q}),\\
      R_1(\rx, {\bf p}, {\bf q}, \mathbb{M})&:={\bf v}^{T}{\mathbb{M}}{\bf v}-\left(\bar E(x_1)-(\gam+1)\bar u_1(x_1)\bar u_1'(x_1)\right)q_1.
      \end{split}
\end{equation}
\end{definition}

Let us set
\begin{equation*}
  \umax:=\bar{u}_1(\lmax).
\end{equation*}
By Lemma \ref{lemma-1d-full EP}, it holds that $\umax>\bar u_1(x_1)$ for $x_1\in[0, \lmax)$. Since $(\umax,0)\in \Tac$, it holds that $H(\umax)=0$  thus we have $u_0<\umax<\infty$, for the function $H$ defined by \eqref{definition of H}. This implies that $ 0<\bar u_1(x_1)<\infty$ for all $x_1\in [0,L]$. Therefore, we have
\begin{equation*}
A_{22}(\rx, 0, {\bf 0}, {\bf 0})=(\gam-1)\left(\bar{\Phi}-\frac 12|\nabla\bar{\vphi}|^2\right)=\frac{\gam S_0J^{\gam-1}}{\bar u_1^{\gam-1}(x_1)}>0\quad\tx{in $\ol{\Om_L}$.}
\end{equation*}
Since $\bar u_1$ is smooth on $[0,L]$, and $A_{22}(\rx, z, {\bf p}, {\bf q})$ is smooth for $(\rx,z, {\bf p}, {\bf q})\in \ol{\Om_L}\times \R\times \R^2\times \R^2$, one can fix a small constant $d_0>0$ depending only on $(\gam, \zeta_0, J, S_0, E_0)$ such that if
\begin{equation}
\label{small zpq condition}
  \max\{|z|, |{\bf p}|, |{\bf q}|\}\le d_0\quad\tx{with $|{\bf p}|:=\sqrt{p_1^2+p_2^2}$,\quad $|{\bf q}|:=\sqrt{q_1^2+q_2^2}$},
\end{equation}
then it holds that
\begin{equation}
\label{strict positivity of A22}
  A_{22}(\rx, z, {\bf p}, {\bf q})\ge \frac{\gam S_0J^{\gam-1}}{2\umax^{\gam-1}}\quad\tx{in $\ol{\Om_L}$}.
\end{equation}

\begin{definition} Let us set
\begin{equation*}
  \mcl{R}_{d_0}:=\{(z, {\bf p}, {\bf q})\in \R\times \R^2\times \R^2: {\tx{the condition \eqref{small zpq condition} holds.}}\}.
\end{equation*}
Let $(\rx, z, {\bf p}, {\bf q})\in \ol{\Om_L}\times \mcl{R}_{d_0}$ with $\rx=(x_1,x_2)$. We define coefficients $\til a_{ij}(i,j=1,2)$, $\til a$, $\til b_1$, and $\til b_0$ by
    \begin{equation}
\label{coefficients of L_1}
\begin{split}
&\til a_{ij}(\rx, z,{\bf p}, {\bf q}):=\frac{A_{ij}(\rx, z,{\bf p}, {\bf q})}{A_{22}(\rx, z,{\bf p}, {\bf q})},\\
&\til a(\rx, z,{\bf p}, {\bf q}):=\frac{\bar E(x_1)-(\gam+1)\bar u_1'(x_1)\bar u_1(x_1)}{A_{22}(\rx, z,{\bf p}, {\bf q})},\\
&\til b_1(\rx, z,{\bf p}, {\bf q}):=\frac{\bar u_1(x_1)}{A_{22}(\rx, z,{\bf p}, {\bf q})},\quad \til b_0(\rx, z,{\bf p}, {\bf q}):=\frac{(\gam-1)\bar u_1'(x_1)}{A_{22}(\rx, z,{\bf p}, {\bf q})}.
\end{split}
\end{equation}
\end{definition}

\medskip

Suppose that $(\vphi, \phi, \Phi, S)$ solves Problem \ref{problem-HD}.
And, let us set $(\psi, \Psi, T)$ as
\begin{equation*}
  (\psi, \Psi, T):=(\vphi-\bar{\vphi}, \Phi-\bar{\Phi}, S-S_0).
\end{equation*}
\begin{definition}
\label{definition:coefficients-nonlinear}
(1) Suppose that $(\psi, \Psi, \phi)$ satisfies
\begin{equation}
\label{smallness condition-1}
\max_{\ol{\Om_L}} \{|\Psi|, |D\psi|, |D\phi|\}\le d_0,
\end{equation}
and define
\begin{equation*}
  \begin{split}
  a_{ij}^{(\psi, \phi, \Psi)}(\rx)&:=\til a_{ij}(\rx,\Psi(\rx), \nabla\psi(\rx), \nabla^{\perp}\phi(\rx)),\\
   a^{(\psi, \phi, \Psi)}(\rx)&:=\til a(\rx,\Psi(\rx), \nabla\psi(\rx), \nabla^{\perp}\phi(\rx)),\\
   b_1^{(\psi, \phi, \Psi)}(\rx)&:=\til b_1(\rx,\Psi(\rx), \nabla\psi(\rx), \nabla^{\perp}\phi(\rx)),\\
  b_0^{(\psi, \phi, \Psi)}(\rx)&:=\til b_0(\rx,\Psi(\rx), \nabla\psi(\rx), \nabla^{\perp}\phi(\rx)),\\
 {f}_1^{(\psi, \phi, \Psi)}(\rx)&:=\frac{Q_1(\rx, \nabla\psi(\rx), \nabla^{\perp}\phi(\rx), \nabla\Psi(\rx))+R_1(\rx, \nabla\psi(\rx), \nabla^{\perp}\phi(\rx), D(\nabla^{\perp}\phi)(\rx))}{ A_{22}(\rx, \Psi(\rx), \nabla\psi(\rx), \nabla^{\perp}\phi(\rx))},
  \end{split}
\end{equation*}
for
\begin{equation*}
D(\nabla^{\perp}\phi):=\begin{pmatrix}\der_1\\ \der_2\end{pmatrix}\begin{pmatrix}\der_2&-\der_1\end{pmatrix}\phi=
\begin{pmatrix}\der_{12}\phi&-\der_{11}\phi\\
\der_{22}\phi &-\der_{21}\phi\end{pmatrix}.
\end{equation*}
\quad\\
(2) In addition to \eqref{smallness condition-1}, suppose that $T$ satisfies
\begin{equation}
\label{smallness condition-2}
  \max_{\Om_L}|T|\le \frac{S_0}{2}.
\end{equation}
Let us define
\begin{equation*}
\begin{split}
  &c_0(\rx):=\der_z\til{\rho}(\rx, 0,0, {\bf 0}, {\bf 0}),\quad c_1(\rx):=\der_{\bf p}\til{\rho}(\rx, 0,0, {\bf 0}, {\bf 0})\cdot{\bf e}_1,\\
 &{f}_2^{(T,\psi, \phi, \Psi)}(\rx):= \til{\rho}(\rx, T(\rx), \Psi(\rx), \nabla\psi(\rx), \nabla^{\perp}\phi(\rx))-\til{\rho}(\rx,0, 0, {\bf 0}, {\bf 0})-\mfrak c_0\Psi(\rx)
  -\mfrak c_1 \der_1\psi(\rx).
 \end{split}
\end{equation*}
A direct computation yields
\begin{equation*}
c_0(\rx)=\frac{1}{\gam S_0\bar{\rho}^{\gam-2}(\rx)},\quad
c_1(\rx)=\frac{-\bar u_1(\rx)}{\gam S_0\bar{\rho}^{\gam-2}(\rx)}.
\end{equation*}

\quad\\
(3) Under the condition
\begin{equation}
\label{smallness condition-3}
 \min_{\ol{\Om_L}} {\bf v}(\rx, \nabla\psi(\rx), \nabla^{\perp}\phi(\rx))\cdot {\bf e}_1\ge \frac{u_0}{2},
\end{equation}
define
\begin{equation*}
      {f}_3^{(T,\psi, \phi, \Psi)}(\rx):=\frac{\left(\bar{\Phi}+\Psi(\rx)-\frac 12|{\bf v}(\rx, \nabla\psi(\rx), \nabla^{\perp}\phi(\rx))|^2\right)\der_2T(\rx)}{\gam(S_0+T(\rx)){\bf v}(\rx, \nabla\psi, \nabla^{\perp}\phi)\cdot {\bf e}_1}.
    \end{equation*}

\quad\\
(4) Finally, define a pseudo momentum density field ${\bf m}^{(\psi, \phi, \Psi)}$ by
    \begin{equation}
    \label{definition: vector field m}
      {\bf m}^{(\psi, \phi, \Psi)}(\rx):=\left(\bar{\Phi}(\rx)+\Psi(\rx)-\frac 12|{\bf v(\rx, \nabla\psi(\rx), \nabla^{\perp}\phi(\rx))}|^2\right)^{\frac{1}{\gam-1}}\bf v(\rx, \nabla\psi(\rx), \nabla^{\perp}\phi(\rx)).
    \end{equation}
\end{definition}

If all the conditions of \eqref{smallness condition-1}--\eqref{smallness condition-3} are satisfied, the nonlinear boundary value problem consisting of \eqref{new system with H-decomp2}--\eqref{new system with H-decomp5} and the boundary conditions stated in \eqref{BC for new system with H-decomp} is equivalent to the following problem for $(\psi, \phi, \Psi, T)$:
\begin{itemize}
\item[(i)] {\emph{Equations for $(\psi, \phi, \Psi, T)$ in $\Om_L$}}
\begin{align}
\label{equation for psi}
 \sum_{i,j=1}^2 a_{ij}^{(\psi, \phi, \Psi)}\der_{ij}\psi+  a^{(\psi, \phi, \Psi)}\der_1\psi+
    b_1^{(\psi, \phi, \Psi)}\der_1\Psi
    +b_0^{(\psi, \phi, \Psi)}\Psi={f}_1^{(\psi, \phi, \Psi)},&\\
\label{equation for Psi}
\Delta \Psi-c_0\Psi-c_1\der_1\psi={f}_2^{(T,\psi, \phi, \Psi)},&\\
\label{equation for phi}
  -\Delta \phi= {f}_3^{(T,\psi, \phi, \Psi)},&\\
    \label{equation for T}
 {\bf m}^{(\psi, \phi, \Psi)}\cdot \nabla T=0.
    \end{align}

\item[(ii)]{\emph{Boundary conditions for $\psi$}}
\begin{equation}\label{BC for psi}
  \psi(0, x_2)=\int_{-1}^{x_2}w_{\rm en}(t)\,dt\quad\tx{on $\Gamen$},\quad \der_2 \psi=0\quad\tx{on $\Gamw$}.
\end{equation}

\item[(iii)]{\emph{Boundary conditions for $\phi$}}
\begin{equation}\label{BC for phi}
  \der_1\phi=0\quad\tx{on $\Gamen$},\quad \phi=0\quad\tx{on $\der\Om_L\setminus \Gamen$}.
\end{equation}

\item[(iv)]{\emph{Boundary conditions for $\Psi$}}
\begin{equation}\label{BC for Psi}
  \der_1\Psi=E_{\rm en}-E_0\quad\tx{on $\Gamen$},\quad \der_2\Psi=0\quad\tx{on $\Gamw$},\quad \Psi=0\quad\tx{on $\Gamex$}.
\end{equation}

\item[(v)]{\emph{The boundary condition for $T$}}
\begin{equation}\label{BC for T}
T=S_{\rm en}-S_0\quad\tx{on $\Gamen$}.
\end{equation}

\end{itemize}

\begin{definition}
\label{definition: iteration sets}
For fixed constants $m\in \mathbb{N}$ and $r>0$, define
\begin{equation*}
  \begin{split}
&\iterV^{m+1}_{r}:=\left\{\eta\in H^{m+1}(\Om_L)\;\middle\vert\;
\begin{split}&\|\eta\|_{H^{m+1}(\Om_L)}\le r,\\
& \left(\frac{\der}{\der x_2}\right)^{k}\eta=0\quad\tx{on $\Gamw$ for $k=2(i-1)$, $i\in \mathbb{N}$ with $k<m+1$ }\end{split}\right\},\\
&\iterP^m_r:=\left\{\xi\in H^m(\Om_L)\;\middle\vert\;
\begin{split}&\|\xi\|_{H^m(\Om_L)}\le r,\\
&\left(\frac{\der}{\der x_2}\right)
^k\xi=0\quad\tx{on $\Gamw$ for $k=2i-1$, $i\in \mathbb{N}$ with $k<m$}\end{split}\right\}.
\end{split}
\end{equation*}

\end{definition}

\begin{definition}
\label{definition:approx coeff and fs} Fix $m\ge 4$. For constants $r_1$, $r_2$, $r_3$ to be specified later, fix $\til{T}\in \iterP^m_{r_1}$ and $P=(\tphi, \tpsi, \tPsi)\in \iterV^{m+1}_{r_2}\times \iterP^m_{r_3}\times \iterP^m_{r_3}$. And, let $a_{ij}^P$, $a^P$, $b_1^P$, $b_0^P$, $c_0$ and $c_1$ be given by Definition \ref{definition:coefficients-nonlinear}.
\quad\\
(1) Define a bilinear differential operator $\mfrak{L}_1^P(\cdot,\cdot)$ associated with $P$ by
\begin{equation*}
  \mfrak{L}_1^P(\psi, \Psi):=\sum_{i,j=1}^2 a_{ij}^P\der_{ij}\psi+a^P\der_1\psi+b_1^P\der_1\Psi+b_0^P\Psi.
\end{equation*}
\quad\\
(2) For the coefficients ${c}_0$ and ${c}_1$ given in Definition \ref{definition:coefficients-nonlinear}, define another bilinear differential operator $\mfrak{L}_2(\cdot,\cdot)$ by
\begin{equation*}
      \mfrak{L}_2(\psi, \Psi):=\Delta \Psi-{c}_0\Psi-{c}_1\der_1\psi.
    \end{equation*}

\end{definition}

\begin{problem}
\label{LBVP1 for iteration}
For fixed $\til{T}\in \iterP^m_{r_1}$ and $P=(\tphi, \tpsi, \tPsi)\in \iterV^{m+1}_{r_2}\times \iterP^m_{r_3}\times \iterP^m_{r_3}$,
solve
\begin{equation}
\label{lbvp for phi}
  \begin{cases}
  -\Delta\phi=f_0^{(\tilT, P)}\quad&\tx{in $\Om_L$},\\
  \der_1\phi=0\quad&\tx{on $\Gamen$},\\
  \phi=0\quad&\tx{on $\Gamw\cup\Gamex$}.
  \end{cases}
\end{equation}

And, solve
    \begin{equation}
    \label{lbvp main}
      \begin{split}
      \begin{cases}
      \mfrak{L}_1^P(\psi, \Psi)=f_1^P\quad&\tx{in $\Om_L$},\\
      \mfrak{L}_2(\psi, \Psi)=f_2^{(\tilT, P)}\quad&\tx{in $\Om_L$},
      \end{cases}\\
      \psi(0, x_2)=\int_{-1}^{x_2}w_{\rm en}(t)\,dt,\quad \der_1\Psi=E_{\rm en}-E_0\quad&\tx{on $\Gamen$},\\
      \der_2\psi=0,\quad \der_2\Psi=0\quad&\tx{on $\Gamw$},\\
      \Psi=0\quad&\tx{on $\Gamex$}.
      \end{split}
    \end{equation}

\end{problem}
Later, we shall fix constants $r_1$, $r_2$ and $r_3$ so that Problem \ref{LBVP1 for iteration} is well-posed for any $\til{T}\in \iterP^m_{r_1}$ and $P=(\tphi, \tpsi, \tPsi)\in \iterV^{m+1}_{r_2}\times \iterP^m_{r_3}\times \iterP^m_{r_3}$.
\medskip


Given $\til{T}\in \iterP^m_{r_1}$ and $P=(\tphi, \tpsi, \tPsi)\in \iterV^{m+1}_{r_2}\times \iterP^m_{r_3}\times \iterP^m_{r_3}$, let $(\phi, \psi, \Psi)$ be a solution to Problem \ref{LBVP1 for iteration}, and define a vector field ${\bf m}(\Psi, \nabla\psi, \nabla^{\perp}\phi)$ by \eqref{definition: vector field m}. It is clear that if $(\phi, \psi, \Phi)=P$ holds in $\Om_L$, then the vector field ${\bf m}$ satisfies the equation
\begin{equation}
\label{div-free}
  \nabla\cdot {\bf m}(\Psi, \nabla\psi, \nabla^{\perp}\phi)=0\quad\tx{in $\Om_L$}.
\end{equation}
\begin{definition}
\label{definition: momentum density field} Fix $\tilT \in \iterP^m_{r_1}$. Assume that the constants $r_1$, $r_2$, and $r_3$ are appropriately fixed so that the problem \ref{LBVP1 for iteration} is well defined and admits a unique solution $(\phi, \psi, \Phi)$ for each $\til{T}\in \iterP^m_{r_1}$ and $P=(\tphi, \tpsi, \tPsi)\in \iterV^{m+1}_{r_2}\times \iterP^m_{r_3}\times \iterP^m_{r_3}$. In addition, suppose that there exists a unique element $P\in \iterV^{m+1}_{r_2}\times \iterP^m_{r_3}\times \iterP^m_{r_3}$ that satisfies the equation $P=(\phi, \psi, \Phi)$ in $\Om_L$. For such an element $P$,  we define the vector field ${\bf m}(\Psi, \nabla\psi, \nabla^{\perp}\phi)$ as {\emph{the approximated momentum density field associated with $\tilT$}}.
\end{definition}

\begin{problem}
\label{problem-transport equation}
For a fixed $\tilT\in \iterP^m_{r_1}$, let ${\bf m}(\Psi, \nabla\psi, \nabla^{\perp}\phi)$ be the {\emph{approximated momentum density field}} associated with $\tilT$ in the sense of Definition \ref{definition: momentum density field}. Solve the following boundary value problem for $T$:
\begin{equation*}
  \begin{split}
  {\bf m}(\Psi, \nabla\psi, \nabla^{\perp}\phi)\cdot \nabla T=0\quad&\tx{in $\Om_L$},\\
  T=S_{\rm en}-S_0\quad&\tx{on $\Gamen$}.
  \end{split}
\end{equation*}
\end{problem}

\begin{remark}
\label{remark-iteration}
Suppose that $T$ is a solution to Problem \ref{problem-transport equation}, and that it satisfies $T=\tilT$ in $\Om_L$. Let us define $(\vphi, \Phi, S)$ by
    \begin{equation*}
      (\vphi, \Phi, S):=(\bar{\vphi}, \bar{\Phi}, S_0)+(\psi, \Psi, T)\quad\tx{in $\Om_L$}.
    \end{equation*}
    Then $(\vphi, \Phi, \phi, S)$ solves Problem \ref{problem-HD}.
\end{remark}

\subsection{Approximated sonic interfaces}
\label{subsection: main issues}
In order to understand the greatest difficulty in solving \eqref{lbvp main}, we first need to understand various properties of the differential operator $\mfrak{L}_1^{P}$.

\begin{lemma}
\label{lemma on L_1}
Given constants $(\gam, \zeta_0, J, S_0, E_0)$, let $(\bar{\vphi}, \bar{\Phi})$ be the associated background solution to the system \eqref{new system with H-decomp1}--\eqref{new system with H-decomp5} in the sense of Definition \ref{definition of background solution-HD}. For any $L\in (0, l_{\rm max}]$, one can fix a constant $\bar{\delta}>0$ sufficiently small depending only on $(\gam, \zeta_0, J, S_0, E_0, L)$ so that if we fix the constants $r_1$, $r_2$, and $r_3$ to satisfy the inequality
\begin{equation}
\label{condition:r}
\max\{r_1,r_2, r_3\}\le 2\bar{\delta},
\end{equation}
then, for each $\til{T}\in \iterP^m_{r_1}$ and $P=(\tphi, \tpsi, \tPsi)\in \iterV^{m+1}_{r_2}\times \iterP^m_{r_3}\times \iterP^m_{r_3}$, the coefficients $(a_{ij}^P, a^P, b_1^P, b_0^P)$ with $i,j=1,2$, and the functions $(f_0^{(\tilT, P)}, f_1^P, f_2^{(\tilT, P)})$ are well defined by Definition \ref{definition:approx coeff and fs}. Furthermore, they satisfy the following properties:

\begin{itemize}
\item[(a)] $\displaystyle{a_{ij}^P, a^P, b_1^P, b_0^P, f_0^{(\tilT, P)}, f_1^P, f_2^{(\tilT, P)}\in H^{m-1}(\Om_L)}$.
    \medskip
\item[(b)] $\displaystyle{a_{22}^P= 1}$ and $\displaystyle{a_{12}^P=a_{21}^P}$ in $\Om_L$.
    \medskip

\item[(c)]
For $k=2j-1$, $j\in \mathbb{N}$ with $k<m-1$,
    \begin{equation*}
      \left(\frac{\der}{\der{x_2}}\right)^k (a_{11}^P, a^P, b_1^P, b_0^P, f_1^p, f_2^{(\tilT,P)})=0\quad\tx{on $\Gamw$}.
    \end{equation*}
For $k=2(j-1)$, $j\in \mathbb{N}$ with $k<m-1$,
    \begin{equation*}
     \left(\frac{\der}{\der{x_2}}\right)^k(a_{12}^P, f_0^{(\tilT,P)})=0\quad\tx{on $\Gamw$}.
    \end{equation*}

\item[(d)] For $P_0=(0,0,0),$ let us set $(\bar a_{11}, \bar a, \bar b_1, \bar b_0)$ as
    \begin{equation}
    \label{coefficient at bg}
      (\bar a_{11}, \bar a, \bar b_1, \bar b_0):=(a_{11}^{P_0}, a^{P_0}, b_1^{P_0}, b_0^{P_0}).
    \end{equation}
    Then it holds that
    \begin{equation}
\label{estimate-coefficient-difference}
\begin{split}
  &\|(a^P_{11}, a^P, b^P_1, b^P_0)-(\bar a_{11}, \bar a, \bar b_1, \bar b_0)\|_{H^{m-1}(\Om_L)}+\|a^P_{12}\|_{H^{m-1}(\Om_L)}\\
  &\le C \|(\tphi, \tpsi, \tPsi)\|_{H^{m}(\Om_L)}
\end{split}
\end{equation}
for a constant $C>0$ depending only on $(\gam, \zeta_0, J, S_, E_0)$.
\medskip

\item[(e)] There exists a constant $C>0$ depending only on $(\gam, \zeta_0, J, S_0, E_0)$ to satisfy the following estimates:
    \begin{align*}
    &\|f_0^{(\tilT, P)}\|_{H^{m-1}(\Om_L)}\le C\|\tilT\|_{H^{m}(\Om_L)},\\
    &\|f_1^{P}\|_{H^{m-1}(\Om_L)}\le C\left(\left(1+\|\tPsi\|_{H^{m}(\Om_L)}\right)\|\tphi\|_{H^{m+1}(\Om_L)}+
        \|(\tpsi, \tPsi)\|^2_{H^{m}(\Om_L)}
        \right),\\
        &\|f_2^{(\tilT, P)}\|_{H^{m-1}(\Om_L)}\le C\left(\|\tilT\|_{H^{m-1}(\Om_L)}
        +\|\tphi\|_{H^{m}(\Om_L)}
        +\|(\tpsi, \tPsi)\|^2_{H^{m}(\Om_L)}
        \right).
    \end{align*}
\medskip

\item[(f)] For the differential operator $\mfrak{L}_1^P(\cdot,\cdot)$ given by Definition \ref{definition:approx coeff and fs}, it is a second order differential operator with respect to the first component of a variable (which corresponds to $\psi$ in Definition \ref{definition:approx coeff and fs}(1)).
    \begin{itemize}
\item[($\tx{f}_1$)]
   The operator $\mfrak{L}_1^{P_0}$, as a second order differential operator with respect to its first argument, is of {\emph{mixed type in $\Om_L$ with a degeneracy occurring on $\displaystyle{\Om_L\cap\{x_1=\ls\}}$}} for the constant $\ls$ given from Lemma \ref{lemma-1d-full EP}. More precisely,
     \begin{equation*}
       \tx{the operator $\mfrak L_1^{P_0}$ is}\,\,\begin{cases}
       \tx{elliptic}\quad&\mbox{for $x_1<\ls$},\\
       \tx{degenerate}\quad&\mbox{for $x_1=\ls$},\\
       \tx{hyperbolic}\quad&\mbox{for $x_1>\ls$}.
       \end{cases}
     \end{equation*}
\item[($\tx{f}_2$)] For each $P=(\tphi, \tpsi, \tPsi)\in
\iterV^{m+1}_{r_2}\times \iterP^m_{r_3}\times \iterP^m_{r_3}$, there exists a function  $\gs^P:[-1,1]\rightarrow (0,L)$ so that
\begin{equation*}
       \tx{the operator $\mfrak L_1^{P}$ is}\,\,\begin{cases}
       \tx{elliptic}\quad&\mbox{for $x_1<\gs^P(x_2)$},\\
       \tx{degenerate}\quad&\mbox{for $x_1=\gs^P(x_2)$},\\
       \tx{hyperbolic}\quad&\mbox{$x_1>\gs^P(x_2)$}.
       \end{cases}
     \end{equation*}
In addition, the function $\gs^P$ satisfies the estimate
    \begin{equation}
    \label{estimate of gs}
    \|\gs^P-\ls\|_{H^{m-1}((-1,1))}\le C  \|(\tphi, \tpsi, \tPsi)\|_{H^m(\Om_L)}
    \end{equation}
for a constant $C>0$ depending only on $(\gam, \zeta_0, J, S_, E_0)$. Furthermore, the function $\gs^P$ satisfies the estimate
\begin{equation}
\label{boundes of gs}
   \frac{15}{16} \ls\le \gs^P(x_2)\le \ls+\frac{1}{16}\min\{\ls, L-\ls\} \quad\tx{for $|x_2|\le 1$}.
\end{equation}

\item[($\tx{f}_3$)] For $\lambda_0:=\bar{a}_{11}\left(\frac{5\ls}{8}\right)(>0)$, it holds that
\begin{equation*}
  \begin{bmatrix}
  a_{11}^P & a_{12}^P\\
  a_{12}^P & 1
  \end{bmatrix}\ge \frac{\lambda_0}{2}\quad\tx{in $\ol{\Om_L}\cap\left\{x_1\le \frac{7\ls}{8}\right\}$}.
\end{equation*}

\item[($\tx{f}_4$)] There exists a constant $\lambda_1>0$ depending only on $(\gam, \zeta_0, J, S_, E_0)$ such that
\begin{equation*}
a_{11}^P\le -\lambda_1\quad\tx{in $\ol{\Om_L}\cap\left\{x_1\ge L-\frac{L-\ls}{10}\right\}$. }
\end{equation*}
\end{itemize}

\end{itemize}

\begin{proof} {\textbf{Step 1.}}
By the generalized Sobolev inequality, one can fix a constant $\delta_1>0$ sufficiently small such that if the inequality
\begin{equation*}
  \max\{r_2, r_3\}\le 2\delta_1
\end{equation*}
holds, then for any $P=(\tphi, \tpsi, \tPsi)\in \iterV^{m+1}_{r_2}\times \iterP^m_{r_3}\times \iterP^m_{r_3}$, the coefficients $(a_{ij}^P, a^P, b_1^P, b_0^P)$ for $i,j=1,2$ and the function $f_1^P$ given by Definition \ref{definition:approx coeff and fs} are well defined. And, one can fix a constant $\delta_2>0$ sufficiently small such that if the inequality
\begin{equation*}
  \max\{r_1, r_2, r_3\}\le 2\delta_2
\end{equation*}
holds, then any $\tilT\in \iterP^m_{r_1}$ satisfies \eqref{smallness condition-2}. Moreover, for any $P=(\tphi,\tpsi, \tPsi)\in \iterV^{m+1}_{r_2}\times \iterP^m_{r_3}\times \iterP^m_{r_3}$, the vector field ${\bf v}(\rx, \nabla\tpsi(\rx), \nabla^{\perp}\tphi(\rx))$ given by \eqref{definition of v} satisfies \eqref{smallness condition-3}.
So the functions $f_2^{(\tilT, P)}$ and $f_3^{(\tilT, P)}$ given by Definition \ref{definition:approx coeff and fs} are well defined. We set
\begin{equation}
\label{delta-choice1}
  \bar{\delta}:=\min\{\delta_1, \delta_2\}.
\end{equation}
Then all the properties stated in (a)--(f) can be directly checked.
\medskip

{\textbf{Step 2.}} A direct computation using $\us$ given by \eqref{EP-1d-reduced} yields $$\bar a_{11}=1-\frac{\bar u_1^{\gam+1}}{\us^{\gam+1}},$$
so we have
\begin{equation*}
  \mfrak{L}_1^{P_0}(v,0)=\left(1-\frac{\bar u_1^{\gam+1}}{\us^{\gam+1}}\right)\der_{11}v+\der_{22}v+\bar a\der_1 v.
\end{equation*}
Then Lemma \ref{lemma-1d-full EP} directly implies the statement ($\tx{f}_1$).
\medskip

{\textbf{Step 3.}} Since $a_{12}^P=a_{21}^P$, we can write $\mfrak{L}_1^P(v,0)$ as
\begin{equation*}
  \mfrak{L}_1^P(v,0)=a_{11}^P\der_{11}v+2a_{12}^P\der_{12}v+\der_{22}v+a^{P}\der_1 v.
\end{equation*}
Due to Lemma \ref{lemma-1d-full EP}, the coefficient $\bar a_{11}$ satisfies
$$
\bar{a}_{11}(L)<0<\bar a_{11}(0),\quad\tx{and}\quad \bar{a}_{11}'=-\frac{(\gam+1)\bar u_1^{\gam}}{\us^{\gam+1}}\bar u_1'.$$
The principal coefficients of the operator $\mfrak{L}_1^P$ form a symmetric matrix $${\mathbb A}^{P}=\begin{bmatrix}a_{11}^{P}&a_{12}^{P}\\
a_{12}^{P}&1\end{bmatrix},$$
so all the eigenvalues of ${\mathbb  A}^{P}$ are real. Hence, the operator $\mfrak{L}_1^P$ is hyperbolic if and only if $a^P_{11}-(a^P_{12})^2<0$, and elliptic if and only if $a^P_{11}-(a^P_{12})^2>0$.
By the estimate \eqref{estimate-coefficient-difference} and the generalized Sobolev inequality, it holds that
\begin{equation}
\label{estimate of C1 difference of a_{11}}
\|\det {\mathbb A}^{P}-\det {\mathbb A}^{P_0}\|_{C^1(\ol{\Om_L})}=\|\det {\mathbb A}^{P}-\bar a_{11}\|_{C^1(\ol{\Om_L})}
\le C\bar{\delta}
\end{equation}
for some constant $C>0$ fixed depending only on $(\gam, \zeta_0, J, S_0, E_0,L)$.
Therefore, we can reduce the constant $\bar{\delta}>0$ from the one given in \eqref{delta-choice1} so that the condition \eqref{condition:r} holds, then the following two properties hold:
\begin{equation*}
  \begin{split}
&\max_{\ol{\Gamex}}\det {\mathbb A}^{P}<0<\min_{\ol{\Gamen}}\det {\mathbb A}^{P},\\
\tx{and}\,\,&\max_{\ol{\Om_L}}\der_{x_1}\det {\mathbb A}^{P}\le
    -\frac 12 \min_{\ol{\Om_L}} \frac{(\gam+1)\bar u_1^{\gam+1}}{\us^{\gam+1}}\bar u'<0.
  \end{split}
\end{equation*}
Then the implicit function theorem yields a unique $C^1$-function $\gs^P:[-1,1]\rightarrow \R$ satisfying
\begin{equation*}
\det {\mathbb A}^{P}(x_1, x_2)
\begin{cases}
>0\quad&\mbox{for $x_1<\gs^P(x_2)$},\\
=0\quad&\mbox{for $x_{1}=\gs^P(x_2)$},\\
<0\quad&\mbox{for $x_1>\gs^P(x_2)$}.
\end{cases}
\end{equation*}
Note that
\begin{equation*}
\det {\mathbb A}^{P}(\gs^P(x_2), x_2)-\bar a_{11}(\ls)= 0  \quad\tx{for $-1\le x_2\le 1$},
\end{equation*}
which can be rewritten as
\begin{equation*}
 \bar a_{11}(\gs(x_1))-\bar a_{11}(\ls)=\bar a_{11}(\gs(x_2))-a_{11}(\gs(x_2),x_2)+a_{12}^2(\gs(x_2),x_2)\quad\tx{for $|x_1|\le 1$.}
\end{equation*}
By using this equation and the trace inequality, we can easily derive the estimate \eqref{estimate of gs}. Moreover we can directly check the estimate \eqref{boundes of gs} by using \eqref{estimate of gs} and the generalized Sobolev inequality. This proves the statement (${\tx f}_2$). Finally, the statements (${\tx f}_3$) and (${\tx f}_4$) can be verified by using \eqref{estimate-coefficient-difference}.

\end{proof}
\end{lemma}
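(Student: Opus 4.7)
The plan is to choose $\bar\delta$ small in two successive rounds: first small enough so that every composition below is well-defined and lives in $H^{m-1}(\Om_L)$, and then small enough so that the perturbation of the principal symbol is $C^1$-close to $\bar a_{11}$. Since $m-1\ge 3$, the Sobolev embedding gives $H^{m-1}(\Om_L)\subset C^1(\ol{\Om_L})$, which is what I need to evaluate smooth nonlinear functions of $(\tilT,\tpsi,\tphi,\tPsi)$ pointwise. Fix $d_0$ so that \eqref{strict positivity of A22} holds on $\ol{\Om_L}\times\mcl R_{d_0}$; choosing $r_1,r_2,r_3\le 2\bar\delta$ with $\bar\delta$ much smaller than $d_0$ ensures that \eqref{smallness condition-1}--\eqref{smallness condition-3} are satisfied uniformly for $(\tilT,P)$ in the iteration set. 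Then the coefficients defined in Definition \ref{definition:coefficients-nonlinear} are pullbacks of smooth functions on $\ol{\Om_L}\times\mcl R_{d_0}$ by a map in $H^{m-1}$, so a standard Moser-type composition estimate gives (a). Part (b) is an immediate algebraic identity from \eqref{definition of pre coeff}, since $A_{22}/A_{22}=1$ and $A_{12}=A_{21}$.

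For the boundary parity statement (c), the key point is that $\iterP^m_r$ and $\iterV^{m+1}_r$ are defined precisely by the parity conditions that make the $x_2$-even/odd extension through $\Gamw$ respect the structure of the equations. I would verify (c) by induction: noting that $\bar\vphi$, $\bar\Phi$, $\bar u_1$, $\bar E$ depend only on $x_1$, the chain rule shows that each $\der_{x_2}^k$ of a coefficient can be written as a polynomial expression in $\der_{x_2}^l$ of $(\tpsi,\tphi,\tPsi)$ times smooth $x_1$-only functions; counting parities of such polynomials gives the claim. The estimates in (d) and (e) then follow from the tame Moser estimate, by Taylor expanding each coefficient about $P_0$ and using that the first-order Taylor coefficients of $(a_{ij}^P,a^P,b_1^P,b_0^P)$ in $(\tpsi,\tphi,\tPsi)$ are bounded; the nonlinear residual is bilinear, producing the quadratic terms in (e).

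The geometric statements in (f) are the heart of the lemma. For (f$_1$), a direct computation from \eqref{definition of pre coeff} at $P_0$ gives $\bar a_{11}=1-\bar u_1^{\gam+1}/\us^{\gam+1}$, which by Lemma \ref{lemma-1d-full EP} is positive for $x_1<\ls$, zero at $\ls$, negative for $x_1>\ls$, proving the trichotomy. For (f$_2$), I would apply the implicit function theorem to $F(x_1,x_2):=\det\mathbb A^P(x_1,x_2)=a_{11}^P-(a_{12}^P)^2$. By (d) and Sobolev embedding, $F$ is $C^1$-close to $\bar a_{11}(x_1)$, and $\der_{x_1}\bar a_{11}=-(\gam+1)\bar u_1^{\gam}\bar u_1'/\us^{\gam+1}<0$ on $[0,L]$; hence after shrinking $\bar\delta$, $\der_{x_1}F<0$ in $\ol{\Om_L}$ and $F>0$ at $x_1=0$, $F<0$ at $x_1=L$, so the zero set is a graph $x_1=\gs^P(x_2)$. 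The estimate \eqref{estimate of gs} is obtained by writing
\begin{equation*}
\bar a_{11}(\gs^P(x_2))-\bar a_{11}(\ls)=\bigl(\bar a_{11}-a_{11}^P\bigr)(\gs^P(x_2),x_2)+(a_{12}^P)^2(\gs^P(x_2),x_2),
\end{equation*}
inverting $\bar a_{11}$ locally (its derivative is uniformly negative), and applying the trace estimate together with (d). The bound \eqref{boundes of gs} follows by Sobolev embedding once $\bar\delta$ is small. Finally (f$_3$) and (f$_4$) are direct consequences of \eqref{estimate-coefficient-difference} combined with the uniform positivity of $\bar a_{11}$ on $[0,7\ls/8]$ and its uniform negativity on $[L-(L-\ls)/10,L]$.

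The step I expect to be the main obstacle is the implicit-function construction of $\gs^P$ and the Sobolev-norm estimate on it: one has to argue that $a_{11}^P-(a_{12}^P)^2\in H^{m-1}$ is a \emph{genuine} $C^1$ perturbation of $\bar a_{11}(x_1)$ with a sign on its $x_1$-derivative, and then upgrade from a $C^1$ result to the $H^{m-1}$ graph bound \eqref{estimate of gs} using a trace inequality and the implicit-function identity above. The other parts are mostly bookkeeping with Moser estimates once the smallness of $\bar\delta$ is fixed.
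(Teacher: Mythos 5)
Your proposal follows essentially the same route as the paper: fix $\bar\delta$ via Sobolev embedding and composition estimates so that the coefficients are well defined and satisfy (a)--(e), compute $\bar a_{11}=1-\bar u_1^{\gam+1}/\us^{\gam+1}$ for (f$_1$), and obtain $\gs^P$ by applying the implicit function theorem to $\det\mathbb A^P$, using its $C^1$-closeness to $\bar a_{11}$, the negativity of $\der_{x_1}\det\mathbb A^P$, the identity $\bar a_{11}(\gs^P)-\bar a_{11}(\ls)=(\bar a_{11}-a_{11}^P)+(a_{12}^P)^2$ together with the trace inequality for \eqref{estimate of gs}, and \eqref{estimate-coefficient-difference} for (f$_3$), (f$_4$). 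The argument is correct; your extra detail on the parity bookkeeping in (c) merely fills in what the paper leaves as a direct check.
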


\subsection{Well-posedness of \eqref{lbvp main}}
\label{subsection-lbvp-preliminary}
\subsubsection{Main proposition}
Given constants $(\gam, \zeta_0, J, S_0, E_0)$ with
\begin{equation*}
  \gam>1,\quad \zeta_0>1,\quad J>0,\quad S_0>0,\quad E_0<0,
\end{equation*}
let $(\bar{\vphi}, \bar{\Phi})$ be the associated background solution to the system \eqref{new system with H-decomp2}--\eqref{new system with H-decomp5} in the sense of Definition \ref{definition of background solution-HD}. For a fixed constant $L\in(0, l_{\rm max}]$, let $\bar{\delta}$ be given to satisfy Lemma \ref{lemma on L_1}. Assuming that the condition
\begin{equation}
\label{condition: r in sec4}
  \max\{r_1, r_2, r_3\}\le 2\bar{\delta}
\end{equation}
holds, fix
\begin{equation*}
\tilT\in \iterP^m_{r_1}\quad\tx{and}\quad  P=(\tphi, \tpsi, \tPsi)\in \iterV^{m+1}_{r_2}\times \iterP^m_{r_3}\times \iterP^m_{r_3}.
\end{equation*}

\begin{problem}
\label{problem-lbvp for iteration}
Given functions $f_1\in H^{m-1}(\Om_L)$ and $f_2\in H^{m-1}(\Om_L)$ satisfying the compatibility conditions
\begin{equation}
\label{compatibility conditions for f1 and f2}
  \left(\frac{\der}{\der{x_2}}\right)^k f_1=\left(\frac{\der}{\der{x_2}}\right)^k  f_2=0\quad \tx{on $\Gamw$},
\end{equation}
for $k=2j-1$, $j\in \mathbb{N}$ with $k<m-1$,
find $(v,w)$ that solves the following problem:
\begin{equation}
\label{lbvp-main general}
\begin{split}
\begin{cases}
\mfrak{L}_1^P(v, w)=f_1\quad&\tx{in $\Om_L$},\\
\mfrak L_2(v,w)=f_2\quad&\tx{in $\Om_L$},
\end{cases}\\
v=0,\quad \der_1 w=0\quad&\tx{on $\Gamen$},\\
\der_2v=0,\quad \der_2 w=0\quad&\tx{on $\Gamw$},\\
w=0\quad&\tx{on $\Gamex$}.
\end{split}
\end{equation}
\end{problem}

\begin{proposition}
\label{theorem-wp of lbvp for system with sm coeff}
Suppose that $(u_0, E_0)\in \Tac$ with $E_0<0$, which is equivalent to $0<u_0<\us$, and let $(\bar{\vphi}, 0, {\bar{\Phi}}, S_0)$ be the background solution associated with $(\gam, \zeta_0, J, S_0, E_0)$ in the sense of Definition \ref{definition of background solution-HD}. Then, one can fix two constants $\bJ$ and $\ubJ$ satisfying
\begin{equation*}
  0<\bJ<1<\ubJ<\infty
\end{equation*}
so that whenever $J\in(0, \bJ]\cup[\ubJ,\infty)$,
there exists a constant $d\in(0,1)$ such that if
$(E_0, L)$ are fixed to satisfy the condition \eqref{almost sonic condition1 full EP}, then Problem \ref{problem-lbvp for iteration} is well posed for any $P=(\tphi, \tpsi, \tPsi)\in \iterV^{m+1}_{r_2}\times \iterP^m_{r_3}\times \iterP^m_{r_3}$, provided that the constant $\bar{\delta}>0$ from Lemma \ref{lemma on L_1} is adjusted appropriately. In other words, the constant $\bar{\delta}>0$ can be reduced so that, under the condition of \eqref{condition: r in sec4}, for each $P=(\tphi, \tpsi, \tPsi)\in \iterV^{m+1}_{r_2}\times \iterP^m_{r_3}\times \iterP^m_{r_3}$, Problem \ref{problem-lbvp for iteration} admits a unique solution $(v,w)\in H^m(\Om_L)\times H^m(\Om_L)$. Furthermore, the solution satisfies the estimate
\begin{equation}
\label{estimate of v and w}
      \|v\|_{H^m(\Om_L)}+\|w\|_{H^m(\Om_L)}
      \le C\left(\|f_1\|_{H^{m-1}(\Om_L)}
      +\|f_2\|_{H^{m-2}(\Om_L)}\right)
    \end{equation}
    for some constant $C>0$.
    \smallskip

    In the above, the parameters $\ubJ$ and $\bJ$ are fixed depending only on $(\gam, \zeta_0, S_0)$. And, the constants $d\in(0,1)$ and $\bar{\delta}$ are fixed depending only on $(\gam, \zeta_0, S_0,J)$ and $(\gam,\zeta_0, S_0, E_0, J, L)$, respectively. Finally, the estimate constant $C$ is fixed depending only on $(\gam,\zeta_0, S_0, E_0, J, L)$.

\end{proposition}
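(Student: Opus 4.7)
The plan is to prove Proposition~\ref{theorem-wp of lbvp for system with sm coeff} in three stages: (i) an a priori $H^1$-estimate for smooth solutions $(v,w)$, from which uniqueness is immediate by linearity; (ii) a bootstrap to $H^m$ using Theorem~\ref{theorem-1-full} and standard elliptic theory; (iii) existence via the Galerkin-plus-singular-perturbation scheme of \S\ref{subsec:2.3}. For the $H^1$-estimate I test the $\mfrak L_1^P$-equation against $e^{-\mu x_1}\der_1 v$, in the spirit of Lemma~\ref{lemma G:wp of singular pert prob-main}, and the $\mfrak L_2$-equation against $\kappa e^{-\mu x_1} w$ for two constants $\mu,\kappa>0$ to be chosen, and then sum. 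The diagonal contributions produce the coercive quantity
\[
\int_{\Om_L} e^{-\mu x_1}\Bigl(-\kz_1^P+\tfrac{\mu}{2}a_{11}^P\Bigr)(\der_1 v)^2 + \tfrac{\mu}{2}e^{-\mu x_1}(\der_2 v)^2 + \kappa e^{-\mu x_1}\bigl(|\nabla w|^2 + c_0 w^2\bigr)\,d\rx + \tx{(good boundary terms on } \Gamex\tx{)},
\]
in which Lemma~\ref{lemma on L_1} secures $-\kz_1^P\ge \lambda/2$ for some $\lambda>0$ once $\bar\delta$ is small. The coupling generates cross-terms of the form $\int e^{-\mu x_1}(b_1^P \der_1 w + b_0^P w)\der_1 v$ and $-\kappa\int e^{-\mu x_1} c_1 w\,\der_1 v$, which are to be handled by Cauchy-Schwarz. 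The crux is the explicit identities $b_1^P = M^2/\bar u_1$ and $c_1 = -\bar u_1\bar\rho/c^2$ (with $c$ the sound speed), which show that the $L^\infty$-norms of these couplings are uniformly small in $\Om_L$ precisely when $J\in(0,\bJ]\cup[\ubJ,\infty)$ and $\bar u_1/\us\in[1-d,1+d]$ for appropriate thresholds. Choosing $\kappa$ and $\mu$ accordingly allows the cross-terms to be absorbed; Poincar\'{e}'s inequality (legitimate because $v=0$ on $\Gamen$ and $w=0$ on $\Gamex$) then yields $\|v\|_{H^1(\Om_L)}+\|w\|_{H^1(\Om_L)}\le C(\|f_1\|_{L^2(\Om_L)}+\|f_2\|_{L^2(\Om_L)})$.

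For the bootstrap I regard the system as the uncoupled pair $\mfrak L_1^P v = f_1 - b_1^P\der_1 w - b_0^P w$ and $\Delta w - c_0 w = f_2 + c_1\der_1 v$. Lemma~\ref{lemma on L_1} verifies every hypothesis of Theorem~\ref{theorem-1-full} for $\mfrak L_1^P$ (Kz-condition of order $m$, ellipticity near $\Gamen$, hyperbolicity on $\Gamex$, vanishing of $a_{12}^P$ on $\Gamw$, and the small-perturbation condition relative to $\bar a_{11}$); the $w$-equation carries Neumann data on $\Gamen\cup\Gamw$ and Dirichlet data on $\Gamex$, with compatibility provided by \eqref{compatibility conditions for f1 and f2} together with Lemma~\ref{lemma on L_1}(c). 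Iterating the two regularity estimates in $H^k$ for $k=2,\dots,m$, with the Step~(i) bound as base case, produces \eqref{estimate of v and w}. Existence is then established by adding $\eps\der_{111} v$ to the first equation with boundary conditions $\der_1 v = 0$ on $\Gamen$ and $\der_{11} v = 0$ on $\Gamex$, expanding $(v,w)$ in the Neumann eigenbasis $\{\eta_k\}$ of $[-1,1]$, solving the resulting finite ODE system by the Fredholm alternative, and passing to the limits $n\to\infty$ and $\eps\to 0^+$ with the aid of the uniform a priori estimates, exactly as in \S\ref{subsec:2.3}.

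The main obstacle is the coupled $H^1$-estimate in Step~(i). In contrast to the scalar situation treated by Theorem~\ref{theorem-1-full}, the Kz-coercivity of the degenerate $\mfrak L_1^P$-equation must absorb a coupling term $b_1^P \der_1 w$ that carries the same differential order as the coercive $(\der_1 v)^2$ on the diagonal, so a naive Cauchy-Schwarz decoupling would destroy the positivity. The parameter window $J\in(0,\bJ]\cup[\ubJ,\infty)$ together with $\bar u_1/\us\approx 1$ is exactly what forces the explicit expressions for $b_1^P$, $b_0^P$, and $c_1$ to be small enough relative to $\lambda$ and to the elliptic coercivity of $-\Delta+c_0$ for this absorption to succeed; isolating the sharp thresholds $\bJ$, $\ubJ$, and $d$ is the core computation and relies on the specific structure of the Euler-Poisson system rather than any abstract mixed-type theory.
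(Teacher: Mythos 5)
Your overall architecture (a coupled a priori $H^1$-estimate as the crux, then a bootstrap via Theorem \ref{theorem-1-full} plus elliptic theory, then existence by Galerkin approximation and singular perturbation) is the same as the paper's, and the bootstrap/existence stages are described correctly. The gap is in the central step, the $H^1$-estimate of Lemma \ref{proposition-H1-apriori-estimate}, where your proposed mechanism does not work. You test the first equation against $e^{-\mu x_1}\der_1 v$ and claim that the coupling coefficients become uniformly small when $J\in(0,\bJ]\cup[\ubJ,\infty)$ and $\bar u_1/\us\approx 1$. This is false: by your own identity $b_1^{P_0}=M^2/\bar u_1$, near the sonic point $b_1^{P_0}\approx 1/\us=h_0^{-1}J^{-(\gam-1)/(\gam+1)}$, which blows up as $J\to 0$; similarly $c_1\sim J^{(3-\gam)/(\gam+1)}$ is large in one of the two regimes depending on $\gam$. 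The couplings are never uniformly small in both regimes; what matters is their size \emph{relative to} the degenerate coercivity constant $-\kz_1^{P_0}\sim \bar u_1'/\us\sim J^{(2-\gam)/(\gam+1)}$, which itself scales with $J$. If you carry out the Cauchy--Schwarz absorption with a $J$-independent weight such as $e^{-\mu x_1}$ (even optimizing your free parameter $\kappa$ between the two cross-terms), the ratio of the coupling penalty to the coercivity comes out $O(1)$ in $J$, so neither limit $J\to 0$ nor $J\to\infty$ helps and the absorption cannot be forced.

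The paper's fix, which your proposal is missing, is the multiplier $G(x_1)=\bar{\rho}^{\eta}(x_1)$ with a $J$-dependent exponent ($\eta=\tfrac{3\gam}{4}$ for small $J$, $\eta=\tfrac{\gam}{4}$ for large $J$). After rewriting everything in the variables $(\kappa,J)$ via the background ODE, the penalty-to-coercivity ratio for the first-order coupling $b_1\der_1 w\,\der_1 v$ becomes $\sim J^{(2\eta-\gam)/(\gam+1)}$, which tends to $0$ as $J\to0$ when $\eta>\gam/2$ and as $J\to\infty$ when $\eta<\gam/2$; this is the entire source of the dichotomy $J\le\bJ$ or $J\ge\ubJ$. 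The near-sonic condition \eqref{almost sonic condition1 full EP} plays a separate role from the one you assign it: it controls the zeroth-order coupling $\beta w\,\der_1 v$ through Poincar\'e's inequality, because the Poincar\'e constant $L$ can be expressed as $\sqrt{h_0^3/2}\,J^{(\gam-2)/(\gam+1)}\sqrt{\lambda(\kappa_0,\kappa_L)}$ with $\lambda(\kappa_0,\kappa_L)\to 0$ as $\kappa_0,\kappa_L\to 1$. Without the weight $\bar\rho^{\eta}$ and this two-mechanism splitting of the cross-terms, the claimed coercivity $T_{\rm coer}+T_{\rm mix}\ge\lambda_0\int(|Dv|^2+|Dw|^2)$ cannot be obtained, and the rest of your argument has no base case.
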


Once Proposition \ref{theorem-wp of lbvp for system with sm coeff} is proved, the well-posedness of \eqref{lbvp main} follows directly.

\begin{corollary}[Well-posedness of \eqref{lbvp main}]
\label{corollary:wp of mixed system in iteration}
Under the same assumptions as in Proposition \ref{theorem-wp of lbvp for system with sm coeff}, for each
\begin{equation*}
\tilT\in \iterP^m_{r_1}\quad\tx{and}\quad   P=(\tphi, \tpsi, \tPsi)\in \iterV^{m+1}_{r_2}\times \iterP^m_{r_3}\times \iterP^m_{r_3},
\end{equation*}
the boundary value problem \eqref{lbvp main} has a unique solution $(\psi, \Psi)\in [H^m(\Om_L)]^2$ that satisfies the estimate
\begin{equation}
\label{estimate-linear-pot}
\begin{split}
 & \|\psi\|_{H^m(\Om_L)}+\|\Psi\|_{H^m(\Om_L)}\\
 &\le C\left(\|\tilT\|_{H^{m-1}(\Om_L)}+\|\tphi\|_{H^{m+1}(\Om_L)}+\|(\tpsi, \tPsi)\|_{H^{m}(\Om_L)}^2+\mfrak P(S_0, E_{\rm en}, \om_{\rm en})\right)
  \end{split}
\end{equation}
for some constant $C>0$ depending only on $(\gam,\zeta_0, S_0, E_0, J, L)$. In the above estimate, the term $\mcl{P}(S_0, E_{\rm en}, \om_{\rm en})$ is defined by \eqref{definition-perturbation of bd}.

\begin{proof}
For two functions $v$ and $w$ defined in $\ol{\Om_L}$, define $\psi$ and $\Psi$ by
\begin{equation*}
\begin{cases}
\psi(x_1,x_2):=v(x_1, x_2)+\int_0^{x_2}w_{\rm en}(t)\,dt,\\
\Psi(x_1, x_2):=w(x_1, x_2)+(x_1-L)(E_{\rm en}(x_2)-E_0).
\end{cases}
\end{equation*}
It can be directly checked that $(\psi, \Psi)$ solves \eqref{lbvp main} if and only if $(v,w)$ solves
\begin{equation}
\label{lbvp-mod}
\begin{split}
\begin{cases}
\mfrak{L}_1^P(v, w)=f_1^*\quad&\tx{in $\Om_L$},\\
\mfrak L_2(v,w)=f_{2}^*\quad&\tx{in $\Om_L$},
\end{cases}\\
v=0,\quad \der_1 w=0\quad&\tx{on $\Gamen$},\\
\der_2v=0,\quad \der_2 w=0\quad&\tx{on $\Gamw$},\\
w=0\quad&\tx{on $\Gamex$},
\end{split}
\end{equation}
for $f_1^*$ and $f_2^*$ given by
\begin{equation*}
  \begin{cases}
  f_1^*:=f_1^P-\mfrak{L}_1^P(\int_0^{x_2}w_{\rm en}(t)\,dt, (x_1-L)(E_{\rm en}(x_2)-E_0)),\\
  f_2^*:=f_2^{(\tilT, P)}-\mfrak{L}_2(\int_0^{x_2}w_{\rm en}(t)\,dt, (x_1-L)(E_{\rm en}(x_2)-E_0)).
  \end{cases}
\end{equation*}
If $w_{\rm en}$ and $E_{\rm en}$ satisfy Condition \ref{conditon:1}, then it directly follows from Lemma \ref{lemma on L_1} (f) that
\begin{equation*}
  \left(\frac{\der}{\der{x_2}}\right)^k f_1^*=\left(\frac{\der}{\der{x_2}}\right)^k  f_2=0\quad \tx{on $\Gamw$},
\end{equation*}
for $k=2j-1$, $j\in \mathbb{N}$ with $k<m-1$.
Therefore, Proposition \ref{theorem-wp of lbvp for system with sm coeff} yields a unique solution $(v,w)\in[H^{m}(\Om_L)]^2$ to \eqref{lbvp-mod}, so the boundary value problem \eqref{lbvp main} has a unique solution $(\psi, \Psi)\in [H^{m}(\Om_L)]^2$. Moreover, the estimate \eqref{estimate-linear-pot} is a result directly following from the estimate \eqref{estimate of v and w} and Lemma \ref{lemma on L_1} (e).
\end{proof}
\end{corollary}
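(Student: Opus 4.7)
The plan is to reduce Problem \eqref{lbvp main}, which carries inhomogeneous entrance data coming from $w_{\rm en}$ and $E_{\rm en}-E_0$, to the homogeneous problem \eqref{lbvp-main general} whose well-posedness is granted by Proposition \ref{theorem-wp of lbvp for system with sm coeff}. The natural way to do this is by a simple explicit lift: absorb the $\der_{x_2}\psi = w_{\rm en}$ condition with $x_2 \mapsto \int_0^{x_2} w_{\rm en}(t)\,dt$, and absorb the $\der_{x_1}\Psi = E_{\rm en}-E_0$ condition on $\Gamen$ together with $\Psi = 0$ on $\Gamex$ by the affine factor $(x_1-L)(E_{\rm en}(x_2)-E_0)$. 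The walls cause no extra trouble since these lifts already satisfy the Neumann conditions on $\Gamw$ when $w_{\rm en}$ and $E_{\rm en}$ are even-extendible across $x_2 = \pm 1$.

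Concretely, I would set $\psi = v + \int_0^{x_2} w_{\rm en}(t)\,dt$ and $\Psi = w + (x_1-L)(E_{\rm en}(x_2)-E_0)$, and observe that $(\psi,\Psi)$ solves \eqref{lbvp main} if and only if $(v,w)$ solves \eqref{lbvp-main general} with modified source terms
\[
f_1^* := f_1^P - \mfrak{L}_1^P\!\left(\int_0^{x_2} w_{\rm en}(t)\,dt,\; (x_1-L)(E_{\rm en}-E_0)\right),
\]
\[
f_2^* := f_2^{(\tilT,P)} - \mfrak{L}_2\!\left(\int_0^{x_2} w_{\rm en}(t)\,dt,\; (x_1-L)(E_{\rm en}-E_0)\right).
\]
The routine-but-important step is to verify that $f_1^*$ and $f_2^*$ satisfy the odd-derivative compatibility conditions \eqref{compatibility conditions for f1 and f2} on $\Gamw$. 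For $f_1^P$ and $f_2^{(\tilT,P)}$ themselves this is supplied by Lemma \ref{lemma on L_1}(c); for the correction terms one combines the parity of the lifting ingredients given by Condition \ref{conditon:1} with the fact that $\mfrak{L}_1^P$ has $a_{12}^P$ vanishing to the correct order on $\Gamw$, again by Lemma \ref{lemma on L_1}(c). So each operator applied to an even function of $x_2$ produces an even function of $x_2$, and odd derivatives vanish on $\Gamw$ up to order $m-2$.

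Once compatibility is in hand, Proposition \ref{theorem-wp of lbvp for system with sm coeff} delivers a unique $(v,w)\in [H^m(\Om_L)]^2$ together with the estimate
\[
\|v\|_{H^m(\Om_L)} + \|w\|_{H^m(\Om_L)} \le C\bigl(\|f_1^*\|_{H^{m-1}(\Om_L)} + \|f_2^*\|_{H^{m-2}(\Om_L)}\bigr).
\]
To close the argument, I would bound $\|f_1^P\|_{H^{m-1}}$ and $\|f_2^{(\tilT,P)}\|_{H^{m-1}}$ by the nonlinear terms $\|\tilT\|_{H^{m-1}} + \|\tphi\|_{H^{m+1}} + \|(\tpsi,\tPsi)\|_{H^m}^2$ using Lemma \ref{lemma on L_1}(e), and bound the corrections coming from the lift by $C \mfrak{P}(S_0, E_{\rm en}, w_{\rm en})$, the latter following from the boundedness of the $H^{m-1}$ norms of the coefficients $(a_{ij}^P, a^P, b_1^P, b_0^P, c_0, c_1)$ provided by Lemma \ref{lemma on L_1}(a) and \eqref{estimate-coefficient-difference}. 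Adding the $H^m$-norms of the lifts themselves (which are controlled by $\mfrak{P}$) to pass from $(v,w)$ back to $(\psi,\Psi)$ yields precisely \eqref{estimate-linear-pot}.

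The substantive work lies entirely in Proposition \ref{theorem-wp of lbvp for system with sm coeff}; the corollary is a bookkeeping reduction. The only step requiring genuine care is the compatibility check on $\Gamw$ for $f_1^*$ and $f_2^*$, since one needs the parity of $w_{\rm en}, E_{\rm en}$ around $x_2 = \pm 1$ to match with the parity of the coefficients of $\mfrak{L}_1^P$ and $\mfrak{L}_2$; this is exactly what Condition \ref{conditon:1} and Lemma \ref{lemma on L_1}(c) are designed to guarantee.
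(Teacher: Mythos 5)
Your proposal is correct and follows essentially the same route as the paper: the identical lifts $\int_0^{x_2} w_{\rm en}(t)\,dt$ and $(x_1-L)(E_{\rm en}(x_2)-E_0)$, the same reduction to the homogeneous problem \eqref{lbvp-main general} with the same modified sources $f_1^*, f_2^*$, the same parity/compatibility check on $\Gamw$ via Condition \ref{conditon:1} and Lemma \ref{lemma on L_1}, and the same assembly of the estimate from \eqref{estimate of v and w} and Lemma \ref{lemma on L_1}(e). No gaps.
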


\medskip

\subsubsection{Proof of Proposition \ref{theorem-wp of lbvp for system with sm coeff}}
Throughout this section, let $P=(\tphi, \tpsi, \tPsi)\in  \iterV^{m+1}_{r_2}\times \iterP^m_{r_3}\times \iterP^m_{r_3}$ be fixed unless otherwise specified.
\begin{notation}
(1) For simplicity, we denote $(a_{ij}^P, a^P, b_1^P, b_0^P)$, and its approximated sonic interface $\gs^P$(see Lemma \ref{lemma on L_1}
 ($\tx{f}_2$)) by $(a_{ij}, a, b_1, b_0)$ and $\gs$, respectively.\\
(2) The terms $(a_{ij}, a, b_1, b_0)$ and $\gs$ for $P=(0,0,0)(=:P_0)$ are denoted by $(\bar a_{ij}, \bar a, \bar b_1, \bar b_0)$ and $\ls$(which is a constant), respectively (see Lemma \ref{lemma on L_1} (d) and (f)).
\end{notation}

\begin{proclaim}
(1) For the remainder of the article, we state that an estimate constant is fixed depending only on the data if it is fixed depending only on $(\gam, \zeta_0, S_0, E_0, J, L)$ and $m$.
(2) Unless otherwise specified, any estimate constant $C$ appearing in the following is assumed to be fixed depending only on the data.
\end{proclaim}
\medskip

In general, Proposition \ref{theorem-wp of lbvp for system with sm coeff} can be proved by adjusting the proof of Theorem \ref{theorem-1-full}. But there is an essential difference in that we need to establish a priori $H^1$-estimate of a solution $(v,w)$ to \eqref{lbvp-main general}, which consists of two second-order PDEs coupled by lower-order derivative terms. Once an a priori $H^1$ estimate is obtained, the proof can be completed using a bootstrap argument. Therefore, the most important ingredient in proving Proposition \ref{theorem-wp of lbvp for system with sm coeff} is the following lemma:

\begin{lemma}
\label{proposition-H1-apriori-estimate}
Suppose that $(v,w)$ is a smooth solution to \eqref{lbvp-main general} associated with $P=(\tphi, \tpsi, \tPsi)\in \iterV^{m+1}_{r_2}\times \iterP^m_{r_3}\times \iterP^m_{r_3}$.
There exist two constants $\bJ$ and $\ubJ$ satisfying
\begin{equation*}
  0<\bJ<1<\ubJ<\infty
\end{equation*}
so that, for any $J\in (0, \bJ]\cup [\ubJ, \infty)$, one can fix a constant $d\in(0,1)$, and reduce the constant $\bar{\delta}>0$ further from the one given in Lemma \ref{lemma on L_1}  so that if
\begin{itemize}
\item[(i)]$(E_0, L)$ are fixed to satisfy the condition \eqref{almost sonic condition1 full EP},
\item[(ii)]
\begin{equation*}
  \max\{r_2, r_3\}\le 2\bar{\delta},
\end{equation*}
\end{itemize}
then $(v,w)$
satisfies the estimate
    \begin{equation}
    \label{A priori H1 estimate}
    \begin{split}
      &\|\der_{1}v\|_{L^2(\Gamen)}+\|Dv\|_{L^2(\Gamex)}+\|v\|_{H^1(\Om_L)}+\|w\|_{H^1(\Om_L)}\\
      &\le C\left(\|f_1\|_{L^2(\Om_L)}+\|f_2\|_{L^2(\Om_L)}\right)
      \end{split}
    \end{equation}
    for some constant $C>0$.
\medskip

    In the above, the constants $\bJ$ and $\ubJ$ are fixed depending only on $(\gam, \zeta_0, S_0)$. For each $J\in (0, \bJ]\cup [\ubJ, \infty)$, the constant $d\in(0,1)$ is depending on $(\gam, \zeta_0, S_0,J)$. Finally, the constant $\bar{\delta}>0$ is adjusted depending only on the data (in the sense of Proclamation (1)).

\end{lemma}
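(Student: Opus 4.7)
The plan is to derive the a priori $H^1$ estimate by combining two energy identities — one for the Keldysh-type equation and one for the elliptic equation in \eqref{lbvp-main general} — and to close the resulting coupled quadratic form by a careful choice of weight, then to identify the range of $J$ for which this closure is possible.

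First, I would test $\mfrak{L}_1^P(v,w)=f_1$ by $e^{-\mu x_1}\der_1 v$. Integration by parts (using $v=0$ on $\Gamen$, which forces $\der_2 v=0$ there, together with $\der_2 v=0$ and $a_{12}^P=0$ on $\Gamw$) parallels the calculation in the proof of Lemma \ref{lemma G:wp of singular pert prob-main} with $\eps=0$. Choosing $\mu$ small relative to $\lambda/\|a_{11}^P\|_{L^\infty}$ and invoking the $\kz$-type Condition \ref{condition:1.2}(i) for $l=1$ yields
\begin{equation*}
\begin{split}
&\frac{\lambda}{4}\int_{\Om_L}e^{-\mu x_1}(\der_1 v)^2 + \frac{\mu}{2}\int_{\Om_L}e^{-\mu x_1}(\der_2 v)^2 + \mcl{B}(v)\\
&\le \int_{\Om_L}e^{-\mu x_1}f_1\,\der_1 v - \int_{\Om_L}e^{-\mu x_1}(b_1^P\der_1 w+b_0^P w)\der_1 v,
\end{split}
\end{equation*}
where the boundary contribution $\mcl{B}(v)$ is nonnegative thanks to $a_{11}^P>0$ on $\Gamen$ and $a_{11}^P\le -\lambda_1$ on $\Gamexg$ (Lemma \ref{lemma on L_1}($\mathrm{f}_3$)--($\mathrm{f}_4$)). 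Next, I would test $\mfrak{L}_2(v,w)=f_2$ by $w$. The boundary conditions on $w$ ($\der_1 w=0$ on $\Gamen$, $\der_2 w=0$ on $\Gamw$, $w=0$ on $\Gamexg$), together with the structural identity $c_1=-\bar u_1 c_0$ (immediate from Definition \ref{definition:coefficients-nonlinear}(2)), give
\begin{equation*}
\int_{\Om_L}|\nabla w|^2 + \int_{\Om_L}c_0\, w^2 = \int_{\Om_L}\bar u_1 c_0\,w\,\der_1 v - \int_{\Om_L}f_2 w.
\end{equation*}

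I would then add $\beta$ times the second identity to the first for a positive weight $\beta$ to be chosen, and split the three resulting cross terms
\begin{equation*}
\int_{\Om_L}e^{-\mu x_1}b_1^P\,\der_1 w\,\der_1 v,\qquad \int_{\Om_L}e^{-\mu x_1}b_0^P\,w\,\der_1 v,\qquad \beta\int_{\Om_L}\bar u_1 c_0\,w\,\der_1 v
\end{equation*}
by Cauchy--Schwarz with suitable weights, absorbing a fraction of $\|\der_1 v\|_{L^2}^2$ into the $\frac{\lambda}{4}\|\der_1 v\|_{L^2}^2$ term and dominating the residuals by $\beta\bigl(\|\nabla w\|_{L^2}^2 + c_{0,\min}\|w\|_{L^2}^2\bigr)$. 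The closure is equivalent to the simultaneous inequalities
\begin{equation*}
\beta > \frac{C\,\|b_1^P\|_{L^\infty}^2}{\lambda},\qquad \beta\,c_{0,\min} > \frac{C\,\|b_0^P\|_{L^\infty}^2}{\lambda} + \frac{C\,\beta^2\|\bar u_1 c_0\|_{L^\infty}^2}{\lambda}.
\end{equation*}

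This closure constraint is the main obstacle, and it is the source of the $J$-range restriction. Using $b_1=\bar u_1/A_{22}$ with $A_{22}=\gam S_0\bar\rho^{\gam-1}$ and $c_0=1/(\gam S_0\bar\rho^{\gam-2})$ with $\bar\rho=J/\bar u_1$, and invoking the almost-sonic condition \eqref{almost sonic condition1 full EP} so that $\bar u_1\approx \us$ throughout $[0,L]$, the three sup-norms and the positive lower bound $c_{0,\min}$ become explicit functions of $(\gam,\zeta_0,S_0,J)$ to leading order. The pair of inequalities above then reduces to a compatibility condition in $J$ whose admissible set contains neighborhoods of $0$ and $+\infty$ but excludes a bounded intermediate interval; I would fix $\bJ$ and $\ubJ$ as the endpoints of that interval (depending only on $(\gam,\zeta_0,S_0)$), and for $J\in(0,\bJ]\cup[\ubJ,\infty)$ a valid $\beta$ then exists, yielding \eqref{A priori H1 estimate} in the background case $P=P_0$. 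The passage to general $P\in\iterV^{m+1}_{r_2}\times\iterP^m_{r_3}\times\iterP^m_{r_3}$ is a standard perturbation: by Lemma \ref{lemma on L_1}(d) and Morrey's inequality, $\|a_{11}^P-\bar a_{11}\|_{C^1}+\|a_{12}^P\|_{C^1}+\|a^P-\bar a\|_{C^0}+\|b_1^P-\bar b_1\|_{C^0}+\|b_0^P-\bar b_0\|_{C^0}\lesssim \bar\delta$, so shrinking $\bar\delta$ further preserves an effective $\kz$-constant of at least $\lambda/2$ and keeps the sup-norm bounds within a fixed factor of their background values, whence step (3) still closes with the same $\beta$. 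Poincar\'e's inequality (via $v=0$ on $\Gamen$ and $w=0$ on $\Gamexg$) then promotes the $L^2$ control of $Dv$ and $\nabla w$ to the full $H^1$ estimate \eqref{A priori H1 estimate}.
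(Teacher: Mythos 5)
Your overall architecture (test the Keldysh equation by a weighted $\der_1 v$, test the elliptic equation by $w$, add the two identities, absorb the cross terms) matches the paper's Step 1, and the preliminary observations are fine: one can check that $\bar a-\tfrac12\bar a_{11}'<0$ for the accelerating background, the boundary terms are signed correctly, and $c_1=-\bar u_1 c_0$. The proof fails, however, at exactly the step you flag as "the main obstacle": the closure of the coupled quadratic form and the claim that it yields admissible $J$ near $0$ and $+\infty$.

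The difficulty is that with the weight $e^{-\mu x_1}$ all the quantities entering your two closure inequalities are, to leading order near the sonic state, \emph{homogeneous of the same degree in $J$}, so the compatibility condition is $J$-independent and cannot produce the dichotomy $J\le\bJ$ or $J\ge\ubJ$. Concretely, using \eqref{EP-1d-reduced}, \eqref{coefficients of L_1} and $\us^{\gam+1}=\gam S_0 J^{\gam-1}$ (see \eqref{definition of us}): near $\kappa=1$ one has $A_{22}\approx\us^2$, $\lambda\approx\frac{(\gam+1)\bar u_1'}{2\us}$ with $(\bar u_1')^2\approx\frac{J}{(\gam+1)\us}(1-\zeta_0^{-1})$, $\|b_1\|\approx\us^{-1}$, $\|\bar u_1c_0\|\approx\us c_0$ and $c_0\approx\frac{(J/\us)^{2-\gam}}{\gam S_0}$. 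Your first constraint forces $\beta\gtrsim\|b_1\|^2/\lambda=(\us^2\lambda)^{-1}$, while absorbing $\beta\int\bar u_1c_0\,w\,\der_1v$ into $\tfrac{\lambda}{8}\|\der_1v\|^2+\beta c_{0,\min}\|w\|^2$ forces $\beta\lesssim\lambda/(\us^2c_0)$; compatibility requires $c_0\lesssim\lambda^2$, i.e. $J^{1-\gam}\us^{\gam+1}\lesssim \gam S_0\cdot\frac{(\gam+1)(1-\zeta_0^{-1})}{4}$, and the left side is identically $\gam S_0$. So the condition reduces to $(\gam+1)(1-\zeta_0^{-1})\gtrsim 4$ — a constraint on $(\gam,\zeta_0)$ and on your absorption constants, not on $J$ — and it is not implied by the hypotheses. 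This is precisely why the paper does \emph{not} use the exponential weight: it takes $G=\bar\rho^{\eta}$ (see \eqref{definition of G}), and the extra factor $(J/\bar u_1)^{\eta}$ breaks the scaling, producing in $\om_1$ of \eqref{expression of alp} the competing term $\frac{2}{h_0^{2+\eta}}\kappa^{2\gam-\eta}J^{\frac{2\eta-\gam}{\gam+1}}$; choosing $\eta=\tfrac34\gam$ (so $2\eta-\gam>0$) makes this term small for $J\le\bJ$, and $\eta=\tfrac14\gam$ makes it small for $J\ge\ubJ$, which is the actual origin of the two regimes in \eqref{alp limit}--\eqref{positive lower bound 2 of alp}.

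A second, related gap: you use the almost-sonic condition \eqref{almost sonic condition1 full EP} only to evaluate sup-norms, whereas in the paper it plays a quantitative role. The dangerous coupling $\int\beta\,w\,\der_1v$ (with $\beta$ as in \eqref{definition of omega}) is absorbed there via the Poincar\'e inequality $\|w\|^2\le L^2\|\der_1w\|^2$, and the resulting contribution to $\alp_2$ carries the factor $\lambda(\kappa_0,\kappa_L)$ of \eqref{definition of lambda}, which tends to $0$ as $\kappa_0\to1-,\ \kappa_L\to1+$. Your scheme replaces this by absorption into the zeroth-order term $c_{0,\min}\|w\|^2$, which does not shrink as $d\to0$, so you lose the mechanism that actually kills this term. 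To repair the proof you would need to (i) replace $e^{-\mu x_1}$ by the density-power weight and redo the coefficient computation in the variables $(\kappa,J)$, and (ii) route the $w\,\der_1v$ coupling through the Poincar\'e constant so that the almost-sonic condition can be exploited — which is essentially the paper's Steps 2--5.
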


\begin{proof}

{\textbf{Step 1.}}
For  $i,j=1,2$, set
\begin{equation}
\label{def of coeff pert}
(d a_{ij}, d a, d b_1, d b_0)
:=(a_{ij}, a, b_1, b_0)-(\bar a_{ij}, \bar a, \bar b_1, \bar b_0)\quad\tx{in $\Om_L$}.
\end{equation}
Next, set
\begin{equation*}
 \mfrak{L}_1:=\mfrak{L}_1^{P_0}\quad\tx{and}\quad
 d \mfrak{L}_1^{P}:=\mfrak{L}_1^{P}-\mfrak{L}_1.
\end{equation*}
Then we have
\begin{equation*}
 d \mfrak{L}_1^{P}(v,w)=d a_{11}\der_{11}v+2a_{12}\der_{12}v+d a\der_1 v+d b_1\der_1 w+d b_0w\quad\tx{in $\Om_L$}.
\end{equation*}
And, rewrite \eqref{lbvp-main general} as
\begin{equation*}
\begin{split}
\begin{cases}
\mfrak L_1(v, w)=f_1-d \mfrak{L}_1^{P}(v,w)=:F_1\quad&\tx{in $\Om_L$},\\
\mfrak L_2(v,w)=f_2\quad&\tx{in $\Om_L$},
\end{cases}\\
v=0,\quad \der_1 w=0\quad&\tx{on $\Gamen$},\\
\der_2v=0,\quad \der_2 w=0\quad&\tx{on $\Gamw$},\\
w=0\quad&\tx{on $\Gamex$}.
\end{split}
\end{equation*}

For a function $G=G(x_1)$ to be determined later, denote
\begin{equation}
\label{definition of I1 and I2}
 I_1:=\int_{\Om_L}  G\der_1v{\mfrak L}_1(v,w)\,d\rx,\quad
 I_2:=\int_{\Om_L} w{\mfrak L}_2(v,w)\,d\rx.
\end{equation}
Clearly, it holds that
\begin{equation}
\label{I1+I2 in H1 estimate}
  I_1+I_2=\int_{\Om_L} F_1G\der_1v+f_2w\,d\rx.
\end{equation}
By using Definition \ref{definition:coefficients-nonlinear}, and integrating by parts with using the boundary conditions for $(v,w)$, one can check that
\begin{equation*}
\begin{split}
  I_1=&\frac 12\int_{\Gamex}(\bar a_{11}(\der_1v)^2-(\der_2v)^2)G\,dx_2-\frac 12\int_{\Gam_0} \bar a_{11}(\der_1v)^2 G\,dx_2\\
  &+\int_{\Om_L}\left(\left(-\frac 12\bar a_{11}'+\bar a\right)G-\frac 12 \bar a_{11}G'\right) (\der_1 v)^2+G'\frac{(\der_2v)^2}{2}\,d\rx\\
  &+\int_{\Om_L} (\bar b_1\der_1v\der_1w+\bar b_0w\der_1v)G\,d\rx,\\
 \tx{and}\quad I_2
  =&\int_{\Om_L}\left(-|\nabla w|^2-\frac{\bar{\rho}}{\gam S_0\bar{\rho}^{\gam-1}} w^2\right)+\frac{J}{\gam S_0\bar{\rho}^{\gam-1}}w\der_1v \,d\rx.
  \end{split}
\end{equation*}
\smallskip

The key of the proof is to  choose the function $G$ as
\begin{equation}
\label{definition of G}
  G(x_1)=\bar{\rho}^{\eta}(x_1)
\end{equation}
for a constant $\eta>0$ to be determined later.
\smallskip

Define a function $\alp_1$ by
\begin{equation}
\label{main-coeff}
\begin{split}
\alp_1:=&  -\frac 12(\bar a_{11}G)'+\bar aG=\frac{\bar{\rho}'}{\bar{\rho}} \frac G2 \left((\gam-1+\eta)\frac{\bar u_1^{\gam+1}}{\us^{\gam+1}}+(2-\eta)\right).
  \end{split}
\end{equation}
We rearrange the terms in $-(I_1+I_2)$ as follows:
\begin{equation*}
-(I_1+I_2)=T_{\rm bd}+T_{\rm coer}+T_{\rm mix}
\end{equation*}
for
\begin{align}
\label{definition-Jbd}
T_{\rm bd}:=&\frac{G(0)}{2} \int_{\Gam_0}\bar a_{11}(\der_1v)^2 \,dx_2-\frac{G(L)}{2}\int_{\Gamex}(\bar a_{11}(\der_1v)^2-(\der_2v)^2)\,dx_2,\\ 
\label{definition-Jp}
T_{\rm coer}:=&-\int_{\Om_L}\alp_1(\der_1v)^2
  +\eta \frac{\bar{\rho}'}{\bar{\rho}}G\frac{(\der_2 v)^2}{2}\,d\rx+
  \int_{\Om_L}|\nabla w|^2+\frac{\bar{\rho}}{\bar c^2} w^2\,d\rx,\\
  \label{definition-Jc}
T_{\rm mix}:=&  \int_{\Om_L}\frac{(\gam-1)\bar u}{\gam S_0\bar{\rho}^{\gam-1}}\frac{\bar \rho'}{\bar\rho}G w\der_1v
-G\frac{\bar u}{\gam S_0\bar{\rho}^{\gam-1}}\der_1w\der_1v-\frac{J}{\gam S_0\bar{\rho}^{\gam-1}}\der_1v w\,d\rx.
\end{align}
Note that the proof of Lemma \ref{lemma on L_1} (see Step 3 in the proof) gives that
\begin{equation*}
  \bar a_{11}>0\,\,\tx{on $\Gamen$}\quad\tx{and}\quad \bar a_{11}<0\,\,\tx{on $\Gamex$},
\end{equation*}
and this implies that
 \begin{equation*}
  T_{\rm bd}\ge 0.
 \end{equation*}
Later, we give an improved estimate for a lower bound of $T_{\rm bd}$, so that it can be used to handle the singular perturbation problem \eqref{bvp-sing-pert}.
\smallskip

In the following steps, we shall fix $\eta>0$ and find $(\bJ, \ubJ)$ with
\begin{equation*}
  0<\bJ<1<\ubJ<\infty
\end{equation*}
so that whenever $J\in(0, \bJ]\cup[\ubJ,\infty)$, there exists a constant $d\in(0,1)$ depending on $(\gam, \zeta_0, S_0,J)$ such that if
$(E_0, L)$ are fixed to satisfy the condition \eqref{almost sonic condition1 full EP}, then it holds that
\begin{equation*}
T_{\rm coer}+T_{\rm mix}\ge \lambda \int_{\Om_L} |Dv|^2+|Dw|^2\,d\rx
\end{equation*}
for some constant $\lambda>0$.
\medskip

{\textbf{Step 2.}} Define a function $\beta$ by
\begin{equation}
\label{definition of omega}
  \beta:=\frac{(\gam-1)\bar u_1}{\gam S_0\bar{\rho}^{\gam-1}}\frac{\bar \rho'}{\bar\rho}G-\frac{J}{\gam S_0\bar{\rho}^{\gam-1}},
\end{equation}
and rewrite $T_{\rm mix}$ as
\begin{equation*}
T_{\rm mix}=\int_{\Om_L}\beta w \der_1v-G\frac{\bar u_1}{\gam S_0\bar{\rho}^{\gam-1}}\der_1w\der_1v\,d\rx.
\end{equation*}
Owing to the boundary condition $w=0$ on $\Gamex$, the Poincar\'e inequality gives
\begin{equation*}
\int_{\Om_L}w^2\,d\rx\le L^2 \int_{\Om_L}(\der_1w)^2\,d\rx.
\end{equation*}
By using this estimate and the Cauchy-Schwarz inequality, one has
\begin{equation}
\label{estimate-Tmix}
\begin{split}
|T_{\rm mix}|
  &\le \frac 14 \int_{\Om_L}(\der_1 w)^2\,d\rx+
  \int_{\Om_L}
  2\left(\left(G\frac{\bar u_1}{\gam S_0\bar{\rho}^{\gam-1}}\right)^2+L^2\beta^2\right)(\der_1 v)^2\,d\rx.
  \end{split}
\end{equation}
Define
\begin{equation}
\label{definition-alp2}
  \alp_2:=2\left(\left(G\frac{\bar u_1}{\gam S_0\bar{\rho}^{\gam-1}}\right)^2+L^2\beta^2\right).
\end{equation}
For $\alp_1$ given by \eqref{main-coeff}, let us set
\begin{equation}
\label{definition of beta*}
\alp:=-\alp_1-\alp_2.
\end{equation}
Then it follows from \eqref{definition-Jp} and \eqref{estimate-Tmix} that
\begin{equation}
\label{inequality 1 of Jp+JC}
  T_{\rm coer}+T_{\rm mix}\ge
  \int_{\Om_L} \alp (\der_1 v)^2-\frac{\eta}{2}\frac{\bar \rho'}{\bar \rho}G (\der_2v)^2\,d\rx
  +\frac 34\int_{\Om_L}|\nabla w|^2+\frac{\bar{\rho}}{\gam S_0\bar{\rho}^{\gam-1}} w^2\,d\rx.
\end{equation}
As long as $\eta$ is fixed as a positive constant, it follows from Lemma \ref{lemma-1d-full EP} that
\begin{equation*}
  \min_{\ol{\Om_L}}\frac{\eta}{2}\frac{\bar u_1'}{\bar u_1}G>0.
\end{equation*}
\medskip

{\textbf{Step 3.}} Note that $(\bar u_1, \bar E)(x_1)$ satisfies
\begin{equation*}
\bar{E}^2=2H(\bar u_1)\quad\tx{for $x_1\in (0, l_{\rm max}]$}
\end{equation*}
for the function $H$ given by \eqref{definition of H}. Define
\begin{equation}\label{definition of kappa}
  \kappa:=\frac{\bar u_1}{\us},\quad E(\kappa):=\bar{E}(\bar u_1),\quad \tx{and}\quad \mcl{F}(\kappa):=  \int_1^{\kappa}
\left(1-\frac{t}{\zeta_0}\right)
\left(1-\frac{1}{t^{\gam+1}}\right)
\,
dt.
\end{equation}
Then, the function $E(\kappa)$ satisfies
\begin{equation}
\label{definition of F}
\begin{split}
  E^2(\kappa)=2\us J\mcl{F}(\kappa).
\end{split}
\end{equation}
By the definition of $\Tac$(see \eqref{definition of T and Tpm}), it holds that
\begin{equation*}
  (\kappa-1)E(\kappa)\ge 0,
\end{equation*}
so we obtain that
\begin{equation}
\label{expression of E in tau}
   E(\kappa)=\begin{cases}
  -\sqrt{2\us J\mcl{F}(\kappa)}&\quad\mbox{for $\kappa<1$},\\
  \sqrt{2\us J\mcl{F}(\kappa)}&\quad\mbox{for $\kappa\ge 1$}.
  \end{cases}
\end{equation}
Define $\mcl{H}(\kappa)$ by
\begin{equation}
\label{definition of mcl H}
    \mcl{H}(\kappa):=
  \left|\frac{ \kappa^{\gam-1}\sqrt{\mcl{F}(\kappa)}}{\kappa^{\gam+1}-1}\right|.
\end{equation}
Substituting \eqref{expression of E in tau} into the differential equation for $\bar u_1$ given in \eqref{EP-1d-reduced} yields
\begin{equation}
\label{ode of rho in kappa}
\begin{split}
  \frac{\bar u'_1}{\bar u_1}
   &=\frac{\sqrt{2\us J}}{\us^2}\mcl{H}(\kappa),\quad\tx{and}\quad
   \frac{\bar{\rho}'}{\bar{\rho}}=
    -\frac{\sqrt{2\us J}}{\us^2}
  \mcl{H}(\kappa).
 \end{split}
\end{equation}

By using the definition of $\us$ given in \eqref{EP-1d-reduced}, one can express $\us$ as
\begin{equation}
\label{definition of us}
\us=h_0J^{\frac{\gam-1}{\gam+1}}
\quad\tx{with $h_0:=(\gam S_0)^{\frac{1}{\gam+1}}$}.
\end{equation}
Substitute this expression into \eqref{ode of rho in kappa} to get
\begin{equation}
\label{ode for rho in kappa}
    \frac{\bar{\rho}'}{\bar{\rho}}=-\sqrt 2 h_0^{-\frac 32} J^{\frac{2-\gam}{\gam+1}}\mcl{H}(\kappa).
\end{equation}
Then, the function $\alp_1$(see \eqref{main-coeff}) can be expressed as
\begin{equation}
\label{alp1-Jkappa-expre}
\begin{split}
  -\alp_1= \sqrt 2 h_0^{-\frac 32} J^{\frac{2-\gam}{\gam+1}} \mcl{H}(\kappa) \frac G2
  \left((\gam-1+\eta)\kappa^{\gam+1}+2-\eta\right).
  \end{split}
\end{equation}
Using \eqref{definition of kappa}, \eqref{definition of us} and the expression $\displaystyle{\bar{\rho}=\frac{J}{\bar u_1}}$, one can explicitly check that
\begin{equation*}
\begin{split}
  &\left(G\frac{\bar u_1}{\gam S_0\bar{\rho}^{\gam-1}}\right)^2=\frac{G^2\kappa^{2\gam}
  J^{\frac{-2(\gam-1)}{\gam+1}}}{h_0^2},\\
  \tx{and}\quad &\beta=-\sqrt 2 (\gam-1)h_0^{-\frac 52}\kappa^{\gam}\mcl{H}(\kappa)J^{\frac{3-2\gam}{\gam+1}}G-
h_0^{-2}\kappa^{\gam-1}J^{\frac{3-\gam}{\gam+1}}.
  \end{split}
\end{equation*}

Since $\displaystyle{\min_{x_1\in[0, L]}\bar{u}'_1(x_1)>0}$ holds by Lemma \ref{lemma-1d-full EP}, the function $\bar u_1:[0, L]\rightarrow (0, \infty)$ is invertible. So we use the first equation in \eqref{ode of rho in kappa} to get
\begin{equation}
\label{L-computation-n}
  L=\int_{u_0}^{\bar u_1(L)}\frac{\bar u_1}{\bar u'_1}\frac{d\bar u_1}{\bar u_1}=\sqrt{\frac{h_0^3}{2}}
  J^{\frac{\gam-2}{\gam+1}}\int_{\kappa_0}^{\kappa_L}\frac{1}{\kappa \mcl{H}(\kappa)}\,d\kappa
\end{equation}
for
\begin{equation*}
 \kappa_0:=\frac{u_0}{\us},\quad \kappa_L:=\frac{\bar u_1(L)}{\us}.
\end{equation*}
Define
\begin{equation}
\label{definition of lambda}
\lambda(\kappa_0, \kappa_L):=\left(\int_{\kappa_0}^{\kappa_L} \frac{1}{\tau\mcl{H}(\kappa)}\,d\kappa\right)^2.
\end{equation}
A straightforward computation gives
\begin{equation*}
  (L\beta)^2=\frac{1}{2h_0}\kappa^{2(\gam-1)}
  \lambda(\kappa_0, \kappa_L)J^{\frac{-2(\gam-1)}{\gam+1}}
  \left(\sqrt 2(\gam-1)h_0^{-\frac 12} \kappa\mcl{H}(\kappa)G+J^{\frac{\gam}{\gam+1}}\right)^2.
\end{equation*}
In terms of $(J,\kappa)$, $\alp_2$ can be expressed as
\begin{equation}
\label{alp2-Jkappa-expre}
\begin{split}
  {\alp}_2=&\frac{2}{h_0^2}\kappa^{2\gam} J^{\frac{-2(\gam-1)}{\gam+1}}G^2\\
  &+
  \frac{1}{h_0}\kappa^{2(\gam-1)}\lambda(\kappa_0, \kappa_L)J^{\frac{-2(\gam-1)}{\gam+1}}
  \left(\sqrt 2(\gam-1)h_0^{-\frac 12} \kappa\mcl{H}(\kappa)G+J^{\frac{\gam}{\gam+1}}\right)^2.
  \end{split}
\end{equation}

Define
\begin{equation*}
G_*:=J^{\frac{2-\gam}{\gam+1}}G.
\end{equation*}
By using \eqref{definition of G} and \eqref{definition of us}, the term $G_*$ can be expressed in terms of $(J, \kappa)$ as
\begin{equation*}
  G_*=\kappa^{-\eta}h_0^{-\eta}J^{\frac{2-\gam+2\eta}{\gam+1}}.
\end{equation*}
By using \eqref{alp1-Jkappa-expre}, \eqref{alp2-Jkappa-expre} and the expression in the above, we represent $-\alp_1$ and $\alp_2$ as
\begin{equation*}
\begin{split}
-\alp_1&=\frac{\sqrt 2}{2}h_0^{-\frac 32} G_*\mcl{H}(\kappa)\left((\gam-1+\eta)\kappa^{\gam+1}+2-\eta\right),\\
  \alp_2&=\frac{2}{h_0^{2+\eta}}\kappa^{2\gam-\eta}J^{\frac{-\gam+2\eta}{\gam+1}}G_*+
  \frac{1}{h_0}\kappa^{2(\gam-1)}\lambda(\kappa_0, \kappa_L)J^{\frac{2}{\gam+1}}
  \left(\sqrt{2}(\gam-1)h_0^{\frac 32-\eta}\kappa^{1-\eta}\mcl{H}(\kappa)J^{\frac{2\eta-\gam}{\gam+1}}+1\right)^2.
\end{split}
\end{equation*}
Then the function $\alp$ given by \eqref{definition of beta*} can be expressed as
\begin{equation}
\label{new expression of alp}
  \alp(\kappa;\kappa_0, \kappa_L, J)=\om_1(\kappa; J, \eta)G_*-\om_2(\kappa;\kappa_0, \kappa_L, J, \eta)
\end{equation}
for
\begin{equation}
\label{expression of alp}
  \begin{split}
  &\om_1(\kappa;J, \eta):=\frac{\sqrt 2}{2}h_0^{-\frac 32}\mcl{H}(\kappa)((\gam-1)\kappa^{\gam+1}+\eta(\kappa^{\gam+1}-1)+2)-
  \frac{2}{h_0^{2+\eta}}\kappa^{2\gam-\eta}J^{\frac{2\eta-\gam}{\gam+1}},\\
  &\om_2(\kappa;\kappa_0, \kappa_L, J, \eta):=\frac{1}{h_0}\kappa^{2(\gam-1)}\lambda(\kappa_0, \kappa_L)J^{\frac{2}{\gam+1}}
  \left(\sqrt 2(\gam-1)h_0^{\frac 32-\eta}\kappa^{1-\eta}\mcl{H}(\kappa)J^{\frac{2\eta-\gam}{\gam+1}}+1\right)^2.
  \end{split}
\end{equation}

\medskip

{\textbf{Step 4.}}
From \eqref{definition of F}, it can be easily checked that $\mcl{F}(1)=0$ and $\mcl{F}'(1)=0$. By applying L$'$H\^{o}pital's rule, one can directly check that
\begin{equation}
\label{H at 1}
 \mcl{H}(1)= \lim_{\kappa\to 1} \mcl{H}(\tau)=\lim_{\kappa\to 1}\frac{\sqrt{\frac 12 \mcl{F}''(1)(\kappa-1)^2}}{\kappa^{\gam+1}-1}=\frac{\sqrt{\frac 12 \mcl{F}''(1)}}{\gam+1}=\frac{\sqrt{1-\frac{1}{\zeta_0}}}{\sqrt{2(\gam+1)}}.
 \end{equation}
 This result, combined with \eqref{condition for zeta0} implies that $\mcl{H}(1)>0$. Therefore, $\mcl{H}(\kappa)$, given by \eqref{definition of mcl H}, satisfies that
\begin{equation}\label{positivity of mcl H}
  \mcl{H}(\kappa)>0\quad\tx{for all $\kappa>0$.}
\end{equation}
It follows from \eqref{definition of lambda} that
\begin{equation*}
\lim_{{\kappa_0\to 1-}\atop{\kappa_L\to 1+}}\lambda(\kappa_0, \kappa_L)=0.
\end{equation*}
Hence we obtain the following important result:
\begin{equation}
\label{alp limit}
  \lim_{{\kappa_0\to 1-}\atop{\kappa_L\to 1+}}{\alp}(1 ;\kappa_0, \kappa_L, J,\eta)=
  h_0^{-\eta}J^\frac{2-\gam+2\eta}{\gam+1}
  \left(\frac 12h_0^{-\frac 32}\sqrt{1-
  \frac{1}{\zeta_0}}\sqrt{\gam+1}
  -\frac{2}{h_0^{2+\eta}}J^{\frac{2\eta-\gam}{\gam+1}}\right).
\end{equation}

Now, we consider two cases:\\
\phantom{a}\quad (Case 1)\,\,$\eta>0$ is fixed to satisfy $2\eta-\gam>0$,\\
\phantom{a}\quad (Case 2)\,\,$\eta>0$ is fixed to satisfy $2\eta-\gam<0$.
\medskip

 Returning to \eqref{definition of G}, fix the constant $\eta$ as $\eta=\frac 34\gam$ that yields $2\eta-\gam>0$. This corresponds to (Case 1). Then, one can fix a constant $\bar{J}>0$ sufficiently small depending only on $(\gam, \zeta_0, S_0)$ so that whenever the background momentum density $J$ satisfies the inequality
\begin{equation*}
  0<J\le \bar J,
\end{equation*}
we obtain from \eqref{alp limit} that
\begin{equation*}
 \lim_{{\kappa_0\to 1-}\atop{\kappa_L\to 1+}}{\alp}(1 ;\kappa_0, \kappa_L, J, \frac 34 \gam)\ge \frac 14 h_0^{-\frac 34(\gam+2)}J^{\frac{4+\gam}{2(\gam+1)}}\sqrt{(\gam+1)\left(1-\frac{1}{\zeta_0}\right)}.
\end{equation*}
Note that ${\alp}(\kappa; \kappa_0, \kappa_L, J, \frac 34 \gam)$ is continuous with respect to $(\kappa, \kappa_0, \kappa_L)$. Therefore, one can fix a small constant $d$ with $d\in (0,1)$ depending only on $(\gam, \zeta_0, S_0, J)$ so that if the inequality
\begin{equation*}
  1-d\le\kappa_0< 1< \kappa_L \le 1+d
\end{equation*}
holds,
then we have
\begin{equation}
\label{positive lower bound 1 of alp}
   \min_{\kappa\in[\kappa_0, \kappa_L]}{\alp}(\kappa ;\kappa_0, \kappa_L, J, \frac 34\gam)\ge \frac 18 h_0^{-\frac 34(\gam+2)}J^{\frac{4+\gam}{2(\gam+1)}}\sqrt{(\gam+1)\left(1-\frac{1}{\zeta_0}\right)}.
\end{equation}
\medskip

 Next, fix $\eta$ as $\eta=\frac{\gam}{4}$ which yields $2\eta-\gam<0$. This corresponds to (Case 2).  Then, one can fix a constant $\underline{J}>1$ sufficiently large depending only on $(\gam, \zeta_0, S_0)$ so that whenever the background momentum density $J$ satisfies the inequality
\begin{equation*}
  J\ge \underbar{J},
\end{equation*}
then we obtain that
\begin{equation*}
  \lim_{{\kappa_0\to 1-}\atop{\kappa_L\to 1+}}{\alp}(1 ;\kappa_0, \kappa_L, J, \frac{\gam}{4})\ge \frac 14 h_0^{-\frac 14(\gam+6)}
  J^{\frac{4-\gam}{\gam+1}}
  \sqrt{(\gam+1)\left(1-\frac{1}{\zeta_0}\right)}>0.
\end{equation*}
Therefore, one can fix a small constant $d\in(0,1)$ depending only on $(\gam, \zeta_0, S_0, J)$ so that if the inequality
\begin{equation*}
  1-d\le\kappa_0< 1< \kappa_L \le 1+d
\end{equation*}
holds, then we have
\begin{equation}
\label{positive lower bound 2 of alp}
 \min_{\kappa\in[\kappa_0, \kappa_L]} {\alp}(\kappa ;\kappa_0, \kappa_L,J, \frac{\gam}{4})\ge \frac 18 h_0^{-\frac 14(\gam+6)}
  J^{\frac{4-\gam}{\gam+1}}
  \sqrt{(\gam+1)\left(1-\frac{1}{\zeta_0}\right)}.
\end{equation}

\medskip

{\textbf{Step 5.}} Returning to to \eqref{inequality 1 of Jp+JC}, we shall use \eqref{definition of kappa}, \eqref{definition of us} and \eqref{ode for rho in kappa} to express the coefficients $-\frac{\eta}{2}\frac{\bar{\rho}'}{\bar{\rho}}G$ and $\frac{\bar{\rho}}{\gam S_0 \bar{\rho}^{\gam-1}}$ on the right-hand side in terms of $(\kappa, J)$ to get
\begin{equation*}
\begin{split}
  T_{\rm coer}+T_{\rm mix}\ge&
  \int_{\Om_L} \alp (\der_1 v)^2+
  \frac{\eta}{\sqrt 2}
 h_0^{-(\frac 32+\eta)}\kappa^{-\eta}\mcl{H}(\kappa)
  J^{\frac{2+2\eta-\gam}{\gam+1}} (\der_2v)^2\,d\rx\\
  &+\frac 34\int_{\Om_L}|\nabla w|^2+\frac{1}{h_0^3}\kappa^{\gam-2}J^{\frac{2(2-\gam)}{\gam+1}} w^2\,d\rx.
  \end{split}
\end{equation*}
For the constant $\eta$ fixed as
\begin{equation}
\label{choice of eta}
  \eta=\begin{cases}
  \frac{3\gam}{4}\quad&\mbox{for $J\le \bJ$},\\
  \frac{\gam}{4}\quad&\mbox{for $J\ge \ubJ$},
  \end{cases}
\end{equation}
define
\begin{equation*}
  \lambda_0:=\min_{ \kappa\in[\kappa_0, \kappa_L]} \left\{{\alp}(\kappa ;\kappa_0, \kappa_L, J), \,\, \frac{\eta}{\sqrt 2}
 h_0^{-(\frac 32+\eta)}\kappa^{-\eta}\mcl{H}(\kappa)
  J^{\frac{2+2\eta-\gam}{\gam+1}},\,\, \frac{1}{h_0^3}\kappa^{\gam-2}J^{\frac{2(2-\gam)}{\gam+1}},\,\,
  \frac 34 \right\}.
\end{equation*}
By \eqref{positivity of mcl H}, \eqref{positive lower bound 1 of alp} and \eqref{positive lower bound 2 of alp}, it is clear that $\lambda_0$ is positive, and we obtain that
\begin{equation}
\label{estimate of Jp+Jc}
T_{\rm coer}+T_{\rm mix}\ge \lambda_0\int_{\Om_L}|Dv|^2+|Dw|^2+w^2\,d\rx.
\end{equation}
Note that the choice of $\lambda_0$ depends only on $(\gam, \zeta_0, S_0, J)$.
\medskip

In treating a boundary value problem derived from \eqref{lbvp-main general} with a singular perturbation, which we shall introduce later (as an analogy of \eqref{bvp-aux}), it is important to have a coerceivity of $T_{\rm bd}$, given by \eqref{definition-Jbd}, as a functional of $v$.
By rewriting $G(0)$, $G(L)$, $\bar{a}_{11}(0)$ and $\bar{a}_{11}(L)$ in terms of $(\kappa_0, \kappa_L, J)$, we can express $T_{\rm bd}$ as
\begin{equation*}
  T_{\rm bd}=\frac{ h_0^{-\eta}J^{\frac{2\eta}{\gam+1}}}{2}
  \left(\kappa_0^{-\eta}(1-\kappa_0^{\gam+1})\int_{\Gamen} (\der_1 v)^2\,dx_2
  +\kappa_L^{-\eta}\int_{\Gamex}(\kappa_L^{\gam+1}-1) (\der_1 v)^2+ (\der_2 v)^2\,dx_2\right).
\end{equation*}
Since $\kappa_0<1<\kappa_L$, one can fix a constant $\lambda_{\rm bd}>0$ satisfying
\begin{equation}\label{estimate of Jbd}
 T_{\rm bd}\ge \lambda_{\rm bd}\left(\int_{\Gamen} (\der_1 v)^2\,dx_2+
\int_{\Gamex} |Dv|^2\,dx_2\right).
\end{equation}

Finally, we combine \eqref{estimate of Jp+Jc} and \eqref{estimate of Jbd} to obtain that
\begin{equation}
\label{lower bound of -(I1+I2)}
\tx{LHS of \eqref{I1+I2 in H1 estimate}}
  \le -\lambda_0\int_{\Om_L}|Dv|^2+|Dw|^2+w^2\,d\rx
  -\lambda_{\rm bd}
  \left(\int_{\Gamen}(\der_1v)^2\,dx_2
  +\int_{\Gamex}|Dv|^2\,dx_2\right).
\end{equation}

\medskip
{\textbf{Step 6.}} By using the definition of $a_{12}$ given by \eqref{coefficients of L_1} and the compatibility condition $\der_2\tpsi=0$ on $\Gamw$, it can be directly checked that
\begin{equation}
\label{compatibility condition of a12}
  a_{12}=0\quad\tx{on $\Gamw$}.
\end{equation}
Then we integrate by parts and use \eqref{compatibility condition of a12} to get
\begin{equation*}
\begin{split}
&\int_{\Om_L} d\mfrak{L}_1^{P}(v,w) G\der_1 v\,d\rx\\
&=\left(\int_{\Gamex}-\int_{\Gamen}\right)Gd a_{11}\frac{(\der_1 v)^2}{2}\,dx_2
-\int_{\Om_L}\left(\frac 12 \der_1(G\,da_{11})+G(\der_2a_{12}-d a)\right)(\der_1 v)^2\,d\rx\\
&\phantom{==}
+\int_{\Om_L} G(d b_1\der_1 w+d b_0 w)\der_1 v\,d\rx.
\end{split}
\end{equation*}
If condition \eqref{condition: r in sec4} holds, then Lemma \ref{lemma on L_1}(d) combined with the generalized Sobolev inequality yields
\begin{equation}
\label{estimate of delta coefficients}
  \|(d a_{11}, a_{12}, d a, d b_1, d b_0)\|_{C^1(\ol{\Om_L})}\le C(r_2+r_3)
\end{equation}
for some constant $C>0$. So we obtain that
\begin{equation}
\begin{split}
\label{weak estimate of approx problem remainder}
  &\left|\int_{\Om_L} d\mfrak{L}_1^P(v,w) G\der_1 v\,d\rx\right|\\
  &\le C_*(r_2+r_3)\left(\|\der_1v\|^2_{L^2(\Gamen)}+\|\der_1v\|^2_{L^2(\Gamex)}+\|\der_1 v\|^2_{L^2(\Om_L)}+\|w\|_{H^1(\Om_L)}^2\right)
  \end{split}
\end{equation}
for some constant $C_*>0$.

Notice that the function $G$ is already fixed in the previous steps, and we know that the maximum of $|G|$ over $\ol{\Om_L}$ depends only on $(\gam, \zeta_0, S_0, J, E_0)$. So the estimate constant $C_*$ can be fixed depending only on $(\gam, \zeta_0, S_0, J, E_0,L)$.

Therefore, we can estimate the right-hand side of \eqref{I1+I2 in H1 estimate} as
\begin{equation*}
\begin{split}
  &|{\tx{RHS of \eqref{I1+I2 in H1 estimate}}}|\\
  &\le
C_*\bar{\delta}\left(\|\der_1v\|^2_{L^2(\Gamen)}+\|\der_1v\|^2_{L^2(\Gamex)}+\|\der_1 v\|^2_{L^2(\Om_L)}+\|w\|_{H^1(\Om_L)}^2\right)+\int_{\Om_L}|f_1 G\der_1 v+f_2 w|\,d\rx.
  \end{split}
\end{equation*}
Finally, the proof of Lemma \ref{proposition-H1-apriori-estimate} can be completed by reducing $\bar{\delta}>0$ and applying the Cauchy-Schwarz inequality.\end{proof}

\begin{remark}
\label{remark: nozzle length L}
In Lemma \ref{proposition-H1-apriori-estimate}, the essential condition to achieve an a priori $H^1$-estimate of $(v,w)$ is that the function $\kappa(=\frac{\bar u_1}{\us})$ needs to be close to 1(see \eqref{almost sonic condition1 full EP}). This condition is satisfied if $\kappa_0$ is fixed in $[1-d,1)$ for a sufficiently small constant $d>0$, and if the nozzle length $L$ is fixed to be suitably small. However, we should point out that this condition does not necessarily imply that the nozzle length $L$ must be small. In Appendix \ref{appendix:nozzle length}, we provide examples of $(\gam, J)$ for which the nozzle length $L$ can be large.
\end{remark}

Using Lemma \ref{proposition-H1-apriori-estimate}, the rest of the proof of Proposition \ref{theorem-wp of lbvp for system with sm coeff} can be given by adjusting the proof of Theorem \ref{theorem-1-full}. So we briefly explain how to complete the proof of Proposition \ref{theorem-wp of lbvp for system with sm coeff}.

For a constant $\eps>0$ fixed sufficiently small, consider the following boundary value problem, given as a singular perturbation of \eqref{lbvp-main general}:
\begin{equation}
\label{bvp-sing-pert}
  \begin{split}
  &\begin{cases}
  \eps \der_{111}v+\mfrak{L}_1^P(v, w)=f_1\quad&\tx{in $\Om_L$}\\
  \mfrak{L}_2(v, w)=f_2\quad&\tx{in $\Om_L$}
  \end{cases}\\
  &\begin{cases}
  v=0,\quad \der_1v=0\quad&\tx{on $\Gamen$}\\
  \der_2 v=0\quad&\tx{on $\Gamw$}\\
 \der_{11}v=0\quad&\tx{on $\Gamex$}
  \end{cases}\\
  &\begin{cases}
  \der_1 w=0\quad&\tx{on $\Gamen$}\\
  \der_2w=0\quad&\tx{on $\Gamw$}\\
  w=0\quad&\tx{on $\Gamex$}.
  \end{cases}
  \end{split}
\end{equation}

\newcommand \itersetent{\mcl{I}_{\rm ent}(r_1)}
\newcommand \itersetdelta{\mcl{I}_{\rm vor}(2\bar{\delta})\times \mcl{I}_{\rm pot}(2\bar{\delta})}

Using the idea given in the proof of Lemma \ref{lemma G:wp of singular pert prob-main}, and using Lemma \ref{proposition-H1-apriori-estimate}, one can establish an a priori $H^1$-estimate of a solution to \eqref{bvp-sing-pert} as follows:

\begin{lemma}
\label{lemma:wp of singular pert prob-main}
Assume that the background solution $(\bar u_1, \bar E)$, the nozzle length $L$, and the parameters $(r_2, r_3)$ are fixed to satisfy all the conditions stated in Lemma \ref{proposition-H1-apriori-estimate}.
Then, one can fix  two constants $\bar{\eps}>0$, and $\bar{\delta}>0$ sufficiently small depending only on the data so that whenever
\begin{equation*}
0<\eps<\bar{\eps},\quad \tx{and}\quad \max\{r_2, r_3\}\le 2\bar{\delta}
\end{equation*}
hold, if $(v^{\eps},w^{\eps})$ is a smooth solution to \eqref{bvp-sing-pert}, then it satisfies
\begin{equation}
\label{a priori estimate1 of vm and wm}
\begin{split}
  &\sqrt{\eps}\|\der_{11}v^{\eps}\|_{L^2(\Om_L)}
  +\|\der_1v^{\eps}\|_{L^2(\Gamen)}+\|Dv^{\eps}\|_{L^2(\Gamex)}
  +\|v^{\eps}\|_{H^1(\Om_L)}+\|w^{\eps}\|_{H^1(\Om_L)}
  \\
  &\le C\left(\|f_1\|_{L^2(\Om_L)}+\|f_2\|_{L^2(\Om_L)}\right).
  \end{split}
\end{equation}

\end{lemma}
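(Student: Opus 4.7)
The plan is to adapt the coupled multiplier argument of Lemma \ref{proposition-H1-apriori-estimate} by treating the viscous term $\eps\der_{111}v^{\eps}$ exactly as in the proof of Lemma \ref{lemma G:wp of singular pert prob-main}. I set $G(x_1):=\bar{\rho}^{\eta}(x_1)$ with $\eta$ chosen as in \eqref{choice of eta}, test the first equation of \eqref{bvp-sing-pert} against $G\der_1 v^{\eps}$, test the second against $w^{\eps}$, and add the two resulting identities. The non-viscous contributions are exactly the quantities $I_1+I_2$ of \eqref{definition of I1 and I2} evaluated at $(v^{\eps},w^{\eps})$, so the decomposition into $T_{\rm bd}+T_{\rm coer}+T_{\rm mix}$ together with the lower bound \eqref{lower bound of -(I1+I2)} and the perturbation estimate \eqref{weak estimate of approx problem remainder} from the proof of Lemma \ref{proposition-H1-apriori-estimate} carry over verbatim, provided the smallness conditions on $(r_2,r_3)$ and on $d$ in \eqref{almost sonic condition1 full EP} are in force.

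Next I handle the viscous term. Integrating by parts in $x_1$ and using the boundary conditions $\der_1 v^{\eps}=0$ on $\Gamen$ and $\der_{11}v^{\eps}=0$ on $\Gamex$ eliminates both boundary contributions, leaving
\begin{equation*}
\int_{\Om_L}\eps G\der_1 v^{\eps}\der_{111}v^{\eps}\,d\rx=-\int_{\Om_L}\eps G(\der_{11}v^{\eps})^2\,d\rx-\int_{\Om_L}\eps G'\der_1 v^{\eps}\der_{11}v^{\eps}\,d\rx.
\end{equation*}
The cross term I split by Cauchy-Schwarz as $\eps|G'\der_1 v^{\eps}\der_{11}v^{\eps}|\le \tfrac{\eps}{2}G(\der_{11}v^{\eps})^2+\tfrac{\eps}{2}(G')^2G^{-1}(\der_1 v^{\eps})^2$. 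Since $G$, $G'$, and $G^{-1}$ are bounded on $[0,L]$ by constants depending only on the data, the first half is absorbed into $\eps\int G(\der_{11}v^{\eps})^2$ while the second is absorbed into the $\lambda_0(\der_1 v^{\eps})^2$ contribution of the coerceivity, provided $\bar{\eps}$ is fixed small enough that $\tfrac{\bar{\eps}}{2}\|(G')^2G^{-1}\|_{L^{\infty}}<\tfrac{\lambda_0}{2}$.

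Combining these reductions with \eqref{lower bound of -(I1+I2)}, absorbing the perturbation $-\int G\,d\mfrak{L}_1^P(v^{\eps},w^{\eps})\,\der_1 v^{\eps}\,d\rx$ through \eqref{weak estimate of approx problem remainder} after reducing $\bar{\delta}$ so that $2C_*\bar{\delta}<\min\{\lambda_0,\lambda_{\rm bd}\}$, and bounding the right-hand side $\int f_1G\der_1v^{\eps}+f_2w^{\eps}\,d\rx$ by Cauchy-Schwarz (using that $w^{\eps}=0$ on $\Gamex$ allows a Poincar\'{e} inequality for $w^{\eps}$), I arrive at
\begin{equation*}
\tfrac{\eps}{2}\|\sqrt{G}\der_{11}v^{\eps}\|_{L^2(\Om_L)}^2+\|\der_1v^{\eps}\|_{L^2(\Gamen)}^2+\|Dv^{\eps}\|_{L^2(\Gamex)}^2+\|Dv^{\eps}\|_{L^2(\Om_L)}^2+\|w^{\eps}\|_{H^1(\Om_L)}^2\le C\bigl(\|f_1\|_{L^2(\Om_L)}^2+\|f_2\|_{L^2(\Om_L)}^2\bigr).
\end{equation*}
A final application of Poincar\'{e}'s inequality, using $v^{\eps}=0$ on $\Gamen$, upgrades $\|Dv^{\eps}\|_{L^2(\Om_L)}$ to $\|v^{\eps}\|_{H^1(\Om_L)}$ and produces \eqref{a priori estimate1 of vm and wm}.

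I expect no genuine obstacle; the only bookkeeping subtlety will be verifying that the threshold $\bar{\eps}$ is determined by $\|(G')^2G^{-1}\|_{L^{\infty}}$ and $\lambda_0$ alone, hence depends only on the data and not on $(r_2,r_3)$, so that $\bar{\eps}$ and $\bar{\delta}$ can be tuned independently. Because $G$ is fixed once $\eta$ and $J$ are chosen, this independence is automatic, and the argument goes through.
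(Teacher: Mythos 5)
Your proposal is correct and follows exactly the route the paper intends: the paper only sketches this proof by referring to the multiplier argument of Lemma \ref{proposition-H1-apriori-estimate} combined with the treatment of the viscous term from Lemma \ref{lemma G:wp of singular pert prob-main}, and your write-up fills in precisely those steps (the integration by parts killing the boundary terms via $\der_1 v^{\eps}=0$ on $\Gamen$ and $\der_{11}v^{\eps}=0$ on $\Gamex$, the weighted Cauchy--Schwarz absorption of the cross term, and the use of $T_{\rm bd}\ge\lambda_{\rm bd}(\cdots)$ to absorb the boundary contributions of \eqref{weak estimate of approx problem remainder}). Your observation that $\bar{\eps}$ is determined by $\|(G')^2G^{-1}\|_{L^{\infty}}$ and $\lambda_0$, hence by the data alone and independently of $(r_2,r_3)$, is also correct.
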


Note that, as a second-order differential operator for $w$, $\mfrak{L}_2$ in \eqref{bvp-sing-pert} is uniformly elliptic in $\Om_L$. Since the a priori $H^1$ estimate of $(v^{\eps}, w^{\eps})$ is achieved in Lemma \ref{lemma:wp of singular pert prob-main}, the a priori $H^2$ estimate of $w^{\eps}$ can be easily obtained by applying a standard elliptic theory along with a bootstrap argument. Then one can apply Lemma \ref{G:lemma for pre H2 estimate of vm, part 2} and use the $H^2$-estimate of $w^{\eps}$ to establish an a priori $H^2$-estimate of $v^{\eps}$ away from $\Gamen$.

\begin{definition}
\label{definition of weak solution}
For $v,w,V,W\in H^1(\Om_L)$, define two bilinear operators $\mcl{B}_1^P$ and $\mcl{B}_2$ by
\begin{equation*}
\begin{split}
\mcl{B}_1^P[(v,w),V]:=
\int_{\Om_L}-\left(a_{11}\der_1v\der_1V+a_{12}(\der_1v\der_2V
+\der_2v\der_1V)+\der_2v\der_2V+a\der_1vV\right)&\\
-(\der_1a_{11}\der_1v+\der_2a_{12}\der_1v+\der_1a_{12}\der_2 v)V+(b_1\der_1 w+b_0w)V\,&d\rx,
\end{split}
\end{equation*}
\begin{equation*}
 \mcl{B}_2[(v,w),W]:=-\int_{\Om_L} \nabla w\cdot \nabla W+(\frac{1}{\gam S_0\bar{\rho}^{\gam-2}}w-\frac{\bar u_1}{\gam S_0\bar{\rho}^{\gam-2}}\der_1 v)W\,d\rx.
\end{equation*}

We say that $(v,w)\in H^1(\Om_L)\times H^1(\Om_L)$ is {\emph{a weak solution}} to \eqref{lbvp-main general} if it satisfies
\begin{equation}
\label{weak-viscous-eqns}
    \begin{cases}
 \mcl{B}_1^P[(v,w),\phi_1]=\int_{\Om_L} f_1 \phi_1\,d\rx,\\
\mcl{B}_2[(v,w),\phi_2]=\int_{\Om_L} f_2 \phi_2\,d\rx
\end{cases}
\end{equation}
  for any test functions $\phi_1, \phi_2\in C^{\infty}(\ol{\Om_L})$ with $\phi_1$ vanishing near $\Gamen\cup \Gamex$, and $\phi_2$ vanishing near $\Gamex$.
\end{definition}

In order to find a weak solution $(v,w)\in [H^1(\Om)\cap H^2_{\rm loc}]^2$ to \eqref{lbvp-main general} in the sense of Definition \ref{definition of weak solution}, we take the following three steps.
\begin{itemize}
\item[(1)] Approximate $a_{ij}$, $a$, $b_1$, $b_0$, $f_1$, $f_2$ by a sequence of smooth functions $a_{ij}^{(\tau)}$, $a^{(\tau)}$, $b_1^{(\tau)}$,$b_0^{(\tau)}$, $f_1^{(\tau)}$, $f_2^{(\tau)}$ for $\tau\in(0, \bar{\tau}]$, respectively, for some small $\bar{\tau}>0$ such that
    \begin{equation*}
     \lim_{\tau\to 0+} \|(a_{ij}, a, b_1, b_0, f_1, f_2)-(a_{ij}^{(\tau)}, a^{(\tau)}, b_1^{(\tau)}, b_0^{(\tau)}, f_1^{(\tau)}, f_2^{(\tau)})\|_{H^{m-1}(\Om_L)}=0.
    \end{equation*}

\item[(2)] Apply the Galerkin approximation method.
\item[(3)] Apply Lemma \ref{lemma:wp of singular pert prob-main} and Lemma \ref{G:lemma for pre H2 estimate of vm, part 2}
\end{itemize}
Then we obtain a weak solution $(v,w)$ that satisfies
\begin{equation*}
  \|v\|_{H^1(\Om_L)}+\|w\|_{H^2(\Om_L)}\le  C(\|f_1\|_{L^2(\Om_L)}+\|f_2\|_{L^2(\Om_L)})
\end{equation*}
for some constant $C>0$ depending only on the data. In addition, it follows from Lemma \ref{G:lemma for pre H2 estimate of vm, part 2} that, for any $d\in(0, \frac{L}{4}]$, we have
\begin{equation*}
  \|v\|_{H^2(\Om_L\cap\{x_1>d\})}\le C_d(\|f_1\|_{H^1(\Om_L)}+\|f_2\|_{L^2(\Om_L)})
\end{equation*}
for some constant $C_d>0$ depending only on the data and $d$, provided that
\begin{equation*}
  \max\{r_2, r_3\}\le 2\bar{\delta}
\end{equation*}
for $\bar{\delta}>0$ fixed sufficiently small depending only on the data.

In order to achieve a global $H^m$-estimate of $(v,w)$, one can proceed as follows.
\begin{itemize}
\item[(1)] Apply a standard elliptic theory to get a global $H^k$-estimate of $w$ for $3\le k\le m$.
   \smallskip

\item[(2)] Next, employ the method of differential operator extension(see \S \ref{subsec:2.2}) and use the result from the previous step to get a global $H^k$-estimate of $v$.
     \smallskip

\item[(3)] And, use a bootstrap argument and employ the idea of the first step along with the estimate obtained from the second step to get a global $H^{k+1}$-estimate of $w$ for $k+1\le m$.
     \smallskip

\item[(4)] Finally, repeat the procedure described in the second step to get a global $H^{k+1}$-estimate of $v$ for $k+1\le m$.
     \smallskip
\end{itemize}
In each step prescribed in the above, the constant $\bar{\delta}$ can be adjusted as needed. This completes the proof of Proposition \ref{theorem-wp of lbvp for system with sm coeff}. \qed

\subsection{Well-posedness of Problem \ref{problem-HD}}
\label{seciton:pf of main theorem}
Now we are ready to prove Theorem \ref{theorem-HD} by applying Proposition \ref{theorem-wp of lbvp for system with sm coeff}.
\newcommand \itersetinrs {\iterseta}
\newcommand \itersetforvor{\iterT}
\newcommand \itersetforpot{\iterP}
\smallskip

\begin{proof}[Proof of Theorem \ref{theorem-HD}]
Let the constant $\bar{\delta}>0$ be from Proposition \ref{theorem-wp of lbvp for system with sm coeff}. Suppose that the constants $r_1,r_2$ and $r_3$ from the iterations sets $\iterP_{r_1}^m$ and $\iterV_{r_2}^{m+1}\times \iterseta$, given by Definition \ref{definition: iteration sets}, satisfy
\begin{equation}
\label{condition for r-0}
  \max\{r_1, r_2+r_3\}\le \bar{\delta}.
\end{equation}

\medskip

{\textbf{Step 1.}}\emph{Claim 1: One can fix the constants $r_1$, $r_2$ and $r_3$ so that, for any given $\tilT\in \iterP_{r_1}^m$ and $P=(\tphi,\tpsi, \tPsi)\in \iterV_{r_2}^{m+1}\times \iterseta$, Problem \ref{LBVP1 for iteration} has a unique solution $(\phi, \psi, \Psi)\in \iterV_{r_2}^{m+1}\times \iterseta$.}
\smallskip

Note that Corollary \ref{corollary:wp of mixed system in iteration} directly follows from Proposition \ref{theorem-wp of lbvp for system with sm coeff}. Therefore, for any given $\tilT\in \iterP_{r_1}^m$ and $P=(\tphi,\tpsi, \tPsi)\in \iterV_{r_2}^{m+1}\times \iterseta$, the linear boundary value problem \eqref{lbvp main} associated with $(\tilT, P)$ has a unique solution $(\psi, \Psi)\in H^m(\Om_L)\times H^m(\Om_L)$, and it satisfies the estimate
\begin{equation}
       \label{estimate:system}
      \|\psi\|_{H^m(\Om_L)}+\|\Psi\|_{H^m(\Om_L)}
      \le C_1\left(r_1+r_2+r_3^2+\mfrak P(S_0, E_{\rm en}, \om_{\rm en})\right)
    \end{equation}
    for $\mfrak P(S_0, E_{\rm en}, \om_{\rm en})$ defined by \eqref{definition-perturbation of bd}.

Due to the $H^{m-1}$-estimate and the compatibility conditions for the function $f_0^{(\til T,P)}$ stated in Lemma \ref{lemma on L_1}(f), the elliptic boundary value problem \eqref{lbvp for phi} has a unique solution $\phi\in H^{m+1}(\Om_L)$ that satisfies the estimate
\begin{equation}
    \label{estimate:phi}
      \|\phi\|_{H^{m+1}(\Om_L)}\le C_2r_1.
    \end{equation}
In \eqref{estimate:system} and \eqref{estimate:phi}, the  constants $C_1$ and $C_2$ in the estimates are fixed depending only on the data.
\smallskip

 For the rest of the proof, any constant in estimates is regarded to be fixed depending only on the data unless otherwise specified.
\smallskip

Set
\begin{equation}
\label{definition of C_*}
  C_*:=\max\{C_1, C_2\}.
\end{equation}

\begin{itemize}
\item[(i)] If the constant $r_1$ satisfies the inequality
\begin{equation}
\label{condition for r-1}
r_1\le \frac{r_2}{C_*},
\end{equation}
then we have
\begin{equation*}
\|\phi\|_{H^{m+1}(\Om_L)}\le r_2.
\end{equation*}

\item[(ii)] If the constants $r_1$, $r_2$, $r_3$ and $\mfrak P(S_0, E_{\rm en}, \om_{\rm en})$ satisfy
\begin{equation}
\label{condition for r-2}
\begin{split}
&C_*(r_1+r_2)\le \frac{r_3}{3},\quad C_*r_3\le \frac 13\quad\tx{and}\quad C_*\mfrak P(S_0, E_{\rm en}, \om_{\rm en})\le \frac{r_3}{3},
\end{split}
\end{equation}
then we have
\begin{equation}
\label{potential estimate fixed pt}
\|(\psi, \Psi)\|_{H^m(\Om_L)}\le r_3.
\end{equation}
\end{itemize}

We rewrite the equation $\der_2\mfrak{L}_1^P(\psi, \Psi)=\der_2 f_1^P$ in $\Om_L$ as
\begin{equation*}
\der_{22}\psi= f_1^P-\left(a_{11}^P+2a_{12}^P+a^P\der_1\psi+b_1^P\der_1\Psi+b_0^P\Psi\right) \quad\tx{in $\Om_L$}.
\end{equation*}
So we can directly check by using the properties (c) and (f) stated in Lemma \ref{lemma on L_1}, and  the slip boundary condition $\der_2\psi=0$ on $\Gamw$ that $\psi$ satisfies the compatibility condition
$$\der_2^k\psi=0\quad\tx{on $\Gamw$}$$
for $k=2i-1$, $i\in \mathbb{N}$ with $k<m$.
In addition, one can similarly check that
\begin{equation*}
  \der_2^k\Psi=0\,\,\tx{on $\Gamw$ for $k=2i-1$, $i\in \mathbb{N}$ with $k<m$},
\end{equation*}
and
\begin{equation*}
\der_2^{l}\phi=0\,\, \tx{on $\Gamw$ for $l=2(j-1)$, $j\in \mathbb{N}$ with $l<m+1$}.
\end{equation*}
Therefore if we fix $(r_1, r_2, r_3)$ to satisfy the conditions \eqref{condition for r-1} and \eqref{condition for r-2}, then we can conclude that
\begin{equation*}
(\phi, \psi, \Psi)\in \iterV_{r_2}^{m+1}\times \iterseta.
\end{equation*}
This verifies Claim 1.
\smallskip

{\textbf{Step 2.}} For a fixed $\tilT\in \iterP_{r_1}^m$, let us consider a nonlinear system for $(\phi, \psi, \Psi)$
\begin{equation}
\label{nlsystem-2nd order-o}
\begin{cases}
-\Delta \phi=f_0^{(\tilT, (\phi, \psi, \Psi))}\quad &\tx{in $\Om_L$},\\
\mfrak{L}_1^{(\phi, \psi, \Psi)}(\psi, \Psi)
=f_1^{(\phi, \psi, \Psi)}\quad &\tx{in $\Om_L$},\\
\mfrak{L}_2(\psi, \Psi)=f_2^{(\tilT, (\phi, \psi, \Psi))}\quad &\tx{in $\Om_L$},
\end{cases}
\end{equation}
with the boundary conditions
\begin{equation}
\label{BCs:nlsystem-2nd order-o}
\begin{split}
\quad \der_1\phi=0,\quad \psi(0, x_2)=\int_{-1}^{x_2}\om_{\rm en}(t)\,dt,\quad \der_1\Psi=E_{\rm en}-E_0\quad&\tx{on $\Gamen$},\\
\phi=0,\quad \der_2\psi=0,\quad \der_2\Psi=0\quad&\tx{on $\Gamw$,}\\
\phi=0,\quad \Psi=0\quad&\tx{on $\Gamex$}.
\end{split}
\end{equation}
This problem is obtained as an approximated nonlinear boundary value problem for $(\phi, \psi, \Psi)$ by replacing $T$ with $\tilT\in \iterP_{r_1}^m$ in \eqref{equation for psi}--\eqref{equation for phi}.
By applying the Schauder fixed point theorem, we can show that, for each $\tilT\in \iterP_{r_1}^m$, there exists at least one solution $(\phi, \psi, \Psi)\in \iterV_{r_2}^{m+1}\times\iterseta$ to the associated nonlinear problem of \eqref{nlsystem-2nd order-o} and \eqref{BCs:nlsystem-2nd order-o}.
Further details can be given by adjusting the proof of \cite[Theorems 1.6 and 1.7]{BDXX}, so we skip details. The only difference is that we should apply Proposition \ref{theorem-wp of lbvp for system with sm coeff} to guarantee a continuity of an iteration mapping with respect to an appropriately chosen norm.
\smallskip

{\textbf{Step 3.}} Given $\tilT\in \iterP_{r_1}^m$, let $(\phi^{(1)}, \psi^{(1)}, \Psi^{(1)})$ and $(\phi^{(2)}, \psi^{(2)}, \Psi^{(2)})$ be two solutions to the problem \eqref{nlsystem-2nd order-o}--\eqref{BCs:nlsystem-2nd order-o}, and suppose that both solutions are contained in $\iterV_{r_2}^{m+1}\times \iterseta$. Set
\begin{equation}
\label{diff of sol-1}
 (u,v,w):=(\phi_1, \psi_1, \Psi_1)- (\phi_2, \psi_2, \Psi_2)\quad\tx{in $\Om_L$},
\end{equation}
and
\begin{equation*}
d_1:=\|u\|_{H^2(\Om_L)},\quad d_2:=\|(v,w)\|_{H^1(\Om_L)},\quad d:=d_1+d_2.
\end{equation*}
Then we have
\begin{equation}
\label{nlsystem-2nd order}
\begin{cases}
-\Delta u=F_0\quad &\tx{in $\Om_L$},\\
\mfrak{L}_1^{ (\phi_1, \psi_1, \Psi_1)}(v,w)
=F_1\quad &\tx{in $\Om_L$},\\
\mfrak{L}_2(v,w)=F_2\quad &\tx{in $\Om_L$},
\end{cases}
\end{equation}
with the boundary conditions
\begin{equation}
\label{BCs:nlsystem-2nd order-u}
\begin{split}
\der_1u=0,\quad v(0, x_2)=0,\quad \der_1w=0\quad&\tx{on $\Gamen$},\\
u=0,\quad \der_2 v=0,\quad \der_2 w=0\quad&\tx{on $\Gamw$},\\
u=0,\quad w=0\quad&\tx{on $\Gamex$},
\end{split}
\end{equation}
for
\begin{equation*}
  \begin{split}
  F_0&:=f_0^{(\tilT, (\phi_1, \psi_1, \Psi_1))}-f_0^{(\tilT, (\phi_2, \psi_2, \Psi_2))},\\
  F_1&:=f_1^{ (\phi_1, \psi_1, \Psi_1)}-f_1^{ (\phi_2, \psi_2, \Psi_2)}+(\mfrak{L}_1^{(\phi_2, \psi_2, \Psi_2)}-\mfrak{L}_1^{(\phi_1, \psi_1, \Psi_1)})(\psi_2, \Psi_2),\\
  F_2&:=f_2^{(\tilT, (\phi_1, \psi_1, \Psi_1))}-f_2^{(\tilT, (\phi_2, \psi_2, \Psi_2))}.
  \end{split}
\end{equation*}
By adjusting the argument in Step 1 from \cite[Proof of Theorem 1.7]{BDXX} together with Lemma \ref{proposition-H1-apriori-estimate}, we can show that
\begin{align}
\label{contraction-1}
 & d_1\le C_{\sharp}r_1(d_1+d_2),\\
 \label{contraction-new}
 &  d_2\le C_{\sharp}\left(d_1+(r_1+r_2+r_3+\mfrak P(S_0, E_{\rm en}, \om_{\rm en}))d_2\right)
\end{align}
for some constant $C_{\sharp}>0$.
The estimate \eqref{contraction-1} is easily obtained by investigating the linear elliptic boundary value problem for $u$, which is a part of the problem \eqref{nlsystem-2nd order}--\eqref{BCs:nlsystem-2nd order-u}. The estimate \eqref{contraction-new} is obtained by applying Lemma  \ref{proposition-H1-apriori-estimate} to the boundary value problem for $(v,w)$ stated in \eqref{nlsystem-2nd order}--\eqref{BCs:nlsystem-2nd order-u}.

If the constant $r_1$ satisfies the inequality
\begin{equation}
\label{condition for r-3}
  r_1\le \frac{1}{2C_{\sharp}},
\end{equation}
then the estimate \eqref{contraction-1} immediately yields that
\begin{equation}\label{contraction-2}
  d_1\le 2C_{\sharp}r_1d_2.
\end{equation}
We combine \eqref{contraction-2} with \eqref{contraction-new}
to get
\begin{equation}\label{contraction-3}
  d_2\le C_{\sharp}\left((2C_{\sharp}+1)r_1+r_2+r_3+\mfrak P(S_0, E_{\rm en}, \om_{\rm en})\right)d_2.
\end{equation}
Therefore, if $(r_1, r_2, r_3)$ are fixed sufficiently small and if $\mfrak P(S_0, E_{\rm en}, \om_{\rm en})$ is sufficiently small to satisfy the condition
\begin{equation}
\label{condition for r-4}
  C_{\sharp}\left((2C_{\sharp}+1)r_1+r_2+r_3+\mfrak P(S_0, E_{\rm en}, \om_{\rm en})\right)<1,
\end{equation}
then we obtain that $d_1=d_2=0$, from which it follows that
\begin{equation*}
  (\phi_1, \psi_1, \Psi_1)=(\phi_2, \psi_2, \Psi_2)\quad\tx{in $\ol{\Om_L}$}.
\end{equation*}
\smallskip

{\textbf{Step 4.}} In the following, we assume that all conditions \eqref{condition for r-0}, \eqref{condition for r-1}, \eqref{condition for r-2}, \eqref{condition for r-3} and \eqref{condition for r-4} are satisfied.

For a fixed $\tilT\in \iterP_{r_1}^m$, let ${\bf m}(\Psi, \nabla\psi, \nabla^{\perp}\phi)$ be the momentum density field associated with $\tilT$ in the sense of Definition \ref{definition: momentum density field}. By the results obtained in the previous three steps, the vector field ${\bf m}(\Psi, \nabla\psi, \nabla^{\perp}\phi)$ is well defined in $\Om_L$. Note that it is divergence-free, that is, it satisfies the equation \eqref{div-free}. Next, we prove the well-posedness of Problem \ref{problem-transport equation}.

\begin{lemma}
\label{lemma:wp for ivp, entropy}
Assuming that the conditions \eqref{condition for r-0}, \eqref{condition for r-1} and \eqref{condition for r-2} are satisfied, one can fix a constant $\bar{r}_3>0$ depending only on the data so that if the inequality
\begin{equation}
\label{condition for r-6}
0<r_3\le \bar{r}_3
\end{equation}
holds, then, for each $\tilT\in \iterP_{r_1}^m$, Problem \ref{problem-transport equation} has a unique solution $T\in H^m(\Om_L)$ that satisfies
\begin{equation}
\label{estimate of T}
  \|T\|_{H^m(\Om_L)}\le C_{\flat}\|S_{\rm en}-S_0\|_{C^m([-1,1])}
\end{equation}
for some constant $C_{\flat}>0$ fixed depending only on the data.
\end{lemma}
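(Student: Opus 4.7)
The plan is to solve Problem \ref{problem-transport equation} by the method of characteristics, exploiting the fact that the constructed momentum density field $\mathbf{m}^{(\psi,\phi,\Psi)}$ is divergence-free, nearly horizontal, and tangent to $\Gamw$.

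First I would verify the basic geometric setup. Because $(\tphi,\tpsi,\tPsi)\in \iterV^{m+1}_{r_2}\times \iterseta$ with $r_2+r_3\le \bar\delta$ small, Sobolev embedding ($H^{m-1}\hookrightarrow C^1$ since $m\ge 4$) together with the explicit form of $\mathbf{m}$ in Definition \ref{definition:coefficients-nonlinear}(4) yields $\|\mathbf{m}-(J,0)\|_{C^1(\ol{\Om_L})}\le C(r_2+r_3)$, so after adjusting $\bar r_3$ we get $m_1\ge J/2$ on $\ol{\Om_L}$. Moreover, on $\Gamw$ the boundary conditions $\phi=0$ and $\der_2\psi=0$ (inherited from the iteration sets) give $m_2=0$, i.e.\ $\mathbf{m}\cdot\mathbf{n}=0$. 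Solving the ODE $\der_{x_1}\Xi = (m_2/m_1)(x_1,\Xi)$ with $\Xi(0;x_2^0)=x_2^0$ therefore produces a flow which preserves the lines $x_2=\pm 1$, and, by uniqueness, its inverse $Y(x_1,x_2)=x_2^0$ is a well-defined $C^1$ diffeomorphism of $[-1,1]$ for each $x_1\in[0,L]$. Setting $T(x_1,x_2):=S_{\rm en}(Y(x_1,x_2))-S_0$ yields the (unique) strong solution of $\mathbf{m}\cdot\nabla T=0$ with $T|_{\Gamen}=S_{\rm en}-S_0$.

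The main obstacle is the $H^m$ regularity estimate \eqref{estimate of T}, since $\mathbf{m}$ lies only in $H^{m-1}(\Om_L)$; a naive composition argument gives at most $H^{m-1}$. To gain the extra derivative I would use a commutator/energy argument adapted to the transport operator $L:=\mathbf{m}\cdot\nabla$. For each multi-index $\alpha$ with $|\alpha|\le m$, applying $D^\alpha$ to $LT=0$ gives
\begin{equation*}
L(D^\alpha T) = -[D^\alpha,L]T = -\sum_{0<\beta\le\alpha}\binom{\alpha}{\beta}(D^\beta\mathbf{m})\cdot\nabla(D^{\alpha-\beta}T).
\end{equation*}
Because $\mathbf{m}\in H^{m-1}$ and inductively $T\in H^{|\alpha|-1}$, the standard Moser-type Sobolev product estimate shows the right-hand side is in $L^2(\Om_L)$ with bound controlled by $\|\mathbf{m}\|_{H^{m-1}}\|T\|_{H^{|\alpha|-1}}$. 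A weighted energy estimate, multiplying by $e^{-\mu x_1}D^\alpha T$, yields
\begin{equation*}
\int_{\Gamex}e^{-\mu L}m_1(D^\alpha T)^2\,dx_2 + \mu\int_{\Om_L}e^{-\mu x_1}m_1(D^\alpha T)^2\,d\rx \le \int_{\Gamen}m_1(D^\alpha T)^2\,dx_2 + \tx{commutator terms},
\end{equation*}
where the $\Gamw$-integrals vanish thanks to $m_2=0$ there and the $\nabla\cdot\mathbf{m}=0$ interior term vanishes as well. Choosing $\mu$ large (depending on $\|\mathbf{m}\|_{H^{m-1}}$) absorbs the commutator into the left-hand side. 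To bound the $\Gamen$-contribution in terms of $\|S_{\rm en}-S_0\|_{C^m}$, the tangential derivatives $\der_2^k T$ on $\Gamen$ are simply $\der_2^k S_{\rm en}$, and any $\der_1$ in $D^\alpha$ is replaced using the equation $\der_1 T=-(m_2/m_1)\der_2 T$, which is controlled on $\Gamen$ by $\|\mathbf{m}\|_{H^{m-1}}$ via the trace theorem. Inducting on $|\alpha|$ and summing over $\alpha$ closes the estimate \eqref{estimate of T}.

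Finally I would verify the compatibility conditions $\der_2^{2i-1}T=0$ on $\Gamw$ for $2i-1<m$ by a reflection argument. The properties of the iteration set together with Lemma \ref{lemma on L_1}(c) (applied to the coefficients of $\mathbf{m}$ via Definition \ref{definition:coefficients-nonlinear}(4)) guarantee that $m_1$ is even and $m_2$ is odd with respect to reflection across $x_2=\pm 1$, up to order $m-1$. Extending $S_{\rm en}-S_0$ evenly across $|x_2|=1$ (permitted by Condition \ref{conditon:1}(i)) and correspondingly extending $\mathbf{m}$ by reflection to a slightly larger strip, the characteristic construction above produces a solution on the extended domain that is even in $x_2\mp 1$ to the appropriate order, which yields the required vanishing of odd $x_2$-derivatives of $T$ on $\Gamw$. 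Uniqueness then follows from the energy estimate applied to the difference of two solutions. The hardest part of the argument is the higher-order commutator bookkeeping in the energy estimate of the previous paragraph, since the transport coefficient is only one order below the target regularity of $T$; this is where the weighted energy method combined with induction on the order of differentiation is essential.
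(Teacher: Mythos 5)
Your construction of $T$ by characteristics is essentially the paper's Lagrangian map (the characteristics are the level sets of the stream function $\vartheta(x_1,x_2)=\int_{-1}^{x_2}{\bf m}\cdot{\bf e}_1\,dt$, since ${\bf m}$ is divergence-free), and the existence, uniqueness, and compatibility parts are fine. The gap is in the top-order estimate. In your commutator identity with $|\alpha|=m$, the term with $\beta=\alpha$ is $(D^{m}{\bf m})\cdot\nabla T$; since ${\bf m}$ lies only in $H^{m-1}(\Om_L)$ globally, $D^m{\bf m}$ is not in $L^2$, and no Moser-type product estimate bounds this term by $\|{\bf m}\|_{H^{m-1}}\|T\|_{H^{m-1}}$ — the standard commutator bound $\|[D^\alpha,f]g\|_{L^2}\lesssim \|\nabla f\|_{L^\infty}\|g\|_{H^{m-1}}+\|f\|_{H^{m}}\|g\|_{L^\infty}$ requires $f={\bf m}\in H^m$. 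The weighted exponential and large $\mu$ only absorb terms already controlled by $\|D^\alpha T\|_{L^2}$; they cannot absorb a term that is not even a function. The same loss appears in your treatment of the entrance data: rewriting $\der_1 T=-(m_2/m_1)\der_2T$ and differentiating $m-1$ more times requires the trace of $D^{m-1}{\bf m}$ on $\Gamen$ to lie in $L^2(\Gamen)$, which fails for a general $H^{m-1}(\Om_L)$ field.

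The missing idea, which the paper supplies, is that ${\bf m}$ gains one derivative near the entrance: the flow is uniformly subsonic for $x_1<\tfrac{7}{8}\ls$ by Lemma \ref{lemma on L_1}($\tx{f}_3$), so the equation for $\psi$ is uniformly elliptic there, whence $\psi\in H^{m+1}(\Om_L\cap\{x_1<\ls/2\})$ and ${\bf m}\in H^{m}$ near $\Gamen$. The paper then writes $T=(S_{\rm en}-S_0)\circ\mathscr{L}$ with $\mathscr{L}$ defined implicitly by $\vartheta(0,\mathscr{L}(\rx))=\vartheta(\rx)$, so that the only genuinely top-order object in $D^mT$ is $\der_{x_2}^m\vartheta(0,\mathscr{L}(\cdot))$ — a quantity that sees ${\bf m}$ only through its trace on $\Gamen$, where the extra regularity is available — and this composition is placed in $L^2(\Om_L)$ by Fubini and the change of variables along the Lagrangian map. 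Without either the localized gain of regularity or the composition structure that confines the $m$-th derivative of the coefficient to the entrance boundary, your energy argument cannot close at order $m$.
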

This lemma is analogous to \cite[Lemma 3.5]{BDXX} except that there is one significant difference in its proof. More details are given after the proof of Theorem \ref{theorem-HD}.

{\textbf{Step 5.}}
In addition to the conditions \eqref{condition for r-0}, \eqref{condition for r-1}, \eqref{condition for r-2} and \eqref{condition for r-6}, if the inequality
\begin{equation}
\label{condition for r-7}
  \mfrak P(S_{\rm en}, E_{\rm en}, \om_{\rm en})\le \frac{r_1}{2C_{\flat}}
\end{equation}
holds, then it follows from Lemma \ref{lemma:wp for ivp, entropy} that, for each $\tilT\in \iterP_{r_1}^m$, the solution $T$ to Problem \ref{problem-transport equation} satisfies the estimate
\begin{equation*}
  \|T\|_{H^m(\Om_L)}\le \frac{r_1}{2}.
\end{equation*}
Furthermore, by using the representation \eqref{expression of T} and the compatibility conditions for $S_{\rm en}$ stated in Condition \ref{conditon:1}(i), it can be checked that the solution $T$ satisfies the compatibility conditions
\begin{equation*}
  \der_2^kT=0\quad\tx{on $\Gamw$}
\end{equation*}
for $k=2i-1$, $i\in \mathbb{N}$ with $k<m$.
This implies $T\in \iterP_{r_1}^m$. Then we can apply the Schauder fixed point theorem and the uniqueness of a solution to problem \ref{problem-transport equation} to conclude that the nonlinear boundary value problem of \eqref{equation for psi}--\eqref{BC for T} has at least one solution $(T, \phi, \psi, \Psi)\in \iterP_{r_1}^m\times \iterV_{r_2}^{m+1}\times \iterseta$ provided that $(r_1, r_2, r_3, \mcl{P}(S_{\rm en}, E_{\rm en}, \om_{\rm en}) )$ satisfies all the conditions \eqref{condition for r-0}, \eqref{condition for r-1}, \eqref{condition for r-2}, \eqref{condition for r-6}, and \eqref{condition for r-7}. To verify this statement, one can simply follow the argument in the proof of \cite[Theorem 1.7]{BDXX}. For further details, see \S 3.2.2 in \cite{BDXX}.

\medskip

{\textbf{Step 6.}} For the solution $(T,\phi, \psi, \Psi)$ to the boundary value problem of \eqref{equation for psi}--\eqref{BC for T}, let us set
\begin{equation*}
  (S, \vphi, \Phi):=(S_0, \bar{\vphi}, \bar{\Phi})+(T, \psi, \Psi)\quad\tx{in $\Om_L$}.
\end{equation*}
For the constant $\bar{r}_3$ from Lemma \ref{lemma:wp for ivp, entropy}, one can fix a constant $\hat{r}_3\in(0, \bar{r}_3]$ depending only on the data so that whenever
\begin{equation}\label{condition for r_3-equiv}
  r_3\le \hat{r}_3,
\end{equation}
we can combine the first inequality in \eqref{condition for r-2} with \eqref{condition for r_3-equiv} to ensure that $(\vphi, \phi, \Phi, S)$ satisfy all the conditions \eqref{smallness condition-1}--\eqref{smallness condition-3}. Then it follows that $(\vphi, \phi, \Phi, S)$ is a solution to Problem \ref{problem-HD}.
\smallskip

Now, we fix three constants $r_1$, $r_2$ and $r_3$ as follows:
\begin{equation}
\label{choice of r's}
  \begin{split}
  r_3&=\frac{\mfrak P(S_{\rm en}, E_{\rm en}, \om_{\rm en})}{\max\{\frac{1}{3C_*}, \frac{C_*+1}{12C_*C_{\flat}}\}}=:\kappa \mfrak P(S_{\rm en}, E_{\rm en}, \om_{\rm en}),\\
  r_1&=\frac{r_3}{6C_*(2C_*+1)}=\frac{\kappa}{6C_*(2C_*+1)} \mfrak P(S_{\rm en}, E_{\rm en}, \om_{\rm en})=:\mu_1\mfrak P(S_{\rm en}, E_{\rm en}, \om_{\rm en}),\\
  r_2&=\frac{r_3}{3(2C_*+1)}=\frac{\kappa}{3(2C_*+1)} \mcl{P}(S_{\rm en}, E_{\rm en}, \om_{\rm en})=:\mu_2\mfrak P(S_{\rm en}, E_{\rm en}, \om_{\rm en}),
  \end{split}
\end{equation}
for the constants $C_*$ and $C_{\flat}$ from \eqref{definition of C_*} and \eqref{estimate of T}, respectively. If the term $\mfrak P(S_{\rm en}, E_{\rm en}, \om_{\rm en})$ satisfies the inequality
\begin{equation}
\label{condition for data perturbation-2}
\begin{split}
  &\mfrak P(S_{\rm en}, E_{\rm en}, \om_{\rm en})\\
  &\le \min\left\{\frac{1}{3\kappa C_*}, \frac{\bar{\delta}}{\mu_1+\mu_2+\kappa}, \frac{1}{2C_{\sharp}\mu_1}, \frac{\hat{r}_3}{\kappa}, \frac{1}{2C_{\sharp}((2C_{\sharp}+1)\mu_1+\mu_2+\kappa+1)}\right\}=:\sigma_*
  \end{split}
\end{equation}
for the constants $\bar{\delta}$, $C_{\sharp}$ and $\hat{r}_3$ from \eqref{condition for r-0}, \eqref{contraction-1}(or \eqref{contraction-new}) and \eqref{condition for r_3-equiv}, respectively, then all the conditions \eqref{condition for r-0}, \eqref{condition for r-1}, \eqref{condition for r-2}, \eqref{condition for r-3}, \eqref{condition for r-4}, \eqref{condition for r-7} and \eqref{condition for r_3-equiv} are satisfied. Since $(T, \phi, \psi, \Psi)$ is contained in the iteration set $\iterP_{r_1}^m\times \iterV_{r_2}^{m+1}\times \iterseta$, it follows from \eqref{choice of r's} that $(\vphi, \phi, \Phi, S)$ satisfies the estimate \eqref{solution estimate HD}.
\smallskip

Suppose that $(\vphi^{(1)}, \Phi^{(1)}, \phi^{(1)}, S^{(1)})$ and $(\vphi^{(2)}, \Phi^{(2)}, \phi^{(2)}, S^{(2)})$ are two solutions to Problem \ref{problem-HD}. In addition, suppose that they satisfy the estimate \eqref{solution estimate HD} with the term $\mcl{P}(S_{\rm en}, E_{\rm en}, \om_{\rm en}) $ satisfying the condition \eqref{condition for data perturbation-2}. For each $j=1$ and $2$, set
\begin{equation*}
  (\psi^{(j)}, \Psi^{(j)}, T^{(j)}):=
  (\vphi^{(j)}, \Phi^{(j)}, S^{(j)})-(\bar{\vphi}, \bar{\Phi}, S_0),\quad P^{(j)}:=(\phi^{(j)},\psi^{(j)}, \Psi^{(j)}).
\end{equation*}
Define
\begin{equation*}
  (\til u, \til v, \til w, \wtil{Y}):=(\phi^{(1)}, \vphi^{(1)}, \Phi^{(1)}, T^{(1)})-(\phi^{(2)}, \vphi^{(2)}, \Phi^{(2)}, T^{(2)}),
\end{equation*}
and
\begin{equation*}
\sigma:=\mfrak P(S_{\rm en}, E_{\rm en}, \om_{\rm en}).
\end{equation*}
By adjusting the argument in Step 3, one can show that
\begin{align}
\label{estimate:final-0}
  &\|\til u\|_{H^2(\Om_L)}\le C\left(\|\wtil{Y}\|_{H^1(\Om_L)}+\sigma(\|\til u\|_{H^2(\Om_L)}+\|(\til v, \til w)\|_{H^1(\Om_L)})\right),\\
\label{estimate:final-1}
  &\|(\til v, \til w)\|_{H^1(\Om_L)}\le C\left(\|\til u\|_{H^2(\Om_L)}+\sigma\|(\til v, \til w) \|_{H^1(\Om_L)}\right).
\end{align}

By following the argument in the proof of \cite[Theorem 1.7]{BDXX} (see \S 3.2.2 in \cite{BDXX}), one can find a constant $\hat{\sigma}\in (0, \sigma_*]$ depending only on the data so that if $\sigma\le \hat{\sigma}$, then it holds that
\begin{equation}
\label{estimate:final-2}
  \|\wtil{Y}\|_{H^1(\Om_L)}\le C\sigma \left(\|\wtil{Y}\|_{H^1(\Om_L)}+\|(\til v, \til w)\|_{H^1(\Om_L)}+\|\til u\|_{H^2(\Om_L)}\right).
\end{equation}
Therefore, one can fix a constant $\bar{\sigma}\in(0,\hat{\sigma}]$ sufficiently small depending only on the data so if
$\sigma\le \bar{\sigma}$, then it follows from the estimates \eqref{estimate:final-0}--\eqref{estimate:final-2} that $$(\til u, \til v, \til w, \wtil{Y})=(0,0,0,0)\quad\tx{in $\Om_L$}.$$
This proves the uniqueness of a solution to Problem \ref{problem-HD}.

\smallskip

{\textbf{Step 7.}}
For a solution $(\vphi, \Phi, \phi, S)$ to Problem \ref{problem-HD}, define its associated Mach number $M(\vphi, \Phi, \phi, S)$ by
\begin{equation*}
  M(\vphi, \Phi, \phi, S):=\frac{|\nabla\vphi+\nabla^{\perp}\phi|}{\sqrt{\gam S\varrho^{\gam-1}(S, \Phi, \nabla\vphi, \nabla^{\perp}\phi)}}
\end{equation*}
for $\varrho(S, \Phi, \nabla\vphi, \nabla^{\perp}\phi)$ given by \eqref{new system with H-decomp1}.

Note that $T(=S-S_0)\in \iterP_{r_1}^m$ and $P:=(\phi,\psi, \Psi)(=(\phi, \vphi, \Phi)-(0, \bar{\vphi}, \bar{\Phi}))\in \iterV_{r_2}^{m+1}\times \iterseta$. For $(a_{ij}^P)_{i,j=1,2}$ given by Definition \ref{definition:approx coeff and fs}, define a function $D:\Om_L\rightarrow \R$ by
\begin{equation*}
  D(\rx)=\det \begin{pmatrix}a_{11}^P(\rx) & a_{12}^P(\rx)\\ a_{12}^P(\rx) & 1 \end{pmatrix}\quad\tx{for $\rx=(x_1, x_2)\in \Om_L$}.
\end{equation*}
It follows from Lemma \ref{lemma on L_1} ($\tx{f}_2$) that there exists a unique function $\gs^P:[-1,1]\rightarrow (0,L)$ satisfying that
\begin{equation}\label{signs of D}
  D(x_1, x_2)\begin{cases}
  >0\quad&\mbox{for $x_1<\gs^P(x_2)$},\\
  =0\quad&\mbox{for $x_1=\gs^P(x_2)$},\\
  <0\quad&\mbox{for $x_1>\gs^P(x_2)$}.
  \end{cases}
\end{equation}
By a direct computation with Definitions \ref{definition:coefficints-iter} and \ref{definition:coefficients-nonlinear}, it can be directly checked that
\begin{equation*}
  D(\rx)=\frac{\left(\gam S\varrho^{\gam-1}(S, \Phi, \nabla\vphi, \nabla^{\perp}\phi)\right)^2}{A_{22}^2(\rx, \Psi, \nabla\psi, \nabla^{\perp}\phi)}\left(1-M^2(\vphi, \Phi, \phi, S)\right).
\end{equation*}
Therefore, \eqref{signs of D} is equivalent to the following:
\begin{equation}\label{mag. of M}
M(\vphi, \Phi, \phi, S)\begin{cases}
  <1\quad&\mbox{for $x_1<\gs^P(x_2)$},\\
  =1\quad&\mbox{for $x_1=\gs^P(x_2)$},\\
  >1\quad&\mbox{for $x_1>\gs^P(x_2)$}.
  \end{cases}
\end{equation}
Define a function $\fsonic$ by
\begin{equation*}
  \fsonic(x_2):=\gs^P(x_2)\quad\tx{for $-1\le x_2\le 1$}.
\end{equation*}
Then, it follows from \eqref{estimate of gs}, \eqref{estimate:phi}, \eqref{potential estimate fixed pt} and \eqref{choice of r's} that the {\emph{sonic interface function}} $\fsonic$ satisfies the estimate \eqref{estimate of sonic boundary pt}. This completes the proof of Theorem \ref{theorem-HD}. \end{proof}

\begin{proof}[Proof of Lemma \ref{lemma:wp for ivp, entropy}]
First of all, we fix a constant $\bar{r}_3$ to satisfy
\begin{equation*}
  \bar{r}_3\le \min\left\{1, \frac{C_*}{C_{\sharp}(2C_{\sharp}+3C_*+3)}\right\}
\end{equation*}
so that whenever the inequality $0<r_3\le \bar{r}_3$ holds, the conditions \eqref{condition for r-3} and \eqref{condition for r-4} are satisfied by \eqref{condition for r-0}, \eqref{condition for r-1} and \eqref{condition for r-2}.
\smallskip

For a fixed $\tilT\in \iterP_{r_1}^m$, let ${\bf m}(\Psi, \nabla\psi, \nabla^{\perp}\phi)$ be the approximated momentum density field associated with $\tilT$ in the sense of Definition \ref{definition: momentum density field}. Since the solution $(\phi,\psi, \Psi)$ to the boundary value problem \eqref{nlsystem-2nd order-o}--\eqref{BCs:nlsystem-2nd order-o} lies in the iteration set $\iterV_{r_2}^{m+1}\times \iterseta$, by using the first inequality stated in \eqref{condition for r-2}, one can directly check the estimate
\begin{equation}
\label{estimate for m-1}
  \|{\bf m}(\Psi, \nabla\psi, \nabla^{\perp}\phi)-{\bf m}(0, {\bf 0}, {\bf 0})\|_{H^{m-1}(\Om_L)}\le Cr_3.
\end{equation}

Note that ${\bf m}(0, {\bf 0}, {\bf 0})=J\hat{\bf e}_1$. Next, we shall improve the regularity of ${\bf m}(\Psi, \nabla\psi, \nabla^{\perp}\phi)$ near the entrance boundary $\Gamen$.
\smallskip

{\emph{Claim: In $\Om_L\cap\{x_1<\frac{\ls}{2}\}$(see Lemma \ref{lemma-1d-full EP} for the definition of the constant $\ls$), the following estimate holds for some constant $C>0$ fixed depending only on the data:
\begin{equation}
\label{estimate for m-2}
\begin{split}
  &\|{\bf m}(\Psi, \nabla\psi, \nabla^{\perp}\phi)-{\bf m}(0, {\bf 0}, {\bf 0})\|_{H^m(\Om_L\cap\{x_1<\frac{\ls}{2}\})} \le Cr_3.
  \end{split}
\end{equation}
}}

{\emph{Verification of the claim:}}
Define a function $\breve{\psi}$ by
\begin{equation*}
  \breve{\psi}(x_1,x_2):=\psi(x_1, x_2)-\int_{-1}^{x_2}\om_{\rm en}(t)\,dt.
\end{equation*}
Then it follows from the boundary condition $\psi(0, x_2)=\int_{-1}^{x_2}\om_{\rm en}(t)\,dt$ that
\begin{equation*}
\breve{\psi}=0\quad\tx{on $\Gamen$}.
\end{equation*}
Next, we rewrite the equation $\mfrak{L}_1^{(\phi,\psi, \Psi)}(\psi, \Psi)=f_1^{(\phi, \psi, \Psi)}$ in terms of $\breve{\psi}$ as
\begin{equation}
\label{equation for u}
  a_{11}\der_{11}\breve{\psi}+2a_{12}\der_{12}\breve{\psi}+\der_{22}\breve{\psi}=F\quad\tx{in $\Om_L$}
\end{equation}
for
\begin{equation*}
  \begin{split}
  &F:=f_1^{(\phi, \psi, \Psi)}-\mfrak{L}_1^{(\phi, \psi, \Psi)}\left(\int_{-1}^{x_2}\om_{\rm en}(t)\,dt, \Psi\right)-a^{(\phi, \psi, \Psi)}\der_1 \breve{\psi},\\
  &(a_{11}, a_{12}):=(a_{11}^{(\phi, \psi, \Psi)}, a_{12}^{(\phi, \psi, \Psi)}).
  \end{split}
\end{equation*}
By using the compatibility conditions of $\om_{\rm en}$ stated in Condition \ref{conditon:1} and Lemma \ref{lemma on L_1}, we can directly check the following properties:
\begin{equation*}
\begin{split}
&\der_2 \breve{\psi}=0\quad\tx{on $\Gamw$},\\
&\der_2 a_{11}=0,\,\,a_{12}=\der_2^2a_{12}=0,\,\,\der_2 F=0\quad\tx{on $\Gamw$}.
\end{split}
\end{equation*}
Most importantly, it follows from Lemma \ref{lemma on L_1} ($\tx{f}_3$) that
the equation \eqref{equation for u} is uniformly elliptic in $\Om_L\cap\{x_1<\frac 34 \ls\}$. Then we can use the estimates given in Lemma \ref{lemma on L_1} (e) and the estimate \eqref{potential estimate fixed pt} to show that
\begin{equation*}
  \|\psi\|_{H^{m+1}(\Om_L\cap\{x_1<\frac{\ls}{2}\})}\le Cr_3.
\end{equation*}
This together with \eqref{definition: vector field m} yields
\begin{equation}
\label{estimate for m-2-2}
  \|{\bf m}(\Psi, \nabla\psi, \nabla^{\perp}\phi)-{\bf m}(0, {\bf 0}, {\bf 0})\|_{H^m(\Om_L\cap\{x_1<\frac{\ls}{2}\})}
  \le Cr_3.
\end{equation}
The claim is verified.
\medskip

\newcommand \vtheta{\vartheta}
For ${\bf m}(\rx):={\bf m}(\Psi(\rx), \nabla\psi(\rx), \nabla^{\perp}\phi(\rx))$, define a function $\vtheta:\ol{\Om_L}\rightarrow \R$ by
\begin{equation*}
  \vtheta(x_1, x_2):=\int_{-1}^{x_2} {\bf m}(x_1, t)\cdot \hat {\bf e}_1\,dt.
\end{equation*}
Then, one can directly check from \eqref{div-free} that
\begin{equation}
\label{derivative of vtheta}
  (\der_{x_1}\vtheta, \der_{x_2}\vtheta)=(-{\bf m}\cdot \hat {\bf e}_2, {\bf m}\cdot \hat {\bf e}_1)\quad\tx{in $\ol{\Om_L}$}.
\end{equation}
Owing to the estimate \eqref{estimate for m-1}, we can further reduce the constant $\bar{r}_3$ depending only on the data so that if $r_3\in (0,\bar{r}_3)$, then we have
    \begin{equation}
    \label{estimate of vtheta-2}
      \der_2\vtheta(\rx)\ge \frac J2\quad\tx{for all $\rx\in \ol{\Om_L}$}.
    \end{equation}
It follows from \eqref{derivative of vtheta} and the boundary conditions stated in \eqref{BCs:nlsystem-2nd order-o} that $\vartheta$ satisfies the boundary conditions
\begin{equation*}
\der_{x_1}\vartheta(x_1, \pm 1)=0\quad\tx{for all $0<x_1<L$. }
\end{equation*}
So we have
\begin{equation*}
  \vtheta(x_1,-1)=0\quad\tx{and}\quad \vartheta(x_1,1)=\vartheta(0,1)=:\bar{\vartheta}\quad\tx{for all $0\le x_1\le L$}.
\end{equation*}
Moreover, it follows from the estimate \eqref{estimate for m-2} and the Morrey's inequality that
\begin{equation}
\label{estimate of vtheta-1}
  \|\vartheta-J(1+x_2)\|_{C^{m-1}(\ol{\Om_L\cap\{x_1<\frac{\ls}{3}\}})}\le
  Cr_3.
\end{equation}

Then we can define a {\emph{Lagrangian mapping}} $\mathscr{L}:\ol{\Om_L}\rightarrow [-1,1]$ by the implicit relation
\begin{equation}
\label{definition:vtheta}
  \vtheta(0, \mathscr{L}(\rx)):=\vtheta(\rx).
\end{equation}
Then
\begin{equation}
\label{expression of T}
  T(\rx):=(S_{\rm en}-S_0)\circ \mathscr{L}(\rx)
\end{equation}
solves Problem \ref{problem-transport equation}.

\newcommand \lmap{\mathscr{L}}
By direct calculations using the definition of $\lmap$ given by \eqref{definition:vtheta} together with the estimates  \eqref{estimate for m-1}, \eqref{estimate of vtheta-2} and \eqref{estimate of vtheta-1},
we can directly check that
\begin{equation*}
  \|\lmap\|_{H^{m-1}(\Om_L)}\le C.
\end{equation*}
Finally, we need to check if $D^m\lmap\in L^2(\Om_L)$.
\medskip

Suppose that $m=4$.
By a lengthy but straightforward computation, we get
\begin{equation*}
 D^4\lmap(\rx)=\frac{{\sum_{j=1}^5 \mathscr{D}_j}}{\der_{x_2}\vtheta(0, \lmap(\rx))}
\end{equation*}
for
\begin{equation*}
\begin{split}
 \mathscr{D}_1&=D^4\vtheta,\quad
 \mathscr{D}_2=-6\der_{x_2}^3\vtheta(0, \lmap)(D\lmap)^2D^2\lmap,\\
 \mathscr{D}_3&=-3\der_{x_2}^2\vtheta(0, \lmap)(D^2\lmap)^2,\quad
 \mathscr{D}_4=-4\der_{x_2}^2\vtheta(0, \lmap)D\lmap D^3\lmap,\\
 \mathscr{D}_5&=-\der_{x_2}^4\vtheta(0, \lmap)(D\lmap)^4.
 \end{split}
\end{equation*}
By using \eqref{derivative of vtheta} and applying the Sobolev inequality, we can directly estimate the terms $\mathscr{D}_j$ for $j=1,2,3,4$ as follows:
 \begin{equation*}
 \begin{split}
 &\|\mathscr{D}_1\|_{L^2(\Om_L)}\le C\|{\bf m}\|_{H^3(\Om_L)},\\
 &\|\mathscr{D}_j\|_{L^2(\Om_L)}\le C\|\vtheta\|_{C^3(\ol{\Om_L\cap\{x_1<\frac{\ls}{3}\}})}
 \|\lmap\|^3_{H^3(\Om_L)}\quad\tx{for $j=2,3,4$.}
 \end{split}
 \end{equation*}
Moreover, applying the generalized Sobolev inequality, we have
\begin{equation*}
  \|\mathscr{D}_5\|_{L^2(\Om_L)}
  \le C\|\lmap\|_{H^3(\Om_L)}^3\|\der_{x_2}^4\vtheta(0, \lmap)\|_{L^2(\Om_L)}.
\end{equation*}
For each $t\in (0, L)$, the function $\lmap(t, \cdot): [-1,1]\rightarrow [-1,1]$ is $C^1$-diffeomorphic. By \eqref{estimate for m-2} and \eqref{derivative of vtheta}, the function $\der_{x_2}^4\vtheta(0, \lmap(t,\cdot))$ belongs to $L^2((-1,1))$ owing to the trace theorem. Then we apply the Fubini's theorem and use \eqref{estimate for m-2}--\eqref{estimate of vtheta-2} to get
\begin{equation*}
\begin{split}
  \|\der_{x_2}^4\vtheta(0, \lmap)\|_{L^2(\Om_L)}^2
  &=\int_{0}^L\int_{\Om_L\cap\{x_1=t\}}|\der_{x_2}^4\vtheta(0, \lmap(x_1,x_2))|^2\,dx_2dt\\
  &=\int_0^L\int_{\Om_L\cap\{x_1=t\}}\frac{|\der_{x_2}^4\vtheta(0, y)|^2}{\der_{x_2}\lmap(t, \lmap^{-1}(t,y))}\,dy\,dt\\
  &\le C
  \end{split}
\end{equation*}
where $\lmap^{-1}$ represents the inverse of $\lmap(t,\cdot)$ as a mapping of $x_2\in[-1,1]$.
For the case of $m>4$, it can be similarly checked that $D^m\lmap\in L^2(\Om_L)$. Then, the estimate \eqref{estimate of T} follows directly.

\end{proof}

\appendix

\section{A further discussion on the nozzle length $L$}
\label{appendix:nozzle length}

Returning to the proof of Lemma \ref{proposition-H1-apriori-estimate}, we provide examples of $(\gam, J)$ for which the length of the nozzle $L$ can be large while the condition \eqref{almost sonic condition1 full EP} is satisfied.
\smallskip

By \eqref{L-computation-n} and \eqref{definition of lambda}, one can express the nozzle length $L$ as
\begin{equation}
\label{new expression of L}
L:=\sqrt{\frac{h_0^3}{2}}J^{\frac{\gam-2}{\gam+1}}\sqrt{\lambda(\kappa_0,\kappa_L)}.
\end{equation}
Define
\begin{equation*}
  \mfrak{p}:=\frac{\sqrt 2}{2}h_0^{-\frac 32}\mcl{H}(\kappa)((\gam-1)\kappa^{\gam+1}+\eta(\kappa^{\gam+1}-1)+2)\kappa^{-\eta}h_0^{-\eta},
\end{equation*}
and
\begin{equation*}
  \hat{\alp}:=\frac{\alp}{J^{\frac{2-\gam+2\eta}{\gam+1}}\mfrak{p}}.
\end{equation*}
for $\alp$ given by \eqref{definition of beta*}.
\medskip

{\textbf{Example 1.}} Returning to Step 4 of the proof of Lemma \ref{proposition-H1-apriori-estimate},
where the parameter $\eta$ in \eqref{definition of G} is fixed as $\eta=\frac 34\gam$, we take a closer look at how the parameter $\bJ$ is fixed.

By \eqref{new expression of alp} and \eqref{expression of alp}, we can roughly express $\hat{\alp}$ as
\begin{equation*}
  \hat{\alp}=
  1-\mu_1(\kappa,h_0,\gam)J^{\frac{\gam}{2(\gam+1)}}
  -\mu_2(\kappa,h_0,\gam)\lambda(\kappa_0, \kappa_L)J^{-\frac{\gam}{2(\gam+1)}}\left(J^{\frac{\gam}{2(\gam+1)}}+\mu_3(\kappa, h_0, \gam)\right)^2.
\end{equation*}
Next, we express $\lambda(\kappa_0, \kappa_L)$ as
\begin{equation}
\label{new expression of lambda}
\lambda(\kappa_0, \kappa_L)=\eps_0J^{\frac{\gam}{2(\gam+1)}}.
\end{equation}
Then we have
\begin{equation*}
  \hat{\alp}=1-\mu_1(\kappa,h_0,\gam)J^{\frac{\gam}{2(\gam+1)}}
  -\mu_2(\kappa,h_0,\gam)\eps_0\left(J^{\frac{\gam}{2(\gam+1)}}+\mu_3(\kappa, h_0, \gam)\right)^2.
\end{equation*}
Assume that
\begin{equation*}
  |\kappa-1|\le q_0
\end{equation*}
for some constant $q_0>0$. Then one can fix a constant $\bar{\mu}>0$ depending only on $(\gam, S_0, \zeta_0, q_0)$ to satisfy
\begin{equation*}
 \max_{{j=1,2,3,}\atop{|\kappa-1|\le q_0}}  |\mu_j(\kappa, h_0, \gamma)|\le \bar{\mu}
\end{equation*}
so we have
\begin{equation*}
  \hat{\alp}\ge 1-\bar{\mu}J^{\frac{\gam}{2(\gam+1)}}-
  \bar{\mu}\eps_0\left(J^{\frac{\gam}{2(\gam+1)}}+\bar{\mu}\right)^2.
\end{equation*}
Now we fix a constant $\bJ>0$ sufficiently small depending only on $(\gam, S_0, \zeta_0, q_0)$ such that, whenever the background momentum density $J$ satisfies $0<J\le \bJ$, it holds that
\begin{equation*}
  1-\bar{\mu}J^{\frac{\gam}{2(\gam+1)}}\ge \frac 23.
\end{equation*}
Then, we fix a constant $\eps_0>0$ sufficiently small depending only on $(\gam, S_0, \zeta_0, q_0)$ so that if $J\in(0, \bJ]$, then we finally obtain that
\begin{equation}
\label{lower bd of hat alp}
  \hat{\alp}\ge \frac 13,
\end{equation}
and this yields the result comparable to \eqref{positive lower bound 1 of alp}.

Substituting \eqref{new expression of lambda} into \eqref{new expression of L} yields
\begin{equation*}
  L=\sqrt{\frac{\eps_0 h_0^3}{2}}J^{\frac{5\gam-8}{4(\gam+1)}}.
\end{equation*}
This expression is valid for any $\gam>1$ as long as the inequality $0<J\le \bJ$ holds. Suppose that $1<\gam<\frac{8}{5}$. Note that the constant $\eps_0h_0^3$ is fixed independently of $J\in(0, \bJ]$. Therefore, the value of $L$ becomes large if $J$ is fixed sufficiently small.

\medskip

{\textbf{Example 2.}} Returning to Step 4 in the proof of Lemma \ref{proposition-H1-apriori-estimate},
where the parameter $\eta$ in \eqref{definition of G} is fixed as $\eta=\frac{\gam}{4}$, repeat the argument given in Example 1 to get
\begin{equation*}
    \hat{\alp}=1-\mu_1(\kappa,h_0,\gam)J^{\frac{-\gam}{2(\gam+1)}}
  -\mu_2(\kappa,h_0,\gam)\eps_0\left(J^{\frac{-\gam}{2(\gam+1)}}+\mu_3(\kappa, h_0, \gam)\right)^2.
\end{equation*}
Then one can easily see that there exist a constant $\ubJ>1$ sufficiently large, and a small constant $\eps_0>0$ depending only on $(\gam, S_0, \zeta_0, q_0)$ so that, for any $J\in [\ubJ, \infty)$, if $\lambda(\kappa_0, \kappa_L)$ is expressed as
\begin{equation*}
  \lambda(\kappa_0, \kappa_L)=\eps_0J^{\frac{-\gam}{2(\gam+1)}},
\end{equation*}
then the estimate \eqref{lower bd of hat alp} holds true, thus the result comparable to \eqref{positive lower bound 2 of alp} is obtained. In this case, the nozzle length $L$ is expressed as
\begin{equation*}
  L=\sqrt{\frac{\eps_0 h_0^3}{2}}J^{\frac{3\gam-8}{4(\gam+1)}}.
\end{equation*}
Therefore, we conclude that the value of $L$ becomes large if $\gam>\frac{8}{3}$ and $J$ is fixed sufficiently large.

\vspace{.25in}
\noindent
{\bf Acknowledgments:}
The research of Myoungjean Bae was supported in part by  Samsung Science and Technology Foundation under Project Number SSTF-BA1502-51 and the Ministry of Education of the Republic of Korea and the National Research Foundation of Korea (NRF-RS-2025-00553734). The research of Ben Duan was supported in part by National Key R\&D Program of China No. 2024YFA1013303, NSFC Grant Nos. 12271205 and 12171498. And, the research of Chunjing Xie was partially supported by NSFC grants 11971307, 1221101620, and 12161141004, Fundamental Research Grants for Central Universities, Natural Science Foundation of Shanghai 21ZR1433300, Program of Shanghai Academic Research Leader 22XD1421400.

\bigskip
\bibliographystyle{acm}
\bibliography{References_EP_smooth_tr_new}

\end{document}